%% file: main.tex
\documentclass[11pt,reqno]{article}

\input{preamble}
\usepackage{amsmath,amssymb,xfrac}
    \usepackage{enumitem} \setlist{nosep}
\usepackage{ifthen} 
\usepackage[pagebackref,breaklinks,unicode]{hyperref} 
    \renewcommand*{\backrefalt}[4]{\ifcase #1 (Not cited).\or (Cited p.~#2).\else (Cited pp.~#2).\fi} 
\usepackage{graphicx} 
\usepackage{mathrsfs}
\usepackage[T1]{fontenc}
\usepackage[margin=3cm]{geometry}
\usepackage{natbib}

\newcommand*{\ext}{\mathrm{ext}}
\newcommand*{\N}{\mathbb{N}}

\makeatletter
\newtheorem*{rep@theorem}{\rep@title}
\newcommand{\newreptheorem}[2]{%
\newenvironment{rep#1}[1]{%
 \def\rep@title{#2 \ref{##1}}%
 \begin{rep@theorem}}%
 {\end{rep@theorem}}}
\makeatother

\newreptheorem{theorem}{Theorem}
\newreptheorem{example}{Example}
\newreptheorem{fact}{Fact}

\newcommand*{\ssm}{\smallsetminus}

\definecolor{harrycomment}{rgb}{0.6,0,0.4}

\definecolor{shakedcomment}{rgb}{0, 0, 255}

\definecolor{oussamacomment}{rgb}{0,0.5,0}

\title{Quasiisometric embeddings between right-angled Artin groups: rigidity}
\hypersetup{pdftitle = Quasiisometric embeddings of RAAGs: rigidity}
\date{}
\author{Shaked Bader, Oussama Bensaid, and Harry Petyt}

\newcommand{\Addresses}{{\bigskip\footnotesize\par
\textsc{Mathematical Institute, University of Oxford, UK}\par\nopagebreak\textit{E-mail address}: 
\texttt{shaked.bader@sjc.ox.ac.uk}
\par\medskip\par
\textsc{Institut de Recherche en Mathématique et en Physique, Université Catholique de Louvain, Belgium} \par\nopagebreak\textit{E-mail address}: 
\texttt{oussama.bensaid@uclouvain.be}
\par\medskip\par
\textsc{Mathematics Institute, University of Warwick, UK}\par\nopagebreak\textit{E-mail address}: 
\texttt{harrypetyt@gmail.com}
}}

\begin{document}

\maketitle

\begin{abstract}
By introducing branching conditions on the defining graph, we prove a range of rigidity results for quasiisometric embeddings between right-angled Artin groups. The starting point for these is that, under mild conditions on the codomain, the branching conditions imply that a quasiisometric embedding induces an embedding between the associated extension graphs. Among other things, we:

\begin{itemize}
\item   provide obstructions to the existence of quasiisometric embeddings into products of trees;

\item   prove that if the direct product $F_2^n\times A_{C_5}^m$ can be quasiisometrically embedded in a RAAG of the same dimension, then this can be seen from its defining graph;

\item   classify all self--quasiisometric-embeddings of RAAGs defined on cycles;

\item   show that no $n$--dimensional RAAG is a universal receiver for quasiisometric embeddings of $n$--dimensional RAAGs.

\end{itemize}
We also establish a strong rigidity theorem for the quasiisometric images of 2--flats in RAAGs defined by triangle-free graphs that are not stars, generalising a theorem of Bestvina--Kleiner--Sageev.

\end{abstract}

{\hypersetup{hidelinks}\setcounter{tocdepth}{1}\tableofcontents\setcounter{tocdepth}{2}}

\section{Introduction} 

A group is a \emph{right-angled Artin group} if it has a generating set where the only relations are that some generators commute. If there are no such relations then the group is free, whereas if every two generators commute then the group is free-abelian. Between these extremes there is a large variety of examples, with perhaps the most obvious being direct products of free groups, or free products of free-abelian groups. In fact, each right-angled Artin group can be encoded by a simplicial graph, and the correspondence is bijective \cite{droms:isomorphisms}.

Because of the simplicity of the relations in right-angled Artin groups, they have some very nice algebraic and combinatorial properties \cite{hermillermeier:algorithms,servatius:automorphisms,davisjanuszkiewicz:right,hsuwise:onlinear}. See \cite{charney:introduction} for a survey. However, as is perhaps to be expected from a class of groups that is naturally indexed by the set of all simplicial graphs, their simple definition belies a very rich spectrum of behaviours.

This is perhaps best illustrated by their subgroup structure. Indeed, the highly influential class of \emph{compact special} groups of Haglund--Wise can be characterised as the \emph{convex-cocompact} subgroups of right-angled Artin groups \cite{haglundwise:special}. Such groups have a powerful associated theory, particularly in the case where they are hyperbolic \cite{wise:structure}. Another famous example is the result of Bestvina--Brady, who used subgroups of right-angled Artin groups to show the inconsistency of the Whitehead and Eilenberg--Ganea conjectures \cite{bestvinabrady:morse}.

In this article, we are interested in the large-scale geometry of right-angled Artin groups, and in how such groups relate to one another.

\subsection*{Large-scale geometry of groups}

The study of groups through their large-scale geometry is one of the central themes in geometric group theory, with the most ambitious goal being to classify groups based on their geometry. A prominent instance of this is Gromov's program to classify groups up to quasiisometry \cite{gromov:asymptotic}, which he proposed in the wake of important results such as: his polynomial growth theorem, which implies quasiisometric rigidity for free-abelian groups \cite{gromov:groups,bass:degree,guivarch:croissance}; Mostow rigidity and Tukia's theorem for hyperbolic manifolds \cite{mostow:onremarkable,marden:geometry,prasad:strong,tukia:onquasiconformal,tukia:homeomorphic}; and the Stallings--Dunwoody theorem for virtually free groups \cite{stallings:ontorsionfree,dunwoody:accessibility}. 

There is by now a very large body of work on quasiisometric classification and rigidity, which we mention just a few aspects of.

For symmetric spaces of non-compact type, quasiisometry determines the space up to a rescaling of the irreducible factors \cite{kleinerleeb:rigidity}; see also \cite{tukia:homeomorphic,pansu:metriques,chow:groups} for the rank-one cases. The same holds for higher-rank Euclidean buildings \cite{kleinerleeb:rigidity}. Mapping class groups enjoy a similar rigidity phenomenon: apart from a few low-complexity exceptions, quasiisometry determines the underlying surface \cite{behrstockkleinerminskymosher:geometry}. Other examples include irreducible nonuniform lattices in semisimple Lie groups other than $\mathrm{Isom}(\mathbb H^2)$, some large-type Artin groups, and solvable Baumslag--Solitar groups $BS(1,n)$, for which quasiisometry within the family is equivalent to commensurability \cite{schwartz:quasiisometry,eskin:quasiisometric,huangosajda:quasieuclidean,huangosajdavaskou:rigidity,farbmosher:quasiisometric}.

A natural variation of asking when two spaces are quasiisometric is the question of when one can be quasiisometrically embedded in another. This is in general a much more difficult question, in line with the difference between asking when two groups are isomorphic and asking whether one is a subgroup of another. At the most basic level, one loses access to arguments of the form ``prove something for a quasiisometry, and then use that same property for its quasiinverse''.

Nevertheless, in some settings with especially strong geometry one can still obtain some analogous rigidity results. In higher-rank symmetric spaces, Fisher--Whyte \cite{fisherwhyte:quasiisometric} proved that, under natural compatibility assumptions, quasiisometric embeddings remain rigid, in the sense that maximal flats are mapped within finite Hausdorff distance of maximal flats. For mapping class groups, Bowditch \cite{bowditch:large:mapping} showed that, for compact orientable surfaces of the same complexity at least $4$, the existence of a quasiisometric embedding between the corresponding mapping class groups forces the surfaces to be homeomorphic, and moreover the embedding is at finite distance from an isometry. Related rigidity phenomena for quasiisometric embeddings were also established for nonuniform lattices in higher-rank semisimple Lie groups \cite{fishernguyen:quasiisometric} and for solvable Baumslag--Solitar groups \cite{nairne:embeddings}.

Our goal in the present paper is to obtain rigidity results for quasiisometric embeddings between right-angled Artin groups.

\subsection*{Right-angled Artin groups}

Quasiisometries between right-angled Artin groups are still not completely understood, though there have been important contributions from a number of authors. As an early sign that the picture is likely to be complicated, note that there is a marked difference between the two extreme cases of free groups and free-abelian groups: any two nonabelian free groups are quasiisometric, but no two free abelian groups are quasiisometric.

Moving away from the extreme cases, there seems to be a subtle line between these two behaviours. On the ``free'' end, Behrstock--Neumann showed that if $\Gamma$ and $\Lambda$ are finite trees of diameter at least three, then the associated right-angled Artin groups $A_\Gamma$ and $A_\Lambda$ are quasiisometric \cite{behrstockneumann:quasiisometric}. They later found higher-dimensional examples, together with Januszkiewicz \cite{behrstockjanuszkiewiczneumann:quasiisometric}.

On the other side, Bestvina--Kleiner--Sageev introduced the family of \emph{atomic} graphs, and showed that if two right-angled Artin groups associated with atomic graphs are quasiisometric, then they are isomorphic \cite{bestvinakleinersageev:asymptotic}. This was later vastly generalised by Huang, who showed the same result for all right-angled Artin groups with finite outer automorphism group \cite{huang:quasiisometric:1} and in various cases where infinite outer automorphism groups are permitted \cite{huang:quasiisometric:2}. Even more remarkably, Huang, building on work with Kleiner \cite{huangkleiner:groups}, showed that for graphs $\Gamma$ that have no induced 4--cycles, are ``star-rigid'', and such that $\Out(A_\Gamma)$ is finite, the group $A_\Gamma$ is absolutely quasiisometrically rigid, in the sense that \emph{any} group quasiisometric to $A_\Gamma$ is commensurable with it \cite{huang:commensurability}.

At this point, we note that all of the rigidity results mentioned above rely in an essential way on the existence of a quasiinverse. Indeed, each of them in some way involves using a ``double embedding'' argument, where one shows a property of a quasiisometry, notes that it holds for the quasiinverse, and then uses that the composition is coarsely the identity.

To our knowledge, the only general result about quasiisometric embeddings between right-angled Artin groups is that of Rull \cite{rull:embedding}. Elaborating on the ``Alice's diary'' construction introduced in \cite{buyalodranishnikovschroeder:embedding}, she proved that if $\Gamma$ is an $n$--colourable finite graph, then $A_\Gamma$ can be quasiisometrically embedded in a product of $n$ finite-rank free groups.

In particular, there do not seem to be any nontrivial general results obstructing the existence of quasiisometric embeddings. As in \cite{fisherwhyte:quasiisometric,bowditch:large:mapping}, in order to have any control, we must restrict our attention to pairs of right-angled Artin groups of the same dimension (note that this is automatic when considering quasiisometries). If $\Gamma$ is a graph, then the dimension of the associated right-angled Artin group is equal to the clique number of $\Gamma$.

To get started with handling quasiisometric embeddings between more general right-angled Artin groups and finding obstructions, we use the results from \cite{baderbensaidpetyt:from}, which provide geometric branching conditions that enable one to control the images of certain flats under quasiisometric embeddings.

\subsection*{Branching conditions on flats}

Let us briefly outline some of the definitions and consequences of \cite{baderbensaidpetyt:from} in the setting of right-angled Artin groups. A more thorough discussion is given in \cref{sec:paper1_for_raags}.

\begin{definition} \label{def:intro_dbc}
Let $\Gamma$ be a graph with clique number $n\ge 2$.
\begin{itemize}
\item   A clique of $\Gamma$ is \emph{branching} if it is the intersection of some $n$--cliques.
\item   A vertex $v\in\Gamma$ is \emph{branch-complemented} if it lies in an $n$--clique $K$ such that $v$ and $K\ssm\{v\}$ are both branching.
\item   $\Gamma$ is \emph{directionally branch-complemented} if all of its vertices are branch-complemented.
\end{itemize}
\end{definition}

In the case when $\Gamma$ has clique number is two, that is, when $A_\Gamma$ is two-dimensional, being directionally branch-complemented is equivalent to having no leaves. (See \cref{sec:beyond_FB} for discussion about leaves.) The property is also closed under taking joins. 

We emphasise that being directionally branch-complemented does not impose any conditions on the edges of $\Gamma$: it is carried by one-dimensional subspaces, namely the \emph{standard} geodesics, or equivalently the cosets of maximal cyclic subgroups corresponding to generators. See \cref{fig:intro_branching} for examples, and see \cref{rem:dbc_vs_out} for a comparison with the Laurence--Servatius conditions on $\Gamma$ that imply $\Out(A_\Gamma)$ is finite \cite{laurence:generating,servatius:automorphisms}.

\begin{figure}[htbp]
    \centering
    \begin{tikzpicture}[baseline=-.1cm, scale=1.3]
\draw[thick]
(-.4,-.35) to (.4,-.35) to (0,.35) to (-.4,-.35);
\draw[thick]
(0,.35) to (-.2,.7) to (.2,.7) to (0,.35)
(-.2,.7) to (.2,.7) to (0,1.05) to (-.2,.7)
(-.4,-.35) to (-.8,-.35) to (-.6,-.7) to (-.4,-.35)
(-.8,-.35) to (-.6,-.7) to (-1,-.7) to (-.8,-.35)
(.4,-.35) to (.8,-.35) to (.6,-.7) to (.4,-.35)
(.8,-.35) to (.6,-.7) to (1,-.7) to (.8,-.35);
\fill[red] (-.4,-.35) circle(.08);
\fill[red] (.4,-.35) circle(.08);
\fill[red] (0,.35) circle(.08);
\fill (-.2,.7) circle(.08);
\fill (.2,.7) circle(.08);
\fill (0,1.05) circle(.08);
\fill (-.8,-.35) circle(.08);
\fill (-.6,-.7) circle(.08);
\fill (-1,-.7) circle(.08);
\fill (.8,-.35) circle(.08);
\fill (.6,-.7) circle(.08);
\fill (1,-.7) circle(.08);
\end{tikzpicture}
\qquad
\begin{tikzpicture}[baseline=-.1cm, scale=1.25]
\draw[thick]
(-.8,-.35) -- (.8,-.35)
(-.4,.35) -- (.4,.35);
\draw[thick]
(-.8,-.35) to (-.4,.35) to (0,-.35) to (.4,.35) to (.8,-.35)
(-.4,.35) to (0,1.05) to (.4,.35)
(.8,-.35) to (1.2,-1.05) to (1.6,-.35) -- (.8,-.35);
\fill (-.8,-.35) circle(.08);
\fill[red] (-.4,.35) circle(.08);
\fill[red] (0,-.35) circle(.08);
\fill[red] (.4,.35) circle(.08);
\fill[red] (.8,-.35) circle(.08);
\fill (0,1.05) circle(.08);
\fill (1.6,-.35) circle(.08);
\fill (1.2,-1.05) circle(.08);
\end{tikzpicture}
\qquad
    \begin{tikzpicture}[baseline=-.1cm, scale=1.5]
\coordinate (t) at (-.85,1);
\coordinate (b) at (-.85,-1);

\coordinate (v1) at (-1.35,.08);
\coordinate (v2) at (-1.10,-.28);
\coordinate (v3) at (-.60,-.28);
\coordinate (v4) at (-.35,.08);
\coordinate (v5) at (-.85,.35);

\draw[thick]
(v1) -- (v2) -- (v3) -- (v4);
\draw[thick,dotted]
(v4) -- (v5) -- (v1);

\draw[thick]
(t) -- (v1) (t) -- (v2) (t) -- (v3) (t) -- (v4)
(b) -- (v1) (b) -- (v2) (b) -- (v3) (b) -- (v4);
\draw[thick,dotted]
(t) -- (v5)
(b) -- (v5);

\fill[red] (v1) circle(.06);
\fill[red] (v2) circle(.06);
\fill[red] (v3) circle(.06);
\fill[red] (v4) circle(.06);
\fill[red] (v5) circle(.06);
\fill[red] (t) circle(.06);
\fill[red] (b) circle(.06);
\end{tikzpicture}
\qquad
\begin{tikzpicture}[baseline=-.1cm, scale=1.5]

\coordinate (tL) at (-.85,1);
\coordinate (bL) at (-.85,-1);

\coordinate (v1) at (-1.35,.08);
\coordinate (v2) at (-1.10,-.28);
\coordinate (v3) at (-.60,-.28);
\coordinate (v4) at (-.35,.08);
\coordinate (v5) at (-.85,.35);

\coordinate (a1) at (.6,-.3);
\coordinate (a2) at (1.4,-.3);
\coordinate (a3) at (1.8,.3);
\coordinate (a4) at (1,.3);
\coordinate (tR) at (1.2,1);
\coordinate (bR) at (1.2,-1);

\draw[thick]
(v1) -- (v2) -- (v3) -- (v4);
\draw[thick,dotted]
(v4) -- (v5) -- (v1);

\draw[thick]
(tL) -- (v1) (tL) -- (v2) (tL) -- (v3) (tL) -- (v4)
(bL) -- (v1) (bL) -- (v2) (bL) -- (v3) (bL) -- (v4);
\draw[thick,dotted]
(tL) -- (v5)
(bL) -- (v5);

\draw[thick]
(a1) -- (a2) -- (a3);
\draw[thick,dotted]
(a1) -- (a4) -- (a3);

\draw[thick]
(a1) -- (tR)
(a2) -- (tR)
(a3) -- (tR)
(a1) -- (bR)
(a2) -- (bR)
(a3) -- (bR);
\draw[thick,dotted]
(a4) -- (tR)
(a4) -- (bR);

\draw[thick]
(tL) -- (tR)
(bL) -- (bR)
(bL) -- (a1)
(v4) -- (a1);

\fill[red] (v1) circle(.06);
\fill[red] (v2) circle(.06);
\fill[red] (v3) circle(.06);
\fill[red] (v4) circle(.06);
\fill[red] (v5) circle(.06);
\fill[red] (tL) circle(.06);
\fill[red] (bL) circle(.06);

\fill[red] (a1) circle(.06);
\fill[red] (a2) circle(.06);
\fill[red] (a3) circle(.06);
\fill[red] (a4) circle(.06);
\fill[red] (tR) circle(.06);
\fill[red] (bR) circle(.06);
\end{tikzpicture}
\caption{Red vertices are branch-complemented. The two graphs on the right are directionally branch-complemented.}
    \label{fig:intro_branching}
\end{figure}

In \cite{kimkoberda:embedability}, Kim--Koberda studied the problem of when one right-angled Artin group can be found as a subgroup of another - our interest in the present paper is the geometric analogue of this. As part of their work, they defined an important combinatorial object known as the \emph{extension graph} of a graph, which encodes key algebraic and geometric information about the associated right-angled Artin group. In many ways, it is analogous to the curve graph of a surface \cite{kimkoberda:geometry}, and just as how the curve graph is central to proofs of quasiisometric rigidity for mapping class groups, the extension graph plays a fundamental role in Huang's work \cite{huang:quasiisometric:1}.

The extension graph $\Gamma^\ext$ of a graph $\Gamma$ can be viewed as a simplified Tits boundary for $A_\Gamma$. One way of defining it is as follows. It has a vertex for each parallelism class of standard geodesics, and two vertices span an edge if there is a 2--flat spanned by representatives of those classes. In particular, it contains $\Gamma$ as an induced subgraph.

Another graph that still simplifies the Tits boundary but is more permissive than the extension graph is the \emph{singular boundary graph} $\partial_{\mathrm{sing}}A_\Gamma$ \cite[Def.~10.11]{baderbensaidpetyt:from}. It has additional vertices compared to $\Gamma^\ext$, which correspond to \emph{singular} geodesic rays; see \cref{def:singular_boundary_graph}. (More formally, it is defined using $X_\Gamma$, the universal cover of the Salvetti complex of $A_\Gamma$, but to reduce notation in the introduction we shall conflate the two.)

The following is a major reduction of what can be said from \cite[\S10]{baderbensaidpetyt:from}; we again refer to \cref{sec:paper1_for_raags} for more detail.

\begin{theorem} \label{thm:intro_boundary}
Let $\Gamma$ and $\Lambda$ be finite graphs with clique number $n$. If $\Gamma$ is directionally branch-complemented, then every quasiisometric embedding $A_\Gamma\to A_\Lambda$ induces a graph embedding $\Gamma^\ext\to\partial_{\mathrm{sing}}A_\Lambda$.
\end{theorem}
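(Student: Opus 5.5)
The plan is to translate everything into the geometry of the universal covers $X_\Gamma$ and $X_\Lambda$ of the Salvetti complexes, which are $n$--dimensional CAT(0) cube complexes quasiisometric to $A_\Gamma$ and $A_\Lambda$. Fix a quasiisometric embedding $f\colon X_\Gamma\to X_\Lambda$. We then apply the flat-control results of \cite{baderbensaidpetyt:from}, as summarised in \cref{sec:paper1_for_raags}. The key input is that, because $\Gamma$ has clique number $n$ and every vertex is branch-complemented, each standard geodesic $\ell$ in $X_\Gamma$ is an intersection of top-dimensional ($n$--) flats in a robust way. Concretely, if $v$ is branch-complemented via the clique $K$, then $\ell$ is coarsely the intersection of $n$--flats spanned by the $n$--cliques through $v$. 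Its complementary $(n-1)$--flat, corresponding to $K\ssm\{v\}$, is likewise coarsely an intersection of $n$--flats.

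Since $\dim X_\Lambda=n$, the quasiflat theorem forces each quasi-$n$--flat $f(F)$ to lie at finite Hausdorff distance from a finite union of orthants. The branching hypothesis is exactly what \cite{baderbensaidpetyt:from} uses to upgrade this: the image $f(\ell)$ is at finite Hausdorff distance from a union of rays whose endpoints in the Tits boundary are singular. Moreover, the parallelism class of $\ell$ determines a well-defined vertex of $\partial_{\mathrm{sing}}X_\Lambda$.

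The steps are as follows.
\begin{enumerate}
\item Define the map on vertices. A vertex of $\Gamma^\ext$ is a parallelism class $[\ell]$ of standard geodesics, and we send it to the singular boundary point(s) determined by $f(\ell)$. For this to be well defined, parallel geodesics $\ell,\ell'$ must have images with the same class. This holds because $\ell$ and $\ell'$ bound a strip $\ell\times[0,d]$ inside a common flat, so $f(\ell)$ and $f(\ell')$ stay at bounded Hausdorff distance.
\item Check edges. If $[\ell_1],[\ell_2]$ span an edge of $\Gamma^\ext$, they span a 2--flat contained in an $n$--flat. The image of that $n$--flat is a finite union of orthants, and within it the images of the two rays lie in a common orthant with orthogonal directions. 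Hence their singular points are adjacent in $\partial_{\mathrm{sing}}X_\Lambda$ by \cite[Def.~10.11]{baderbensaidpetyt:from}.
\item Prove injectivity and inducedness. Here we use the quasiisometric embedding lower bound. If two non-parallel standard geodesics had images converging to the same singular point, then after branch-complementation we could find $n$--flats whose images coarsely intersect in a set of dimension too large compared with the intersection of the source flats. This contradicts properness of $f$ via the coarse intersection statements of \cite{baderbensaidpetyt:from}.
\item Rule out spurious edges. Non-adjacent vertices are handled the same way: an edge in the target would force the coarse intersection of image flats to contain a quadrant that is not present in the source.
\end{enumerate}

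The main obstacle is step 3/4. Without a quasiinverse, we cannot pull the target structure back, so all control must come from the branching configuration in the source. I would first try to show that branch-complementation makes $f(\ell)$ the coarse intersection of the images of flats. This would use the fact that coarse intersections of quasi-$n$--flats in an $n$--dimensional cube complex are well behaved. Once that holds, the combinatorics of $\Gamma^\ext$ transfer faithfully.
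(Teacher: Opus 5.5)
Your overall strategy is the paper's: use \cite{baderbensaidpetyt:from} to send each standard geodesic near a singular one, then read off a map on parallelism classes. Two steps fail as written, however.

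The serious failure is in Steps~3--4, where you try to prove that the map is an \emph{induced} embedding, i.e.\ that non-adjacent vertices go to non-adjacent vertices. That is false. Take $\Gamma=C_6$, which is triangle-free without leaves and hence directionally branch-complemented, and take $\Lambda=C_4$. By \cite{rull:embedding} there is a quasiisometric embedding $A_{C_6}\to A_{C_4}=F_2\times F_2$. Since $X_{C_4}$ is a product of two trees, $\partial_{\mathrm{sing}}X_{C_4}$ is complete bipartite. Any graph homomorphism from $C_6^\ext$ to a complete bipartite graph sends two opposite vertices of the induced hexagon $C_6\subset C_6^\ext$ into different parts, because they are joined by a path of length three; so they land on adjacent vertices. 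Hence ``graph embedding'' must be read as an injective graph homomorphism, which is all the paper uses afterwards (e.g.\ for colourings). No coarse-intersection argument can exclude your ``spurious edges'': with no quasiinverse, the target may well contain orthants between images of non-commuting standard rays. Injectivity itself needs no flats. If the singular rays approximating $f(\ell_1^+)$ and $f(\ell_2^+)$ are equivalent, the lower quasiisometric bound puts $\ell_1^+$ and $\ell_2^+$ at finite Hausdorff distance. Standard geodesics with asymptotic rays are parallel (see the proof of \cref{lem:singular_tripod_RAAG}).

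The second problem is Step~2, which assumes that the 2--flat spanned by an edge of $\Gamma^\ext$ lies in an $n$--flat. For $n\ge3$ this fails, because directionally branch-complemented graphs can have edges in no $n$--clique (\cref{rem:fully_branching_not_branching}). For example, join two octahedra by one edge: no generator commutes with both labels of that edge, so its standard 2--flat lies in no 3--flat. Even when such an $n$--flat exists, knowing that its image is a finite union of orthants does not put the two image rays orthogonally in one orthant; you need the image to be close to a genuine flat.

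Relatedly, your key input ($f(\ell)$ close to a union of rays with singular Tits endpoints) is not the right one. Vertices of $\partial_{\mathrm{sing}}$ are classes of rays in the $1$--skeleton, so you need $f(\ell)$ to be close to a singular geodesic. The paper gets the theorem from \cite[Cor.~10.13]{baderbensaidpetyt:from} via \cref{rem:extension_graph_embeds_singular_boundary}, and the mechanism is as follows.
\begin{itemize}
\item Directionally branch-complemented implies directionally strongly branch-complemented (\cref{rem:FB_implies_strong_FB}).
\item Hence \cref{thm:fully_branching_raag_rephrased}(3) gives a \emph{uniform} $D$ such that the image of every standard geodesic is $D$--close to a singular geodesic.
\item The same result gives that the image of every standard 2--flat is $D$--close to a singular 2--flat, whether or not its edge lies in an $n$--clique. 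Uniformity over all parallels is exactly what \cite[Prop.~4.3]{baderbensaidpetyt:from} needs.
\item A vertex is mapped via the positive ray of a representative.
\item For an edge, the two approximating singular geodesics are parallel into that singular 2--flat by \cref{lem:parallel_geod_in_convex}. Being non-parallel, they are orthogonal, so they span a 2--orthant.
\end{itemize}
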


As mentioned, we shall leverage this result to obtain restrictions on quasiisometric embeddings between right-angled Artin groups.

\subsection*{Main results}

We start with some rather direct consequences of \cref{thm:intro_boundary}. To simplify the discussion, all right-angled Artin groups in these statements will be assumed to be finitely generated, although many of our results do not rely on that hypothesis.

Our first result concerns quasiisometric embeddings in products of trees. As mentioned above, Rull proved that every right-angled Artin group with $n$--colourable defining graph can be quasiisometrically embedded in a product of $n$ trees of bounded degree. It is natural to wonder about the converse: does one really need $n$ trees? The simplest nontrivial example of this is when $\Gamma=C_n$ is an odd cycle of length at least five, which has clique number two but chromatic number three. In \cite{baderbensaidpetyt:from}, we observed that \cref{thm:intro_boundary} implies that three is the optimal number of trees for such right-angled Artin groups. 

Our first result is a large generalisation of this. In it, we allow trees to be locally infinite, and $(F_2)^n$ denotes the direct product of $n$ rank-two free groups.

\begin{mthm} \label{mthm:rull}
Let $\Gamma$ be a graph with clique number $n$, and assume that $\Gamma$ is directionally branch-complemented if $n>2$. The following are equivalent. 
\begin{enumerate}
\item   $\Gamma$ is $n$--colourable.
\item   $A_\Gamma$ can be equivariantly isometrically embedded in a product of $n$ trees. 
\item   $A_\Gamma$ can be quasiisometrically embedded in $(F_2)^n$.
\item   $A_\Gamma$ can be quasiisometrically embedded in a product of $n$ trees. 
\end{enumerate}
\end{mthm}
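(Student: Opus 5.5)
We prove the cycle (1)$\Rightarrow$(2)$\Rightarrow$(4) together with (1)$\Rightarrow$(3)$\Rightarrow$(4), and then close the loop with (4)$\Rightarrow$(1).

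\textbf{(1)$\Rightarrow$(2).} Fix a proper colouring $c\colon V(\Gamma)\to\{1,\dots,n\}$. For each colour $i$, let $\Gamma_i$ be the set of vertices of colour $i$; it spans no edges. Consider the homomorphism $A_\Gamma\to A_{\Gamma_i}\cong F_{|\Gamma_i|}$ that kills every generator not of colour $i$; this is well defined because the relations only involve commuting pairs. Rather than using these free quotients directly, we take the product of the Bass--Serre-type trees $T_i$ obtained as follows. The Salvetti complex $X_\Gamma$ is a CAT(0) cube complex whose hyperplanes are labelled by vertices of $\Gamma$. Two hyperplanes with labels of the same colour never cross, since crossing hyperplanes have adjacent labels. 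Hence the hyperplanes of colour $i$ form a pairwise non-crossing family, and their dual cube complex is a tree $T_i$. The restriction quotient maps give an $A_\Gamma$-equivariant map $X_\Gamma\to\prod_i T_i$. This map is an isometric embedding for the $\ell^1$ metrics, since every hyperplane of $X_\Gamma$ appears in exactly one factor, and $\ell^1$ distance counts separating hyperplanes. This is the standard Rull / Alice's-diary type argument.

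\textbf{(1)$\Rightarrow$(3) and (2)$\Rightarrow$(4).} The implication (2)$\Rightarrow$(4) is trivial. For (1)$\Rightarrow$(3), we note that if $\Gamma$ is finite the trees $T_i$ above have bounded (though infinite) valence. So we instead invoke Rull's theorem directly, which embeds $A_\Gamma$ into a product of $n$ finite-rank free groups. Each finite-rank free group is quasiisometric to $F_2$, so the product is quasiisometric to $(F_2)^n$. The implication (3)$\Rightarrow$(4) then follows because $F_2$ is a tree.

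\textbf{(4)$\Rightarrow$(1).} This is the substantive direction. A product of $n$ trees is a CAT(0) cube complex. I would apply the framework of \cite{baderbensaidpetyt:from} in the generality of cube complexes, since \cref{thm:intro_boundary} is stated for RAAG codomains; one could alternatively embed the trees, or first reduce to a RAAG codomain. When $n>2$, directional branch-complementedness gives an embedding $\Gamma\subset\Gamma^\ext\hookrightarrow\partial_{\mathrm{sing}}(T_1\times\cdots\times T_n)$. In a product of trees, a singular geodesic ray must be parallel to a ray in a single factor direction, with the other coordinates constant. Each vertex of the singular boundary graph therefore carries a well-defined factor label $i\in\{1,\dots,n\}$. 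Two such vertices are adjacent only if they span a 2-flat, which in a product of trees forces distinct factors. Composing, the label gives a proper $n$-colouring of $\Gamma$.

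When $n=2$, the hypothesis is dropped, so we must handle leaves. I would reduce to the leafless core: repeatedly delete leaves, which does not change $2$-colourability, and note that $A_{\Gamma'}$ sits as a convex subgroup of $A_\Gamma$, so it still quasiisometrically embeds. Isolated vertices and edges are trivially fine. The remaining graph has no leaves and so is directionally branch-complemented, and the argument above applies. If the core is empty, $\Gamma$ is a forest and hence $2$-colourable.

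\textbf{Main obstacle.} The key difficulty is justifying the use of \cref{thm:intro_boundary} with a product-of-trees codomain, where the trees may be locally infinite, and correctly identifying the singular boundary graph there as an $n$-partite graph. I would check that the hypotheses of \cite[\S10]{baderbensaidpetyt:from} only require a finite-dimensional CAT(0) cube complex codomain of dimension $n$.
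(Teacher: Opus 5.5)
Your proposal is correct and follows essentially the same route as the paper. That means (1)$\Rightarrow$(2) by Sageev's construction on colour classes of hyperplanes (\cref{lem:colourable_isometric_embedding}) and (1)$\Rightarrow$(3) by Rull's theorem. Then (4)$\Rightarrow$(1) embeds $\Gamma$ in the complete $n$--partite graph $\partial_{\mathrm{sing}}(T_1\times\cdots\times T_n)$ via \cref{thm:fully_branching_raag_to_sing_boundary}, which is already stated for arbitrary $n$--dimensional CAT(0) cube complex codomains, so your ``main obstacle'' does not arise. Finally, the $n=2$ case reduces to a leaf-pruned (equivalently, odd-cycle) parabolic subgraph as in \cref{cor:fb_into_products_of_trees}.

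Three minor slips, none affecting the argument: the trees $T_i$ are locally infinite rather than of bounded valence; the hyperplane construction is Sageev's rather than Alice's diary; and isolated vertices left after pruning leaves should also be discarded before applying the branching theorem.
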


The forward directions of this are given by Rull's theorem and an application of Sageev's construction; see \cref{sec:embeddings_into_trees}. We note that the theorem in particular applies to all graphs with clique number two, that is, all two-dimensional right-angled Artin groups. 

Comparing the different items in \cref{mthm:rull}, one may ask when a right-angled Artin group $A_\Gamma$ can be \emph{isometrically} embedded in $(F_2)^n$. We show in \cref{cor:ie_prod_free} that this is only possible when $A_\Gamma$ is itself a direct product of free groups.

A second consequence of \cref{thm:intro_boundary} that follows quickly is that there is no universal receiver for quasiisometric embeddings between $n$--dimensional right-angled Artin groups; see \cref{sec:universal}. 

\begin{mthm} \label{mthm:universal}
For each $n>1$, there is no $n$--dimensional right-angled Artin group into which every $n$--dimensional right-angled Artin group quasiisometrically embeds.
\end{mthm}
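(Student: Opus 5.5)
Suppose for contradiction that $A_\Lambda$ is an $n$--dimensional right-angled Artin group, with $\Lambda$ a finite graph of clique number $n$, into which every $n$--dimensional right-angled Artin group quasiisometrically embeds. The plan is to find a family of directionally branch-complemented graphs $\Gamma_k$ with clique number $n$, and then apply \cref{thm:intro_boundary}. Each quasiisometric embedding $A_{\Gamma_k}\to A_\Lambda$ gives a graph embedding $\Gamma_k^\ext\to\partial_{\mathrm{sing}}A_\Lambda$. The family should carry some graph invariant that grows with $k$, while the same invariant stays bounded on $\partial_{\mathrm{sing}}A_\Lambda$ by a function of $\Lambda$ alone.

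The invariant I would use is the chromatic number, together with the constraint that induced subgraphs of bounded clique number live inside $\partial_{\mathrm{sing}}A_\Lambda$. A first idea is a colouring of $\partial_{\mathrm{sing}}A_\Lambda$ with at most $|\Lambda|$ colours, or some function of $|\Lambda|$. On $\Lambda^\ext$ this should be straightforward: each vertex is a conjugate $v^g$ of a vertex $v\in\Lambda$, and we colour it by $v$. Two adjacent vertices $u^g,w^h$ of $\Lambda^\ext$ are commuting conjugates, and such conjugates come from distinct commuting, hence distinct, vertices of $\Lambda$. Since $\Gamma_k\subset\Gamma_k^\ext$ is an induced subgraph, it would then suffice to take $\Gamma_k$ to be directionally branch-complemented with clique number $n$ and chromatic number larger than $|\Lambda|$. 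Such graphs are easy to produce. For $n=2$, take a triangle-free graph with no leaves and large chromatic number, for instance a Mycielski graph; Mycielski graphs have minimum degree at least two. For $n>2$, take the join of that graph with a clique $K_{n-2}$. Joins preserve the directionally branch-complemented property, as noted after \cref{def:intro_dbc}; the join has clique number $n$, and its chromatic number is still large.

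The main obstacle is that $\partial_{\mathrm{sing}}A_\Lambda$ has extra vertices, coming from singular rays that are not standard geodesics, so the colouring by labels must be extended to these vertices. My first attempt is to colour such a vertex by the support of the ray. A singular ray lies in a flat associated to a join or clique subgraph, and its direction has nonzero components on a set $S$ of coordinates, namely a subset of a clique or of a factor of a join. I would check, using \cref{def:singular_boundary_graph}, that adjacent vertices of $\partial_{\mathrm{sing}}A_\Lambda$ have disjoint supports that commute, so that distinct supports give a proper colouring with at most $2^{|\Lambda|}$ colours.

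If that check fails, the fallback is to make the vertex set of $\Gamma_k$ map only to standard vertices. The idea is to show that vertices of $\Gamma_k^\ext$ coming from branch-complemented vertices must land in $\Lambda^\ext$ rather than on genuinely singular vertices. Alternatively, one can use a cruder bound: every finite subgraph of $\partial_{\mathrm{sing}}A_\Lambda$ has chromatic number bounded in terms of $\Lambda$, obtained by combining a colouring of the flats with the tree-like structure of the singular directions. Either way, the chosen $\Gamma_k$ with chromatic number exceeding that bound cannot embed, which gives the contradiction.
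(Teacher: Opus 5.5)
The construction of the family fails for $n>2$. The closure-under-joins remark concerns joins of two directionally branch-complemented graphs, so it does not apply to $K_{n-2}$. In fact $\Gamma*K_{n-2}$ has no branch-complemented vertex at all. Every $n$--clique has the form $e*K_{n-2}$ with $e$ an edge of $\Gamma$, so every intersection of $n$--cliques contains $K_{n-2}$. Hence $\{v\}$ is not branching for $v\in\Gamma$. Likewise $\{c\}$ is not branching for $c\in K_{n-2}$ when $n\ge4$. When $n=3$, the complement $(e*c)\ssm\{c\}=e$ lies in a unique triangle, so it is not branching either.

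Geometrically, $A_{\Gamma*K_{n-2}}=A_\Gamma\times\Z^{n-2}$, and the Euclidean factor carries no branching. So neither \cref{thm:intro_boundary} nor any item of \cref{thm:fully_branching_raag_rephrased} says anything about quasiisometric embeddings of this group. You also cannot pass to $A_\Gamma$, since those results need domain and codomain of equal dimension. The repair is to join with $K_{(n-2)\times2}$ instead of $K_{n-2}$, i.e.\ take the $(n-2)$--fold suspension, whose RAAG is $A_{\Gamma_k}\times(F_2)^{n-2}$. Suspension preserves being directionally branch-complemented and raises both clique number and chromatic number by one. This is exactly the paper's family of iterated suspensions of leafless triangle-free graphs of large chromatic number. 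The paper uses Burling graphs; Mycielski graphs $M_k$ with $k\ge3$ work equally well, though $M_2=K_2$ has leaves.

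Your colouring idea is essentially the paper's \cref{lem:colouring_number_of_boundary}, but ``support'' needs a different definition. A singular ray inside a standard flat is parallel to a coordinate axis, hence standard, so there is no ``direction with several nonzero components''. Moreover, non-standard singular rays need not lie in any flat: with the vertices of $C_5$ labelled cyclically, the ray reading $s_1s_3s_5s_2s_4s_1\cdots$ in $A_{C_5}$ is an example. Instead, let $L_x$ be the set of labels occurring infinitely often on a representative singular ray of $x$. The point you have not checked is that $L_x$ is independent of the representative; this follows from \cref{lem:parallel_sing_geod_same_labels}, since asymptotic singular rays have parallel singular subrays with the same labels. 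Given that, a $2$--orthant spanned by representatives of adjacent $x,y$ forces $L_x$ and $L_y$ to be disjoint, with every element of one commuting with every element of the other. So colouring by $L_x$ is proper with at most $2^{|\Lambda|}$ colours, which suffices. The paper instead takes the minimal colour of $L_x$ under a colouring of $\Lambda$, giving $\chi(\partial_{\mathrm{sing}}X_\Lambda)=\chi(\Lambda)$.

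Discard your first fallback: without a hypothesis on $\Lambda$ such as the one in \cref{thm:squarefree_stable}, it is false. For instance, let $\phi(a)=ab$ and $\phi(b)=ab^{-1}$. Then $\phi\times\phi$ on $F_2\times F_2=A_{C_4}$ is a quasiisometric embedding sending $\langle a\rangle$ to the non-standard singular geodesic through $\langle ab\rangle$, so the vertex $1\cdot v_a$ of $C_4^\ext$ does not land in $\Lambda^\ext$. With these two corrections, your argument coincides with the paper's.
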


This provides a geometric analogue of \cite[Thm~1.16]{kimkoberda:embedability}, which states that there is no two-dimensional right-angled Artin group that contains all two-dimensional right-angled Artin groups as subgroups. It is also a generalisation of that result, because whenever one two-dimensional right-angled Artin group is a subgroup of another, it can be found as an undistorted subgroup \cite[Cor.~1.15]{kimkoberda:embedability}.

\medskip

\cref{thm:intro_boundary} is a special case of a more general result about quasiisometric embeddings between CAT(0) cube complexes, where the strongest property one can ask from a geodesic is to be singular. 
In the setting of right-angled Artin groups, we have the additional structure of standard geodesics, and \cref{thm:intro_boundary} is proved by showing that standard geodesics are mapped at finite Hausdorff distance from singular geodesics. 

Under an additional hypothesis on the codomain right-angled Artin group, we show that something stronger is true: quasiisometric embeddings must map standard geodesics at finite Hausdorff distance from \emph{standard} geodesics (\cref{thm:squarefree_stable}). More generally, we refer to quasiisometric embeddings with this property as \emph{stable}, inspired by terminology from \cite{huang:quasiisometric:1}. We show in \cref{prop:stable_implies_induced} that stable quasiisometric embeddings induce maps of extension graphs, which implies the following (see \cref{cor:squarefree_stable}). We write $S_n$ for the graph that is the suspension of the disjoint union of a vertex and an $(n-1)$--clique.

\begin{mthm} \label{mthm:ext_map}
Let $\Gamma$ and $\Lambda$ be graphs with clique number $n$. Suppose that $\Gamma$ is directionally branch-complemented and that $S_n$ is not a subgraph of $\Lambda$. Every quasiisometric embedding $A_\Gamma\to A_\Lambda$ induces a graph embedding $\Gamma^\ext\to\Lambda^\ext$.
\end{mthm}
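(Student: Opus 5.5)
The plan is to upgrade \cref{thm:intro_boundary} in two stages. First I show that, when $S_n$ is not a subgraph of $\Lambda$, every quasiisometric embedding $f\colon A_\Gamma\to A_\Lambda$ is \emph{stable}: it sends each standard geodesic to within finite Hausdorff distance of a standard geodesic. Then I show that a stable quasiisometric embedding induces a graph embedding of extension graphs.

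For the first stage, \cref{thm:intro_boundary} (more precisely, its proof from \cite[\S10]{baderbensaidpetyt:from}) already gives that the image of a standard geodesic $\ell$ lies at finite Hausdorff distance from a singular geodesic $\ell'$ of $X_\Lambda$. The task is to rule out singular geodesics that are not coarsely standard. Since $\Gamma$ is directionally branch-complemented, the vertex $v$ of $\ell$ lies in an $n$--clique $K$ with $v$ and $K\ssm\{v\}$ both branching. Hence $\ell$ is the coarse intersection of several maximal flats, and it also lies in a flat $\ell\times E$ which branches along the $(n-1)$--dimensional direction $E$. Under $f$, maximal flats go to within finite Hausdorff distance of maximal flats, which are standard $n$--flats of $X_\Lambda$ since $\Lambda$ has clique number $n$. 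So $\ell'$ is coarsely the intersection of at least two distinct standard $n$--flats, and is also transverse to an $(n-1)$--dimensional branching direction.

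A coarse intersection of standard flats is a standard subcomplex, namely a flat associated with a clique. If it is one-dimensional, $\ell'$ lies in a standard line or in a "diagonal" line of a standard $2$--flat. I then use the branching of $K\ssm\{v\}$ to rule out the diagonal. If $\ell'$ were not standard, the flats branching off $\ell'$ would have to come from two distinct vertices $a,b$ of $\Lambda$. These vertices would have common links containing an $(n-1)$--clique, forced by the $(n-1)$--dimensional complementary factor, together with an extra vertex coming from the branching of $v$ itself. Assembling these pieces produces a copy of $S_n$ in $\Lambda$: the suspension points are $a,b$, and the base is an $(n-1)$--clique disjoint from a further vertex. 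This contradicts the hypothesis.

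This combinatorial extraction of $S_n$ is the main obstacle. I would handle it by analysing the link of $\ell'$ in $X_\Lambda$ as a join structure, and I expect to need care with the case distinction on whether $\ell'$ lies in a product region $A_{\mathrm{st}(u)}$.

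For the second stage I apply \cref{prop:stable_implies_induced}. Parallel standard geodesics have images at finite Hausdorff distance, so stability gives a well-defined map on the vertices of $\Gamma^\ext$. Two commuting classes span a $2$--flat, whose image is coarsely a quasiflat containing two standard lines that diverge linearly. Those standard lines then span a standard $2$--flat, so edges are preserved. Injectivity holds because non-parallel standard geodesics diverge and $f$ is a quasiisometric embedding. Preservation of non-adjacency follows as in \cref{thm:intro_boundary}, since the map factors through the embedding $\Gamma^\ext\to\partial_{\mathrm{sing}}A_\Lambda$ and $\Lambda^\ext$ sits inside $\partial_{\mathrm{sing}}A_\Lambda$ as an induced subgraph. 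Together this gives the required graph embedding $\Gamma^\ext\to\Lambda^\ext$.
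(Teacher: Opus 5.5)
Your two-stage structure (stability, then \cref{prop:stable_implies_induced}) matches the paper's, but the first stage has a genuine gap.

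\textbf{The false premise.} You assert that maximal flats of $X_\Lambda$ are standard because $\Lambda$ has clique number $n$. This is false. Top-dimensional flats are singular, but they can be products of non-standard singular geodesics; in $(F_2)^n=A_{K_{n\times2}}$, take the axis of $ab$ in each factor. \cref{thm:fully_branching_raag_rephrased} only gives you singular $n$--flats $F',E'$ near $f(F),f(E)$.

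If your assertion held, the theorem would be immediate with no hypothesis on $\Lambda$, since a singular geodesic inside a standard flat is axis-parallel and hence standard. That cannot be right: $C_6$ is directionally branch-complemented and $A_{C_6}$ quasiisometrically embeds in $F_2\times F_2$ by Rull's theorem, yet by \cref{cor:no_stable_Rull} no such embedding is stable.

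For the same reason, a ``diagonal line of a standard $2$--flat'' is not a singular geodesic, so the case you set out to exclude is vacuous. The real case is never treated: $\gamma'$ lying in a non-standard singular flat and carrying at least two labels. Your extraction of $S_n$ is then only announced, and its ingredients are misattributed: the $(n-1)$--clique has nothing to do with $K\ssm\{v\}$ being branching.

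\textbf{The missing argument} is that of \cref{thm:squarefree_stable}.
\begin{enumerate}
\item Let $F'$ be spanned by orthogonal singular geodesics $\alpha_1\parallel\gamma',\alpha_2,\dots,\alpha_n$ with label sets $L_i$. These sets are pairwise disjoint and mutually commuting, and $L_1$ is the label set of $\gamma'$ by \cref{lem:parallel_sing_geod_same_labels}.
\item Because $v$ is branching, $\gamma$ lies in two distinct standard $n$--flats $F\ne E$. So $F'$ and $E'$ are at infinite Hausdorff distance, yet both contain parallels of $\gamma'$. Hence some parallel of $\gamma'$ leaves $F'$.
\item \cref{lem:width_one_strip_outside_flat} then produces a width-one strip from $\alpha_1\subset F'$ to some $\beta\not\subset F'$. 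Its transverse label $z$ commutes with all of $L_1$.
\item Since $\beta\not\subset F'$, you can choose $a_i\in L_i\ssm\{z\}$ for each $i\ge2$.
\item Two distinct $x,y\in L_1$ would then be suspension points over $\{a_2,\dots,a_n\}\sqcup\{z\}$, giving a copy of $S_n$ in $\Lambda$.
\end{enumerate}
So the suspension points are labels of $\gamma'$ itself. The clique comes merely from $F'$ being $n$--dimensional, and the extra vertex comes from the branching of $v$ via the strip.

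\textbf{Second stage.} Your additional claim that non-adjacency is preserved is not needed for the statement. It is also not justified, since nothing shows that the map $\Gamma^\ext\to\partial_{\mathrm{sing}}X_\Lambda$ is an induced embedding.
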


Observe that the condition on $\Lambda$ is automatically satisfied if it has no 4--cycle subgraphs, and this is precisely what the condition says when $n=2$. We remark that, in contrast to \cite[Thm~1.3]{kimkoberda:embedability}, the existence of a graph embedding $\Gamma^\ext\to\Lambda^\ext$ does not imply that $A_\Gamma$ is a subgroup of $A_\Lambda$. This even fails when $\Gamma$ and $\Lambda$ are cycle graphs - see \cite{baderbensaidpetyt:quasiisometric:flexibility}, which contains a complete classification of when $A_{C_m}$ can be quasiisometrically embedded in $A_{C_n}$.

\cref{mthm:ext_map} forms the basis for showing more refined rigidity results for quasiisometric embeddings between right-angled Artin groups. By combining with \cite[Lem.~3.9]{kimkoberda:embedability}, we immediately get the following example of this.

\begin{mcor} \label{mcor:girth}
If $\Gamma$ is not a forest and has smaller girth than $\Lambda$, then $A_\Gamma$ cannot be quasiisometrically embedded in $A_\Lambda$.
\end{mcor}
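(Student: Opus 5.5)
The plan is to reduce to the case where the domain is a single cycle and then apply \cref{mthm:ext_map} to that cycle. Let $g$ be the girth of $\Gamma$; this is finite because $\Gamma$ is not a forest. Choose a cycle $C\subset\Gamma$ of length $g$. A shortest cycle has no chords, so $C$ is an induced subgraph isomorphic to $C_g$. For an induced subgraph, the special subgroup $A_C\le A_\Gamma$ is a retract (kill the other generators). Hence it is undistorted. Precomposing a hypothetical quasiisometric embedding $A_\Gamma\to A_\Lambda$ with this inclusion gives a quasiisometric embedding $f\colon A_{C_g}\to A_\Lambda$. It therefore suffices to rule out such an $f$ whenever $\Lambda$ has girth greater than $g$.

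\textbf{Case $g=3$.} Then $\Lambda$ is triangle-free, so $A_\Lambda$ acts geometrically on the $2$--dimensional CAT(0) square complex $X_\Lambda$. On the other hand $A_{C_3}=\mathbb Z^3$. A quasiisometric embedding of $\mathbb Z^3$ into a $2$--dimensional CAT(0) cube complex is impossible. I would justify this with a standard dimension obstruction: for instance, quasiflats of dimension $k$ in a CAT(0) complex force $k$ to be at most the geometric dimension. The more elementary route is that the asymptotic cones of $X_\Lambda$ have topological dimension at most $2$, but contain a bi-Lipschitz copy of $\mathbb R^3$.

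\textbf{Case $g\ge4$.} Then both $C_g$ and $\Lambda$ (girth $>g\ge4$) are triangle-free. The graph $\Lambda$ must contain an edge: otherwise $A_\Lambda$ is free, hence hyperbolic, and cannot contain the quasiisometrically embedded copy of $\mathbb Z^2\le A_{C_g}$. So both graphs have clique number $n=2$. We now check the hypotheses of \cref{mthm:ext_map}. In dimension two, being directionally branch-complemented is the same as having no leaves, which $C_g$ certainly satisfies. The graph $S_2$ is the $4$--cycle, and it is not a subgraph of $\Lambda$ because $\Lambda$ has girth greater than $4$. Hence $f$ induces a graph embedding $C_g^{\ext}\to\Lambda^{\ext}$. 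Since $C_g$ is an induced subgraph of $C_g^{\ext}$, this restricts to an injective, edge-preserving map $C_g\to\Lambda^\ext$. The image is a closed walk of length $g$ on distinct vertices, that is, a $g$--cycle. So $\Lambda^\ext$ has girth at most $g$.

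Finally we invoke \cite[Lem.~3.9]{kimkoberda:embedability}, which says that the girth of $\Lambda^\ext$ equals the girth of $\Lambda$ (with $\Lambda^\ext$ a forest exactly when $\Lambda$ is). This gives $\operatorname{girth}(\Lambda)\le g$, contradicting the hypothesis.

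The step needing the most care is the reduction to a leafless, triangle-free domain, since $\Gamma$ itself may fail the hypotheses of \cref{mthm:ext_map}. The retraction onto a shortest cycle handles this, but I need to confirm two things. First, the quasiisometric embedding really passes to $A_{C_g}$ with the correct clique number. Second, the $g=3$ dimension argument, which lies outside \cref{mthm:ext_map}, is cited correctly.
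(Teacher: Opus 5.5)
Your proposal is correct and follows the paper's proof: the girth-three case is ruled out by a dimension argument, and otherwise you restrict to the undistorted subgroup on a shortest cycle, apply \cref{mthm:ext_map} to get a cycle in $\Lambda^\ext$ shorter than the girth of $\Lambda$, and contradict \cite[Lem.~3.9]{kimkoberda:embedability}. You also supply details the paper leaves implicit: the retraction showing $A_{C_g}$ is undistorted, the fact that $\Lambda$ must have an edge so that both graphs have clique number two, and the observation that $S_2=C_4$ is excluded by the girth of $\Lambda$.
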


\begin{proof}
If $\Gamma$ has girth three, then $A_\Gamma$ has dimension greater than two, so cannot be quasiisometrically embedded in $A_\Lambda$. Otherwise, the restriction of a quasiisometric embedding to the subgroup of $A_\Gamma$ generated by a shortest cycle satisfies the conditions of \cref{mthm:ext_map}. We therefore obtain cycles in $\Lambda^\ext$ of length less than the girth of $\Lambda$, contradicting \cite[Lem.~3.9]{kimkoberda:embedability}.
\end{proof}

In fact, we can prove a much more precise \emph{unconcealability} result. For subgroups, it was shown by Kambites that if $F_2\times F_2=A_{C_4}$ is a subgroup of a right-angled Artin group $A_\Gamma$, then $C_4$ is an induced subgraph of $\Gamma$ \cite{kambites:oncommuting}. Moreover, Kim--Koberda proved that if $\Gamma$ is triangle-free and $A_{C_n}$ is a subgroup of $A_\Gamma$ for some $n\ge5$, then $C_m$ is an induced subgraph of $\Gamma$ for some $m\in\{5,\dots,n\}$ \cite[Cor.~8.1]{kimkoberda:embedability}. In particular, $A_{C_5}$ cannot be concealed by subgroup embeddings in dimension two.

We prove a more general geometric analogue of these results; see \cref{sec:unconcealable}. Let $K_{n\times2}$ denote the complete $n$--partite graph whose parts have two vertices; the associated right-angled Artin group is $A_{K_{n\times2}}=(F_2)^n$.

\begin{mthm} \label{mthm:unconcealable}
Let $p,q \geq 0$, $n_1,\dots,n_q\ge4$, and let $\Gamma = K_{p\times 2} * C_{n_1} *\cdots *C_{n_q}$. If $A_\Gamma$ quasiisometrically embeds into a $(p+2q)$--dimensional right-angled Artin group $A_\Lambda$, then there exist $m_i \in \{4,\dots,n_i\}$ such that $K_{p\times 2} * C_{m_1} *\cdots *C_{m_q}$ is an induced subgraph of $\Lambda$. Moreover, if $n_i$ is odd, then we can take $m_i$ to be odd.
\end{mthm}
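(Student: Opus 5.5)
We plan to reduce everything to Theorem~\ref{thm:intro_boundary}, or better, to a stable version in which standard geodesics go to standard geodesics, and then do a combinatorial analysis in $\Lambda^\ext$. First we check that $\Gamma = K_{p\times2}*C_{n_1}*\cdots*C_{n_q}$ has clique number $p+2q$ and is directionally branch-complemented. Each $C_{n_i}$ with $n_i\ge4$ and $K_{2}$ with no edges (the two-vertex part of $K_{p\times 2}$) behave well: in $C_{n}$ each vertex lies in two edges, so it is branching. The complementary edge-minus-vertex is also branching. For a discrete pair $\{a,b\}$, the vertex $a$ alone is a branching $1$--clique only after joining; since the property is closed under joins as remarked, we check the join factors separately, suitably interpreted. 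So Theorem~\ref{thm:intro_boundary} applies and gives a graph embedding $\Gamma^\ext\to\partial_{\mathrm{sing}}A_\Lambda$.

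Rather than working in the singular boundary graph, we want to upgrade to the statement that the image of each standard flat of $A_\Gamma$ of the form $\mathbb{R}^p\times(\text{standard 2-flats of the }C_{n_i})$ lies near a standard maximal flat. The concrete approach is the following.

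\textbf{Localise to a top-dimensional flat.} Pick a maximal standard flat $F$ in $A_\Gamma$. It is $(p+2q)$-dimensional, so by the branching machinery of \cite{baderbensaidpetyt:from} its image is at finite Hausdorff distance from a finite union of top-dimensional flats. Since the codomain has the same dimension, these are standard-orthant-complexes of $X_\Lambda$. We then record the supporting cliques of $\Lambda$.

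\textbf{Track the join structure.} Images of standard geodesics are singular, so each singular direction in $X_\Lambda$ is parallel to a product of standard lines. Hence each vertex $v\in\Gamma$ determines a nonempty set $L_v$ of vertices of $\Lambda$, its ``support'', well defined up to conjugation, that is, a clique-free label pattern. Commuting vertices of $\Gamma$ have supports that are joined, because the image of the flat they span is a product. Non-commuting vertices have supports that are not joined, because otherwise the image of the free subgroup they generate would lie near a flat, contradicting quasiisometric embedding of $F_2$.

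\textbf{Extract the induced subgraph.} The join decomposition of $\Gamma$ forces $\Lambda$ to contain a join of $p+q$ pieces. Dimension counting, with total clique number $p+2q$, forces each $K_2$-factor to give two non-adjacent vertices and each $C_{n_i}$-factor to give a triangle-free, non-join subgraph of clique number $2$. Inside the $i$-th factor, the cycle $v_1,\dots,v_{n_i}$ maps to a closed walk in $\Lambda^\ext$ of length $n_i$ that is not contained in a star, by non-commutation of consecutive-but-one vertices and the fact that $C_{n_i}$ is not a join for $n_i\ge 5$; we treat $n_i=4$ directly as a $K_{2\times2}$. Passing to $\Lambda$ via the projection $\Lambda^\ext\to\Lambda$ and taking a shortest induced cycle, in the spirit of \cite[Lem.~3.9]{kimkoberda:embedability} and \cite[Cor.~8.1]{kimkoberda:embedability}, gives an induced cycle of length $m_i\le n_i$ with $m_i\ge4$. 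For odd $n_i$ the walk has odd length, so it contains an odd induced cycle, of length at least $5$ since there are no triangles in the factor.

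\textbf{Main obstacle.} We expect the main obstacle to be the step from supports in $\partial_{\mathrm{sing}}$ to actual vertices of $\Lambda$ without assuming that $\Lambda$ avoids $S_n$. Singular directions can be diagonal, so we must show that an odd or non-star cycle of diagonal directions still produces an induced cycle among vertices of $\Lambda$. Our first attempt would be to choose, for each $v$, a single vertex of $L_v$ minimally, and to verify that non-adjacency is preserved.
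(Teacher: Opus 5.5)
Your overall shape (singular images of standard geodesics, join relations from commuting label sets, then a Kim--Koberda-type descent) is the paper's. However, two essential ingredients are missing and one of your claims is false.

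\textbf{The $K_{p\times2}$ factors.} Images of \emph{standard} geodesics need not see two non-adjacent vertices of $\Lambda$, and no dimension count forces them to. The undistorted embedding $\psi\colon F(a,b)\to F(x,y)$, $a\mapsto x$, $b\mapsto yxy^{-1}$, sends every standard geodesic near a standard geodesic of type $x$. So for $\psi\times\psi\colon A_{C_4}\to A_{C_4}$ your supports only span an edge of $\Lambda$. The paper obtains the non-adjacent pair from \cref{lem:singular_tripod_RAAG}. Two geodesics of a tree factor that share a ray cannot both go near standard geodesics, since asymptotic standard geodesics are parallel. So some geodesic of the tree factor goes near a \emph{non-standard} singular geodesic $\alpha'$. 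Two consecutive edges of $\alpha'$ then give $u,u'\in\operatorname{Ext}(\alpha')$ with non-commuting types (\cref{rem:non_commuting_vertex_types_in_EXT}), and this is what yields $\lfloor\Sigma\rfloor\cong\Sigma$.

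\textbf{The false claim.} You assert that non-adjacent vertices of $\Gamma$ have non-joined supports. Take a quasiisometric embedding $A_{C_6}\to A_{C_4}=F_2\times F_2$, which exists by \cite{rull:embedding}. It sends the standard geodesics through $1$ near singular geodesics lying alternately in the two tree factors, so $v_1$ and $v_4$ get joined label sets. Commuting labels on two singular geodesics say nothing about the image of the rest of $\langle s_v,s_w\rangle$. Your planned upgrade to standard maximal flats also fails, e.g.\ for the automorphism $a\mapsto xy$, $b\mapsto y$ of $F_2$ crossed with the identity. Non-adjacency is never needed, so the fix you suggest under ``main obstacle'' aims at the wrong target. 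The paper picks any $v_{k,j}\in\operatorname{Ext}(\beta'_{k,j})$ and uses only the adjacencies coming from consecutive standard $2$--flats (\cref{prop:EXT_of_images_of_SFB_geodesics}). It then passes to an induced subcycle in $\Lambda^\ext$, of length at least $4$ because of the clique number of the join.

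\textbf{The descent from $\Lambda^\ext$ to $\Lambda$.} This is the second missing ingredient, and projecting to types fails for even cycles. $C_4^\ext$ is the join of the conjugates of $a,b$ with the conjugates of $c,d$. In it, the vertices $a,c,bab^{-1},dcd^{-1}$ span an induced $4$--cycle whose types only trace the edge $\{a,c\}$. For odd $n_i$ your parity argument does work at the level of types: an odd closed walk of types contains an induced odd cycle, of length at least $5$ by the clique number, and types from different join factors are automatically adjacent. Applying \cite[Lem.~3.9]{kimkoberda:embedability} to each cycle separately does not rescue the even case. That descent inserts the doubling vertex $v$ into the cycle, and nothing makes $v$ adjacent to the other factors.

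This is exactly what \cref{prop:pulling_join_from_ext} provides, by induction along $\Lambda=\Lambda_0\subset\Lambda_1\subset\cdots$. Suppose $C_{n_1}$ is not contained in one copy of $\Lambda_N$, and let $S=\star(v)$. There are no edges between the two copies outside $S$, so all of $\Sigma*C_{n_2}*\cdots*C_{n_q}$ lies in $S\ssm\{v\}$. Take an excursion $x_0,\dots,x_k$ of $C_{n_1}$ with endpoints in $S$ and interior outside $S$. Closing it up through $v$ gives an induced cycle of length $k+2\in\{4,\dots,n_1\}$ that is still joined to everything else. For parity, full clique number forbids edges of $C_{n_1}$ inside $S$. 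Hence these excursions partition the edges of $C_{n_1}$, and an odd one can be chosen when $n_1$ is odd.
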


See \cref{rem:need_5} for discussion of the optimality of this statement. As with the result of Kim--Koberda, specialising to products of free groups and $A_{C_5}$ gives an interesting special case.


\begin{mcor} \label{mcor:unconcealability}
Let $p,q \geq 0$, and let $\Gamma = K_{p\times 2} * \bigast_{i=1}^q C_5$. If $A_\Gamma$ quasiisometrically embeds into a $(p+2q)$--dimensional right-angled Artin group $A_\Lambda$, then $\Gamma$ is an induced subgraph of $\Lambda$.
\end{mcor}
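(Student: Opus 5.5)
This follows by specialising \cref{mthm:unconcealable} to $n_1=\dots=n_q=5$. In that case the dimension of $A_\Gamma$ is the clique number of $K_{p\times 2} * C_5 * \cdots * C_5$. This is $p+2q$, since each $K_2$ part contributes $1$ and each $C_5$ (which is triangle-free) contributes $2$. So the hypothesis is exactly that of \cref{mthm:unconcealable} with $n_i=5\ge4$.

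\cref{mthm:unconcealable} then provides integers $m_i\in\{4,5\}$ such that $K_{p\times2}*C_{m_1}*\cdots*C_{m_q}$ is an induced subgraph of $\Lambda$. Because each $n_i=5$ is odd, the ``moreover'' clause lets us take each $m_i$ odd. The only odd value in $\{4,5\}$ is $5$, so $m_i=5$ for all $i$. The induced subgraph obtained is therefore exactly $K_{p\times 2} * \bigast_{i=1}^q C_5=\Gamma$.

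There is no real obstacle here. All of the work lies in \cref{mthm:unconcealable}, in particular in its parity refinement, which is what excludes the $4$--cycle alternative. The two points to verify are the dimension count above and the fact that the degenerate cases $p=0$ or $q=0$ are covered by the same statement, which they are since \cref{mthm:unconcealable} allows $p,q\ge0$.
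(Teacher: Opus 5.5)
Your proposal is correct and matches the paper, which obtains this corollary by specialising \cref{mthm:unconcealable} to $n_1=\dots=n_q=5$, with the parity clause forcing $m_i=5$. One notational slip: the parts of $K_{p\times 2}$ are pairs of non-adjacent vertices rather than copies of $K_2$, but your clique count of $p+2q$ is right.
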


In case $p=2$ and $q=0$, Genevois \cite{Genevois:PolynHyp} has recently announced a result extending the above corollary to arbitrary $\Lambda$, using different methods. 

Thanks to Droms' classification of coherent right-angled Artin groups \cite{droms:graph}, we obtain the following corollary.

\begin{mcor}
Let $A_\Gamma$ be a two-dimensional right-angled Artin group. If some incoherent right-angled Artin group quasiisometrically embeds in $A_\Gamma$, then $A_\Gamma$ is incoherent.
\end{mcor}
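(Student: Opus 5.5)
Droms' theorem says $A_\Gamma$ is coherent exactly when $\Gamma$ has no induced cycle of length at least $4$; equivalently, $\Gamma$ is chordal. We prove the contrapositive. Suppose $A_\Gamma$ is two-dimensional and coherent, so $\Gamma$ is chordal with clique number $2$; a triangle-free chordal graph is a forest. Suppose an incoherent right-angled Artin group $A_\Delta$ quasiisometrically embeds in $A_\Gamma$. We aim for a contradiction.

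First, by Droms, $\Delta$ has an induced cycle $C_m$ with $m\ge4$. The subgroup $A_{C_m}\le A_\Delta$ is a special subgroup, hence convex in the Salvetti complex and undistorted. Composing gives a quasiisometric embedding $A_{C_m}\to A_\Gamma$. Since $C_m$ is triangle-free, $A_{C_m}$ is two-dimensional, the same dimension as $A_\Gamma$. We now have two routes to a contradiction.

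The cleanest is \cref{mthm:unconcealable} with $p=0$, $q=1$, $n_1=m$ and $(p+2q)=2$. It gives an induced subgraph $C_{m'}\subseteq\Gamma$ with $4\le m'\le m$. This is impossible because $\Gamma$ is a forest.

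Alternatively, we can use \cref{mcor:girth}: $C_m$ is not a forest, and its girth $m$ is finite while the forest $\Gamma$ has infinite girth. So $A_{C_m}$ cannot be quasiisometrically embedded in $A_\Gamma$.

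Either route shows that the only real input is the earlier rigidity machinery. The points to check are undistortedness of special subgroups and the dimension bookkeeping. Undistortedness is standard: a special subgroup is a retract of $A_\Delta$. For dimension, the hypothesis that $A_\Gamma$ is two-dimensional is exactly what makes the dimension match for \cref{mthm:unconcealable}, and $C_m$ being triangle-free keeps $A_{C_m}$ two-dimensional. I do not expect any genuine obstacle; the only care needed is applying \cref{mthm:unconcealable} in the correct dimension $p+2q=2$.
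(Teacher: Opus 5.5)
Your proposal is correct and is essentially the paper's argument. The paper also combines Droms' characterisation (a two-dimensional RAAG is coherent if and only if its defining graph is a forest) with the unconcealability theorem in the case $p=0$, $q=1$; you make explicit the steps the paper leaves implicit, namely passing to the undistorted special subgroup $A_{C_m}$ and checking that the dimensions match. Your alternative route via the girth corollary also works.
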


\medskip

Returning to the setting of quasiisometric embeddings into direct products, we now consider stable quasiisometric embeddings. When the codomain is a product of trees, we note that, by the nature of the Alice's diary construction, Rull's embeddings are almost never stable \cite{rull:embedding}, and in fact stability is a major restriction in such a case. The following is \cref{cor:no_stable_Rull}.

\begin{mthm} \label{mthm:anti-rull}
If $A_\Gamma$ admits a stable quasiisometric embedding in a finite product of trees, then $A_\Gamma$ is itself a product of finite-rank free groups.
\end{mthm}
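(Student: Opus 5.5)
The plan is to use stability to transfer the extension-graph structure of $A_\Gamma$ into a product of trees, and then show that a product of trees has extension-graph-like structure only of the form of a complete multipartite graph. The first step is to make precise what ``standard geodesic'' means in the codomain. The natural analogue for a product of trees $T_1\times\cdots\times T_m$ is a geodesic contained in a single factor, namely a geodesic of the form $\{x_1\}\times\cdots\times\gamma_i\times\cdots\times\{x_m\}$. Stability then says that every standard geodesic of $X_\Gamma$ is sent within finite Hausdorff distance of such a factor geodesic. To each standard geodesic $\ell$ of $X_\Gamma$ we can therefore assign a factor index $c(\ell)\in\{1,\dots,m\}$. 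This index is well defined, because a quasigeodesic cannot be at finite Hausdorff distance from geodesics lying in two different factors. Parallel standard geodesics receive the same index, since they stay at finite Hausdorff distance, so $c$ descends to a colouring of the vertices of $\Gamma^\ext$, and in particular of $\Gamma$.

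The second step is to analyse edges and non-edges of $\Gamma$ through this colouring. If $u,v$ span an edge, there is a 2--flat spanned by a $u$--geodesic and a $v$--geodesic. Its image is a quasiflat in which every parallel copy of each line is coarsely a factor geodesic. If $c(u)=c(v)=i$, the whole quasiflat would coarsely lie in a single tree $T_i$ times a bounded set, which is impossible for a quasi-2-flat in a tree. Hence adjacent vertices get different colours, and $c$ is a proper colouring. The key input for non-edges is the following: if $u,v$ are not adjacent, then $u$ and $v$ generate a quasiisometrically embedded $F_2$, and in particular a quasiconvex tree-like region spanned by standard geodesics. I claim $c(u)=c(v)$ in this case. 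Suppose instead $c(u)=i\ne j=c(v)$. The geodesics $u^k v^{\mathbb Z}$ for $k\in\mathbb Z$ are all $v$--geodesics, so they map near lines in $T_j$. Their base points move along a $u$--geodesic, which maps into $T_i$. Then the union of $u^{\mathbb Z}$ and the translates $u^k v^{\mathbb Z}$ maps coarsely into a product $\text{line}_i\times\text{tree}_j$, making the image coarsely a flat. More precisely, $u^a v^b$ and $v^b u^a$ would be at bounded distance in the image but at distance growing like $\min(|a|,|b|)$ in $A_\Gamma$, since the path $u^a v^b u^{-a} v^{-b}$ has length comparable to $|a|+|b|$. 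This contradicts the quasiisometric embedding. So non-adjacent vertices share a colour, and adjacent vertices do not.

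For the final step, a colouring in which adjacency is exactly the relation of having different colours means $\Gamma$ is complete multipartite, with parts the colour classes. Hence $A_\Gamma$ is a direct product of free groups on the parts, and these are of finite rank as $\Gamma$ is finite. The main obstacle is the non-edge step. Making the commutator estimate rigorous requires controlling the constants uniformly across all the translates $u^k v^{\mathbb Z}$. I would try to handle this using the fact that the stability constant is uniform over all standard geodesics, which follows from cocompactness of the action on $X_\Gamma$ together with the way stability is established in \cref{thm:squarefree_stable}. If uniformity fails, I would instead argue at the level of the Tits boundary. Non-adjacent vertices of $\Gamma^\ext$ that are joined by a path of length two must map to points at Tits distance at least $\pi$, whereas in a product of trees two singular points in different factors are always at Tits distance $\pi/2$.
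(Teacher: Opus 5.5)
Your Step~3 fails, and the problem is not uniformity of constants: the claim that non-adjacent vertices receive the same colour is false. Your colouring lives on $\Gamma^\ext$, i.e.\ on parallelism classes. For non-adjacent $u,v$ the geodesics $u^k\sgen{v}$, $k\in\Z$, are pairwise non-parallel, so nothing forces them into the factor of $\sgen{v}$; stable embeddings may be type-mixing.

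Concretely, let $F=F(u,v)$ and define $\phi:F\to F\times F$ by $\phi(g)=(g,1)$ if $g=1$ or the reduced word for $g$ begins with $u^{\pm1}$, and $\phi(g)=(1,g)$ otherwise. The Cayley tree of $F$ is the union of two subtrees meeting only at $1$, and points in different subtrees are joined by a geodesic through $1$. Hence $\phi$ is an isometric embedding for the word ($=\ell^1$) metric on $F\times F=A_{C_4}$. A standard geodesic not through $1$ lies in one subtree, so every standard geodesic is mapped onto a standard geodesic. Thus $\phi$ is stable with $D=0$, yet $\sgen{u}$ lands in the first factor and $\sgen{v}$ in the second, while $u^k\sgen{v}$ for $k\ne0$ lands in the first.

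Taking $\phi\times\mathrm{id}_\Z$ on $A_{P_3}=F\times\Z$ (with $u,v$ the endpoints of $P_3$) gives a stable embedding into $F\times F\times\Z$. There the two non-adjacent vertices with a common neighbour go to rays spanning an orthant, at Tits distance $\pi/2$. So your Tits-boundary fallback is false too.

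The commutator estimate has a second defect. Even if all the lines $u^k\sgen{v}$ went to $T_j$, they could go to distinct, branching lines there, so $f(u^av^b)$ and $f(v^bu^a)$ need not be close. You never use local finiteness of the trees, and without it the statement is false. The Sageev embedding of \cref{lem:colourable_isometric_embedding} sends every standard geodesic into a single factor, so e.g.\ $\Z^2*\Z$ embeds ``stably'' in your sense into a product of two locally infinite trees.

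What is missing is the right configuration. Non-edges alone impose no constraint, as $F_2$ shows; the obstruction comes from an edge $\{a,b\}$ together with a vertex $v$ adjacent to neither. The paper proves (\cref{thm:anti-rull2}) that a stable quasiisometric embedding into $A_\Lambda$, with $\Lambda=\Lambda_1*\cdots*\Lambda_r$ finite, sends every non-spinal standard 2--flat near a 2--flat coming from a single $\Lambda_i$.

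The argument goes as follows.
\begin{itemize}
\item It considers the pairwise non-parallel $v$--geodesics based at points of the flat $\sgen{s_a,s_b}$. Their images are standard geodesics passing boundedly close to the image flat $\sgen{t_1,t_2}$, with $t_1,t_2$ in different factors.
\item Finiteness of $\Lambda$ (finitely many types, finitely many words of bounded length) lets one pigeonhole a rectangle of base points, spread in both factor directions, whose image geodesics have the same type and the same connecting word.
\item The join structure then forces two of these images to be parallel, contradicting injectivity of $\Gamma^\ext\to\Lambda^\ext$.
\end{itemize}
When all $\Lambda_i$ are edgeless there are no such 2--flats, so every edge of $\Gamma$ is spinal. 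That is, $\Gamma$ has no induced edge-plus-isolated-vertex, and the argument of \cref{lem:tree_flats_parabolic} (or the clique argument in \cref{cor:no_stable_Rull}) shows $\Gamma$ is complete multipartite.

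Your Step~2 (adjacent vertices get different colours) is correct, but it cannot finish on its own, since $\Z^2*\Z$ has a proper $2$--colouring.
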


This can be viewed as an instance of a more general phenomenon, where right-angled Artin groups that are not ``product-like'' can only be quasiisometrically embedded in a product in a degenerate way; see \cref{cor:stable_irreducible}.

\begin{mthm} \label{mthm:stable_irreducible}
Let $\Gamma$ be a connected graph with diameter at least four. If $f:A_\Gamma\to A_\Lambda$ is a stable quasiisometric embedding, then $f(A_\Gamma)$ lies in a uniform neighbourhood of a single irreducible factor of $A_\Lambda$.
\end{mthm}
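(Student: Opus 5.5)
We would prove this by passing to extension graphs. A stable quasiisometric embedding induces a map $\Gamma^\ext\to\Lambda^\ext$ by \cref{prop:stable_implies_induced}: each parallelism class of standard geodesics goes to the class of the standard geodesic it is sent near, and 2--flats spanned by representatives go to (coarsely) 2--flats spanned by representatives. So the induced map is at least a graph homomorphism.

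Write $\Lambda=\Lambda_1*\cdots*\Lambda_k$ for the join decomposition into irreducible factors, so that $A_\Lambda=\prod A_{\Lambda_i}$ and $\Lambda^\ext=\Lambda_1^\ext*\cdots*\Lambda_k^\ext$. Each vertex of $\Lambda^\ext$ then lies in exactly one factor $\Lambda_i^\ext$. This gives a colouring $c$ of the vertices of $\Gamma^\ext$ by the factor containing their image.

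The key step is to show that $c$ is constant. We would argue in three stages.

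\begin{enumerate}
\item \emph{Non-adjacent vertices get the same colour.} Suppose $u,v\in\Gamma^\ext$ are non-adjacent but their images lie in different factors. Then their images commute and span a flat in $A_\Lambda$. Take standard geodesics $\ell_u,\ell_v$ that come close. Since $u$ and $v$ do not commute, $\langle u,v\rangle\cong F_2$ is quasiisometrically embedded in $A_\Gamma$. Its image would lie near the product of two lines, a copy of $\mathbb{Z}^2$, which is impossible because a free group does not quasiisometrically embed in $\mathbb{R}^2$ (growth). So $c(u)=c(v)$.
\item \emph{The complement graph is connected.} Let $\overline{\Gamma^\ext}$ be the graph on the same vertices with edges between non-adjacent pairs. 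We need $\overline{\Gamma^\ext}$ to be connected. This should follow from $\Gamma$ being connected of diameter at least four, using that $\Gamma^\ext$ then has infinite diameter (Kim--Koberda). For adjacent $u,v$, pick $w$ at distance at least $2$ from both, which exists since the diameter is large. Then $u$--$w$--$v$ is a path in the complement.
\item \emph{Conclusion.} By stages 1 and 2, $c$ is constant, with value $i$ say. So every standard geodesic through every vertex of $A_\Gamma$ maps near a standard geodesic in the factor direction $\Lambda_i$. We must then upgrade this to: all of $f(A_\Gamma)$ lies uniformly near a single coset $gA_{\Lambda_i}\times\{pt\}$. For this, project $f$ onto the complementary factor $\prod_{j\ne i}A_{\Lambda_j}$. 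Along each standard geodesic the projection is coarsely constant, because the image geodesic is parallel to a line inside $A_{\Lambda_i}$. Consecutive points $g,gs$ are joined by a standard geodesic, so the projection is coarsely constant on the whole connected Cayley graph.
\end{enumerate}

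The main obstacle is stage 3. Coarse constancy with additive error at each step accumulates, so we need a uniform bound rather than a local one. What we would try first is to use that every element lies on a geodesic edge path, and that the projection onto the complementary factor is 1--Lipschitz. Along a standard geodesic the projection is within a bounded distance $D$ of a single point. The real task is to show that crossing from one standard geodesic to another does not drift. For this we would use flats: two intersecting standard geodesics either span a flat, whose image has both directions in factor $i$ and hence projects boundedly, or they generate a free group. In the free case the concatenation of segments of $\ell_u$ and $\ell_v$ is still a quasigeodesic mapped into the $i$--factor direction at both ends. A quasigeodesic argument in the CAT(0) product then bounds the complementary coordinate uniformly in terms of the quasiisometry constants, giving the uniform neighbourhood.
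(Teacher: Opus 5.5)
\textbf{Stage 1 is the main gap.} The growth argument has no support. Stability only controls where each individual standard geodesic goes; nothing places $f(\langle u,v\rangle)$ near the flat spanned by the images of $\ell_u$ and $\ell_v$. An element such as $u^av^b$ lies on the translate $u^a\ell_v$, which represents a different vertex $u^a\cdot v$ of $\Gamma^\ext$, and its image is a priori unrelated to that flat. (If $u,v$ are far apart in $\Gamma^\ext$, no representatives of them even come close.)

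The missing idea is the pigeonhole argument of \cref{thm:anti-rull2}, which works with \emph{adjacent} pairs. Suppose a standard $2$--flat $F$ on an edge $e$ is sent near a flat $F'$ with directions $t_1\in\Lambda_1$ and $t_2\in\Lambda_2$, and pick a vertex $v$ not adjacent to $e$. The vertices $g\cdot v$, $g\in F$, are pairwise distinct, and their images are standard geodesics passing uniformly close to $F'$. Finiteness of $\Lambda$ then yields a rectangle of these with the same type $t$ and the same bounded offset. Since $t$ lies outside $\Lambda_1$ or outside $\Lambda_2$, it commutes with $t_1$ or with $t_2$, so two of these image geodesics are parallel. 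This contradicts injectivity on $\Gamma^\ext$.

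The hypothesis $\diam\Gamma\ge4$ is used exactly to guarantee that such a $v$ exists for every edge (no spinal edges). Your argument never uses it, which is a warning sign. With this argument the colour is constant along every edge of $\Gamma^\ext$, hence constant because $\Gamma$ is connected. Your Stage~2 is correct but then becomes unnecessary.

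\textbf{Stage 3 cannot be done as you set it up}, because the coset-neighbourhood conclusion is false in general. Let $\Lambda=\Gamma*\{z\}$, so $A_\Lambda=A_\Gamma\times\sgen{z}$, and put $f(g)=(g,z^{|g|_{\mathrm{syl}}})$.
\begin{itemize}
\item Syllable length is subadditive, so $\bigl||g|_{\mathrm{syl}}-|h|_{\mathrm{syl}}\bigr|\le d(g,h)$, and $f$ is $2$--biLipschitz.
\item Also $\bigl||gs^k|_{\mathrm{syl}}-|g|_{\mathrm{syl}}\bigr|\le1$, so $f(g\sgen{s})$ lies within Hausdorff distance $1$ of the standard geodesic $(g,z^{|g|_{\mathrm{syl}}})\sgen{s}$. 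Hence $f$ is stable.
\item But the $\sgen{z}$--coordinate of $f$ is unbounded, so $f(A_\Gamma)$ is not near any coset of either factor.
\end{itemize}
The drift you were worried about is real: the complementary coordinate can be any Lipschitz function that is uniformly coarsely constant on standard geodesics.

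What is actually provable, and what the paper's one-line deduction from \cref{thm:anti-rull2} delivers, is weaker. Every standard geodesic and standard $2$--flat is sent uniformly close to one whose directions lie in a single $\Lambda_i$; equivalently, the induced map lands in $\Lambda_i^\ext$. Your proof should end at the constancy of the colouring rather than attempt a uniform bound on the complementary coordinate.
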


One noteworthy thing about these last two results is that they do not require the domain and codomain to have the same dimension. 

\medskip

As described previously, a remarkable result of Huang shows that if $\Gamma$ is a graph with no induced 4--cycles that is \emph{star-rigid} and has the property that $\Out(A_\Gamma)$ is finite (which can be read from $\Gamma$ by \cite{laurence:generating,servatius:automorphisms}), then $A_\Gamma$ is absolutely quasiisometrically rigid \cite{huang:commensurability}. The simplest examples of graphs satisfying this property are the cycle graphs, $\Gamma=C_n$ for $n>4$. We also mentioned that \cite{baderbensaidpetyt:quasiisometric:flexibility} classifies when $A_{C_m}$ can be quasiisometrically embedded in $A_{C_n}$.

With \cref{mthm:ext_map} as our starting point, in \cref{sec:self_qie} we give a complete description of all self--quasiisometric embeddings $A_{C_n}\to A_{C_n}$ for $n>4$. For mapping class groups of complexity at least four, Bowditch proved that every self--quasiisometric-embedding is at finite distance from a left translation \cite{bowditch:large:mapping}. For right-angled Artin groups there are more possibilities, but they can still be classified.

\begin{mthm} \label{mthm:self_qie}
For each $n \geq 5$, every quasiisometric embedding $A_{C_n}\to A_{C_n}$ is, up to bounded distance, a composition of a left translation, an automorphism of $C_n$, and a map that preserves the syllable structure of reduced words and changes only the exponents. Moreover, such a map is a quasiisometry if and only if the associated exponent maps are onto.
\end{mthm}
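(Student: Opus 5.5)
The plan is to begin from \cref{mthm:ext_map}. For $n\ge5$ the cycle $C_n$ has clique number two and no leaves, so it is directionally branch-complemented. As a codomain it contains no $4$--cycle subgraph, hence no copy of $S_2$. The theorem then tells us that a quasiisometric embedding $f\colon A_{C_n}\to A_{C_n}$ is stable: standard geodesics are sent within uniform Hausdorff distance of standard geodesics. It follows that $f$ induces a graph embedding $f_*\colon C_n^\ext\to C_n^\ext$ (\cref{prop:stable_implies_induced}).

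The first real step is to understand the embeddings $C_n^\ext\to C_n^\ext$ that come from such maps. Two facts about $C_n^\ext$ will be used: it is a quasitree, and its only induced $n$--cycles are the translates $gC_n$. The second follows from the girth/cycle analysis of Kim--Koberda \cite[Lem.~3.9]{kimkoberda:embedability}, since every shorter cycle is absent and any $n$--cycle has to be a conjugate of the defining cycle. Because $f_*$ is injective and preserves adjacency, it sends each translate $gC_n$ to some translate $hC_n$. The standard $2$--flats in $gC_n$ span the coset $gA_{C_n}$, so after composing with a left translation and an automorphism of $C_n$ we may assume that $f_*$ fixes the base copy $C_n\subset C_n^\ext$ pointwise.

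Next I will propagate this along the tree of spaces structure. Take a vertex $v$. The star of $v$ in $C_n^\ext$ corresponds to the cosets of $\langle v\rangle\times\langle$neighbours$\rangle$, and the flats of the star through the identity are arranged by powers of $v$. The induced map on the standard geodesic $\langle v\rangle$ is a quasiisometric embedding $\mathbb Z\to\mathbb Z$, since $f$ maps the line within bounded distance of a line. Adjacency must be preserved, so the translates $v^kC_n$ have to go to translates $v^{\phi_v(k)}C_n$ for some exponent map $\phi_v\colon\mathbb Z\to\mathbb Z$. Inducting on syllable length of reduced words, I will show that $f(v_1^{k_1}\cdots v_r^{k_r})$ lies uniformly close to $v_1^{\phi(k_1)}\cdots v_r^{\phi(k_r)}$, where the relevant exponent map may depend on the prefix. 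This step, getting uniform control rather than error accumulating with syllable length, is the main obstacle. I would handle it by recording where $f$ sends entire flats, using the branching rigidity from \cite{baderbensaidpetyt:from}, rather than tracking points: images of the flats $g\langle v,w\rangle$ are within uniform distance of flats, and these constants depend only on the quasiisometry constants, not on $g$. A point is then pinned down as the coarse intersection of two transverse standard lines, which gives a uniform bound at every step.

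Finally, for the converse and the quasiisometry criterion, I will first check that any map built from uniformly quasiisometric exponent maps and preserving syllable structure is a quasiisometric embedding. The word metric on $A_{C_n}$ is coarsely given by syllable lengths through the normal form, so this follows. Such a map is coarsely surjective exactly when every point is near the image, which happens exactly when each exponent map is onto (up to bounded error).
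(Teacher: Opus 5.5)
Your overall architecture is the paper's. You get stability from \cref{thm:squarefree_stable} and the induced embedding $C_n^\ext\to C_n^\ext$ from \cref{prop:stable_implies_induced}. You then use that $n$--cycles of $C_n^\ext$ are translates $g\cdot C_n$ to obtain a route map $F$ with $f_*(g\cdot C_n)=F(g)\cdot C_n$; note this is \cite[Lem.~3.11]{kimkoberda:embedability}, and the girth statement alone only shows the image cycle is induced. After that come the normalisation and the exponent maps.

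The gap is in the step you identify as the main obstacle: pinning $\varphi(g)$ near $F(g)$ with two transverse standard lines does not work. The extension-graph data determine the image of $g\langle s\rangle$ only up to parallelism. It is $D$--close to \emph{some} standard line parallel to $F(g)\langle s\rangle$, i.e.\ some line in the parallel set $F(g)\langle\star(s)\rangle$.
\begin{itemize}
\item For adjacent $v,w$, the images of $g\langle v\rangle$ and $g\langle w\rangle$ may lie near $F(g)w^j\langle v\rangle$ and $F(g)v^i\langle w\rangle$. These meet at $F(g)v^iw^j$, arbitrarily far from $F(g)$.
\item Knowing the image of the flat these two lines span does not help, since both candidate lines lie in $F(g)\langle v,w\rangle$.
\item Passing to parallel sets does not rescue a two-line argument either. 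The coarse intersection of $F(g)\langle\star(s)\rangle$ and $F(g)\langle\star(t)\rangle$ is unbounded whenever the stars meet, and for $n=5$ every pair of stars meets.
\end{itemize}
What \cref{lem:route_map_close_to_qie} does is use all standard lines through $g$ at once. Then $\varphi(g)$ is within $D$ of every $F(g)\langle\star(s)\rangle$, $s\in S$. Since $A_{C_n}$ is centreless, every letter of a reduced word for $F(g)^{-1}\varphi(g)$ fails to commute with some generator, so $d(\varphi(g),F(g))\le nD$. This bound is direct, not inductive; the induction on syllable length is only needed to construct $F$.

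A version closer to your flat idea also works. In triangle-free $C_n$ a standard $2$--flat is its own parallel set, so $\varphi(g\langle a,b\rangle)$ is close to exactly $F(g)\langle a,b\rangle$. Two such flats over disjoint edges then pin $\varphi(g)$ uniformly, but that uses flats over disjoint edges, not two transverse lines.

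Three smaller points remain.
\begin{itemize}
\item Your formula $v_1^{\phi(k_1)}\cdots v_r^{\phi(k_r)}$ presupposes the induced map is not type-mixing, and ``adjacency must be preserved'' does not give this. You need that $f_*(gs^k\cdot C_n)=h\cdot C_n$ contains $f_*(g\cdot\star(v_s))=F(g)\cdot\star(v_t)$. This forces $F(g)^{-1}h\in\langle t\rangle$, because the common centraliser of $\star(t)$ is $\langle t\rangle$. The resulting identification of the two copies of $C_n$ fixes a vertex star pointwise, hence is trivial by star-rigidity (\cref{lem:route_maps_are_the_shit}, \cref{prop:no_exotic_map}).
\item The quasiisometry criterion is exact surjectivity, not surjectivity ``up to bounded error''. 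If some $F_{g,s}$ misses one value $m$, then for $t$ not commuting with $s$ the points $F(g)s^mt^k$ are at distance at least $k$ from the image, since syllable structure is preserved. Conversely, inverse bijections assemble into an inverse via \cref{lem:extending_power_maps}; see \cref{lem:power_map_is_surjective}.
\item The quasiisometric-embedding direction needs more than ``word length is the sum of exponents''. Two elements can diverge inside a syllable whose exponent map depends on the prefix. One needs the prefix analysis of \cref{lem:power_map_is_qie} with uniform bilipschitz bounds, which injectivity upgrades from uniform quasiisometric ones.
\end{itemize}
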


We refer to \cref{thm:qie_of_cycle_to_itself} for a more precise statement, which includes additional information about the change-of-power map. The basic strategy is as follows. Using \cref{mthm:ext_map}, we induce a map $C_n^\ext\to C_n^\ext$. Using this, we replace the original quasiisometric embedding by a better-behaved one at finite distance from it. We then use the combinatorics of $C_n^\ext$ to understand the structure of this perturbed embedding. See \cref{sec:self_qie}.

We obtain a similar description for self-\emph{quasiisometries} in the case when $\Gamma$ is star-rigid and $\Out(A_\Gamma)$ is finite, which complements Huang's rigidity theorem. Note that we allow $\Gamma$ to have induced 4--cycles. See \cref{thm:qi_of_finite_out_to_itself}.

\begin{mthm}
Let $\Gamma$ be a nontrivial star-rigid graph such that $\Out(A_\Gamma)$ is finite. Every quasiisometry $A_\Gamma\to A_\Gamma$ is, up to bounded distance, a composition of a left translation, a graph automorphism, and a map which preserves the syllable structure of reduced words and changes only the exponents.
\end{mthm}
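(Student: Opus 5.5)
The plan is to follow the same strategy as for \cref{mthm:self_qie}, replacing the branching input with Huang's theory for finite-outer-automorphism RAAGs. Let $f\colon A_\Gamma\to A_\Gamma$ be a quasiisometry.

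\textbf{Step 1: stability.} Since $\Out(A_\Gamma)$ is finite, Huang's work \cite{huang:quasiisometric:1} shows that $f$ is stable, i.e.\ it sends standard geodesics to within uniform Hausdorff distance of standard geodesics. No assumption on $4$--cycles is needed here, since we have a quasiinverse and can use Huang's argument directly rather than \cref{thm:squarefree_stable}. By \cref{prop:stable_implies_induced}, $f$ and a quasiinverse $\bar f$ induce maps $\Gamma^\ext\to\Gamma^\ext$. These are mutually inverse, so $f$ induces an automorphism $f_*$ of $\Gamma^\ext$. Moreover, standard flats (cosets of $A_\Delta$ for cliques $\Delta$) and, more importantly, cosets of star subgroups $A_{\mathrm{st}(v)}$ are sent to within bounded Hausdorff distance of cosets of the same type.

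\textbf{Step 2: straightening to a vertex map.} Identify vertices $g\in A_\Gamma$ with the maximal standard flats through $g$, or equivalently with the configuration of standard lines through $g$. Use $f_*$ to define a map $\phi$ on vertices: $\phi(g)$ is the unique vertex lying on the images of all standard lines through $g$. The intersection of the corresponding parallel classes of lines, coarsely determined, pins down a single vertex because $\Gamma$ is nontrivial and the stars cover enough. Then $\phi$ is at bounded distance from $f$.

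\textbf{Step 3: the graph automorphism.} Star-rigidity enters here. The restriction of $f_*$ to the link of a vertex of $\Gamma^\ext$ is an isomorphism between links. The induced map on the copy of $\Gamma$ given by the lines through the identity, namely $\mathrm{lk}$ data at $\phi(1)$, must be a graph isomorphism $\Gamma\to\Gamma$. Star-rigidity says that an automorphism of $\Gamma$ fixing a closed star pointwise is the identity. This gives the key consequence: the label-isomorphism $\alpha_g\colon\Gamma\to\Gamma$ obtained at vertex $g$ agrees with that at each neighbour $gv^{\pm1}$. At a neighbour $gv$, the two isomorphisms agree on $\mathrm{st}(v)$, since those lines are shared or parallel. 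Hence $\alpha_g^{-1}\alpha_{gv}$ fixes $\mathrm{st}(v)$ and is trivial. By connectivity, $\alpha$ is a single automorphism. After composing with a left translation (so that $\phi(1)=1$) and the graph automorphism $\alpha^{-1}$, $\phi$ preserves labels of all standard lines.

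\textbf{Step 4: exponents.} A label-preserving vertex map sends each standard line $g\langle v\rangle$ to a line $\phi(g)\langle v\rangle$. This yields bijections $\mathbb Z\to\mathbb Z$ of exponents, which are bijections since $f$ is a quasiisometry. Reading a reduced word syllable by syllable $g=v_1^{k_1}\cdots v_r^{k_r}$ and moving along successive lines shows that $\phi(g)=v_1^{k_1'}\cdots v_r^{k_r'}$. Well-definedness under shuffling of commuting syllables follows because commuting generators span flats, which are preserved as product maps.

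I expect the main obstacle to be Step 2 together with the flat-product argument in Step 4. One must show that the exponent map along a line depends only on the appropriate coset data, namely the prefix modulo the star of $v$, and that this is consistent across flats. For this I would first use that the images of standard $2$--flats are at bounded distance from standard flats, where the restriction is then coarsely a product of line maps.
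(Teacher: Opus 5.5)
Your outline follows the paper's route: Huang's stability, the induced automorphism $f_*$ of $\Gamma^\ext$, a vertex-level map at bounded distance from $f$, star-rigidity to rule out type-mixing (your Step~3 is essentially \cref{prop:no_exotic_map}), and then exponents (\cref{lem:route_maps_are_the_shit}). But Step~2 has a genuine gap. There you need, for every $g$, a vertex $h$ with $f_*(g\cdot\Gamma)=h\cdot\Gamma$; that is, you need $f_*$ to have a route map. Your justification only shows that $f(g)$ lies within $D$ of each parallel set $P(\ell_s)=h_sA_{\star(v_{t_s})}$, and that the coarse intersection of these cosets is bounded (by centrelessness). That gives uniqueness of $h$, and, once $h$ exists, the bounded-distance estimate (\cref{lem:route_map_close_to_qie}). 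It does not give existence. Cosets can be $D$--close without meeting, and only pairs $P(\ell_s),P(\ell_{s'})$ with $v_s,v_{s'}$ adjacent are guaranteed to intersect (through a standard $2$--flat), so no Helly-type argument is available. Indeed, the local data at $f(g)$ is just a copy of $\Gamma$ in $\Gamma^\ext$ whose finitely many lines pass near one point, and such a copy need not be of the form $h\cdot\Gamma$. The graph in \cref{fig:counter_to_3.11} is atomic, hence has finite $\Out$, yet it embeds in $C_6^\ext\subset\Gamma^\ext$, where at most six of the fourteen vertex types occur. This is why the paper's result for self--quasiisometric-embeddings is restricted to cycles, where Kim--Koberda's Lemma~3.11 supplies the route map.

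The missing input is global. You must use that $f_*$ is an automorphism of all of $\Gamma^\ext$, together with the finite-$\Out$ hypotheses; the paper imports exactly this from Huang's Lemmas~4.10 and~4.12 (\cref{lem:route_maps_exist}). Without it, the label map $\alpha_g$ in your Step~3 is not even defined as an automorphism of $\Gamma$, and Step~4 has no vertex map to act on. Once the route map is in hand, the obstacle you anticipate in your final paragraph does not arise. Since $\phi$ is already a function on $A_\Gamma$, there is no well-definedness question and no need for a flat-by-flat product argument. One gets $\phi(gs^k)=\phi(g)t^{m_k}$ with $t=\type(f_*(g\cdot v_s))$ and $k\mapsto m_k$ injective. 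To pass from $\phi(g)\cdot\star(v_t)=\phi(gs^k)\cdot\star(v_t)$ to $\phi(g)^{-1}\phi(gs^k)\in\langle t\rangle$, you use the absence of domination pairs. The syllable statement then follows by induction on syllable length.
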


\medskip

Finally, in \cref{sec:beyond_FB}, we explore weaker branching conditions for right-angled Artin groups using which one can still obtain rigidity results similar to \cref{thm:intro_boundary} and those of \cite[\S10]{baderbensaidpetyt:from}. 

We introduce the notion of a clique in $\Gamma$ being \emph{weakly directionally branch-complemented}, and the condition covers all edges of $\Gamma$ if $\Gamma$ is triangle-free and not a star (recall that a graph is a star if it is connected and has a vertex that is contained in all edges); see \cref{def:weakly_fully_branching_clique}. We show in \cref{thm:weakly_FB_cliques} that if $Y$ is a proper CAT(0) cube complex of the same dimension $n$ as $A_\Gamma$, then every quasiisometric embedding $A_\Gamma\to Y$ sends standard flats associated with weakly directionally branch-complemented $n$--cliques within finite Hausdorff distance of flats. In fact, we prove the result for a more general class of CAT(0) cube complexes $Y$.

In the two--dimensional case we prove a much stronger rigidity result. Let $X_\Gamma$ denote the universal cover of the Salvetti complex of $A_\Gamma$. The following, which is \cref{thm:qi_embedding_every_2flats_2D_RAAGs} provides a broad generalisation of \cite[Thm~1.1]{bestvinakleinersageev:asymptotic}: not only is the codomain not required to be a right-angled Artin group, the map is not required to be a quasiisometry.

\begin{mthm} \label{mthm:flats}
Let $\Gamma$ be a triangle-free graph such that no connected component is a star. Let $Y$ be a proper, 2--dimensional CAT(0) cube complex. If $f: X_\Gamma \to Y$ is a quasiisometric embedding, then for every $2$--flat $F \subset X_\Gamma$, the image $f(F)$ lies at finite Hausdorff distance from some $2$--flat in $Y$.
\end{mthm}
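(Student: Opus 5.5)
We prove \cref{mthm:flats} in two stages. First, we handle the $2$--flats of $X_\Gamma$ that are \emph{standard}, i.e.\ translates of $\langle a,b\rangle\cong\mathbb Z^2$ for an edge $ab$ of $\Gamma$. Second, we extend to arbitrary $2$--flats by a decomposition argument.

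\textbf{Standard flats.} Since $\Gamma$ is triangle-free and no component is a star, every edge of $\Gamma$ is weakly directionally branch-complemented in the sense of \cref{def:weakly_fully_branching_clique}. We will apply \cref{thm:weakly_FB_cliques} with $n=2$ and $Y$ proper, $2$--dimensional and CAT(0). This gives that $f$ sends every standard $2$--flat $gF_{ab}$ within finite Hausdorff distance of a $2$--flat $F'\subset Y$. We expect the constant to depend on the flat. However, a quasiflat in a $2$--dimensional CAT(0) complex that is Hausdorff-close to a flat is close to a unique flat up to parallelism. So the assignment $gF_{ab}\mapsto F'$ is well defined up to parallelism.

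\textbf{Arbitrary flats.} Let $F\subset X_\Gamma$ be a $2$--flat. Because $X_\Gamma$ is a $2$--dimensional CAT(0) cube complex, $F$ is a union of flat sectors. More precisely, following Bestvina--Kleiner--Sageev, $F$ lies in a product subcomplex $T_1\times T_2$. Here $T_1,T_2$ are trees, coming from the join structure: $F$ is contained in a translate of $X_{\mathrm{lk}}$ for some edge, or more generally in a product region $P_\Delta=X_{\Delta_1}\times X_{\Delta_2}$ for a join subgraph $\Delta_1*\Delta_2\subset\Gamma$. The flat $F$ is then $\ell_1\times\ell_2$ for bi-infinite lines $\ell_i\subset X_{\Delta_i}$, and each $\ell_i$ is a concatenation of standard segments.

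We will cover $F$ by a locally finite family of standard flat \emph{pieces}. Each piece is a quarter or half of a standard flat $g\langle a,b\rangle$ with $a\in\Delta_1$ and $b\in\Delta_2$. Consecutive pieces share standard half-lines or strips. Each piece is sent near a flat sector of $Y$ by the standard-flat case, and adjacent images agree along the common boundary rays. Gluing these sectors along rays gives a piecewise-flat quasiflat $Q\subset Y$ at finite Hausdorff distance from $f(F)$. It remains to show $Q$ is close to an actual flat.

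For that, we use that $Y$ is $2$--dimensional CAT(0). A $2$--quasiflat in $Y$ has a Tits boundary that is a topological circle. By Kleiner's quasiflat theorem and the rigidity of cycles in the Tits boundary of a $2$--dimensional complex, such a circle of length $2\pi$ bounds a flat. We will argue that the boundary circle $\partial_T Q$ has length exactly $2\pi$. The reason is that it is subdivided by the boundary points of the images of standard rays, at which the image sectors are genuinely flat, each sector having angle $\pi/2$.

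\textbf{Main obstacle.} The hardest step is controlling \emph{uniformity}. Infinitely many standard pieces tile $F$, so the finite-Hausdorff-distance constants from \cref{thm:weakly_FB_cliques} must not blow up along $F$. We plan to get this from the boundary rather than from metric constants. The induced map on singular boundary graphs from \cref{thm:intro_boundary}, or its analogue in \cite[\S10]{baderbensaidpetyt:from} for general $Y$, sends standard ray classes to singular points. The Tits circle $\partial_T F$ consists of finitely many standard or singular directions, or is approximated by them, with consecutive ones spanning quarter flats. Their images are then consecutive vertices at Tits distance $\pi/2$ in $\partial_T Y$, giving a circle of length $2\pi$ in $\partial_T Y$. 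That circle bounds a flat $F'$, and Kleiner's theorem says the quasiflat $f(F)$ with this Tits boundary lies at finite Hausdorff distance from $F'$. The delicate case is when $F$ contains non-periodic lines, so that $\partial_T F$ has infinitely many singular directions. There we would approximate by sectors spanned by singular rays and use the local finiteness of $Y$, which follows from properness.
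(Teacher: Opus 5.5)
\textbf{There is a genuine gap in your second stage.} Your first stage is fine, apart from a small slip. An edge such as the middle edge of a path on four vertices is directionally branch-complemented but not weakly so. Such edges are handled by \cref{thm:fully_branching_raag_rephrased} rather than \cref{thm:weakly_FB_cliques}.

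The gap sits exactly where the difficulty of the theorem lies. Write $F=\alpha\times\beta$ with $\alpha,\beta$ orthogonal singular geodesics and label sets $L_\alpha,L_\beta$. When the labels change infinitely often along both lines, $F$ contains no standard quarter-plane, half-plane or half-strip. The only standard pieces are then bounded rectangles, and the standard-flat case says nothing about bounded sets.

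You have also misdiagnosed the ``delicate case''. $F$ is a square-tiled plane, so $\partial_T F$ has exactly four singular points, however aperiodic $\alpha,\beta$ are. The real problem is that none of these points need be represented by a standard ray. So neither \cref{thm:weakly_FB_cliques} nor a boundary map defined on standard ray classes tells you anything. In any case, \cref{thm:intro_boundary} and \cref{thm:fully_branching_raag_to_sing_boundary} require $\Gamma$ to be directionally branch-complemented, which fails as soon as $\Gamma$ has a leaf.

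Even when $\alpha,\beta$ are eventually standard, the gluing and angle count are unsupported. A quasiisometric embedding of the plane need not send a quarter-plane near a $\pi/2$--sector. It can open the quarter into a half-plane, or send a boundary ray to a quasigeodesic that tracks no geodesic ray. The only tool available here that rules this out is that branching standard rays go near singular rays. Standard rays of leaf vertices are not branching, so nothing controls their images. Your closing step (a Tits circle of length $2\pi$ bounds a flat, and a quasiflat with that boundary lies near it) is also nontrivial for a proper but non-cocompact $Y$.

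\textbf{The missing idea} is that a non-standard singular geodesic in $F$ is itself branching. The results of \cite{baderbensaidpetyt:from} apply to such geodesics even though they are not standard (\cref{rem:branching_not_standard}). This yields a three-way case split that replaces your decomposition and gluing.
\begin{itemize}
\item If $b_1\neq b_2\in L_\beta$, then $\bigcup_k b_1^k\alpha$ and $\bigcup_k b_2^k\alpha$ are $2$--flats meeting exactly in $\alpha$, so $\alpha$ is branching. Hence if both $\alpha$ and $\beta$ are non-standard, $F$ itself is directionally branch-complemented, and \cite[Thm~10.1]{baderbensaidpetyt:from} applies to $F$ directly.
\item If $\alpha$ is standard and $\beta$ is not, then the vertex labelling $\alpha$ is adjacent to every element of $L_\beta$, so it is not a leaf. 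Since no component is a star, it is branch-complemented. Every parallel of $\alpha_\omega$ in $F_\omega$ is then an ultralimit of $1$--branch-complemented geodesics. Its image is therefore a singular geodesic by \cite[Cor.~9.21]{baderbensaidpetyt:from}, and \cref{thm:top_quasiflats_union_parallel_codim1_flats_paper1} finishes the argument.
\item If both $\alpha$ and $\beta$ are standard, you are back in your first stage.
\end{itemize}
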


It is notable that branching properties of the defining graph of $\Gamma$ are the main input to this result, because its statement concerns flats that may have nothing to do with the defining graph.

\subsection*{Outline of the article}

\begin{itemize}
\item   \cref{sec:prelims} covers background results on CAT(0) cube complexes, right-angled Artin groups, and extension graphs.
\item   In \cref{sec:paper1_for_raags}, we interpret some of the main results of \cite{baderbensaidpetyt:from} in the setting of right-angled Artin groups. These results underpin everything that follows.
\item   \cref{sec:embeddings_into_trees} concerns embeddings of right-angled Artin groups into products of trees. 
\item   The goal of \cref{sec:universal} is to prove \cref{mthm:universal}, showing that there is no universal receiver for quasiisometric embeddings of $n$--dimensional right-angled Artin groups.
\item   \cref{sec:qie_stable} is where we introduce stable quasiisometric embeddings and prove that they induce embeddings of extension graphs. We also prove \cref{thm:squarefree_stable}, which gives conditions under which all quasiisometric embeddings are stable.
\item   Our results on unconcealability are proved in \cref{sec:unconcealable}.
\item   \cref{antirull} is about stable quasiisometric embeddings in products of right-angled Artin groups.
\item   \cref{sec:self_qie} is dedicated to the description of quasiisometric embeddings from a right-angled Artin group to itself.
\item   Finally, \cref{sec:beyond_FB} introduces the weakened branching condition on cliques and studies the quasiisometric images of corresponding flats.
\end{itemize}

\subsection*{Acknowledgements}
OB acknowledges support from the FWO and F.R.S.-FNRS under the Excellence of Science (EOS) programme (project ID 40007542). SB and HP thank the Isaac Newton Institute for their hospitality during the programme \emph{Operators, Graphs, Groups}, where some of the work on this paper took place (EPSRC grant EP/Z000580/1).

\section{Preliminaries} \label{sec:prelims}

Let $X$ be a metric space and $A\subset X$. For $r\ge0$, the \emph{$r$--neighbourhood} of $A$ is $A^{+r}=\{x\in X\,:\,\dist(x,A)\le r\}$. The \emph{Hausdorff distance} between subsets $A,B\subset X$ is 
$$
d_{\mathrm{Haus}}(A,B)=\inf\{r\geq 0 \,:\, A\subseteq B^{+r}\text{ and }B\subseteq A^{+r}\}.
$$

The \emph{clique number} $\omega_\Gamma$ of a graph $\Gamma$ is the supremal cardinality of cliques of $\Gamma$. If $\omega_\Gamma\le2$, then we refer to $\Gamma$ as being \emph{triangle-free}. All graphs considered in this paper will be simplicial. We write $K_{n\times2}$ for the complete $n$--partite graph whose parts all have cardinality two. 

\subsection{CAT(0) spaces}\label{sec:CAT(0)_spaces}

We shall use only a limited amount of CAT(0) geometry in this paper. We refer the reader to \cite{ballmann:lectures} and \cite{bridsonhaefliger:metric} for background on the subject. The main class of CAT(0) spaces we are concerned with in this paper are CAT(0) cube complexes (see \cref{subsec:ccc}) and more specifically, Salvetti complexes. 

\begin{definition}[Flats]\label{def:flats_in_CAT(0)}
Let $(X,d)$ be a CAT(0) space. A \emph{$k$--flat} in $X$ is the image of an isometric embedding $(\mathbb R^k,\|\cdot\|_2)\to (X,d)$.
\end{definition}

\begin{definition}[Parallels] \label{def:parallel_set}
    Let $X$ be a complete CAT(0) space, and let $F,F' \subset X$ be flats. We say that $F$ and $F'$ are \emph{parallel} if they are at finite Hausdorff distance. Equivalently, the distance from $F$ to $F'$ and the distance from $F'$ to $F$ are constant. 

    The \emph{parallel set} of a flat $F$ is the union of all flats parallel to $F$, and is denoted by $P(F)$. It is a closed convex subset of $X$, and it admits a canonical splitting as a metric product $P(F)=F\times Y$ for some complete CAT(0) space $Y$, see \cite[Section~2.3.3]{kleinerleeb:rigidity}.
\end{definition}




The following simple lemma shows that a geodesic lying near a closed, convex subset has a parallel inside that subset. We refer to \cite[Lem.~4.2]{baderbensaidpetyt:from} for a proof.

\begin{lemma}\label{lem:parallel_geod_in_convex}
Let $X$ be a complete CAT(0) space, let $A \subseteq X$ be a closed, convex subset, and let $\gamma \subseteq X$ be a geodesic. If $\gamma \subseteq A^{+D}$, for some $D \geq 0$, then there exists a parallel geodesic $\gamma' \subseteq A$ such that $d_{\mathrm{Haus}}(\gamma,\gamma') \leq D$.
\end{lemma}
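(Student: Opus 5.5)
The idea is to project onto $A$ and use convexity of the distance function. Let $\pi\colon X\to A$ be the nearest-point projection onto the closed convex set $A$. This is well defined and $1$--Lipschitz because $X$ is complete CAT(0).

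First I will use that the function $t\mapsto d(\gamma(t),A)$ is convex, since $A$ is convex. It is also bounded by $D$ on all of $\mathbb R$. A convex function on $\mathbb R$ that is bounded above must be constant, say equal to $c\le D$. Consequently the geodesic $\gamma$ and the curve $\pi\circ\gamma$ stay at constant distance $c$ from each other.

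The main step is to show that $\gamma'=\pi\circ\gamma$ is itself a geodesic. For this I will use the flat strip argument, in the form of the "sandwich lemma" of \cite{bridsonhaefliger:metric} (II.2.12). Take two points $\gamma(s),\gamma(t)$ and consider the geodesic quadrilateral with vertices $\gamma(s),\gamma(t),\pi\gamma(t),\pi\gamma(s)$. The projection property gives that the angles at $\pi\gamma(s)$ and $\pi\gamma(t)$ are at least $\pi/2$. Convexity of the distance function along the segment $[\gamma(s),\gamma(t)]$ gives $d\bigl([\pi\gamma(s),\pi\gamma(t)],[\gamma(s),\gamma(t)]\bigr)\le c$ pointwise. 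Combining these, the sandwich/flat quadrilateral lemma shows the convex hull of the quadrilateral is isometric to a Euclidean rectangle. In particular $d(\pi\gamma(s),\pi\gamma(t))=|s-t|$.

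The delicate point is the input to the flat rectangle lemma. An alternative I would try first is to argue directly: since $d(\gamma(\cdot),A)$ is constant, the function $t\mapsto d(\gamma(t),\pi\gamma(t))$ is constant. Then apply \cite[II.2.13]{bridsonhaefliger:metric}, the flat strip theorem, to the geodesic $\gamma$ and the image of the geodesic through $\pi$. Either way, once $\pi\circ\gamma$ is known to be a geodesic at constant distance $c$ from $\gamma$, it is parallel to $\gamma$. It lies in $A$, and $d_{\mathrm{Haus}}(\gamma,\gamma')\le c\le D$.
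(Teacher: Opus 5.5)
Your reduction is sound: $\pi$ is a well-defined $1$--Lipschitz retraction, $t\mapsto d(\gamma(t),A)$ is convex and bounded, hence constant $c\le D$, and once $\pi\circ\gamma$ is known to be a geodesic the bound $d_{\mathrm{Haus}}(\gamma,\pi\circ\gamma)\le c$ is immediate. The gap is in the one step that carries the content, namely $d(\pi\gamma(s),\pi\gamma(t))=|s-t|$.

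The flat quadrilateral theorem requires the four Alexandrov angles of the quadrilateral to sum to at least $2\pi$. You have only controlled the two angles at $\pi\gamma(s)$ and $\pi\gamma(t)$. The upper bound $\le c$ coming from convexity of the metric is not a hypothesis of that theorem, and by itself it says nothing about the angles at $\gamma(s)$ and $\gamma(t)$. Your fallback is circular: the flat strip theorem applies to two geodesic lines, and that $\pi\circ\gamma$ is a geodesic line is exactly what you are trying to prove.

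The missing idea is to use the constancy of $d(\gamma(\cdot),A)$ a second time, in the other direction. Assume $c>0$; if $c=0$ then $\gamma\subseteq A$ because $A$ is closed, and $\gamma'=\gamma$ works. For every $u$ we have $d(\gamma(u),\pi\gamma(s))\ge d(\gamma(u),A)=c=d(\gamma(s),\pi\gamma(s))$. So $\gamma(s)$ is the nearest-point projection of $\pi\gamma(s)$ onto the closed convex set $\gamma(\mathbb R)$. The same projection property you used for $A$, now applied to $\gamma(\mathbb R)$, gives $\angle_{\gamma(s)}(\pi\gamma(s),\gamma(t))\ge\pi/2$, and likewise at $\gamma(t)$. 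Now all four angles are at least $\pi/2$, so the flat quadrilateral theorem yields a Euclidean rectangle, and opposite sides give $d(\pi\gamma(s),\pi\gamma(t))=|s-t|$.

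If you prefer the sandwich lemma, the input you are missing is the lower bound. Every point of $[\pi\gamma(s),\pi\gamma(t)]\subseteq A$ is at distance $\ge c$ from every point of $\gamma$. Combined with your upper bound, each of the two segments then has distance function identically $c$ on the other, which is what that lemma actually needs.

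You also need $\pi\gamma(s)\ne\pi\gamma(t)$ for $s\ne t$, so that the angles at the projected vertices are defined. If both equalled a point $p$, the CAT(0) inequality at the midpoint $m$ of $[\gamma(s),\gamma(t)]$ would give $d(m,p)^2\le c^2-\tfrac14|s-t|^2<c^2$, contradicting $d(m,A)=c$. With these additions the argument is complete.
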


\subsection{CAT(0) cube complexes} \label{subsec:ccc}

We refer the reader to \cite{wise:structure,bowditch:median:book,genevois:algebraic} for basic facts about CAT(0) cube complexes. 

A \emph{cube complex} is a cell complex obtained by gluing Euclidean unit cubes $[0,1]^n$ along isometric faces. A CAT(0) cube complex is a simply connected cube complex such that the link of each vertex is a flag simplicial complex. The \emph{dimension} of a CAT(0) cube complex is the supremal dimension of the cubes involved in its construction. 

A key combinatorial feature of CAT(0) cube complexes is their \emph{hyperplanes}. For each edge $e$ of $X$, the hyperplane \emph{dual} to $e$ can be defined as an equivalence class of edges: set $e_1\sim e_2$ whenever they are opposite edges in a square and extend transitively. Each hyperplane $h$ separates $X$ into two \emph{halfspaces}. We say that $h$ \emph{separates} two points of $X$ if they lie in different halfspaces. Two hyperplanes \emph{cross} if all four possible intersections of halfspaces are nonempty.

If $X$ is a CAT(0) cube complex, then the length metric $d_2$ obtained by equipping each cube with the $\ell^2$ metric makes $X$ into a CAT(0) space. If one instead equips the cubes with the $\ell^1$ metric, then one obtains a metric $d_1$ on $X$ that makes $X$ into a \emph{median metric space}. For vertices $x,y\in X$, the distance $d_1(x,y)$ is equal to the number of hyperplanes that separate $x$ from $y$. If $X$ is finite-dimensional, then $d_1$ and $d_2$ are biLipschitz equivalent. In this paper, we shall almost always consider CAT(0) cube complexes as being equipped with the metric $d_1$. The exception is when we are discussing flats.

\begin{definition}[Singular] \label{def:ccc_singular}
Let $X$ be a CAT(0) cube complex. A $k$--flat $F\subset X$ is \emph{singular} if it is contained in the $k$--skeleton of $X$. A \emph{singular geodesic} is a singular $1$--flat. A \emph{singular geodesic ray} is a subset of the 1--skeleton that is the image of an isometric embedding $[0,\infty)\to(X,d_2)$. 
\end{definition}


A \emph{2--orthant} is the image of an isometric embedding of $[0,\infty)\times[0,\infty)$. Following \cite[Def.~10.11]{baderbensaidpetyt:from}, we introduce the following definition. 

\begin{definition}[Singular boundary graph] \label{def:singular_boundary_graph}
Let $X$ be a CAT(0) cube complex. The \emph{singular boundary graph} of $X$, denoted $\partial_{\mathrm{sing}}X$, is the following graph. The vertices of $\partial_{\mathrm{sing}}X$ are the equivalence classes of singular geodesic rays in $X$, where geodesic rays are deemed equivalent if they lie at finite Hausdorff distance from one another. Two vertices of $\partial_{\mathrm{sing}}X$ are joined by an edge whenever they admit singular geodesic representatives that span a 2--orthant of $X$.
\end{definition}

\begin{example}
If $X$ is the ``staircase'' square complex, obtained from a 2--orthant by deleting all squares containing a point $(x,y)$ with $y>x$, then $\partial_{\mathrm{sing}}X$ is a single point. 

Let $\alpha_2>\alpha_1>0$. If $X$ is the subcomplex of $[0,\infty)^2$ consisting of all squares whose points $(x,y)$ all satisfy $\alpha_1 x\le y\le\alpha_2x$, then $\partial_{\mathrm{sing}}X=\varnothing$.

If $T$ is a tree, then $\partial_{\mathrm{sing}} T$ is the discrete graph whose vertex set is $\partial_T T$. More generally, if $X=T_1\times\cdots\times T_n$ is a product of trees, then $\partial_{\mathrm{sing}}X$ is the complete $n$-partite graph whose parts are the sets $\partial_T T_1,\dots,\partial_T T_n$. In particular, $\partial_{\mathrm{sing}}X$ is $n$--colourable.
\end{example}

See \cite[\S10.3]{baderbensaidpetyt:from} for more discussion of the singular boundary graph and a comparison to other notions of boundary for CAT(0) cube complexes.


\subsection{Right-angled Artin groups} \label{sec:raags}

A \emph{right-angled Artin group} (RAAG) is a group $A$ with a generating set $S$ such that the only relations are that some elements of $S$ commute. This can be encoded by the (simplicial) graph $\Gamma$ that has vertex set $S$ and an edge between vertices whose corresponding generators commute. We write $A=A_\Gamma$, and refer to $S$ as the \emph{standard} generating set of $A_\Gamma$. 

\begin{remark}
In the body of the paper, we do not require right-angled Artin groups to be finitely generated unless specified otherwise.
\end{remark}

\bsh{Notation} \label{sh:vertex_notation}
If we are given a vertex $v\in\Gamma$, then we shall sometimes write $s_v\in S$ for the corresponding standard generator. Similarly, given an element $s\in S$, we shall sometimes write $v_s$ for the corresponding vertex of $\Gamma$.
\esh

\bsh{The space $X_\Gamma$} \label{sh:the_space_X_Gamma}
The Cayley graph of $A_\Gamma$ with respect to the generating set $S$ is a median graph, which is the 1--skeleton of a CAT(0) cube complex $X_\Gamma$. The quotient of $X_\Gamma$ by the action of $A_\Gamma$ is known as the \emph{Salvetti complex} of $A_\Gamma$. 

If $X_\Gamma$ is finite-dimensional, then it is quasiisometric to its 1--skeleton, $A_\Gamma$.

The dimension of $X_\Gamma$ is equal to the clique number of $\Gamma$, so as long as $\Gamma$ has finite clique number, which it always will in this paper, $X_\Gamma$ is quasiisometric to $A_\Gamma$. Hence for most purposes we will not distinguish the two, and just write $A_\Gamma$. For instance, the \emph{dimension} of $A_\Gamma$ is the dimension of $X_\Gamma$. But when thinking of the space as a CAT(0) object we will consider $X_\Gamma$. It is an easy fact that the asymptotic rank of $X_\Gamma$ is equal to its dimension.
\esh


Given $g\in A_\Gamma$, write $|g|=|g|_\Gamma$ for the word length with respect to $S$. A \emph{minimal representative} of $g\in G$ is a word $\gamma$ over $S$ that represents $g$ and has length $|\gamma|=|g|$. 

\begin{proposition}[\cite{hermillermeier:algorithms}] \label{prop:shuffling}
The minimal representatives of $g\in A_\Gamma$ can be obtained from an arbitrary representative by repeatedly shuffling commuting pairs of elements and cancelling inverse pairs.
\end{proposition}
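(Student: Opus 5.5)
This is the Hermiller--Meier shuffling result. Throughout, a ``representative'' of $g$ is a word over $S^{\pm1}$. My plan is to induct on word length, driving a given word down to a minimal one using only the two moves.

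\textbf{Step 1: a non-minimal word contains a cancellable pair.} Let $w$ be a word representing $g$ with $|w|>|g|$. I will show that $w$ contains letters $s^{\epsilon}$ and $s^{-\epsilon}$, for some $s\in S$, such that every letter strictly between them commutes with $s$. For this I use the hyperplane structure of $X_\Gamma$.
\begin{itemize}
\item The word $w$ traces an edge path in the Cayley graph from $1$ to $g$. Since $d_1$ counts separating hyperplanes, and the path has more edges than $d_1(1,g)=|g|$, some hyperplane $h$ is crossed at least twice.
\item Choose two consecutive crossings of the same hyperplane that are innermost, meaning no hyperplane is crossed twice strictly between them. Hyperplanes of $X_\Gamma$ are labelled by generators, and parallel edges carry the same label. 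So the two crossing edges are labelled by the same $s$, and they have opposite orientation because the path returns to the same side of $h$.
\item The letters strictly between them form a subpath with endpoints on the carrier of $h$. By innermostness, each hyperplane crossed in this subpath is crossed exactly once. Each such hyperplane must therefore cross $h$, since it separates two points that lie adjacent to $h$ on the same side.
\item Hyperplanes labelled $t$ and $s$ cross only if $t$ and $s$ commute and $t\neq s$. Hence every intermediate letter commutes with $s$.
\end{itemize}

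\textbf{Step 2: shuffle and cancel.} Using the commutation relations, I shuffle $s^{\epsilon}$ rightwards past the intermediate letters until it is adjacent to $s^{-\epsilon}$. I then cancel the pair, which shortens the word by two. Repeating this strictly decreases length, so it terminates, and by Step 1 it can only terminate at a minimal word. This shows every representative reduces to some minimal representative via the two moves.

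\textbf{Step 3: any two minimal representatives are related by shuffles alone.} Let $u,v$ be minimal representatives of $g$. I induct on $|g|$, using the fact that each minimal path from $1$ to $g$ crosses exactly the hyperplanes separating $1$ from $g$, each exactly once.
\begin{itemize}
\item Let $s^{\epsilon}$ be the first letter of $v$, and $h$ the hyperplane it crosses at $1$. Then $h$ separates $1$ from $g$, so $u$ also crosses $h$, at some letter, which must be $s^{\epsilon}$.
\item Every hyperplane crossed by $u$ before $h$ must cross $h$. Indeed, such a hyperplane separates $1$ from $g$, while $h$ is adjacent to $1$; a nested configuration is impossible, since then $v$'s first edge would lie on the wrong side.
\item So that letter of $u$ commutes with all earlier letters of $u$, and can be shuffled to the front. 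Removing the common first letter and applying induction finishes the argument.
\end{itemize}

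The main obstacle is the hyperplane bookkeeping in Steps 1 and 3. The key facts to verify are that same-hyperplane edges carry the same label, and that crossing hyperplanes correspond to commuting, distinct generators. I would derive both from the square structure of $X_\Gamma$: squares correspond exactly to commuting pairs, and opposite edges of a square have equal labels.
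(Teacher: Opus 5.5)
Your argument is correct, but the paper contains no proof of this statement to compare it with: the proposition is imported from Hermiller--Meier. There it is part of the normal-form theory for arbitrary graph products and is established by word-combinatorial means, without reference to any cube complex.

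Your route is geometric. You use the median structure of the Cayley graph, which the paper takes as background in its preliminaries, in two places:
\begin{itemize}
\item In Steps 1--2, an innermost pair of crossings of a single hyperplane produces a cancellable pair $s^{\epsilon},s^{-\epsilon}$ separated only by letters commuting with $s$.
\item In Step 3, the fact that a geodesic edge path crosses exactly the separating hyperplanes lets you move the common first letter to the front.
\end{itemize}
This is short and fits the paper's framework. It is not circular, since $X_\Gamma$ being CAT(0) comes from Gromov's link condition on the Salvetti complex, not from normal forms. The price is twofold. First, it relies on standard cubical facts, notably that crossing hyperplanes are dual to two sides of a common square. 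Second, it only covers RAAGs, whereas the cited result holds for all graph products.

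One fact you use but do not list among the things to verify is that orientations are consistent along a hyperplane. That is, all edges dual to $h$, oriented $x\to xs$, point from the same halfspace of $h$ to the other. This is what forces opposite exponents in Step 1, and what forces the letter of $u$ crossing $h$ to be exactly $s^{\epsilon}$ in Step 3. It follows at once from the square $x,xs,xt,xst$, in which both $s$--edges point from $\{x,xt\}$ to $\{xs,xst\}$.

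Your brief justifications that the relevant hyperplanes cross amount to exhibiting all four quarterspaces, and that is fine:
\begin{itemize}
\item In Step 1, the outer endpoints of the two $s$--edges also lie on opposite sides of $k$.
\item In Step 3, a vertex of $u$ after $k$ but before $h$ lies in one quarterspace. The endpoint of the first edge of $v$ lies in the opposite one.
\end{itemize}
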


A \emph{prefix} of $g$ is an element $h\in A_\Gamma$ such that $|g|=|h|+|h^{-1}g|$. In other words, $h$ lies on an $\ell^1$-geodesic from 1 to $g$. Note that if $\gamma$ is a word representing $g$, then not all prefixes of $g$ are represented by initial substrings of $\gamma$. We will never talk about ``prefixes of words'': we shall always refer to such things as initial substrings. Suffixes are defined similarly.

\begin{definition}[Syllable-length]
The \emph{syllable-length} of an element $g\in A_\Gamma$, denoted $\syl g$, is the minimal $k$ such that $g$ can be written as $g=s_1^{n_1}\dots s_k^{n_k}$, where $s_i\in S$. A word in $S$ is said to be \emph{syllable-reduced} if it has the form $s_1^{n_1}\dots s_k^{n_k}$ and $k$ is the syllable-length of the element of $A_\Gamma$ that it represents.
\end{definition}

\begin{definition}[Standard]\label{def:standard}
Each edge of $X_\Gamma$ is labelled by some standard generator in $S$. A geodesic in the 1--skeleton of $X_\Gamma$ is \emph{standard} if all its edges have the same label. We denote the standard geodesic labelled by $s$ and going through $g$ by $g\sgen s$. Note that $g\sgen s=gs\sgen s$. Indeed, its vertex set is just the $g$--coset of $\sgen s$.

For $k>1$, a $k$--flat in $X_\Gamma$ is \emph{standard} if it is singular and contains $k$ parallelism classes of standard geodesics. Standard flats correspond to cliques in $\Gamma$.
\end{definition}


\begin{definition} \label{def:duplication_graph}
For a graph $\Gamma$, we define the \emph{duplication} of $\Gamma$ to be the graph $d\Gamma$ with vertex set $\Gamma\times\{0,1\}$ and an edge from $(v,\eps_1)$ to $(w,\eps_2)$ whenever $\{v,w\}\in E(\Gamma)$. In other words, it is the lexicographic product of $\Gamma$ with an edge.
\end{definition}

For a RAAG $A_\Gamma$, the link of each vertex is the flag complex on $d\Gamma$. 

We will need the following lemma concerning the labels of asymptotic singular geodesic rays and parallel singular geodesics; it is an adaptation of \cite[Cor.~2.9]{bestvinakleinersageev:asymptotic}.


\begin{lemma}\label{lem:parallel_sing_geod_same_labels}
    Let $\Gamma$ be a simplicial graph.
    \begin{enumerate}
    \item If $\alpha,\alpha' \subset X_\Gamma$ are asymptotic singular geodesic rays, then $\alpha$ and $\alpha'$ contain singular subrays $\beta \subset \alpha$ and $\beta' \subset \alpha'$ with the same set of labels.
    \item If $\alpha,\alpha' \subset X_\Gamma$ are parallel singular geodesics, then they have the same set of labels.
    \end{enumerate}
\end{lemma}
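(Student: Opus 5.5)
Our plan is to use the hyperplane structure of $X_\Gamma$ together with the fact that labels are constant along hyperplanes. In $X_\Gamma$, every edge dual to a given hyperplane $h$ carries the same label $s\in S$. The reason is that opposite edges of a square carry the same label, since the squares of $X_\Gamma$ are commutator squares $g, gs, gst, gt$ for commuting $s,t$. So we may speak of the label of a hyperplane. A singular geodesic (or ray) $\alpha$ lies in the 1--skeleton and is an $\ell^2$ geodesic, hence also a combinatorial geodesic. Therefore the edges of $\alpha$ are dual to pairwise distinct hyperplanes, each crossing $\alpha$ exactly once, and the label set of $\alpha$ is the set of labels of the hyperplanes crossing $\alpha$.

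For part (2), let $\alpha,\alpha'$ be parallel singular geodesics, at Hausdorff distance at most $D$. Take a hyperplane $h$ crossing $\alpha$ with label $s$. Its halfspaces each contain a ray of $\alpha$, and these two rays head to infinity in opposite directions. Since $\alpha'$ stays within $D$ of $\alpha$, far points of $\alpha'$ near either end are separated by $h$ from each other. The justification is that only finitely many hyperplanes separate a point of $\alpha'$ from a nearby point of $\alpha$, and as we move along $\alpha$ we cross $h$ only once, so the far ends of $\alpha$ lie deep in the respective halfspaces. More concretely, pick $x\in\alpha$ far on one side of $h$, at $d_1$-distance greater than $D'$ from $h$'s carrier, and choose $x'\in\alpha'$ with $d_1(x,x')\le D'$ (using biLipschitz equivalence of $d_1,d_2$). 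Then $x'$ is in the same halfspace as $x$, and the same holds on the other side. Hence $h$ separates two vertices of $\alpha'$, so $h$ is dual to an edge of $\alpha'$, and that edge is labelled $s$. By symmetry the label sets coincide.

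For part (1), we handle asymptotic rays $\alpha,\alpha'$ similarly, but now a hyperplane crossing $\alpha$ near its start need not cross $\alpha'$. Since $\alpha,\alpha'$ are asymptotic rays in a CAT(0) space, for large $t$ the points $\alpha(t),\alpha'(t)$ remain within bounded distance $D$ after reparametrising. We pass to tails $\beta\subset\alpha$, $\beta'\subset\alpha'$ starting at vertices $x_0,x_0'$. We claim that every hyperplane $h$ crossing $\beta$ far from its start also crosses $\beta'$. The argument is as before: the initial point of $\beta$ and points far along $\beta$ lie on opposite sides of $h$, and the corresponding points of $\beta'$ are within $d_1$-distance $D'$ of these. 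The delicate point is the initial end. Only finitely many hyperplanes, at most $D'$ of them, separate $x_0$ from $x_0'$, so any hyperplane crossing $\beta$ that fails to cross $\beta'$ must be among those separating $x_0$ from $x_0'$, or among those crossing $\beta$ within a bounded initial segment. Thus only finitely many hyperplanes cross one tail but not the other. Truncating further past all of these, we obtain subrays $\beta,\beta'$ such that every hyperplane crossing $\beta$ crosses $\beta'$, and conversely.

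The main obstacle is that truncating $\beta$ changes which hyperplanes of $\beta'$ are matched. We will instead argue at the level of labels. A label $s$ occurring on only finitely many edges of $\alpha$ can be discarded by passing to a subray of $\alpha$. So we choose subrays on which each remaining label occurs infinitely often. Each label occurring infinitely often on $\alpha$ then gives infinitely many hyperplanes crossing $\alpha$. By the finiteness above, all but finitely many of these also cross $\alpha'$, so $s$ occurs infinitely often on $\alpha'$. Taking $\beta,\beta'$ to be the tails beyond the last occurrence of any finitely-occurring label yields equal label sets.
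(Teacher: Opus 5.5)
The gap is the step where you assert that ``the far ends of $\alpha$ lie deep in the respective halfspaces'', i.e.\ that points of $\alpha$ far from the edge dual to $h$ are at large distance from the carrier of $h$. Your only justification is that $\alpha$ crosses $h$ once. But every combinatorial geodesic has that property, and for combinatorial geodesics the claim is false. In $X_\Gamma=\R^2$ ($\Gamma$ a single edge), take the bi-infinite combinatorial geodesic that runs down the line $x=0$, crosses the edge from $(0,0)$ to $(1,0)$, and runs up the line $x=1$. It crosses $h=\{x=\tfrac12\}$ exactly once, yet never leaves its carrier. Worse, it is at Hausdorff distance $1$ from the line $x=0$, whose label set is strictly smaller. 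So an argument that only uses ``$\alpha$ is a combinatorial geodesic'' would prove a false statement. This is exactly where the hypothesis that $\alpha$ is an $\ell^2$--geodesic must enter, and you never use it. Part (1) inherits the same gap, because ``as before'' you need two things: that the initial point of $\beta$ is far from the carrier when $h$ crosses $\beta$ far out, and that a hyperplane crossing $\beta$ but not $\beta'$ is crossed in a bounded initial segment.

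To repair this, use the CAT(0) geometry. The edge of $\alpha$ dual to $h$ meets $h$ orthogonally at its midpoint $p$. Every direction of $h$ at $p$ lies in a cube containing that edge, so it makes angle $\pi/2$ with $\alpha$. Hence the nearest-point projection of all of $\alpha$ to the convex subspace $h$ is $p$, and $d_2(\alpha(t),h)$ grows linearly in both directions. (Alternatively, use that a singular geodesic never crosses two crossing hyperplanes, as in Proposition~\ref{prop:properties_of_EXT_of_singgeod}(3).) With this, part (2) goes through. In part (1), every hyperplane crossing $\alpha$ but not $\alpha'$ must then separate $\alpha(0)$ from $\alpha'(0)$, which gives your finiteness claim. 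The repaired argument is a genuinely different route from the paper's: the paper quotes Huang's lemma to obtain parallel subrays and then transports labels across width-one strips in the splitting $P(\beta)=\beta\times C$, whereas you replace this with hyperplane bookkeeping. One further point: choosing tails ``beyond the last occurrence of any finitely-occurring label'' requires $\alpha$ to carry only finitely many labels. That can fail, since $\Gamma$ is not assumed finite; for example, the ray $s_1s_2s_3\cdots$ in a free group of infinite rank uses each label once. Instead, note that hyperplanes crossing both rays are pairwise non-crossing, hence nested and crossed in the same order by both rays. Then start both subrays at the edges dual to a single common hyperplane crossed, on both rays, after all the finitely many unmatched ones.
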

\begin{proof}
$(1)$ By \cite[Lem.~2.10]{huang:top}, the distance between $\alpha$ and $\alpha'$ is attained on singular subrays $\beta \subset \alpha$ and $\beta' \subset \alpha'$. In particular, $\beta$ and $\beta'$ are parallel, in the sense that the distance from one to the other is constant. Since $X_\Gamma$ is a CAT(0) cube complex, and $\beta$ is a singular geodesic ray that is a subcomplex, its parallel set $P(\beta)$ is well defined; see \cite[Rem.~3.5]{huang:quasiisometric:1}. It splits as $P(\beta)=\beta \times C$, for some CAT(0) cube complex $C$. In this splitting, $\beta$ corresponds to $\beta \times \{v\}$ for some vertex $v \in C$, and $\beta'$ corresponds to $\beta \times \{v'\}$ for some vertex $v' \in C$. Take a combinatorial path $v=v_0,\dots,v_k=v'$ in $C$. For each $i$, set $\beta_i:=\beta \times \{v_i\}$. The singular rays $\beta_i$ and $\beta_{i+1}$ bound the flat strip subcomplex $\beta \times [v_i,v_{i+1}]$. Hence every edge of $\beta_i$ is opposite, in a square of this strip, to an edge of $\beta_{i+1}$ with the same label. Thus $\beta_i$ and $\beta_{i+1}$ have the same set of labels. By induction, $\beta=\beta_0$ and $\beta'=\beta_k$ have the same set of labels.

$(2)$ The proof is the same, working directly with the parallel set $P(\alpha)$ instead of passing to subrays.
\end{proof}

\subsection{Extension graphs}\label{sec:extension_graphs}

A useful combinatorial object for studying a right-angled Artin group $A_\Gamma$ is the associated \emph{extension graph}, which was introduced by Kim--Koberda \cite{kimkoberda:embedability}. From a coarse perspective, it is a quasitree that can be thought of as obtained by collapsing all standard 2--flats \cite{kimkoberda:geometry}. In this article we are more interested in the fine structure of extension graphs.

The following is equivalent to \cite[Def.~1.2]{kimkoberda:embedability}.

\begin{definition}[Extension graph] \label{def:extension_graph}
For each $g\in A_\Gamma$, let $g\cdot\Gamma$ be an isomorphic copy of~$\Gamma$, with vertices $\{g\cdot v\mid v\in V(\Gamma)\}$. The \emph{extension graph} $\Gamma^\ext$ of $\Gamma$ is obtained from the disjoint union $\bigsqcup_{g\in A_\Gamma}g\cdot\Gamma$ as follows:
\begin{itemize}
\item   first identify vertices $g\cdot v\in g\cdot\Gamma$ and $h\cdot w\in h\cdot\Gamma$ whenever $gs_vg^{-1}=hs_wh^{-1}\in A_\Gamma$;
\item   then identify each pair of edges with the same endpoints.
\end{itemize}
\end{definition}

For example, for $s$ a standard generator of $A_\Gamma$, the subgraphs $g\cdot\Gamma$ and $gs^n\cdot\Gamma$ of $\Gamma^\ext$ are glued along $\str(g\cdot v)$ for all $n\ne0$, because $gs_vg^{-1}=(gs)s_v{(gs)^{-1}}$ exactly when $s_v$ commutes with $s$. If we identify the vertex $g\cdot v$ of $\Gamma^\ext$ with the standard geodesic $g\sgen {s_v}$ of $A_\Gamma$, then we see that $g\cdot v=h\cdot v$ exactly when $g\sgen {s_v}$ and $h\sgen {s_v}$ are parallel.

Observe that in \cref{def:extension_graph}, vertices $g\cdot v$ and $h\cdot w$ can only be identified if $v=w$. The group $A_\Gamma$ has a natural left action on $\Gamma^\ext$, given by $g(h\cdot v)=gh\cdot v$. 

\begin{remark} \label{rem:extension_graph_definition}
The extension graph $\Gamma^\ext$ can equivalently be defined as follows. It has a vertex for each parallelism class of standard geodesics in $X_\Gamma$. Two parallelism classes are joined by an edge if they admit representatives that span a 2--flat of $X_\Gamma$.
\end{remark}

\begin{definition}[Types and labels]
Let $S=\{s_i\}$ be the standard generating set of $A_\Gamma$.
We say that a vertex $v\in\Gamma^\ext$ is of \emph{type} $i$ or $s_i$ if there exists $g\in A_\Gamma$ such that $v=g\cdot v_i$, where $s_i$ is the standard generator of $A_\Gamma$ corresponding to the vertex $v_i$ of $\Gamma$. We write $\type(v)=s_i$. A \emph{label} of $v$ is any such group element $g$. The \emph{minimal label} is the element $g\in A_\Gamma$ such that $|g|$ is minimal among labels of $v$. 
\end{definition}

\begin{remark}\label{rem:extension_graph_embeds_singular_boundary} 
The extension graph $\Gamma^\ext$, and more generally its duplication $d(\Gamma^\ext)$, embed naturally as induced subgraphs of $\partial_{\mathrm{sing}}X_\Gamma$. Indeed, by \Cref{rem:extension_graph_definition}, each standard geodesic determines two vertices of $\partial_{\mathrm{sing}}X_\Gamma$, corresponding respectively to the rays defined by positive and negative powers. Restricting to rays defined by positive powers yields an embedding of $\Gamma^\ext$, while restricting to rays defined by negative powers yields another one. Together, these two embeddings yield an embedding of $d(\Gamma^\ext)$.
\end{remark}

\section{Branching theorems for RAAGs} \label{sec:paper1_for_raags}

The main results of \cite{baderbensaidpetyt:from} concern quasiisometric embeddings between CAT(0) cube complexes. They give geometric branching conditions on flats under which the images of those flats are Hausdorff-close to flats in the codomain, and deduce the existence of induced maps between appropriately defined boundaries. In this section we interpret some of those definitions and results for right-angled Artin groups.

The following definitions are equivalent to specialising \cite[Def.s~9.1, 9.3, 9.5]{baderbensaidpetyt:from} to universal covers of Salvetti complexes. See also \cite[Rem.~9.4]{baderbensaidpetyt:from}.

\begin{definition}\label{def:fully_branching_in_graphs}
Let $\Gamma$ be a (simplicial) graph with clique number $n \geq 2$.
\begin{enumerate}
\item A clique is said to be \emph{branching} if it is the intersection of some $n$--cliques.
\item A vertex $v$ is called \emph{branch-complemented} if it is contained in an $n$--clique $K$ such that both $\{v\}$ and $K\ssm\{v\}$ are branching. 
\item A subgraph of $\Gamma$ is \emph{directionally branch-complemented} if each of its vertices is branch-complemented.
\end{enumerate}
\end{definition}

If $\Gamma$ is triangle-free, then a vertex is branching precisely when it is not a leaf, and branch-complemented if and only if it is branching and has a branching neighbour. Thus $\Gamma$ is directionally branch-complemented if and only if it has no leaves. See \Cref{fig:fully_branching_vertices_cliques_3D} for examples of directionally branch-complemented subgraphs when the clique number is $3$.

\begin{figure}[htbp]
    \centering
    \begin{tikzpicture}[baseline=-.1cm, scale=1.3]
\fill[fill=red!30]
(-.4,-.35) to (.4,-.35) to (0,.35);
\draw[thick]
(-.4,-.35) to (.4,-.35) to (0,.35) to (-.4,-.35);
\draw[thick]
(0,.35) to (-.2,.7) to (.2,.7) to (0,.35)
(-.2,.7) to (.2,.7) to (0,1.05) to (-.2,.7)
(-.4,-.35) to (-.8,-.35) to (-.6,-.7) to (-.4,-.35)
(-.8,-.35) to (-.6,-.7) to (-1,-.7) to (-.8,-.35)
(.4,-.35) to (.8,-.35) to (.6,-.7) to (.4,-.35)
(.8,-.35) to (.6,-.7) to (1,-.7) to (.8,-.35);
\fill[red] (-.4,-.35) circle(.08);
\fill[red] (.4,-.35) circle(.08);
\fill[red] (0,.35) circle(.08);
\fill (-.2,.7) circle(.08);
\fill (.2,.7) circle(.08);
\fill (0,1.05) circle(.08);
\fill (-.8,-.35) circle(.08);
\fill (-.6,-.7) circle(.08);
\fill (-1,-.7) circle(.08);
\fill (.8,-.35) circle(.08);
\fill (.6,-.7) circle(.08);
\fill (1,-.7) circle(.08);
\end{tikzpicture}
\qquad
\begin{tikzpicture}[baseline=-.1cm, scale=1.25]
\fill[fill=red!30]
(-.4,.35) to (0,-.35) to (.8,-.35) to (.4,.35);
\draw[thick]
(-.8,-.35) -- (.8,-.35)
(-.4,.35) -- (.4,.35);
\draw[thick]
(-.8,-.35) to (-.4,.35) to (0,-.35) to (.4,.35) to (.8,-.35)
(-.4,.35) to (0,1.05) to (.4,.35)
(.8,-.35) to (1.2,-1.05) to (1.6,-.35) -- (.8,-.35);
\fill (-.8,-.35) circle(.08);
\fill[red] (-.4,.35) circle(.08);
\fill[red] (0,-.35) circle(.08);
\fill[red] (.4,.35) circle(.08);
\fill[red] (.8,-.35) circle(.08);
\fill (0,1.05) circle(.08);
\fill (1.6,-.35) circle(.08);
\fill (1.2,-1.05) circle(.08);
\end{tikzpicture}
\qquad
    \begin{tikzpicture}[baseline=-.1cm, scale=1.3]
\fill[fill=red!30]
(-.4,-.35) to (0,.35) to (.4,-.35);
\draw[thick]
(-.8,.35) -- (.8,.35)
(-1.2,-.35) -- (1.2,-.35);
\draw[thick]
(-1.2,-.35) to (-.8,.35) to (-.4,-.35) to (0,.35) to (.4,-.35) to
(.8,.35) to (1.2,-.35);
\fill (-1.2,-.35) circle(.08);
\fill (-.8,.35) circle(.08);
\fill[red] (-.4,-.35) circle(.08);
\fill[red] (0,.35) circle(.08);
\fill[red] (.4,-.35) circle(.08);
\fill (.8,.35) circle(.08);
\fill (1.2,-.35) circle(.08);
\end{tikzpicture}
\qquad
\begin{tikzpicture}[baseline=-.1cm, scale=1.3]
\fill[fill=red!30]
(-.4,-.35) to (0,.35) to (.4,-.35);
\draw[thick]
(-.8,.35) -- (.8,.35)
(-1.2,-.35) -- (1.2,-.35);
\draw[thick]
(-1.2,-.35) to (-.8,.35) to (-.4,-.35) to (0,.35) to (.4,-.35) to (.8,.35)
(1.2,-.35) to (.8,-1.05) to (.4,-.35);
\fill (-1.2,-.35) circle(.08);
\fill (-.8,.35) circle(.08);
\fill[red] (-.4,-.35) circle(.08);
\fill[red] (0,.35) circle(.08);
\fill[red] (.4,-.35) circle(.08);
\fill (.8,.35) circle(.08);
\fill (1.2,-.35) circle(.08);
\fill (.8,-1.05) circle(.08);
    \end{tikzpicture}
\caption{Red vertices are branch-complemented. Every subgraph whose vertices are all red is directionally branch-complemented.}
    \label{fig:fully_branching_vertices_cliques_3D}
\end{figure}

\begin{remark}\label{rem:fully_branching_not_branching}
It should be emphasized that being directionally branch-complemented imposes no additional condition on the edges of a subgraph: this property only depends on its vertices. Thus, although a directionally branch-complemented vertex, is necessarily branching, a directionally branch-complemented clique of size at least $2$ need not be branching. For example, in the leftmost graph in \cref{fig:fully_branching_vertices_cliques_3D}, none of the three directionally branch-complemented edges are branching. More strongly, the third graph in \Cref{fig:fully_branching_graphs_subgraphs_3D} provides examples of directionally branch-complemented edges that are not branching, and are not even contained in any $3$--clique.
\end{remark}

When $\Gamma$ is a directionally branch-complemented subgraph of itself, we simply refer to $\Gamma$ as being directionally branch-complemented. We also refer to the associated RAAG $A_\Gamma$ as being directionally branch-complemented.

\begin{example}\label{ex:fully_branching_examples}
Here are some examples of directionally branch-complemented graphs.
\begin{itemize}
\item   Suppose $\omega_\Gamma=n\ge2$ and every vertex of $\Gamma$ is contained in some $n$--clique. If every $(n-1)$--clique of $\Gamma$ is branching, then $\Gamma$ is directionally branch-complemented. This applies to the $1$--skeleton of any triangulation of a closed manifold.     

\item   If $\Gamma$ is a join of $n$ edgeless graphs of size at least two (so that $A_\Gamma$ is a product of nonabelian free groups), then $\Gamma$ satisfies the previous item, so is directionally branch-complemented.

\item If $\Gamma$ is a directionally branch-complemented graph, then so is its suspension $\widehat{\Gamma}$. Note that $A_{\widehat{\Gamma}} = A_\Gamma \times F_2$.
    
\item If $\Gamma_1$ and $\Gamma_2$ are directionally branch-complemented graphs, then so is their join $\Gamma_1 * \Gamma_2$. Note that $A_{\Gamma_1 * \Gamma_2} = A_{\Gamma_1} \times A_{\Gamma_2}$.
    
\item Let $\Gamma$ be a directionally branch-complemented graph of clique number $n$, and let $K_1,K_2\subset \Gamma$ be two disjoint branching $(n-1)$--cliques such that the induced subgraph on $K_1\cup K_2$ contains no $n$--clique. Let $\Gamma'$ be the graph obtained from $\Gamma$ by adding a vertex $v\notin \Gamma$, adjacent to every vertex of $K_1\cup K_2$. Then $\Gamma'$ is again directionally branch-complemented. See the rightmost example in \Cref{fig:fully_branching_graphs_subgraphs_3D}.

\item If $\Gamma_1$ and $\Gamma_2$ are directionally branch-complemented graphs of clique number $n$, then any graph obtained from their disjoint union by adding edges between them, while keeping clique number $n$, is again directionally branch-complemented, see \Cref{fig:fully_branching_graphs_subgraphs_3D}.
\end{itemize}
\end{example}

\begin{figure}[htbp]
    \centering

    \begin{tikzpicture}[baseline=-.1cm, scale=1.7]
\coordinate (t) at (-.85,1);
\coordinate (b) at (-.85,-1);

\coordinate (v1) at (-1.35,.08);
\coordinate (v2) at (-1.10,-.28);
\coordinate (v3) at (-.60,-.28);
\coordinate (v4) at (-.35,.08);
\coordinate (v5) at (-.85,.35);

\draw[thick]
(v1) -- (v2) -- (v3) -- (v4);
\draw[thick,dotted]
(v4) -- (v5) -- (v1);

\draw[thick]
(t) -- (v1) (t) -- (v2) (t) -- (v3) (t) -- (v4)
(b) -- (v1) (b) -- (v2) (b) -- (v3) (b) -- (v4);
\draw[thick,dotted]
(t) -- (v5)
(b) -- (v5);

\fill[red] (v1) circle(.06);
\fill[red] (v2) circle(.06);
\fill[red] (v3) circle(.06);
\fill[red] (v4) circle(.06);
\fill[red] (v5) circle(.06);
\fill[red] (t) circle(.06);
\fill[red] (b) circle(.06);
\end{tikzpicture}
\qquad
\begin{tikzpicture}[baseline=-.1cm, scale=1.7]
\draw[thick]
(-1.4,-.3) -- (-.6,-.3) to (-.2,.3);
\draw[thick, dotted]
(-1.4,-.3) -- (-1,.3) to (-.2,.3);
\draw[thick]
(-1.4,-.3) to (-.8,1)
(-.6,-.3) to (-.8,1)
(-.2,.3) to (-.8,1)
(-1.4,-.3) to (-.8,-1)
(-.6,-.3) to (-.8,-1)
(-.2,.3) to (-.8,-1);
\draw[thick, dotted]
(-1,.3) to (-.8,1)
(-1,.3) to (-.8,-1);
\fill[red] (-1.4,-.3) circle(.06);
\fill[red] (-.6,-.3) circle(.06);
\fill[red] (-.2,.3) circle(.06);
\fill[red] (-1,.3) circle(.06);
\fill[red] (-.8,1) circle(.06);
\fill[red] (-.8,-1) circle(.06);
\end{tikzpicture}
\qquad
\begin{tikzpicture}[baseline=-.1cm, scale=1.7]

\coordinate (tL) at (-.85,1);
\coordinate (bL) at (-.85,-1);

\coordinate (v1) at (-1.35,.08);
\coordinate (v2) at (-1.10,-.28);
\coordinate (v3) at (-.60,-.28);
\coordinate (v4) at (-.35,.08);
\coordinate (v5) at (-.85,.35);

\coordinate (a1) at (.6,-.3);
\coordinate (a2) at (1.4,-.3);
\coordinate (a3) at (1.8,.3);
\coordinate (a4) at (1,.3);
\coordinate (tR) at (1.2,1);
\coordinate (bR) at (1.2,-1);

\draw[thick]
(v1) -- (v2) -- (v3) -- (v4);
\draw[thick,dotted]
(v4) -- (v5) -- (v1);

\draw[thick]
(tL) -- (v1) (tL) -- (v2) (tL) -- (v3) (tL) -- (v4)
(bL) -- (v1) (bL) -- (v2) (bL) -- (v3) (bL) -- (v4);
\draw[thick,dotted]
(tL) -- (v5)
(bL) -- (v5);

\draw[thick]
(a1) -- (a2) -- (a3);
\draw[thick,dotted]
(a1) -- (a4) -- (a3);

\draw[thick]
(a1) -- (tR)
(a2) -- (tR)
(a3) -- (tR)
(a1) -- (bR)
(a2) -- (bR)
(a3) -- (bR);
\draw[thick,dotted]
(a4) -- (tR)
(a4) -- (bR);

\draw[thick]
(tL) -- (tR)
(bL) -- (bR)
(bL) -- (a1)
(v4) -- (a1);

\fill[red] (v1) circle(.06);
\fill[red] (v2) circle(.06);
\fill[red] (v3) circle(.06);
\fill[red] (v4) circle(.06);
\fill[red] (v5) circle(.06);
\fill[red] (tL) circle(.06);
\fill[red] (bL) circle(.06);

\fill[red] (a1) circle(.06);
\fill[red] (a2) circle(.06);
\fill[red] (a3) circle(.06);
\fill[red] (a4) circle(.06);
\fill[red] (tR) circle(.06);
\fill[red] (bR) circle(.06);
\end{tikzpicture}
\qquad
\begin{tikzpicture}[baseline=-.1cm, scale=1.7]
\coordinate (t) at (-.85,1);
\coordinate (b) at (-.85,-1);

\coordinate (v1) at (-1.35,.08);
\coordinate (v2) at (-1.10,-.28);
\coordinate (v3) at (-.60,-.28);
\coordinate (v4) at (-.35,.08);
\coordinate (v5) at (-.85,.35);

\coordinate (u) at (-1.75,-1.05);

\draw[thick]
(v1) -- (v2) -- (v3) -- (v4);
\draw[thick,dotted]
(v4) -- (v5) -- (v1);
\draw[thick]
(t) -- (v1) (t) -- (v2) (t) -- (v3) (t) -- (v4)
(b) -- (v1) (b) -- (v2) (b) -- (v3) ;
\draw[thick]
(b) -- (v4);
\draw[thick,dotted]
(t) -- (v5)
(b) -- (v5);

\draw[thick]
(u) -- (v4)
(u) -- (v1)
(u) -- (v3)
(u) -- (v2);

\fill[red] (v1) circle(.06);
\fill[red] (v2) circle(.06);
\fill[red] (v3) circle(.06);
\fill[red] (v4) circle(.06);
\fill[red] (v5) circle(.06);
\fill[red] (t) circle(.06);
\fill[red] (b) circle(.06);
\fill[red] (u) circle(.06);
\end{tikzpicture}
    \caption{Some directionally branch-complemented graphs.}
    \label{fig:fully_branching_graphs_subgraphs_3D}
\end{figure}

\begin{remark} \label{rem:dbc_vs_out}
Let us compare the condition that $\Gamma$ is directionally branch-complemented to the condition that $\Out(A_\Gamma)$ is finite. According to \cite{laurence:generating,servatius:automorphisms}, the latter is equivalent to requiring that $\Gamma$ has no separating vertex star and no domination pair: distinct vertices $v,w$ such that $\lk(w)\subseteq \star(v)$. Therefore, in the triangle-free case, every graph $\Gamma$ satisfying the ``finite Out'' condition is directionally branch-complemented. The converse, however, already fails for the square $C_4$, which is directionally branch-complemented but has infinite outer automorphism group. 

More generally, the finite Out condition is not stable under taking joins, since domination pairs are often created in the process. By contrast, the join of two directionally branch-complemented graphs is again directionally branch-complemented; see \Cref{ex:fully_branching_examples}. 

In higher dimensions, the two notions become much farther apart. For instance, the triangular prism graph, that is, the Cartesian product of a triangle with an edge, has finite outer automorphism group but is not directionally branch-complemented.
\end{remark}

In some situations when we are interested in subgraphs, we need a stronger branching condition.

\begin{definition}\label{def:strong_fully_branching_in_graphs}
Let $\Gamma$ be a (simplicial) graph with clique number $n \geq 2$.
\begin{enumerate}
\item A vertex $v$ is called \emph{strongly branch-complemented} if it the intersection of some directionally branch-complemented $n$--cliques.
\item A subgraph of $\Gamma$ is \emph{directionally strongly branch-complemented} if each of its vertices is strongly branch-complemented.
\end{enumerate}
\end{definition}

\begin{figure}[htbp]
\begin{tikzpicture}[baseline=-.1cm, scale=1.1]
\def\h{0.8660254}

\draw[thick]
(-1,0) -- (-0.5,\h) -- (0.5,\h) -- (1,0) -- (0.5,-\h) -- (-0.5,-\h) -- (-1,0)
(1,0) -- (2,0) -- (3,0)
(-1,0) -- (-2,0) -- (-3,0) -- (-4,0);

\fill[red] (-1,0) circle(.08);
\fill[red] (-0.5,\h) circle(.08);
\fill[red] (0.5,\h) circle(.08);
\fill[red] (1,0) circle(.08);
\fill[red] (0.5,-\h) circle(.08);
\fill[red] (-0.5,-\h) circle(.08);
\fill[red] (2,0) circle(.08);
\fill[red] (-2,0) circle(.08);
\fill[red] (-3,0) circle(.08);

\fill (3,0) circle(.08);
\fill (-4,0) circle(.08);

\draw[red, thick] (-1,0) circle(.145);
\draw[red, thick] (-0.5,\h) circle(.145);
\draw[red, thick] (0.5,\h) circle(.145);
\draw[red, thick] (1,0) circle(.145);
\draw[red, thick] (0.5,-\h) circle(.145);
\draw[red, thick] (-0.5,-\h) circle(.145);
\draw[red, thick] (-2,0) circle(.145);
\end{tikzpicture}
\qquad
\begin{tikzpicture}[baseline=-.1cm, scale=1.6]
\draw[thick]
(-1.6,-1.05) -- (1.6,-1.05)
(-1.2,-.35) -- (1.2,-.35)
(-1.6,1.05) -- (1.6,1.05)
(-1.2,.35) -- (1.2,.35);
\draw[thick]
(-1.6,-1.05) to (-1.2,-.35) to (-.8,-1.05) to (-.4,-.35) to (0,-1.05) to
(.4,-.35) to (.8,-1.05) to (1.2,-.35) to (1.6,-1.05)
(-1.6,1.05) to (-1.2,.35) to (-.8,1.05) to (-.4,.35) to (0,1.05) to
(.4,.35) to (.8,1.05) to (1.2,.35) to (1.6,1.05)
(-.4,.35) to (-.4,-.35)
(.4,.35) to (.4,-.35)
(1.2,.35) to (1.2,-.35);
\fill (-1.6,-1.05) circle(.06);
\fill (-1.2,-.35) circle(.06);
\fill[red] (-.8,-1.05) circle(.06);
\fill[red] (-.4,-.35) circle(.06);
\fill[red] (0,-1.05) circle(.06);
\fill[red] (.4,-.35) circle(.06);
\fill[red] (.8,-1.05) circle(.06);
\fill (1.2,-.35) circle(.06);
\fill (1.6,-1.05) circle(.06);
\fill (-1.6,1.05) circle(.06);
\fill (-1.2,.35) circle(.06);
\fill[red] (-.8,1.05) circle(.06);
\fill[red] (-.4,.35) circle(.06);
\fill[red] (0,1.05) circle(.06);
\fill[red] (.4,.35) circle(.06);
\fill[red] (.8,1.05) circle(.06);
\fill (1.2,.35) circle(.06);
\fill (1.6,1.05) circle(.06);

\draw[red, thick] (0,-1.05) circle(.11);
\draw[red, thick] (0,1.05) circle(.11);
\end{tikzpicture}
\caption{Red vertices are branch-complemented, and those additionally circled in red are strongly branch-complemented.}
    \label{fig:fully_branching_subgraphs_3D}
\end{figure}

\begin{remark} \label{rem:FB_implies_strong_FB}
If a graph $\Gamma$ is directionally branch-complemented, then it is directionally strongly branch-complemented. Indeed, saying that $\Gamma$ is directionally branch-complemented means that every vertex is branch-complemented. This implies that every clique is directionally branch-complemented, and hence every vertex of $\Gamma$ is strongly branch-complemented. However, this does not extend to subgraphs of an arbitrary graph. For example, in \cref{fig:fully_branching_subgraphs_3D}, every red subgraph is directionally branch-complemented, but only subgraphs whose vertices are all circled are directionally strongly branch-complemented.
\end{remark}

The following generalises the last item in \cref{ex:fully_branching_examples}.

\begin{example}
Let $\Gamma_1$ and $\Gamma_2$ be graphs of clique number $n$, and let $\Lambda_1\subset \Gamma_1$ and $\Lambda_2\subset \Gamma_2$ be directionally (strongly) branch-complemented subgraphs. Let $\Gamma$ be obtained from the disjoint union of $\Gamma_1$ and $\Gamma_2$ by adding arbitrary edges between vertices of $\Lambda_1$ and vertices of $\Lambda_2$,  while keeping clique number $n$ of $\Gamma$. Then the subgraph of $\Gamma$ induced by $\Lambda_1\cup \Lambda_2$ is again directionally (strongly) branch-complemented.  
\end{example}

\begin{remark}\label{rem:branching_not_standard}
It should be noted that not every geodesic in $X_\Gamma$ that is branching or branch-complemented in the sense of \cite{baderbensaidpetyt:from} is standard. The results of \cite{baderbensaidpetyt:from} therefore hold for more flats than we are considering here.
\end{remark}

We now state the consequences of the main results of \cite{baderbensaidpetyt:from} that are needed in this paper. In particular, we are restricting to the domain being the universal cover of a Salvetti complex, only considering standard flats, and forcing the codomain to be $n$--dimensional (rather than merely having \emph{asymptotic rank} $n$). The following is a combination of \cite[Thms~10.1, 10.2, 10.3]{baderbensaidpetyt:from}.

\begin{theorem}\label{thm:fully_branching_raag_rephrased}
Let $\Gamma$ be a graph with clique number $n$, and let $Y$ be an $n$--dimensional CAT(0) cube complex. For every $q$ there exists $D=D(n,q)$ such that the following holds for every $q$--quasiisometric embedding $f:X_\Gamma \to Y$.
\begin{enumerate}
    \item For every directionally branch-complemented $n$--clique $K \subset \Gamma$ and every standard $n$--flat $F \subset X_\Gamma$ associated with $K$, the image $f(F)$ lies at Hausdorff distance at most $D$ from some $n$--flat in $Y$. 
    
    \item If $B \subset K$ is a branching $p$--clique, then $f$ maps every standard $p$--flat associated with $B$ to within finite Hausdorff distance of a singular $p$--flat. In particular, this holds for the standard geodesics associated with the vertices of $K$.
    
    \item For every $p\leq n$, every directionally strongly branch-complemented $p$--clique $L \subset \Gamma$, and every standard $p$--flat $H \subset X_\Gamma$ associated with $L$, the image $f(H)$ lies at Hausdorff distance at most $D$ from a singular $p$--flat in $Y$. 
\end{enumerate}
\end{theorem}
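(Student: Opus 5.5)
This theorem is a translation of \cite[Thms~10.1, 10.2, 10.3]{baderbensaidpetyt:from} to the present setting. The plan is therefore to check that the hypotheses of those general cube-complex results hold for standard flats in $X_\Gamma$, with no new geometric argument. First I fix the dictionary. The space $X_\Gamma$ is an $n$--dimensional CAT(0) cube complex. A standard $p$--flat $F$ associated with a $p$--clique $B$ is the subcomplex $g\langle B\rangle$, which is a product of $p$ standard geodesics. Its parallel set splits as $F\times C$, where $C$ is the cube complex associated with the link of $B$, taken up to its cosets. The branching notions of \cite[Def.s~9.1, 9.3, 9.5]{baderbensaidpetyt:from} concern how the orthants of $F$ extend to $n$--flats, and I will verify them through this local structure.

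The key step is to show the following. If $B$ is branching, meaning an intersection of $n$--cliques $K_1,\dots,K_r$, then the standard flat for $B$ is branching in the geometric sense of \cite{baderbensaidpetyt:from}: every orthant of $F$ is the intersection of the standard $n$--flats for the $K_i$, each translated by a suitable element of $\langle K_i\ssm B\rangle$. Concretely, take $F=\langle B\rangle$ and the flats $\langle K_i\rangle$. Since $\bigcap K_i=B$, their intersection is exactly $\langle B\rangle$; this uses the fact that $\langle K\rangle\cap\langle K'\rangle=\langle K\cap K'\rangle$ for special subgroups. To obtain a half-flat or orthant version, I pass to appropriate half-orthants of each $n$--flat, using positive and negative powers of the generators in $K_i\ssm B$. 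Applying this to $\{v\}$ and to $K\ssm\{v\}$ inside one $n$--clique $K$ shows that each direction of the standard flat $\langle K\rangle$ is branch-complemented in the sense of \cite[Def.~9.3]{baderbensaidpetyt:from}. This matches \cite[Rem.~9.4]{baderbensaidpetyt:from}, and it establishes the hypothesis of Theorem~10.1 there. That theorem then gives item (1), with $D$ depending only on $n$ and $q$. Uniformity is not an issue, because it is part of the cited statement for a fixed dimension and fixed quasiisometry constants.

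Item (2) follows the same way from Theorem~10.2 of \cite{baderbensaidpetyt:from}. A branching sub-flat of a flat whose image is close to a flat is sent close to a singular flat. The last sentence of (2) applies this with $B=\{v\}$, which is branching by the definition of branch-complemented. For item (3), I write the strongly branch-complemented vertices of $L$ as intersections of directionally branch-complemented $n$--cliques. By (1), each such $n$--flat is sent near an $n$--flat. Theorem~10.3 of \cite{baderbensaidpetyt:from} is designed for exactly this intersection situation. Applying it, each vertex direction of $L$ goes to a singular direction, and the product flat $H$ then goes near a singular $p$--flat.

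The main obstacle is the precise matching of definitions in the second step. The geometric branching conditions are stated for arbitrary flats and orthants, including non-standard ones (compare \cref{rem:branching_not_standard}), whereas the graph conditions only see standard ones. So the delicate point is to check that standard flats alone already witness the geometric branching required in \cite{baderbensaidpetyt:from}. In particular, all $2^p$ orthants of the standard flat must be handled simultaneously, and they must be handled at every basepoint $g$. Equivariance under $A_\Gamma$ reduces the second requirement to the case $g=1$.
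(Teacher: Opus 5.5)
This is essentially the paper's argument: the paper states the theorem simply as a combination of \cite[Thms~10.1, 10.2, 10.3]{baderbensaidpetyt:from}, once the graph conditions are recognised as specialisations of \cite[Def.s~9.1, 9.3, 9.5]{baderbensaidpetyt:from} to $X_\Gamma$, and your observation that $\bigcap_i g\langle K_i\rangle=g\langle B\rangle$ is the natural way to check that dictionary. One slip: in item (1), the $n$--clique witnessing that a vertex $v\in K$ is branch-complemented need not be $K$ itself (see \cref{rem:fully_branching_not_branching}), so you must take the complement $K_v\ssm\{v\}$ inside the witnessing flat $g\langle K_v\rangle\supseteq g\langle v\rangle$, not inside $\langle K\rangle$.
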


Furthermore, \cite[Cor.~10.13]{baderbensaidpetyt:from} shows that $f$ induces a map between some appropriately defined boundaries. In view of \cref{rem:extension_graph_embeds_singular_boundary}, this restricts to give the following.

\begin{theorem}\label{thm:fully_branching_raag_to_sing_boundary}
Let $\Gamma$ be a directionally branch-complemented graph with clique number $n$. If $Y$ is an $n$--dimensional CAT(0) cube complex, then every quasiisometric embedding $f:X_\Gamma\to Y$ induces a graph embedding $\Gamma^\ext \to \partial_{\mathrm{sing}}Y$.
\end{theorem}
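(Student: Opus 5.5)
Since $\Gamma$ is directionally branch-complemented, \cref{rem:FB_implies_strong_FB} shows that every vertex of $\Gamma$ is strongly branch-complemented and every clique of $\Gamma$ is directionally strongly branch-complemented. So \cref{thm:fully_branching_raag_rephrased}(3) applies with $p=1$ and $p=2$. It shows that every standard geodesic $g\sgen{s}$ is sent within Hausdorff distance $D$ of a singular geodesic in $Y$. It also shows that every standard $2$--flat, spanned by $g\sgen{s}$ and $g\sgen{t}$ with $s,t$ adjacent, is sent within distance $D$ of a singular $2$--flat. I would build the map vertex by vertex. For a vertex $g\cdot v$ of $\Gamma^\ext$, take the positive ray of $g\sgen{s_v}$ (this picks out the copy of $\Gamma^\ext$ in \cref{rem:extension_graph_embeds_singular_boundary}) and send it to the class of the singular ray to which $f$ of that ray is asymptotic.

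The first step is to check that this is well defined. Two parallel standard geodesics are at finite Hausdorff distance, so their images are at finite Hausdorff distance from each other. Hence the positive subrays of the two chosen singular geodesics are at finite Hausdorff distance and define the same point of $\partial_{\mathrm{sing}}Y$. One must also check that the positive ray maps near exactly one end of the singular geodesic. This holds because $f$ is a quasiisometric embedding, so the two half-rays are sent coarsely to the two distinct ends.

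The second step is injectivity. If two non-parallel standard geodesics had asymptotic image rays, then the positive rays in $X_\Gamma$ would stay at sublinear distance from each other, since $f$ is coarsely injective at large scale. Positive standard rays of non-parallel standard geodesics diverge linearly in $X_\Gamma$: either they have different labels, or they have the same label and are non-parallel, and then the distance grows linearly using the normal form in $A_\Gamma$ or \cref{lem:parallel_sing_geod_same_labels}. This gives a contradiction.

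The third step is edges, which I expect to be the main obstacle. If $g\cdot v$ and $h\cdot w$ are adjacent, choose representatives spanning a standard $2$--flat $F$. Its image is $D$--close to a singular $2$--flat $F'$ in $Y$, and the images of the two standard lines lie near lines of $F'$. By \cref{lem:parallel_geod_in_convex} (flats are convex), these lines have parallels inside $F'$. The hard part is showing that the parallels are singular lines of $F'$, in two distinct directions, and that their positive rays span a quadrant of $F'$, giving an edge of $\partial_{\mathrm{sing}}Y$.

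I would argue this as follows. $F'$ is a singular $2$--flat in an $n$--dimensional cube complex, so it is a square-tiled plane, and a singular geodesic near a line in it is parallel to one of the two axis directions. The images of the two standard lines are quasi-orthogonal, since $f$ restricted to $F$ is a quasiisometric embedding of $\mathbb R^2$. So they cannot both be parallel to the same axis, and the two chosen positive rays span an orthant of $F'$.

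Finally, "graph embedding" should probably mean the map is injective and preserves edges. If it must also reflect non-edges (induced), I would check that directly too. If the image rays spanned an orthant, pull back via coarse inverse on the image: the standard lines would then have to be coarsely a quasiflat, forcing them to commute. That last direction is where I am least confident.
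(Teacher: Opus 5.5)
Your steps 1--3 (well-definedness, injectivity, edges) are correct, but they follow a different route from the paper. The paper gives no separate argument. It quotes \cite[Cor.~10.13]{baderbensaidpetyt:from}, which already provides a boundary map for quasiisometric embeddings of cube complexes, defined on branch-complemented singular rays. It then restricts that map along the embedding $\Gamma^\ext\hookrightarrow\partial_{\mathrm{sing}}X_\Gamma$ of \cref{rem:extension_graph_embeds_singular_boundary}. You instead rebuild the restricted map by hand from the flat-level statement \cref{thm:fully_branching_raag_rephrased}(3) with $p=1,2$. You use RAAG facts for well-definedness and injectivity, and the convexity lemma plus the square-tiled structure of singular $2$--flats for edges; this is the mechanism the paper later uses in \cref{prop:EXT_of_images_of_SFB_geodesics}. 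Your version is self-contained relative to \cref{thm:fully_branching_raag_rephrased} and shows exactly where branching enters. The citation is shorter and gives a map on a larger part of $\partial_{\mathrm{sing}}X_\Gamma$, including non-standard branch-complemented rays (\cref{rem:branching_not_standard}).

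A few tightenings:
\begin{itemize}
\item Injectivity: if the image rays are at finite Hausdorff distance, then so are the domain rays, not merely at sublinear distance. The argument of \cref{lem:singular_tripod_RAAG} then gives parallelism: same label by \cref{lem:parallel_sing_geod_same_labels}, followed by the finite-ball argument. No linear-divergence claim is needed.
\item Well-definedness: $d(gs^k,gcs^k)=|c|$ when $c$ centralises $s$, so positive rays are matched with positive rays.
\item Axis claim in the edge step: $F'$ lies in the parallel set of the singular line, which splits as a cube complex $\gamma'\times C$. So $F'=\gamma'\times\lambda$ with $\lambda$ singular in $C$, exactly as in the proof of \cref{lem:width_one_strip_outside_flat}.
\end{itemize}

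Drop your final paragraph. ``Graph embedding'' here must mean injective and edge-preserving, because the induced version is false. For example, $C_6$ has no leaves, so the theorem applies to Rull's quasiisometric embedding $A_{C_6}\to F_2\times F_2$ \cite{rull:embedding}. The singular boundary graph of a product of two trees is complete bipartite, and adjacent vertices must go to different parts. Hence two vertices of $C_6\subset C_6^\ext$ at distance three land in different parts and become adjacent. Your pull-back idea fails because the orthant spanned by the image rays need not lie anywhere near $f(X_\Gamma)$.
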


\section{Embeddings into products of trees} \label{sec:embeddings_into_trees}

In this section we discuss embeddings of RAAGs into products of trees and prove \cref{mthm:rull}, which turns out to be a straightforward consequence of the results in \cref{sec:paper1_for_raags}. We first prove the result under the directionally branch-complemented assumption (\cref{thm:fb_in_product_trees}), and then deduce that it holds for all 2--dimensional RAAGs, even without that assumption (\cref{cor:fb_into_products_of_trees}). 

Recall that when we refer to the dimension of $A_\Gamma$, we mean the dimension of $X_\Gamma$; see Item~\ref{sh:the_space_X_Gamma}.
In the following, we allow trees to have infinite valence. We think of $(F_2)^n$ as a product of $n$ 4--regular trees. 

\begin{theorem} \label{thm:fb_in_product_trees}
If $A_\Gamma$ is a directionally branch-complemented, $n$--dimensional RAAG, then the following are equivalent.
\begin{enumerate}
\item   $\Gamma$ is $n$--colourable.\label{item:n_colourable}
\item   $A_\Gamma$ can be equivariantly isometrically embedded in a product of $n$ trees. \label{item:ie_in_trees}
\item   $A_\Gamma$ can be quasiisometrically embedded in $(F_2)^n$.\label{item:qie_in_F_2^n}
\item   $A_\Gamma$ can be quasiisometrically embedded in a product of $n$ trees. \label{item:qie_in_trees}
\end{enumerate}
\end{theorem}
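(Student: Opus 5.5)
We will prove the cycle of implications \eqref{item:n_colourable}$\Rightarrow$\eqref{item:ie_in_trees}$\Rightarrow$\eqref{item:qie_in_trees}$\Rightarrow$\eqref{item:n_colourable}, together with \eqref{item:n_colourable}$\Rightarrow$\eqref{item:qie_in_F_2^n}$\Rightarrow$\eqref{item:qie_in_trees}. The implications \eqref{item:ie_in_trees}$\Rightarrow$\eqref{item:qie_in_trees} and \eqref{item:qie_in_F_2^n}$\Rightarrow$\eqref{item:qie_in_trees} are trivial, since $(F_2)^n$ is a product of $n$ 4--regular trees. The implication \eqref{item:n_colourable}$\Rightarrow$\eqref{item:qie_in_F_2^n} is Rull's theorem \cite{rull:embedding}. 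A priori, Rull's theorem gives a quasiisometric embedding into a product of $n$ finite-rank free groups. Each of these is quasiisometric to $F_2$ in the nonabelian case, and in the cyclic case $\mathbb Z$ quasiisometrically embeds in $F_2$. So we land in $(F_2)^n$.

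For \eqref{item:n_colourable}$\Rightarrow$\eqref{item:ie_in_trees}, we use Sageev's construction. Fix a colouring $c:V(\Gamma)\to\{1,\dots,n\}$. Let $\mathcal H_i$ be the set of hyperplanes of $X_\Gamma$ dual to edges whose label has colour $i$. Two hyperplanes of $X_\Gamma$ cross only if their labels are adjacent in $\Gamma$, hence have different colours. So each $\mathcal H_i$ is an $A_\Gamma$--invariant family of pairwise non-crossing hyperplanes. The cube complex dual to the pocset $\mathcal H_i$ is therefore a tree $T_i$, possibly of infinite valence, with an $A_\Gamma$--action. The restriction maps of ultrafilters give an equivariant map $X_\Gamma^{(0)}\to\prod_i T_i$. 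Since every hyperplane lies in exactly one $\mathcal H_i$, the $\ell^1$ distance in the product counts exactly the hyperplanes separating two vertices. This gives an equivariant isometric embedding for $d_1$.

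The main content is \eqref{item:qie_in_trees}$\Rightarrow$\eqref{item:n_colourable}. Let $f:A_\Gamma\to Y=T_1\times\cdots\times T_n$ be a quasiisometric embedding. Here $Y$ is an $n$--dimensional CAT(0) cube complex, and $A_\Gamma$ is directionally branch-complemented of clique number $n$. So \cref{thm:fully_branching_raag_to_sing_boundary} gives a graph embedding $\Gamma^\ext\to\partial_{\mathrm{sing}}Y$. Restricting to the induced copy of $\Gamma$ in $\Gamma^\ext$ gives a graph embedding $\Gamma\to\partial_{\mathrm{sing}}Y$. By the example following \cref{def:singular_boundary_graph}, $\partial_{\mathrm{sing}}Y$ is complete $n$--partite. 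Pulling back its $n$--colouring along an injective graph homomorphism yields a proper $n$--colouring of $\Gamma$: adjacent vertices map to adjacent vertices, which lie in different parts.

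The steps I expect to need care are the following.
\begin{itemize}
\item Confirm that the description of $\partial_{\mathrm{sing}}Y$ as complete $n$--partite holds for locally infinite trees. In particular, a singular ray in a product of trees is eventually contained in a single tree-factor direction up to finite Hausdorff distance, so each vertex of $\partial_{\mathrm{sing}}Y$ is assigned a well-defined factor.
\item Make sure that a "graph embedding" in \cref{thm:fully_branching_raag_to_sing_boundary} preserves adjacency. That is all the colouring argument uses.
\item Degenerate trees (for instance, lines or finite trees) cause no issue, since the colouring argument only uses that $Y$ has dimension at most $n$ and that its singular boundary graph is $n$--partite.
\end{itemize}
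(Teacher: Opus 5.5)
Your proposal is correct and follows the paper's proof essentially verbatim: (1)$\Rightarrow$(2) via Sageev's construction on the colour classes of hyperplanes, (1)$\Rightarrow$(3) via Rull's theorem, (2) and (3)$\Rightarrow$(4) trivially, and (4)$\Rightarrow$(1) by pulling back the $n$--partition of the complete $n$--partite graph $\partial_{\mathrm{sing}}(T_1\times\cdots\times T_n)$ along the embedding from \cref{thm:fully_branching_raag_to_sing_boundary}. One small correction to your last bullet: that theorem is stated for $Y$ of dimension exactly $n$, not at most $n$, so you should first replace any one-point tree factors by lines (which only enlarges $Y$ isometrically) before applying it.
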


We first prove the following lemma, which is presumably well-known. It shows that \ref{item:n_colourable} implies \ref{item:ie_in_trees} in the theorem above.

\begin{lemma} \label{lem:colourable_isometric_embedding}
If $\Gamma$ is an $n$--colourable graph, then $A_\Gamma$ can be equivariantly isometrically embedded in a product of $n$ trees.
\end{lemma}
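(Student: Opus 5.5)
Fix an $n$--colouring $c:V(\Gamma)\to\{1,\dots,n\}$. For each colour $i$, let $\Gamma_i=c^{-1}(i)$ be the set of vertices of that colour; it is an independent set, so $A_{\Gamma_i}$ is free on $\Gamma_i$. I propose to build, for each $i$, a tree $T_i$ with an $A_\Gamma$--action, and then to show that the diagonal orbit map $A_\Gamma\to T_1\times\cdots\times T_n$ (with the $\ell^1$ product metric) is an isometric embedding with respect to the word metric on the standard generators, or equivalently $d_1$ on the 1--skeleton of $X_\Gamma$.

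The trees come from Sageev's construction applied to families of hyperplanes. Every hyperplane of $X_\Gamma$ is dual to edges carrying a single label $s$, so it has a well-defined label, and hence a colour $c(s)$. Let $\mathcal H_i$ be the set of hyperplanes whose label has colour $i$.

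The first key step is to check that no two hyperplanes in $\mathcal H_i$ cross. Two hyperplanes that cross span a square, so their labels are adjacent vertices of $\Gamma$, and in particular are distinct. Adjacent vertices receive different colours, so the two hyperplanes cannot both lie in $\mathcal H_i$. Thus $\mathcal H_i$ is a family of pairwise non-crossing hyperplanes. Dual cubulation of $X_\Gamma$ with respect to $\mathcal H_i$, or equivalently collapsing every edge whose label is not of colour $i$, therefore produces a CAT(0) cube complex of dimension one, that is, a tree $T_i$. Since the labelling is $A_\Gamma$--invariant, $\mathcal H_i$ is an $A_\Gamma$--invariant family, so $A_\Gamma$ acts on $T_i$ and the projection $\pi_i:X_\Gamma\to T_i$ is equivariant.

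For vertices $x,y$ of $X_\Gamma$, the distance $d_{T_i}(\pi_i x,\pi_i y)$ is the number of hyperplanes in $\mathcal H_i$ that separate $x$ from $y$. The families $\mathcal H_1,\dots,\mathcal H_n$ partition the set of all hyperplanes, and $d_1(x,y)$ counts all separating hyperplanes. Summing over $i$ therefore gives $d_1(x,y)=\sum_i d_{T_i}(\pi_i x,\pi_i y)$. So the map $\pi=(\pi_1,\dots,\pi_n)$ is an equivariant isometric embedding into the $\ell^1$ product of trees. This extends cube-by-cube to $X_\Gamma$, since a $k$--cube carries $k$ distinct colours and so maps isometrically onto a product of edges in distinct factors.

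The only point needing care is the precise definition of the quotient and confirming that it is a tree, and this is the main obstacle. Collapsing all edges labelled outside $\Gamma_i$ is the same as taking the Bass--Serre-style quotient in which vertices are the components after deleting the $\mathcal H_i$--hyperplanes. One could worry that the quotient fails to be simply connected or fails to be a tree. This is avoided by citing the standard fact that the restriction quotient of a CAT(0) cube complex to a family of hyperplanes is CAT(0) and has dimension equal to the maximal number of pairwise-crossing hyperplanes in that family, which here is one. I expect that, if the statement "isometric" is meant with respect to $d_2$ rather than $d_1$, one should use the $\ell^2$ product of the trees instead; the same cube-by-cube argument applies to give the isometric embedding in that case.
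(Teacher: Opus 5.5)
Your argument is correct and is essentially the paper's proof: the hyperplanes whose labels have colour $i$ form an $A_\Gamma$--invariant family of pairwise non-crossing hyperplanes, and Sageev's construction turns each family into a tree. Since the colour classes partition all hyperplanes, the tree distances add up to the word metric.

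Your closing aside about $d_2$ is false, however. Take $\Gamma$ the path $a$--$b$--$c$--$d$ coloured $\{a,c\},\{b,d\}$. The vertices $a^n$ and $d^n$ are at $d_2$--distance $2n$ in $X_\Gamma$, because the angle at $1$ is $\pi$. Their images in the $\ell^2$ product $T_1\times T_2$ are only at distance $n\sqrt2$, so the lemma genuinely concerns the word ($\ell^1$) metric.
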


\begin{proof}
Recall that $A_\Gamma$ is the 0--skeleton of the CAT(0) cube complex $X_\Gamma$. Given an $n$--colouring of $\Gamma$ with colours $c_1,\dots,c_n$, let $H_i$ be the set of hyperplanes of $X_\Gamma$ dual to edges labelled by generators corresponding to vertices of colour $i$. Note that no two elements of $H_i$ cross, and each $H_i$ is $A_\Gamma$-invariant. 

Let $T_i$ denote the graph with: a vertex $v_C$ for each connected component $C$ of the complement $X_\Gamma\ssm\bigcup_{h\in H_i}h$; and an edge from $v_C$ to $v_{C'}$ whenever $C$ and $C'$ are separated by a single hyperplane. Since no two elements of $H$ cross, the graph $T_i$ is a tree. (This is a special case of Sageev's construction.) 

There is a natural 1--Lipschitz map $f_i:A_\Gamma\to T_i$ given by sending $x\in A_\Gamma\subset X_\Gamma$ to the vertex $v_C$ of $T_i$ such that $x\in C$. This map is $A_\Gamma$-equivariant, because $H_i$ is $A_\Gamma$-invariant. Assembling the $f_i$ together gives an equivariant map $f:A_\Gamma\to\prod_{i=1}^nT_i$. The distance between two elements of $A_\Gamma$ is equal to the number of hyperplanes that separate them, which is easily seen to be the same as the distance between their images under $f$. Thus $f$ is an isometric embedding.
\end{proof}

Note that the trees constructed in the proof of \cref{lem:colourable_isometric_embedding} are almost always locally infinite.

\begin{proof}[Proof of \cref{thm:fb_in_product_trees}]
It is obvious that \ref{item:ie_in_trees} and \ref{item:qie_in_F_2^n}, both imply \ref{item:qie_in_trees}.

Assuming \ref{item:n_colourable} (that $\Gamma$ is $n$--colourable), \cref{lem:colourable_isometric_embedding} shows that $A_\Gamma$ isometrically embeds in a product of $n$ trees, and \cite[Thm~1.1]{rull:embedding} shows that $A_\Gamma$ quasiisometrically embeds in $F_2^n$. Thus \ref{item:n_colourable} implies \ref{item:ie_in_trees} and \ref{item:qie_in_F_2^n} hold.

We are left to show that \ref{item:qie_in_trees} implies \ref{item:n_colourable}. Suppose that $A_\Gamma$ quasiisometrically embeds in a product of $n$ trees. By \cref{thm:fully_branching_raag_to_sing_boundary}, the quasiisometric embedding induces an embedding of $\Gamma$ in $\partial_{\mathrm{sing}}(\prod_{i=1}^n T_\infty)$, which is a complete $n$--partite graph. Hence $\Gamma$ is $n$--colourable.
\end{proof}

Note that only the implication \ref{item:qie_in_trees} $\Rightarrow$ \ref{item:n_colourable}  used the directionally branch-complemented assumption.

\begin{corollary} \label{cor:fb_subgraphs}
If an $n$--dimensional RAAG $A_\Gamma$ admits a quasiisometric embedding in a product of $n$ trees, then every directionally branch-complemented induced subgraph of $\Gamma$ is $n$--colourable.
\end{corollary}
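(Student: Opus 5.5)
The plan is to restrict the given quasiisometric embedding to the special subgroup on the subgraph and then run the argument of \cref{thm:fb_in_product_trees}. Let $\Lambda\subset\Gamma$ be a directionally branch-complemented induced subgraph. The first issue is dimension. \cref{thm:fully_branching_raag_to_sing_boundary} requires the domain graph and the codomain cube complex to have the same dimension. Being directionally branch-complemented in the sense of \cref{def:fully_branching_in_graphs} refers to the ambient clique number $n$ of $\Gamma$. So I will treat $\Lambda$ as a directionally branch-complemented subgraph of $\Gamma$ and work in $X_\Gamma$, rather than passing to $A_\Lambda$ and recomputing its clique number.

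Concretely, let $f:X_\Gamma\to T_1\times\cdots\times T_n$ be the quasiisometric embedding. The codomain is an $n$--dimensional CAT(0) cube complex, so \cref{thm:fully_branching_raag_rephrased} applies.

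\textbf{Step 1.} For each vertex $v$ of $\Lambda$, item (2) of \cref{thm:fully_branching_raag_rephrased} applies: $v$ is branch-complemented, so $\{v\}$ is a branching $1$--clique in some directionally branch-complemented $n$--clique $K$. Hence every standard geodesic labelled by $v$ is sent within finite Hausdorff distance of a singular geodesic in $\prod T_i$. Taking positive rays gives a map from the vertices of $\Lambda\subset\Gamma^\ext$ to vertices of $\partial_{\mathrm{sing}}(\prod T_i)$.

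\textbf{Step 2.} Show this map sends edges to edges. Take an edge $\{v,w\}$ of $\Lambda$. The standard $2$--flat at the identity spanned by $v$ and $w$ is quasiisometrically embedded. Its two standard rays go near singular rays $\rho_v,\rho_w$, and these must span a $2$--orthant. I would get this from the proof of \cite[Cor.~10.13]{baderbensaidpetyt:from}. Alternatively, in the concrete case of a product of trees, I would argue directly as follows. A singular ray in $\prod T_i$ lies in a single tree factor direction, being a ray in some $T_i$ times a point. If $\rho_v$ and $\rho_w$ lay in the same factor $T_i$, then the quarter-flat spanned by the two standard rays would map coarsely into a neighbourhood of a subset of $T_i\times(\text{bounded})$ near the union of two rays. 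Such a set is coarsely a tree, which contradicts the quasiisometric embedding of a $2$--dimensional orthant.

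\textbf{Step 3.} Assign to $v$ the index $i$ such that $\rho_v$ is parallel to a ray in the factor $T_i$. By Step 2, adjacent vertices of $\Lambda$ get distinct indices. This is a proper $n$--colouring of $\Lambda$, as in the last paragraph of the proof of \cref{thm:fb_in_product_trees}, using that $\partial_{\mathrm{sing}}$ of a product of $n$ trees is complete $n$--partite.

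The main obstacle is Step 2: guaranteeing that adjacent vertices of $\Lambda$ go to distinct parts without invoking the full-graph hypothesis of \cref{thm:fully_branching_raag_to_sing_boundary}. The direct coarse argument in Step 2, that the image of a $2$--orthant cannot sit near a single tree factor, is what I would try first, since it needs only Step 1 and elementary product-of-trees geometry.
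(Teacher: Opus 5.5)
Your Step~2 has a genuine gap. The direct argument is false, because the images of the two boundary rays of the quarter-flat give no control over the image of its interior.

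\textbf{A counterexample to the rays-only argument.} In polar coordinates, the angle-doubling map $(r,\theta)\mapsto(r,2\theta)$ is a $2$--bilipschitz map from $[0,\infty)^2$ onto a closed half-plane. That half-plane embeds isometrically in $T_1\times T_2$ as $\ell\times\sigma$, where $\ell=r_1\cup r_2$ is a line in $T_1$ and $\sigma$ is a ray in $T_2$ from $p$. The composite quasiisometrically embeds the orthant, sending its boundary rays onto $r_1\times\{p\}$ and $r_2\times\{p\}$. Both are singular rays in the factor $T_1$, yet the image is not near $T_1\times(\text{bounded})$.

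\textbf{Full lines and finite control do not suffice either.} Using the full standard flat and the full lines $\sgen{s_v},\sgen{s_w}$ does not help by itself. Take the cone over an $8$--cycle alternating between four rays of $T_1$ and four rays of $T_2$, and double angles onto it, so that $\sgen{s_v}$ and $\sgen{s_w}$ go to the two lines formed by pairs of opposite $T_1$--rays. This is a quasiisometric embedding of $\R^2$ in which both axes go to singular geodesics of $T_1$. Moreover, every line parallel to an axis goes within finite, but linearly growing, Hausdorff distance of a singular geodesic. So neither a rays-only argument nor the merely finite control stated in your Step~1 can give Step~2. Pointing to the proof of \cite[Cor.~10.13]{baderbensaidpetyt:from} defers the step rather than proving it.

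\textbf{What is missing: uniformity over the whole parallel family.} \cref{thm:fully_branching_raag_rephrased} comes with a single constant $D=D(n,q)$. So every line $s_w^t\sgen{s_v}$ (and $s_v^t\sgen{s_w}$) of the flat $\sgen{s_v,s_w}$ is sent within $D$ of a singular geodesic. You can then finish in either of two ways.
\begin{itemize}
\item Invoke \cite[Prop.~4.3]{baderbensaidpetyt:from} as in \cref{prop:EXT_of_images_of_SFB_geodesics}. This gives a singular $2$--flat, which in $\prod T_i$ has the form $\ell_i\times\ell_j\times\{\mathrm{pt}\}$ with $i\ne j$.
\item Argue directly. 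Images of consecutive lines $s_w^t\sgen{s_v}$ are at bounded Hausdorff distance, so their singular geodesics are parallel. Hence they all have the form $\ell\times\{p_t\}$ for a single line $\ell\subset T_i$. Suppose $f(\sgen{s_w})$ were within $D$ of some $m\times\{p'\}$ with $m\subset T_i$. Comparing at the points $f(s_w^t)$ gives $\dist(p_t,p')\le2D$. Then $f(\sgen{s_v,s_w})$ would lie in the $3D$--neighbourhood of the line $\ell\times\{p'\}$, which is impossible for a quasiisometric embedding of $\Z^2$.
\end{itemize}
With this in place, Step~3 goes through.

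\textbf{Check the input for Step~1.} Item~(2) is stated for branching cliques inside a directionally branch-complemented $n$--clique. For $n\ge3$, a branch-complemented vertex need not lie in such a clique. For example, take $v$ lying in exactly the triangles $\{v,u,u'\}$ and $\{v,y,z\}$, together with one more triangle $\{x,u,u'\}$. So you need a uniform statement about branch-complemented standard geodesics themselves.

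\textbf{Comparison with the paper.} The paper gives no separate proof; it reads the corollary off the boundary-embedding argument for (4)$\Rightarrow$(1) in \cref{thm:fb_in_product_trees}. There, adjacency in $\partial_{\mathrm{sing}}$ amounts to the assertion that standard $2$--flats go near $2$--flats. That is exactly the ingredient your Step~2 lacks.
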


A graph is 2-colourable if and only if it does not contain an induced odd cycle. Since cycles of length at least four are directionally branch-complemented (\cref{ex:fully_branching_examples}), by repeatedly deleting leaves we obtain the following as a consequence of \cref{thm:fb_in_product_trees} and \cref{cor:fb_subgraphs}.

\begin{corollary} \label{cor:fb_into_products_of_trees}
If $A_\Gamma$ is a 2--dimensional RAAG, then the following are equivalent. 
\begin{enumerate}
\item   $\Gamma$ is $2$--colourable.
\item   $A_\Gamma$ can be equivariantly isometrically embedded in a product of two trees. 
\item   $A_\Gamma$ can be quasiisometrically embedded in $F_2\times F_2$.
\item   $A_\Gamma$ can be quasiisometrically embedded in a product of two trees. 
\end{enumerate}
\end{corollary}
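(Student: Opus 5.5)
The forward implications need no branching hypothesis, so I would dispose of them first. If $\Gamma$ is $2$--colourable, then \cref{lem:colourable_isometric_embedding} gives an equivariant isometric embedding into a product of two trees. Rull's theorem \cite[Thm~1.1]{rull:embedding} gives a quasiisometric embedding into $F_2\times F_2$. Both of these trivially imply the last item.

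The real content is that item (4) implies item (1). Suppose $A_\Gamma$ quasiisometrically embeds in a product of two trees but $\Gamma$ is not $2$--colourable. A graph is $2$--colourable exactly when it contains no odd cycle, and a shortest odd cycle is always induced. So $\Gamma$ contains an induced cycle $C_m$ with $m$ odd. Since $\Gamma$ is triangle-free, we have $m\ge5$.

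The induced subgraph $C_m$ spans a subgroup $A_{C_m}\le A_\Gamma$. For an induced subgraph, the inclusion $X_{C_m}\to X_\Gamma$ is an isometric embedding of a convex subcomplex in the $\ell^1$ metric, so it is in particular a quasiisometric embedding. Composing gives a quasiisometric embedding of $A_{C_m}$ into a product of two trees. Now $C_m$ has clique number two and no leaves, so by the remark after \cref{def:fully_branching_in_graphs} it is directionally branch-complemented. \cref{thm:fb_in_product_trees}, applied with $n=2$, then says $C_m$ is $2$--colourable, which contradicts $m$ being odd. Equivalently, one can apply \cref{cor:fb_subgraphs} directly to the induced subgraph $C_m$ of $\Gamma$.

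The only point needing care is dimension bookkeeping. \cref{thm:fb_in_product_trees} is stated for $n$--dimensional RAAGs mapping into a product of $n$ trees. The subgroup $A_{C_m}$ is $2$--dimensional, matching the two trees. We also need $\Gamma$ itself to be $2$--dimensional and not $1$--dimensional; if $\Gamma$ is edgeless it is trivially $2$--colourable. So the main obstacle is really just confirming the undistortedness of the induced-subgraph subgroup and that odd cycles of length at least five have no leaves. The remark about "repeatedly deleting leaves" suggests an alternative route: pass to the leafless core of $\Gamma$, which is directionally branch-complemented and has the same odd cycles. The odd-cycle argument avoids needing that core to have clique number exactly two in every component.
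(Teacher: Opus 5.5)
Your proposal is correct and follows essentially the same route as the paper: the forward implications come from \cref{lem:colourable_isometric_embedding} and Rull's theorem. For (4)$\Rightarrow$(1), the paper likewise observes that an induced odd cycle (necessarily of length at least five) is directionally branch-complemented, so \cref{thm:fb_in_product_trees} and \cref{cor:fb_subgraphs} rule it out, with your check that $A_{C_m}$ is undistorted filling in a detail the paper leaves implicit.
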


The following can be viewed as a geometric version of \cite[Cor.~8.1(2)]{kimkoberda:embedability}, which states that if $\Gamma$ and $\Lambda$ are finite graphs such that $\Lambda$ is bipartite and $A_\Gamma<A_\Lambda$, then $\Gamma$ is bipartite.

\begin{corollary}
Let $\Lambda$ be a bipartite graph. If a right-angled Artin group $A_\Gamma$ quasiisometrically embeds in $A_\Lambda$, then $\Gamma$ is bipartite. 
\end{corollary}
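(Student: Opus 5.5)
The plan is to reduce to \cref{cor:fb_into_products_of_trees} via \cref{lem:colourable_isometric_embedding}. Since $\Lambda$ is bipartite, it is $2$--colourable, so by \cref{lem:colourable_isometric_embedding} there is an isometric embedding $A_\Lambda\to T_1\times T_2$ into a product of two trees. Composing with the given quasiisometric embedding $A_\Gamma\to A_\Lambda$ yields a quasiisometric embedding $A_\Gamma\to T_1\times T_2$.

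The first step is to dispose of degenerate cases. If $\Lambda$ has no edges, then $A_\Lambda$ is free, hence hyperbolic. A quasiisometrically embedded $A_\Gamma$ is then hyperbolic, so it cannot contain $\mathbb Z^2$. Thus $\Gamma$ has no edges and is trivially bipartite. Otherwise $A_\Lambda$ is $2$--dimensional. The dimension of $A_\Gamma$ is at most $2$, because a quasiisometric embedding cannot increase asymptotic rank, and the asymptotic rank of $X_\Gamma$ equals its dimension (Item~\ref{sh:the_space_X_Gamma}). Hence $\Gamma$ is triangle-free. If $\Gamma$ has no edges, we are done; otherwise $A_\Gamma$ is a $2$--dimensional RAAG.

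In the remaining case, \cref{cor:fb_into_products_of_trees} (item (4) $\Rightarrow$ item (1)) applies to the quasiisometric embedding $A_\Gamma\to T_1\times T_2$ and shows that $\Gamma$ is $2$--colourable, that is, bipartite. If one prefers to avoid finite generation issues, note that \cref{thm:fully_branching_raag_to_sing_boundary} does not require it. Alternatively, one can argue directly: an induced odd cycle $C\subset\Gamma$ has length at least $5$ since $\Gamma$ is triangle-free, so it is directionally branch-complemented. Restricting to the undistorted subgroup $A_C$ then gives an embedding of $C$ into $\partial_{\mathrm{sing}}(T_1\times T_2)$, which is bipartite, a contradiction.

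The only step needing care is the dimension bound $\omega_\Gamma\le 2$. For this I would cite the standard fact that a quasiisometric embedding between proper CAT(0) spaces cannot increase asymptotic rank: an asymptotic cone of a $k$--flat would bi-Lipschitz embed into an asymptotic cone of $X_\Lambda$, whose topological dimension is at most $2$. Everything else is a direct composition of results already established.
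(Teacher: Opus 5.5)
Your proof is correct and follows the paper's argument: embed $A_\Lambda$ isometrically in a product of two trees via \cref{lem:colourable_isometric_embedding}, deduce that $A_\Gamma$ has dimension at most two, and apply \cref{cor:fb_into_products_of_trees}. Your separate treatment of edgeless $\Lambda$, your justification of the dimension bound, and your direct odd-cycle argument are harmless extra detail that the paper leaves implicit.
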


\begin{proof}
Since $A_\Lambda$ quasiisometrically embeds in a product of two trees, so does $A_\Gamma$. Thus $A_\Gamma$ has dimension at most two. \cref{cor:fb_into_products_of_trees} now tells us that $\Gamma$ is bipartite.
\end{proof}



In view of \cref{thm:fb_in_product_trees}, it is natural to ask about isometrically embedding RAAGs in finite products of finite-valence trees. We shall prove in \cref{cor:ie_prod_free} that this is only possible if the RAAG is itself a product of free groups.

\begin{proposition} \label{prop:no_ie_in_finite_valence}
The group $\mathbb{Z}^2*\mathbb{Z}=\langle a,b,c\mid aba^{-1}b^{-1}\rangle$ with its standard word metric does not admit an isometric embedding in a finite product of locally finite trees with the $\ell^1$--metric.
\end{proposition}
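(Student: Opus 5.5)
The plan is to turn an isometric embedding into a combinatorial colouring of the hyperplanes of $X_\Gamma$, for $\Gamma$ the graph with vertices $a,b,c$ and a single edge $ab$, and then show that local finiteness of the trees cannot be respected.

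\textbf{Step 1: walls in the trees pull back to hyperplanes.} Suppose $f=(f_1,\dots,f_n):A_\Gamma\to\prod_{i=1}^n T_i$ is an isometric embedding for the $\ell^1$--metric. Since distances add coordinatewise, each coordinate of $f$ restricted to a combinatorial geodesic of $X_\Gamma$ is a geodesic in $T_i$, so it never backtracks. Each edge $e$ of $T_i$ separates $T_i$ into two halfspaces, and pulls back to a partition of $A_\Gamma$. I will show this partition is of the form $\{U,U^c\}$ with $U$ and $U^c$ convex, so that a geodesic crosses it at most once. In the median graph $X_\Gamma^{(1)}$, a nontrivial convex set with convex complement is a halfspace of a hyperplane.

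Now $d(x,y)$ equals the number of pulled-back walls separating $x$ and $y$, counted over all trees. It also equals the number of hyperplanes of $X_\Gamma$ separating $x$ and $y$. Comparing these two counts on adjacent vertices shows that every hyperplane of $X_\Gamma$ arises from exactly one edge of exactly one $T_i$. This gives a colouring $\kappa:\{\text{hyperplanes}\}\to\{1,\dots,n\}$. Walls coming from a single tree are nested or disjoint, so hyperplanes of the same colour never cross.

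\textbf{Step 2: local finiteness becomes a finiteness condition on colour classes.} Fix a colour $k$ and a complementary component $C$ of the colour--$k$ hyperplanes. Then $f_k$ is constant on $C\cap A_\Gamma$ with value some vertex $v$. Distinct colour--$k$ hyperplanes adjacent to $C$ correspond to distinct edges of $T_k$ at $v$, because no two of them are pulled back from the same edge. Local finiteness of $T_k$ therefore forces every such region $C$ to be adjacent to only finitely many colour--$k$ hyperplanes. The goal is to contradict this.

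\textbf{Step 3: the combinatorial contradiction, which I expect to be the main obstacle.} Work in the standard flat $F=\langle a,b\rangle$ through $1$. Every $a$--hyperplane of $F$ crosses every $b$--hyperplane of $F$. Hence the set $\mathcal A$ of colours used by $a$--hyperplanes of $F$ is disjoint from the set $\mathcal B$ of colours used by $b$--hyperplanes of $F$.

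At each point $a^ib^j$ of $F$ there is a $c$--hyperplane $h_{ij}$, dual to the edge from $a^ib^j$ to $a^ib^jc$, and these hyperplanes are pairwise distinct. Consider the row $\{a^i\}_{i\in\mathbb Z}$. If infinitely many of the $h_{i0}$ share a colour $k\notin\mathcal A$, then no colour--$k$ hyperplane separates these points. Indeed, any such hyperplane would have to be an $a$--hyperplane of $F$, since it separates two points of the row. So all of these $h_{i0}$ are adjacent to a single colour--$k$ region, contradicting Step 2. The symmetric argument applies to columns with $\mathcal B$.

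So in each row only finitely many $h_{ij}$ have colours outside $\mathcal A$. In each column only finitely many have colours outside $\mathcal B$, and hence only finitely many have colours in $\mathcal A$. As written, this pair of conditions is not yet contradictory: a "staircase" pattern satisfies both. I therefore need to sharpen the argument, and I will try two refinements in order.

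First, I will use the finer region structure. Within a row, a fixed colour $k\in\mathcal A$ cuts the row only at the colour--$k$ $a$--hyperplanes. I would aim to bound, in terms of the valence at the corresponding tree vertex, how many $h_{i0}$ of colour $k$ lie between two consecutive colour--$k$ $a$--hyperplanes. I would then play this bound off against the column condition inside an $N\times N$ box, with a density count as $N\to\infty$.

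If that is insufficient, I will use the other flats $gF$ attached at the points $a^ib^jc$. I will also use the hyperplanes dual to $c$ that separate $F$ from those flats, which are not crossed by anything. The aim is to propagate the constraints and force some colour class to have infinitely many hyperplanes adjacent to one region, again contradicting Step 2.
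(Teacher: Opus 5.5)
Your Steps 1 and 2 are correct, and they repackage the paper's set-up cleanly: each hyperplane of $X_\Gamma$ is pulled back from exactly one edge of exactly one tree, and same-coloured hyperplanes do not cross. Local finiteness of $T_k$ then says that each complementary region of the colour--$k$ hyperplanes is adjacent to only finitely many colour--$k$ hyperplanes. The gap is Step 3, which, as you note yourself, ends without a contradiction.

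Your first refinement cannot succeed, because the data it uses is realisable. The subcomplex $Z$ consisting of $F$ together with all edges $(a^ib^j,a^ib^jc)$ embeds isometrically in a product of two locally finite trees:
\begin{itemize}
\item let $T_1$ be a line with, at its vertex $i$, a leaf $p_{ij}$ for each $j$ with $|j|<|i|$;
\item let $T_2$ be a line with, at its vertex $j$, a leaf $q_{ij}$ for each $i$ with $|i|\le|j|$;
\item send $a^ib^j\mapsto(i,j)$, and send $a^ib^jc$ to $(p_{ij},j)$ if $|j|<|i|$ and to $(i,q_{ij})$ otherwise.
\end{itemize}
This is exactly your staircase. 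One checks directly that it realises $d(a^ib^jc,a^{i'}b^{j'})=|i-i'|+|j-j'|+1$ and $d(a^ib^jc,a^{i'}b^{j'}c)=|i-i'|+|j-j'|+2$. So no valence or density count inside $F$ and its pendant $c$--edges can give a contradiction; the argument must leave $F$.

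The missing idea, which is what the paper does, is an iteration that forces a new colour at every stage. It runs entirely on your Step 2.
\begin{itemize}
\item Let $i_1$ be the colour of the hyperplane dual to $(1,a)$. Every hyperplane dual to an edge of $\langle b\rangle$ crosses it, so $\{b^k\}_{k\ge0}$ lies in one colour--$i_1$ region.
\item By Step 2, only finitely many of the $c$--edges at these points have colour $i_1$. So infinitely many share a colour $j_1\ne i_1$, and the endpoints $b^kc$ of these edges all lie in one colour--$i_1$ region $C$.
\item Along each ray $b^kc\langle a\rangle^+$ and $b^kc\langle b\rangle^+$, the first colour--$i_1$ hyperplane (if any) is adjacent to $C$, and these hyperplanes are pairwise distinct. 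By Step 2 there is therefore $k_1$ such that both positive rays at $g_1=b^{k_1}c$ avoid colour $i_1$ altogether.
\item The paper phrases this last step via two rays with the same first $T_{i_1}$--edge: joined through $c^{-1}b^{k'-k}c$, they give a geodesic whose image crosses a hyperplane twice.
\item Now $(g_1,g_1a)$ has a colour $i_2\ne i_1$, and the positive $b$--ray at $g_1$ avoids both $i_1$ and $i_2$. The same argument, using Step 2 for both colours, produces $g_2=g_1b^{k_2}c$ whose positive rays avoid $i_1,i_2$ and whose $a$--edge has a third colour $i_3$.
\end{itemize}
After $n$ stages this exhausts the $n$ available colours, a contradiction. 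Your second refinement points in this direction, but without this mechanism the proposal does not yet prove the statement.
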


\begin{proof}
Let $A=\sgen{a,b,c\mid aba^{-1}b^{-1}}\cong \mathbb{Z}^2*\mathbb{Z}$.
Suppose that $f:A\to\prod_{i\in I}T_i$ is an isometric embedding, where $T_i$ is a locally finite tree for each $i\in I$. We will show that $I$ is infinite. Let $Y=\prod_{i\in I}T_i$. Since $A$ is the 1--skeleton of the CAT(0) cube complex $X$, we can canonically extend $f$ to an isometric embedding $f:X\to Y$. The distance between two elements of $A$ is equal to the number of hyperplanes of $X_\Gamma$ that separate them, and their $f$--images must be separated by the same number of hyperplanes in $Y$. In particular, if $\gamma\subset A$ is a geodesic, then $f(\gamma)$ cannot cross any hyperplane of $Y$ twice.

Both $f(\sgen a)$ and $f(\sgen b)$ are geodesics. For an edge $e\subset A$, let $h_e$ be the hyperplane of $Y$ dual to the edge $f(e)$. Let $T_{i_1}$ be the tree corresponding to the edge $(f(1),f(a))$, that is, the unique tree into which the edge projects injectively. Since $a$ and $b$ commute, the hyperplane $h_{(1,a)}$ crosses $h_{(b^k,b^{k+1})}$ for all $k$. In particular, none of the $h_{(b^k,b^{k+1})}$ come from $T_{i_1}$. Because of this, the projection of $f(b^k)$ to $T_{i_1}$ is equal to the projection of $f(1)$ for all $k$.

Consider the edges $(f(b^k),f(b^kc))$ of $Y$, with $k\in\mathbb N$. Each one has a vertex whose projection to $T_{i_1}$ is equal to the projection of $f(1)$. Since $T_{i_1}$ is locally finite, only finitely many of these edges can come from $T_{i_1}$. Thus there is some $j_1\ne i_1$ such that infinitely many of them come from $T_{j_1}$. Let $K_1=\{k\in\mathbb N\,:\,(f(b^k),f(b^kc))\text{ comes from }T_{j_1}\}$. Note that if $k\in K_1$, then the projection to $T_{i_1}$ of $f(b^kc)$ is equal to that of $f(b^k)$ and hence to that of $f(1)$.

For $k\in K_1$, consider the geodesic rays $f(b^kc\sgen a^+)$ and $f(b^kc\sgen b^+)$. We claim that only finitely many such rays can have edges that come from $T_{i_1}$. Indeed, if there were infinitely many then there would be two such rays $\gamma_1$ and $\gamma_2$ such that the first edge of $\gamma_1$ coming from $T_{i_1}$ comes from the same edge of $T_{i_1}$ as the first edge of $\gamma_2$. But if that happened then the concatenation of $\gamma_1$ and $\gamma_2$, which is a geodesic in $A$ would satisfy that its $f$--image crosses some hyperplane of $Y$ twice, contradicting the assumption that $f$ is an isometric embedding.

We have shown that there is some $k_1\in K_1$ such that no geodesic ray $f(b^{k_1}c\sgen a^+)$ or $f(b^{k_1}c\sgen b^+)$ has an edge coming from $T_{i_1}$. Let $T_{i_2}$ be the tree corresponding to the edge $(f(b^{k_1}c),f(b^{k_1}ca))$. Note that $i_2\ne i_1$. In particular, $|I|\ge2$ 

We now repeat the above argument. Namely, $h_{(b^{k_1}c,b^{k_1}ca)}$ crosses $h_{(b^{k_1}cb^k,b^{k_1}cb^{k+1})}$ for all $k$, and by local finiteness of $T_{i_1}$ and $T_{i_2}$ there is some $j_2\notin\{i_1,i_2\}$ such that infinitely many of the edges $(f(b^{k_1}cb^k),f(b^{k_1}cb^kc))$ come from $T_{j_2}$. Letting $K_2$ be the set of such $k$, we can argue as above, again using local finiteness of $T_{i_1}$ and $T_{i_2}$, that there is some $k_2\in K_2$ such that no geodesic ray $f(b^{k_1}cb^{k_2}c\sgen a^+)$ or $f(b^{k_1}cb^{k_2}c\sgen b^+)$ has an edge coming from either $T_{i_1}$ or $T_{i_2}$. Let $T_{i_3}$ be the tree corresponding to the edge $(f(b^{k_1}cb^{k_2}c),f(b^{k_1}cb^{k_2}ca))$. Noting that $i_3\notin\{i_1,i_2\}$, we must have $|I|\ge3$.

By iterating this process, we see that the set $I$ indexing the product of trees $Y=\prod_{i\in I}T_i$ cannot be finite.
\end{proof}

The proof of the following lemma is purely graph-theoretic. A \emph{parabolic} subgroup of a RAAG $A_\Gamma$ is a subgroup generated by a subset of the standard generators of $A_\Gamma$.

\begin{lemma} \label{lem:tree_flats_parabolic}
Let $A_\Gamma$ be a finitely generated right-angled Artin group. Either $A_\Gamma$ is a direct product of free groups, or $\Z^2*\Z$ is a parabolic subgroup of $A_\Gamma$.
\end{lemma}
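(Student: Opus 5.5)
The plan is to translate both conditions into properties of the finite graph $\Gamma$. A parabolic subgroup generated by a vertex set $V'$ is the RAAG on the induced subgraph $\Gamma[V']$. So $\Z^2*\Z$ is a parabolic subgroup exactly when $\Gamma$ contains an induced subgraph on three vertices $a,b,c$ with $a\sim b$, $c\not\sim a$ and $c\not\sim b$; call this an induced $K_2\sqcup K_1$. Similarly, $A_\Gamma$ is a direct product of free groups exactly when $\Gamma$ is a join of edgeless graphs (a single vertex gives $\Z$, a free group of rank one). So the lemma reduces to the following claim: if $\Gamma$ has no induced $K_2\sqcup K_1$, then $\Gamma$ is a complete multipartite graph.

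To prove the claim, I use the relation ``nonadjacent or equal'' on the vertices of $\Gamma$. A graph is complete multipartite exactly when this relation is an equivalence relation; the parts are then the equivalence classes, and any two distinct classes are completely joined. Reflexivity and symmetry are automatic, so only transitivity needs checking. Suppose $x\not\sim y$ and $y\not\sim z$ with $x,y,z$ distinct, but $x\sim z$. Then $\{x,z,y\}$ induces an edge $xz$ together with the isolated vertex $y$, which is an induced $K_2\sqcup K_1$. This contradicts the hypothesis, so transitivity holds. The parts are then edgeless and pairwise joined, which gives the direct product decomposition $A_\Gamma\cong F_{k_1}\times\cdots\times F_{k_m}$, where $k_i$ is the size of the $i$-th part.

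There are a few edge cases to handle. If $\Gamma$ is empty, $A_\Gamma$ is trivial, which is an empty product. If $\Gamma$ is edgeless, $A_\Gamma$ is a single free group, which also falls under the first alternative. Finite generation is needed so that the parts are finite and the factors are finite-rank free groups. Beyond that, the argument is purely graph-theoretic and I do not expect any step to be a real obstacle. The only point needing care is the identification of parabolic subgroups with RAAGs on induced subgraphs, which is standard.
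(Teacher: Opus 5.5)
Your proof is correct and is essentially the paper's argument: the paper also defines $u\sim v$ to mean that $u=v$ or $u,v$ are nonadjacent, gets transitivity from the absence of an induced edge plus isolated vertex, and reads off $\Gamma=C_1*\cdots*C_k$ with edgeless classes $C_i$. Your transitivity step, in which $xz$ is the edge and $y$ is the isolated vertex, is stated more accurately than the paper's version, which labels the wrong pair as the edge.
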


\begin{proof}
Assume $A_\Gamma$ does not contains a parabolic $\Z^2*\Z$. 
Equivalently, $\Gamma$ does not contain the disjoint union of an edge and a vertex as an induced subgraph. We will refer to this as the \emph{graph assumption}. We need to show that $\Gamma$ decomposes as a join of edgeless graphs.

Consider the following relation $\sim$ on the vertices of $\Gamma$:
$$
u \sim v \iff u=v \text{ or } u \text{ is not adjacent to } v.
$$
The relation $\sim$ is reflexive and symmetric by definition.
It is also transitive. Indeed, assume that $u \sim v$ and $v \sim w$. If $u \not\sim w$, then $\{u,v\}$ forms an edge of $\Gamma$ and the induced subgraph on $\{u,v,w\}$ is a disjoint union of that edge and $\{w\}$, in contradiction to the graph assumption. Thus $\sim$ is an equivalence relation.

Let $C_1,\dots,C_k$ be the equivalence classes of $\sim$. By definition, each $C_i$ has no edges. Moreover, if $i\neq j$, then every vertex of $C_i$ is adjacent to every vertex of $C_j$, otherwise two non-adjacent vertices in distinct classes would be equivalent. Therefore $\Gamma=C_1 * \cdots * C_k$,
and hence $A_\Gamma=A_{C_1}\times \cdots \times A_{C_k}$ is a direct product of free groups.
\end{proof}

\begin{corollary} \label{cor:ie_prod_free}
A right-angled Artin group can be isometrically embedded in a finite product of locally finite trees if and only if it is a direct product of finite-rank free groups.
\end{corollary}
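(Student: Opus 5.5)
The proof has two directions, and I will treat them separately: the forward direction is a direct construction, and the converse combines \cref{lem:tree_flats_parabolic} with \cref{prop:no_ie_in_finite_valence}.

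\emph{If direction.} Suppose $A_\Gamma=F_{r_1}\times\cdots\times F_{r_k}$ with each $r_i$ finite, so that $\Gamma=C_1*\cdots*C_k$ is a join of finite edgeless graphs. The Cayley graph of $F_{r_i}$ with respect to its free basis is the $2r_i$--regular tree $T_i$, which is locally finite. The standard word metric on $A_\Gamma$ is the $\ell^1$--metric on the product of these Cayley graphs. Hence the identity map gives an isometric embedding of $A_\Gamma$ into $\prod_{i=1}^k T_i$. The trivial cases, where $\Gamma$ is empty or a single class, are covered by the same argument.

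\emph{Only if direction.} Suppose $A_\Gamma$ isometrically embeds in a finite product $\prod_{i\in I}T_i$ of locally finite trees. First I check that $A_\Gamma$ is finitely generated. The standard generators are pairwise at distance at most $2$, so their images lie in a bounded set. In a finite product of locally finite trees, a bounded set contains only finitely many vertices. Since an isometric embedding is injective, $S$ is finite.

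Next, suppose for contradiction that $A_\Gamma$ is not a direct product of free groups. By \cref{lem:tree_flats_parabolic}, there are generators $a,b,c$ spanning a parabolic subgroup $P\cong\Z^2*\Z$, with induced subgraph an edge $\{a,b\}$ together with the isolated vertex $c$.

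The key point, and the only real check, is that $P$ is isometrically embedded in $A_\Gamma$ with respect to the word metrics. By \cref{prop:shuffling}, a minimal representative of an element of $P$ can be obtained from any representative by shuffling commuting pairs and cancelling inverse pairs. Starting from a word in $\{a,b,c\}^{\pm1}$, these moves never introduce other letters. So the word length of an element of $P$ in $A_\Gamma$ equals its word length in $P$, and the standard word metric on $P$ is the restriction of the metric on $A_\Gamma$.

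Composing, $P\cong\Z^2*\Z$ with its standard word metric isometrically embeds in a finite product of locally finite trees with the $\ell^1$--metric. This contradicts \cref{prop:no_ie_in_finite_valence}. Hence $A_\Gamma$ is a direct product of free groups, and these have finite rank because $S$ is finite.
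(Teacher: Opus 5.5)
Your proof is correct and follows the same route as the paper: finiteness of $\Gamma$ from local finiteness of the target, then \cref{lem:tree_flats_parabolic} combined with \cref{prop:no_ie_in_finite_valence}. Your one addition is an explicit check, via \cref{prop:shuffling}, that the parabolic $\Z^2*\Z$ is isometrically embedded in $A_\Gamma$, a fact the paper uses without comment.
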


\begin{proof}
The reverse direction of the equivalence is obvious, so let $A_\Gamma$ be a right-angled Artin group that isometrically embeds in a finite product of locally finite trees. The graph $\Gamma$ must be finite, for otherwise $A_\Gamma$ would be locally infinite. By \cref{prop:no_ie_in_finite_valence}, $\Z^2*\Z$ is not a parabolic subgroup of $A_\Gamma$. By \cref{lem:tree_flats_parabolic}, we deduce that $A_\Gamma$ is a finite direct product of finite-rank free groups.
\end{proof}

We are still left with some unanswered questions.

\begin{question}
Is the assumption that $\Gamma$ is directionally branch-complemented necessary in \cref{thm:fb_in_product_trees}? 
\end{question}

In view of \cref{lem:colourable_isometric_embedding} and \cite{rull:embedding}, this is equivalent to asking whether every $n$--dimensional RAAG that quasiisometrically embeds in a product of $n$ trees has $n$--colourable defining graph.

A second aspect that is of interest is the requirement that the domain and codomain have the same dimension. In general, quasiisometric embeddings between objects of different dimensions are hard to understand, but perhaps the following can be answered.

\begin{question}
Suppose $\Gamma$ is triangle-free but not 2--colourable. Can $A_\Gamma$ be quasiisometrically embedded in a product of three trees?
\end{question}

In particular, we do not know how many trees are needed to quasiisometrically embed the RAAG on a triangle-free graph of chromatic number four.

\section{Universality with respect to quasiisometric embeddings} \label{sec:universal}

In this section we use \cite[Thm~10.1]{baderbensaidpetyt:from} to show that for each $n>1$, no finitely-generated right-angled Artin group of dimension $n$ is a universal receiver for quasiisometric embeddings of $n$--dimensional right-angled Artin groups. Since every 2--dimensional RAAG subgroup of a 2-dimensional RAAG can be found as a quasiisometrically embedded subgroup (\cite[Cor.~1.15]{kimkoberda:embedability}), this generalises \cite[Thm~1.16]{kimkoberda:embedability}, where it is shown that no 2-dimensional RAAG is a universal receiver for subgroup embeddings of 2--dimensional RAAGs. 

Our strategy is to find directionally branch-complemented RAAGs of fixed dimension whose defining graphs have arbitrarily large chromatic number. We write $\chi(G)$ for the chromatic number of a graph $G$. We refer to \cref{def:singular_boundary_graph} for the definition of singular boundary graph $\partial_{\mathrm{sing}} X_\Gamma$.

\begin{lemma} \label{lem:colouring_number_of_boundary}
For every graph $\Gamma$, we have that $\chi(\partial_{\mathrm{sing}} X_\Gamma)=\chi(\Gamma)$. In particular, if $\Gamma$ is finite then $\chi(\partial_{\mathrm{sing}} X_\Gamma)$ is finite.
\end{lemma}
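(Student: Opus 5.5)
Both inequalities will be proved separately. For $\chi(\partial_{\mathrm{sing}} X_\Gamma)\ge\chi(\Gamma)$, note that $\Gamma$ is an induced subgraph of $\Gamma^\ext$. By \cref{rem:extension_graph_embeds_singular_boundary}, $\Gamma^\ext$ embeds as an induced subgraph of $\partial_{\mathrm{sing}}X_\Gamma$. Chromatic number is monotone under taking subgraphs, so this direction is immediate.

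For the reverse inequality, fix a proper colouring $c:V(\Gamma)\to C$ with $|C|=\chi(\Gamma)$. I will colour a vertex of $\partial_{\mathrm{sing}}X_\Gamma$ using the labels of its representatives. Given a singular geodesic ray $\alpha$, consider the set of labels occurring infinitely often along $\alpha$. This set is nonempty, and I claim it spans a clique of $\Gamma$. The reason is the standard structure of singular geodesics in $X_\Gamma$: a geodesic edge path never crosses a hyperplane twice. Suppose labels $s,t$ appear infinitely often but do not commute. Then I expect a contradiction from the fact that the ray is contained in the $1$--skeleton and is an $\ell^2$--geodesic. In the $1$--skeleton, being a CAT(0) geodesic forces consecutive edges to make angle $\ge\pi$ at each vertex in the link. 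This means consecutive edges are not in a common square, i.e.\ the path turns only between edges whose labels do not commute, or continues with the same label and sign.

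This shows the hardest step needs care. An $\ell^2$ singular geodesic through vertices is locally straight at each vertex: consecutive edges $e,e'$ must have link distance $\ge\pi$. In the flag complex on $d\Gamma$ with right-angled simplices, this happens iff $e'$ is the antipode of $e$ in the link, i.e.\ $e'$ has the same label, continuing in the same direction. Thus every singular geodesic ray is standard, having a single label $s$ and a sign. I would therefore verify this link-distance computation first. The link of a vertex of $X_\Gamma$ is an all-right spherical complex on $d\Gamma$. In it, $(v,0)$ and $(v,1)$ are at distance $\pi$, and any other pair is at distance $\le\pi/2\cdot$(path length). Moreover, the distance is $\ge\pi$ only if the points lie in no common simplex and no path of length $<\pi$ joins them. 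If this turns out false for non-adjacent distinct vertices (whose link distance may be $\ge\pi$), then I fall back to defining the colour via a label occurring infinitely often.

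With singular rays known to be eventually standard with label $s$, define the colour of the class $[\alpha]$ to be $c(s)$. This is well defined by \cref{lem:parallel_sing_geod_same_labels}(1): asymptotic rays have subrays with the same label sets. If $[\alpha],[\beta]$ are adjacent, they have representatives spanning a singular-bounded 2--orthant. Its two boundary rays have labels $s,t$, and the orthant forces every $s$--edge to be in a square with every $t$--edge, so $s$ and $t$ commute and are distinct. Hence $s,t$ are adjacent in $\Gamma$ and $c(s)\ne c(t)$. So this is a proper colouring with $\chi(\Gamma)$ colours. Finiteness when $\Gamma$ is finite follows trivially.

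In the fallback case, where rays may have several labels, I would instead use the set of labels appearing infinitely often. If this set is a clique, I choose a colour for it by a fixed rule. I would then check that the orthant condition makes the two label sets jointly commute, and choose colours via a proper colouring of cliques adapted to this. This is the step I expect to be the main obstacle.
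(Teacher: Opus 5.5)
Your lower bound is correct and matches the paper's. The upper bound, however, rests on a false claim: singular geodesic rays in $X_\Gamma$ need not be eventually standard, and the labels occurring infinitely often along one need not span a clique.

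In the all-right flag complex on $d\Gamma$, \emph{any} two non-adjacent vertices are at distance at least $\pi$, not only the pairs $(v,0),(v,1)$. So an edge path is a local (hence global) CAT(0) geodesic exactly when consecutive edges either continue with the same label and sign or have distinct non-commuting labels. That is what you first wrote, before ``correcting'' it to the antipode condition. For instance, if $a,b$ are non-adjacent in $\Gamma$, then $abab\cdots$ is a singular geodesic ray (in $F_2$ it is just a ray in the tree), and its label set $\{a,b\}$ is not a clique. The paper uses such non-standard singular geodesics essentially, e.g.\ in \cref{thm:squarefree_stable} and \cref{lem:singular_tripod_RAAG}. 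So your main route fails, and your fallback still presupposes that the label set is a clique and never specifies the colouring rule.

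The missing point is that no structure on the label set is needed. Let $L_x$ be the set of labels occurring infinitely often on a representative of $x$; it is independent of the representative by \cref{lem:parallel_sing_geod_same_labels}, and nonempty by pigeonhole when $\Gamma$ is finite. Colour $x$ by $\min c(L_x)$; any choice function on $c(L_x)$ would do.

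Suppose $x$ and $y$ are adjacent, with orthant spanned by $\beta_x,\beta_y$. For every edge of $\beta_x$ and every edge of $\beta_y$, the orthant contains a square whose sides carry their two labels. Hence every $s\in L_x$ is distinct from and commutes with every $t\in L_y$, so every element of $L_x$ is adjacent in $\Gamma$ to every element of $L_y$. Since $c$ is proper, $c(L_x)\cap c(L_y)=\varnothing$, and the chosen colours differ.

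You had already observed that the orthant makes the two label sets jointly commute. It is this disjointness of the colour sets, not any clique structure, that closes the argument, and this is exactly the paper's proof. (For infinite $\Gamma$ with $\chi(\Gamma)<\infty$, a ray may have no label occurring infinitely often. In that case use instead the set of colours occurring infinitely often, which is nonempty by pigeonhole and handled by the same argument.)
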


\begin{proof}
Since $\Gamma$ embeds in $\partial_{\mathrm{sing}} X_\Gamma$ (\cref{rem:extension_graph_embeds_singular_boundary}), we have $\chi(\partial_{\mathrm{sing}}X_\Gamma)\ge\chi(\Gamma)$. For the other bound, suppose $\Gamma$ is $k$--coloured with colours $1,\dots,k$. In other words, this is a colouring of the generators of $A_\Gamma$, which are also the labels of the 1-cells in $X_\Gamma$. 

We define a colouring of the vertices of $\partial_{\mathrm{sing}} X_\Gamma$ as follows. For each vertex $x \in \partial_{\mathrm{sing}} X_\Gamma$, define $L_x$ to be the set of labels appearing infinitely many times in some singular geodesic ray representing $x$. By \Cref{lem:parallel_sing_geod_same_labels}, this is well defined and does not depend on the representative geodesic ray. Colour $x$ with the minimal colour among the colours of elements of $L_x$. We shall prove that this defines a $k$--colouring of $\partial_{\mathrm{sing}}X_\Gamma$. 

Let $x$ and $y$ be adjacent vertices of $\partial_{\mathrm{sing}}X_\Gamma$. That means that there is a 2--orthant spanned by rays $\beta_x$ and $\beta_y$ representing $x$ and $y$ respectively. In particular,  every $s\in L_x$ and $t\in L_y$ are the labels of edges of a square. Thus, $L_x \cap L_y = \varnothing$ and every $s\in L_x$ commutes with every $t\in L_y$. Therefore, the set of colours of elements of $L_x$ is disjoin from the set of colours of elements of $L_y$, so $x$ and $y$ have different colours.
We have constructed a $k$--colouring of $\chi(\partial_{\mathrm{sing}}\Gamma)$, establishing the equality.
\end{proof}

\begin{theorem}
For each $n>1$, there is no finitely generated right-angled Artin group of dimension $n$ in which every finitely generated $n$--dimensional right-angled Artin group quasiisometrically embeds.
\end{theorem}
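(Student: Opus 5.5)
The plan is to argue by contradiction using chromatic numbers. Suppose $A_\Lambda$ is a finitely generated $n$--dimensional RAAG into which every finitely generated $n$--dimensional RAAG quasiisometrically embeds. Since $\Lambda$ is finite, \cref{lem:colouring_number_of_boundary} gives $\chi(\partial_{\mathrm{sing}}X_\Lambda)=\chi(\Lambda)=:k<\infty$. Now take any finite directionally branch-complemented graph $\Gamma$ with clique number $n$. Composing the assumed quasiisometric embedding $A_\Gamma\to A_\Lambda$ with the quasiisometry $A_\Lambda\to X_\Lambda$ and applying \cref{thm:fully_branching_raag_to_sing_boundary} with $Y=X_\Lambda$, which is $n$--dimensional, we obtain a graph embedding $\Gamma^\ext\to\partial_{\mathrm{sing}}X_\Lambda$. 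Since $\Gamma$ is a subgraph of $\Gamma^\ext$, this forces $\chi(\Gamma)\le k$. So the proof reduces to exhibiting, for each $n\ge2$, finite directionally branch-complemented graphs of clique number exactly $n$ whose chromatic number is arbitrarily large.

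For $n=2$ the construction is straightforward. There are finite triangle-free graphs of arbitrarily large chromatic number, for example Mycielski graphs or Erdős's high-girth, high-chromatic graphs. Take one such graph, pass to a $(k+1)$--critical subgraph, and note that it has minimum degree at least $k\ge2$. It therefore has no leaves, so by the remark after \cref{def:fully_branching_in_graphs} it is directionally branch-complemented. It also contains an edge, so its clique number is exactly two.

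For $n>2$, the plan is to take joins. Let $G$ be the $n=2$ example with $\chi(G)>k$, and let $\Gamma=G*K_{(n-2)\times2}$, so that $A_\Gamma=A_G\times F_2^{n-2}$. By \cref{ex:fully_branching_examples}, joins of directionally branch-complemented graphs are directionally branch-complemented, and $K_{(n-2)\times2}$ is directionally branch-complemented. Clique numbers add under joins, giving $2+(n-2)=n$. Chromatic number is also additive under joins, so $\chi(\Gamma)=\chi(G)+n-2>k$. This produces the required contradiction.

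The main obstacle is really the existence of the $n=2$ family, namely triangle-free graphs of large chromatic number with no leaves. That is classical, and the remaining steps are direct applications of \cref{thm:fully_branching_raag_to_sing_boundary} and \cref{lem:colouring_number_of_boundary}. The one subtlety is that the receiving group is finitely generated, which guarantees that $k$ is finite.
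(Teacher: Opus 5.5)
Your proof is correct and is essentially the paper's argument: the paper also derives the contradiction from \cref{thm:fully_branching_raag_to_sing_boundary} and \cref{lem:colouring_number_of_boundary}. Your graph $G*K_{(n-2)\times 2}$ is exactly the $(n-2)$--fold suspension of a leafless triangle-free graph of large chromatic number that the paper uses, and your passage to a $(k+1)$--critical subgraph explicitly justifies the leaflessness that the paper simply asserts for its chosen family (Burling graphs).
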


\begin{proof}
    Consider a family $\{\Gamma_n\,:\,n\in \mathbb{N}\}$ of triangle-free graphs without leaves whose chromatic numbers satisfy $\chi(\Gamma_n)\ge n$. For example, $\{\Gamma_n\}$ could be the family of Burling graphs \cite{burling:oncolouring}. For triangle-free graphs, having no leaves is equivalent to being directionally branch-complemented.

    For each $k$, let $S^k\Gamma_n$ be the $k$--fold suspension of $\Gamma_n$. As taking a suspension increases the clique and colouring numbers by 1, the family $\{S^k\Gamma_n\}$ is a set of directionally branch-complemented graphs (\cref{ex:fully_branching_examples}) with clique number $k+2$ such that $\chi(S^k\Gamma_n)\ge n+k$.

    Fix $k\ge0$. Assume towards contradiction that $A_\Lambda$ is a finitely generated universal right-angled Artin group of dimension $k+2$ with respect to quasiisometric embeddings. By universality, there exists a quasiisometric embedding $A_{S^k\Gamma_n}\to A_\Lambda$ for each $n$. By \cref{thm:fully_branching_raag_to_sing_boundary}, the graph $S^k\Gamma_n$ embeds in $\partial_{\mathrm{sing}} X_\Lambda$. 
    This would imply that $\chi(\partial_{\mathrm{sing}} A_\Lambda)\geq n$ for every $n$, but $\chi(\partial_{\mathrm{sing}} A_\Lambda)=\chi (\Lambda)$, by \cref{lem:colouring_number_of_boundary} and $\Lambda$ is finite, so $\chi(\Lambda)\leq |\Lambda|$, which is a contradiction.
\end{proof}

A natural question then is to ask whether that is the only obstruction: if we fix a chromatic number for the defining graph, is there a universal RAAG with respect to quasiisometric embeddings for this family? This is exactly the main theorem of \cite{rull:embedding}, which shows that $(F_2)^n$ is the universal RAAG for the class of RAAGs with chromatic number $n$.

\section{Stable quasiisometric embeddings} \label{sec:qie_stable}

As discussed in the introduction, Rull constructed quasiisometric embeddings of RAAGs with $n$--colourable defining graph into $(F_2)^n$ \cite[Thm~1.1]{rull:embedding}. The embeddings she constructed are non-rigid, in the sense that they do not take standard geodesics Hausdorff-close to standard geodesics. (We shall see in Section~\ref{antirull} that this is necessary in general.) In this section, we consider quasiisometric embeddings between more general $n$--dimensional RAAGs, and show that in many situations there cannot be any such non-rigid examples. We formalise this with the following definition.

\begin{definition}\label{def:stable_QIE}
A quasiisometric embedding $\varphi:A_\Gamma\to A_\Lambda$ is said to be \emph{stable} if there is a constant $D$ such that $\varphi$ sends every standard geodesic of $A_\Gamma$ within Hausdorff distance at most $D$ of some standard geodesic of $A_\Lambda$. It is \emph{weakly stable} if $\varphi$ sends every standard geodesic of $A_\Gamma$ within finite Hausdorff distance of a standard geodesic of $A_\Lambda$. 
\end{definition}

Stability allows us to pass from the coarse setting of quasiisometric embeddings to the combinatorics of extension graphs (see \cref{def:extension_graph}). We later use this to obtain combinatorial obstructions to the existence of such quasiisometries. 

\begin{proposition} \label{prop:stable_implies_induced}
Let $\Gamma$ and $\Lambda$ be graphs, and let $\varphi:A_\Gamma\to A_\Lambda$ be a quasiisometric embedding. If $\varphi$ is weakly stable and the $\varphi$--image of every standard 2--flat in $X_\Gamma$ is at finite Hausdorff distance from a 2--flat in $X_\Lambda$, then $\varphi$ induces a combinatorial embedding $f:\Gamma^\ext\to \Lambda^\ext$.

In particular, this occurs when $\varphi$ is stable.
\end{proposition}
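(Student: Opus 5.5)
We use the description of $\Gamma^\ext$ from \cref{rem:extension_graph_definition}: vertices are parallelism classes of standard geodesics, and edges are pairs of classes with representatives spanning a 2--flat. To define $f$, let $v\in\Gamma^\ext$ be represented by a standard geodesic $\gamma$. Weak stability gives a standard geodesic $\gamma'\subset X_\Lambda$ with $d_{\mathrm{Haus}}(\varphi(\gamma),\gamma')<\infty$, and we set $f(v)=[\gamma']$. This is independent of choices. Two standard geodesics at finite Hausdorff distance from the same $\varphi(\gamma)$ are at finite Hausdorff distance from each other, hence parallel. If $\gamma_1,\gamma_2$ are parallel representatives of $v$, then $\varphi(\gamma_1),\varphi(\gamma_2)$ are at finite Hausdorff distance, since $\varphi$ is coarsely Lipschitz. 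So their standard approximants are parallel.

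\textbf{Injectivity.} Suppose $f(v)=f(w)$, with representatives $\gamma,\delta$. Then $\varphi(\gamma)$ and $\varphi(\delta)$ are at finite Hausdorff distance. Since $\varphi$ is a quasiisometric embedding, the lower bound $d(x,y)\le q\,d(\varphi x,\varphi y)+q$ shows that $\gamma$ and $\delta$ are at finite Hausdorff distance. Hence they are parallel and $v=w$.

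\textbf{Edges are preserved.} Let $v,w$ be adjacent, so they have representatives $\gamma,\delta$ spanning a standard 2--flat $F$. By hypothesis there is a 2--flat $E\subset X_\Lambda$ with $d_{\mathrm{Haus}}(\varphi(F),E)<\infty$. Let $\gamma',\delta'$ be the standard approximants of $\varphi(\gamma),\varphi(\delta)$. Then $\gamma',\delta'\subset E^{+D}$, and by \cref{lem:parallel_geod_in_convex} we may replace them by parallel geodesics $\gamma'',\delta''\subset E$. This uses that $E$ is closed and convex. These are geodesics in the Euclidean plane $E$. They are not parallel, since $f(v)\ne f(w)$ by injectivity. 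So they intersect transversally and span $E$.

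The main obstacle is that $\gamma'',\delta''$ need not be standard, and $E$ need not be singular, so this does not yet produce an edge of $\Lambda^\ext$. I would handle it with parallel sets. Consider $P(\gamma')=\gamma'\times C$. Since $\gamma''$ is parallel to $\gamma'$, every translate of $\gamma''$ in $E$ lies in $P(\gamma')$. Thus $E\subset P(\gamma')$, and $E=\gamma'\times\ell$ for some geodesic $\ell\subset C$. Here $C$ is a CAT(0) cube complex (cf.\ \cite[Rem.~3.5]{huang:quasiisometric:1}), and the labels appearing in $C$ commute with $\type(\gamma')$.

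Now $\delta'$ is a standard geodesic of type $t$ lying close to $E$, and it is not parallel to $\gamma'$. Its projection to the $C$--factor should therefore be a standard geodesic of type $t$ in $C$. Together with $\gamma'$, this spans a standard 2--flat containing a parallel copy of $\delta'$. In particular, $s=\type(\gamma')$ and $t$ commute and $s\ne t$. Hence $f(v)$ and $f(w)$ span a 2--flat, giving an edge of $\Lambda^\ext$. This last step — that a standard geodesic in a product region $\gamma'\times C$ which is non-parallel to the factor $\gamma'$ yields a standard flat — is where I expect the real work to be. I would argue it via hyperplanes: the hyperplanes crossed by $\delta'$ all cross those dual to $\gamma'$.

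\textbf{The stable case.} Stability gives weak stability directly. For the flat condition, let $F$ be a standard 2--flat spanned by $\gamma,\delta$. Its image $\varphi(F)$ lies within bounded distance of the product of the standard approximants $\gamma',\delta'$. I would use quasi-flat considerations to argue that it lies near a 2--flat in $\mathrm{hull}(\gamma'\cup\delta')$.
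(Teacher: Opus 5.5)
Up to the last step, your argument is the paper's. The definition of $f$ via parallelism classes, the well-definedness and injectivity arguments, and the use of \cref{lem:parallel_geod_in_convex} to move the standard approximants into the hypothesised flat $E$ all match. You are right that $E$ need not be singular and that $\gamma'',\delta''$ need not be standard. The paper handles this in one line (the parallels ``must be orthogonal'' and the flat is standard), so your parallel-set step is the same step treated more carefully.

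Your hyperplane idea can be made precise without having to rule out $\delta''$ being diagonal in $E$. Since $\delta'\subset E^{+D}\subset P(\gamma')^{+D}$, each vertex of $\delta'$ is separated from the convex subcomplex $P(\gamma')=\gamma'\times C$ by boundedly many hyperplanes. Hence only finitely many hyperplanes crossed by $\delta'$ miss $P(\gamma')$. So the gate projection of $\delta'$ to $P(\gamma')$ is, after discarding finitely many collapsed edges, a bi-infinite combinatorial geodesic all of whose edges are labelled $t$, i.e.\ a standard geodesic parallel to $\delta'$. Edges of $\gamma'\times C$ in the $\gamma'$--direction are labelled $s$ and those in the $C$--direction are not. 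Since this geodesic is not parallel to $\gamma'$, it has the form $\{x\}\times\ell'$ with $\ell'\subset C$ standard. Then $\gamma'\times\ell'$ is a standard 2--flat witnessing that $f(v)$ and $f(w)$ are adjacent. Note that this only uses that $\delta'$ lies in a bounded neighbourhood of $P(\gamma')$.

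The genuine gap is the final assertion, that stable maps satisfy the hypotheses. Your claim that $\varphi(F)$ lies near ``the product of the standard approximants'' presupposes that $\gamma'$ and $\delta'$ span a product, which is essentially the adjacency being proved. If their types did not commute, $\mathrm{hull}(\gamma'\cup\delta')$ would contain no 2--flat at all. General quasiflat considerations cannot do the job either. $X_\Lambda$ may have dimension greater than two, and then a 2--quasiflat need not be near any flat (e.g.\ $(x,y)\mapsto(x,y,|x|)$ into the standard cubulation of $\mathbb R^3$). So the stability of both families of parallel lines in $F$ must be used in an essential way. The paper does this by citing \cite[Prop.~4.3]{baderbensaidpetyt:from}.

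Alternatively, you can bypass the flat hypothesis and reuse the argument above. Write $F=g\langle a,b\rangle$ with $\gamma=g\langle a\rangle$ and $\delta=g\langle b\rangle$, and let $D$ be the stability constant. For each $j$, the standard geodesic within Hausdorff distance $D$ of $\varphi(gb^j\langle a\rangle)$ is parallel to $\gamma'$ and passes within $D$ of $\varphi(gb^j)$. Hence every $\varphi(gb^j)$ is $D$--close to $P(\gamma')$, so $\delta'\subset P(\gamma')^{+R}$ for some finite $R$. The gate-projection argument then produces the required standard flat directly. The uniformity of $D$ over all $j$ is exactly where stability, rather than weak stability, is used.
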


\begin{proof}
Recall that $A_\Gamma$ can be viewed as the 1-skeleton of the cube complex $X_\Gamma$. We view $\varphi$ as a map from $X_\Gamma$ to $X_\Lambda$. As $\varphi$ preserves parallelism (up to finite Hausdorff distance), it induces a map $f$ from parallelism classes of standard geodesics in $X_\Gamma$ to parallelism classes of standard geodesics in $X_\Lambda$, or in other words from the vertices of $\Gamma^\ext$ to the vertices of $\Lambda^\ext$ (see \cref{rem:extension_graph_definition}). This map $f$ is injective, because $\varphi$ is a quasiisometric embedding and non-parallel standard geodesics in a RAAG diverge. 

If vertices $v,w\in \Gamma^\ext$ are adjacent, then they correspond to parallelism classes of standard geodesics that can be represented by a pair of standard geodesics $\gamma_v$ and $\gamma_w$ that span a standard 2--flat $F\subset X_\Gamma$. We claim that $f(v)$ and $f(w)$ are adjacent in $\Lambda^\ext$.

By assumption, the quasiflat $\varphi(F)$ lies at finite Hausdorff distance from a 2--flat $F'\subset X_\Lambda$. 
Let $\beta_v$ and $\beta_w$ be standard geodesics in $X_\Lambda$ that lie at finite Hausdorff distance from $f(\gamma_v)$ and $f(\gamma_w)$, respectively.
According to \cref{lem:parallel_geod_in_convex}, both $\beta_v$ and $\beta_w$ are parallel into $F'$. Since $\gamma'_v$ and $\gamma'_w$ are not parallel, their parallels in $F'$ must be orthogonal, and $F'$ is a standard 2--flat. Hence $f$ sends $v$ and $w$ to adjacent vertices of $\Lambda^\ext$.

When $\varphi$ is stable, \cite[Prop.~4.3]{baderbensaidpetyt:from} tells us that the $\varphi$--image of every standard $2$--flat is at (uniform) finite Hausdorff distance from a $2$--flat in $X_\Lambda$ and so the conclusion follows.
\end{proof}

Our next goal is to show that under mild assumptions on $\Gamma$ and $\Lambda$, all quasiisometric embeddings $A_\Gamma\to A_\Lambda$ are stable. We will need the following lemma.

\begin{lemma}\label{lem:width_one_strip_outside_flat}
    Let $X$ be a CAT(0) cube complex, let $\alpha \subset X$ be a singular geodesic, and let $F \subset X$ be a singular flat containing a parallel copy of $\alpha$. Suppose that there exists a parallel copy of $\alpha$ not contained in $F$. Then there exist two singular geodesics $\alpha' \subset F$ and $\beta' \not\subset F$, both parallel to $\alpha$, which bound a strip subcomplex of width one.
\end{lemma}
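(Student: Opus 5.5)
The plan is to work inside the parallel set of $\alpha$. As in the proof of \cref{lem:parallel_sing_geod_same_labels}, since $\alpha$ is a singular geodesic, hence a subcomplex, its parallel set $P(\alpha)$ is a convex subcomplex. It splits as a product of cube complexes $P(\alpha)=\alpha\times C$, where $C$ is a CAT(0) cube complex. In this splitting, the singular geodesics parallel to $\alpha$ are exactly the fibres $\alpha\times\{c\}$ with $c$ a vertex of $C$.

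First, locate $F$ in this picture. The flat $F$ contains a parallel copy $\alpha_0=\alpha\times\{c_0\}$. Because $F$ is singular, hence a subcomplex, every parallel of $\alpha_0$ inside $F$ lies in $P(\alpha)$. The product structure of $F$ along this direction then gives $F\cap P(\alpha)=\alpha\times C_F$ for a subcomplex $C_F\subseteq C$. I would show that $C_F$ is nonempty (it contains $c_0$) and convex, since it is the intersection of the convex sets $F$ and $P(\alpha)$ projected to the factor. The vertices of $C_F$ are exactly the $c$ with $\alpha\times\{c\}\subset F$.

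Second, by hypothesis there is a parallel copy $\alpha\times\{c_1\}$ not contained in $F$, so $c_1$ is a vertex of $C\ssm C_F$. Take a combinatorial path $c_0=v_0,v_1,\dots,v_k=c_1$ in the 1--skeleton of $C$. Let $i$ be the first index with $v_{i+1}\notin C_F$; such an index exists because $v_0\in C_F$ and $v_k\notin C_F$. Set $\alpha'=\alpha\times\{v_i\}\subset F$ and $\beta'=\alpha\times\{v_{i+1}\}\not\subset F$. Both are parallel to $\alpha$, and $\alpha\times[v_i,v_{i+1}]$ is a strip subcomplex of width one bounded by them.

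The main obstacle is the first step, namely justifying that membership of a fibre in $F$ is detected by a vertex condition on $C$. This means checking that a fibre $\alpha\times\{c\}$ either lies entirely in $F$ or is not contained in $F$, and that the combinatorial path argument needs nothing more. In fact the second step only uses the dichotomy ``$\alpha\times\{v\}\subset F$ or not'' for each vertex $v$, which is tautological, so convexity of $C_F$ is not actually needed. I would therefore present the proof essentially as the path argument, with the product splitting of $P(\alpha)$ quoted from \cite[Rem.~3.5]{huang:quasiisometric:1}.
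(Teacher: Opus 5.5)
Your proposal is correct and is essentially the paper's proof. The paper also uses the splitting $P(\alpha)=\alpha\times C$ from \cite[Rem.~3.5]{huang:quasiisometric:1} and writes $F=\alpha\times H$ for a singular flat $H\subset C$. It then walks a combinatorial path in $C$ from a vertex outside $H$ into $H$, taking the first vertex of $H$ and its predecessor. Your traversal in the opposite direction (first exit from $C_F$) is the same argument, and you are right that convexity of $C_F$ plays no role.
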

\begin{proof}
    Let $n$ be the dimension of $F$, and let $\beta$ be a singular geodesic parallel to $\alpha$ such that $\beta \not\subset F$. Since $F$ contains a parallel copy of $\alpha$, $F$ is contained in the parallel set $P(\alpha)$. Since $X$ is a CAT(0) cube complex, $P(\alpha)$ is a convex subcomplex that splits as $\alpha\times C$, where $C$ is a CAT(0) cube complex; see \cite[Rem.~3.5]{huang:quasiisometric:1}. Under this splitting, $F$ corresponds to $\alpha\times H$ for some singular $(n-1)$--flat $H \subset C$, and $\beta$ corresponds to $\alpha\times \{v\}$ for some vertex $v \in C$. Since $\beta \not\subset F$, it follows that $v \notin H$. Since $P(\alpha)$ is connected, so is $C$. Thus, there exists a combinatorial path in $C$ from $v$ to a vertex of $H$. Let $h$ be the first vertex of this path that belongs to $H$, and let $c$ be the previous vertex. Then $c \notin H$ and $c$ is adjacent to $h$. Set
    $\alpha' := \alpha\times \{h\}$ and $\beta' := \alpha\times \{c\}$. Then $\alpha' \subset F$, while $\beta' \not\subset F$. Both are parallel to $\alpha$, and they bound the strip subcomplex $\alpha\times [h,c]$, which has width one.
\end{proof}

\begin{theorem} \label{thm:squarefree_stable}
Let $\varphi : A_\Gamma\to A_\Lambda$ be a quasiisometric embedding between
$n$--dimensional RAAGs. Assume that $\Lambda$ does not contain, as a subgraph, the suspension of the disjoint union of a vertex and an $(n-1)$--clique. If $\Gamma$ is directionally branch-complemented, then $\varphi$ is stable.
\end{theorem}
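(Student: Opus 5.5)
Fix a $q$--quasiisometric embedding $\varphi$ and a standard geodesic $\gamma=g\langle s\rangle$ of $X_\Gamma$, and let $v=v_s$. The plan is to first locate $\varphi(\gamma)$ near a singular geodesic, then upgrade singular to standard using the forbidden subgraph.

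\textbf{Step 1 (a singular geodesic inside a flat).} Since $\Gamma$ is directionally branch-complemented, $v$ lies in an $n$--clique $K$ with $\{v\}$ and $K\ssm\{v\}$ both branching. Let $F$ be the standard $n$--flat through $g$ associated with $K$, so $\gamma\subset F$. By \cref{thm:fully_branching_raag_rephrased}(1) and (2), $\varphi(F)$ lies at Hausdorff distance at most $D=D(n,q)$ from an $n$--flat $F'\subset X_\Lambda$, and $\varphi(\gamma)$ lies at distance at most $D$ from a singular geodesic $\alpha$. By \cref{lem:parallel_geod_in_convex} (applied to the convex set $F'$), $\alpha$ has a parallel copy in $F'$. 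Since $\varphi(F)$ is an $n$--quasiflat in an $n$--dimensional complex, I expect $F'$ can be taken to be singular. Likewise, the branching complement $K\ssm\{v\}$ gives a singular $(n-1)$--flat $H'$ near the image of the standard $(n-1)$--flat transverse to $\gamma$.

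\textbf{Step 2 (the singular geodesic is standard).} This is the main step. Suppose $\alpha$ is not standard, so it carries at least two labels $a\neq b$. These labels commute, because consecutive distinct labels along a singular geodesic span squares. Inside $F'\cong\alpha\times H'$, the $n-1$ directions of $H'$ carry labels that commute with all labels of $\alpha$ and with each other. This yields a clique containing $a$, $b$ and $n-1$ further vertices, which is too big unless the labels of $H'$ overlap with those of $\alpha$. The branching of $\{v\}$ should then be used as follows. There are at least two standard $n$--flats through $\gamma$ whose complementary $(n-1)$--flats diverge. Their images are flats containing parallels of $\alpha$ that are not all contained in one flat. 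By \cref{lem:width_one_strip_outside_flat}, we get a parallel copy $\beta'\not\subset F'$ of $\alpha$ bounding a width-one strip with some $\alpha'\subset F'$, crossing an extra label $c$. The label $c$ commutes with $a$ and $b$, but does not commute with some label $d$ of an $(n-1)$--clique in $F'$ transverse to $\alpha$ (otherwise $\beta'$ would extend $F'$ to dimension $n+1$, or lie in $F'$). Taking $a$ and $b$ as the two suspension points, the vertex $c$ together with the $(n-1)$--clique spanned by the other labels gives a copy of the suspension of a vertex and an $(n-1)$--clique as a subgraph of $\Lambda$, which is a contradiction. I expect the bookkeeping here to be the obstacle, namely showing that $a$ and $b$ genuinely commute with the chosen $(n-1)$--clique and with $c$. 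I would first try doing this by examining the link of a vertex of $\alpha'$, where $\alpha'$ crosses both an $a$--edge and a $b$--edge.

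\textbf{Step 3 (uniformity).} All the constants come from \cref{thm:fully_branching_raag_rephrased} and depend only on $n$ and $q$. The singular-to-standard upgrade in Step 2 does not move $\alpha$. Hence $\varphi(\gamma)$ lies within uniform Hausdorff distance $D$ of a standard geodesic, and $\varphi$ is stable.
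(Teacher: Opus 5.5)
Your skeleton matches the paper's proof. You take two non-parallel standard $n$--flats through $\gamma$: they exist because $\{v\}$ is branching, and \cref{thm:fully_branching_raag_rephrased}(1) applies to them because every $n$--clique of $\Gamma$ is directionally branch-complemented. Their image flats $F',E'$ are at infinite Hausdorff distance, so $E'$ contains a parallel of $\alpha$ not in $F'$. \cref{lem:width_one_strip_outside_flat} then gives a width-one strip with crossing label $c$, and the suspension has suspension vertices two labels $a,b$ of $\alpha$. (Your hedge in Step~1 is harmless: a top-dimensional flat in an $n$--dimensional cube complex is a subcomplex.)

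However, Step~2 rests partly on a false claim: distinct labels of a singular geodesic do \emph{not} commute. If two consecutive edges of $\alpha$ carried distinct commuting labels, they would span a square. Then $\alpha$ would turn through angle $\pi/2$ there and could not be a CAT(0) geodesic (compare \cref{rem:non_commuting_vertex_types_in_EXT}). In fact, here no two labels of $\alpha$ commute at all. Together with one label from each of the $n-1$ transverse directions of $F'$, they would span an $(n+1)$--clique. Labels of orthogonal directions of a flat also never overlap, because every square has two distinct labels. So your ``too big unless the labels overlap'' paragraph is wrong. If it were right, it would show, with no hypothesis on $\Lambda$, that every singular geodesic in a singular $n$--flat of an $n$--dimensional RAAG is standard. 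That already fails in $(F_2)^n$, e.g.\ for the flat spanned by an $abab\cdots$ geodesic in one free factor and standard geodesics in the others. Fortunately, your final contradiction never uses that $a$ and $b$ commute.

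The commutations you single out as the obstacle are immediate. Write $F'=\alpha'\times H$, and let $L_2,\dots,L_n$ be the label sets of the orthogonal singular directions $\alpha_2,\dots,\alpha_n$ of $H$. These labels commute with the labels of $\alpha'$, which coincide with those of $\alpha$ by \cref{lem:parallel_sing_geod_same_labels}. Also, $c$ commutes with the labels of $\alpha'$ because the strip is a product subcomplex.

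What genuinely needs an argument is \emph{distinctness}. The forbidden graph is a suspension of a \emph{disjoint} union, and $c$ may occur in some $L_i$. So an arbitrary transverse clique could contain $c$, and the configuration would collapse. The paper chooses $a_i\in L_i\ssm\{c\}$ for each $i\ge2$. This is possible because if $L_i=\{c\}$, then every parallel of $\alpha_i$ in $F'$ is a standard $c$--geodesic. Then the crossing edge of the strip at each vertex of $\alpha'$ already lies in $F'$, forcing $\beta'\subset F'$. Combine this with $c\notin L_1$ (squares have distinct labels) and $L_1\cap L_i=\varnothing$. This produces the required subgraph on $a,b,c,a_2,\dots,a_n$.

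Your assertion that $c$ fails to commute with some clique label holds only once the clique avoids $c$. It is beside the point anyway, since the graph is excluded as a subgraph, not as an induced subgraph.
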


\begin{proof}
By \cref{rem:FB_implies_strong_FB}, if $\Gamma$ is directionally branch-complemented then it is directionally strongly branch-complemented. Let $q$ be a quasiisometric embedding constant for $\varphi$, and let $D$ be a constant given by \cref{thm:fully_branching_raag_rephrased}.

Let $\gamma \subseteq A_\Gamma$ be a standard geodesic. As $\Gamma$ is directionally strongly branch-complemented, $\gamma$ is the intersection of standard $n$--flats. In particular, there exist two distinct directionally branch-complemented standard $n$--flats $F$ and $E$ such that $\gamma \subset F\cap E$. By \Cref{thm:fully_branching_raag_rephrased}(2), there exist singular $n$--flats $F'$ and $E'$ in $A_\Lambda$ that are at Hausdorff distance at most $D$ from $\varphi(F)$ and $\varphi(E)$, respectively. Moreover, $F'$ contains a singular geodesic $\gamma'$ that is at Hausdorff distance at most $D$ from $\varphi(\gamma)$. Since $E$ and $F$ are standard $n$--flats in $X_\Gamma$, they are at infinite Hausdorff distance from each other, and the same holds for $F'$ and $E'$. Since $E'$ also contains a singular geodesic at finite Hausdorff distance from $\varphi(\gamma)$, hence parallel to $\gamma'$, it follows that $E'$ contains a singular geodesic $\beta'$ parallel of $\gamma'$ such that $\beta' \not\subset F'$. By \Cref{lem:width_one_strip_outside_flat}, there exist singular geodesics $\alpha_1 \subset F'$ and $\beta \not\subset F$, both parallel to $\gamma'$, which bound a strip subcomplex of width one. Let $z$ be the label of the edges crossing this strip. 
    
Let $\alpha_2, \alpha_3, \dots, \alpha_n$ denote the singular geodesics in $F'$ such that $\{\alpha_1, \alpha_2, \dots, \alpha_n \}$ form a family of pairwise-orthogonal geodesics that span $F'$. Let $L_i$ denote the set of labels of edges in $\alpha_i$. By \cref{lem:parallel_sing_geod_same_labels}, since $\alpha_1$ is parallel to $\gamma'$, it follows that $L_1$ is also the set of labels of $\gamma'$. We will show that $|L_1|=1$. Suppose towards contradiction that $|L_1| \geq 2$, and let $x,y \in L_1$ be distinct.

Note that $L_1, \dots, L_n$ are pairwise disjoint, because the $\alpha_i$ span a flat. Moreover, for each $i \ne j$, every label in $L_i$ commutes with every label in $L_j$. In particular, for every $i \geq 2$, the labels of $L_i$ commute with both $x$ and $y$. Note that this also implies that $x$ and $y$ are not adjacent; otherwise, together with one label from each of $L_2,\dots,L_n$, they would span an $(n+1)$--clique in $\Lambda$. The label $z$ commutes with every label in $L_1$, since the strip between $\alpha_1$ and $\beta$ is a product subcomplex whose transverse edges have label $z$. For each $i \geq 2$, choose $a_i \in L_i \setminus \{z\}$. This is possible because if $z\in L_i$ and $L_i=\{z\}$, then the width-one strip labelled by $z$ starting from $\alpha_1$ would already be contained in $F'$, contradicting $\beta\not\subset F'$.

Therefore, $a_2, \dots, a_n$ span an $(n-1)$--clique which is disjoint from the vertex $z$. Moreover, both $x$ and $y$ commute with every vertex in $\{a_2, \dots, a_n,z\}$. Hence $\Lambda$ contains a subgraph isomorphic to the suspension of the disjoint union of an $(n-1)$--clique and a vertex, with suspension vertices $x$ and $y$. This contradicts the assumption on $\Lambda$. We conclude that $|L_1|=1$, and therefore $\gamma'$ is a standard geodesic. \qedhere



\end{proof}



\begin{remark}\label{rem:square_free_implies_no_suspension}
    The assumption on $\Lambda$ in \Cref{thm:squarefree_stable} is satisfied if $\Lambda$ does not contain an induced square. Indeed, suppose that $\Lambda$ contains the suspension of the disjoint union of a vertex $z$ and an $(n-1)$--clique spanned by $a_2,\dots,a_n$, and let $x,y$ be the suspension vertices. Then $z$ is not adjacent to at least one of the $a_i$'s; otherwise $z,a_2,\dots,a_n,x$ would span an $(n+1)$--clique. Hence, for some $i$, the vertices $x,z,y,a_i$ span an induced square.

    When $n=2$, the two conditions are equivalent. Indeed, the suspension of the disjoint union of a vertex and a $1$--clique is a square. Since $\Lambda$ is triangle-free, every square subgraph is necessarily induced.
\end{remark}

\begin{corollary}\label{cor:squarefree_stable}
Let $A_\Gamma$ and $A_\Lambda$ be $n$--dimensional RAAGs such that $\Gamma$ is directionally branch-complemented and $\Lambda$ does not contain, as a subgraph, the suspension of the disjoint union of a vertex and an $(n-1)$--clique. If $\varphi : A_\Gamma\to A_\Lambda$ is a quasiisometric embedding, then $\varphi$ induces a combinatorial embedding $\Gamma^\ext \to \Lambda^\ext$.
\end{corollary}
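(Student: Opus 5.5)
The corollary follows by combining \cref{thm:squarefree_stable} with \cref{prop:stable_implies_induced}. Let $\varphi:A_\Gamma\to A_\Lambda$ be a quasiisometric embedding. Both RAAGs are $n$--dimensional, $\Gamma$ is directionally branch-complemented, and $\Lambda$ contains no subgraph isomorphic to the suspension of the disjoint union of a vertex and an $(n-1)$--clique. These are exactly the hypotheses of \cref{thm:squarefree_stable}, so $\varphi$ is stable: there is a uniform constant $D$ such that every standard geodesic of $A_\Gamma$ is sent within Hausdorff distance $D$ of a standard geodesic of $A_\Lambda$.

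We then apply the ``in particular'' clause of \cref{prop:stable_implies_induced}. For a stable $\varphi$, it gives that images of standard $2$--flats are at finite Hausdorff distance from $2$--flats, via \cite[Prop.~4.3]{baderbensaidpetyt:from}. Independently of that proposition, this also follows from \cref{thm:fully_branching_raag_rephrased}(3) applied to $2$--cliques, since every clique of a directionally branch-complemented graph is directionally strongly branch-complemented by \cref{rem:FB_implies_strong_FB}. Hence $\varphi$ is weakly stable and sends standard $2$--flats near $2$--flats, and \cref{prop:stable_implies_induced} produces the induced map $f:\Gamma^\ext\to\Lambda^\ext$.

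The map is defined on vertices by sending the parallelism class of a standard geodesic $\gamma$ to the parallelism class of the standard geodesic near $\varphi(\gamma)$. This is well defined because parallel geodesics have images at finite Hausdorff distance, and in $X_\Lambda$ two standard geodesics at finite Hausdorff distance are parallel. The map is injective because non-parallel standard geodesics diverge and $\varphi$ is a quasiisometric embedding. Adjacency is preserved by the flat argument in the proof of \cref{prop:stable_implies_induced}.

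There is no substantive obstacle here; the work lies in the two cited results. The only point to check is that the hypotheses line up: $\Gamma$ and $\Lambda$ have the same clique number $n$, as \cref{thm:squarefree_stable} requires, and the graph condition on $\Lambda$ is imposed as a subgraph condition rather than an induced-subgraph condition, matching the theorem.
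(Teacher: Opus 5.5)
Your proposal is correct and matches the paper's argument. The paper obtains the corollary by combining \cref{thm:squarefree_stable}, which gives stability, with \cref{prop:stable_implies_induced}, which says stable quasiisometric embeddings induce combinatorial embeddings of extension graphs; your alternative justification of the $2$--flat hypothesis via \cref{thm:fully_branching_raag_rephrased}(3) is also valid but not needed.
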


\begin{remark} \label{rem:girth}
\cref{cor:squarefree_stable} gives a method for obstructing the existence of quasiisometric embeddings between certain RAAGs: one has to rule out the existence of $\Gamma^\ext$ as a subgraph of $\Lambda^\ext$. For example, if $\Lambda$ is a graph of girth greater than $k\ge4$, then $A_{C_k}$ cannot be quasiisometrically embedded in $A_\Lambda$, because of \cite[Lem.~3.9(3)]{kimkoberda:embedability}. See \cref{sec:unconcealable} for expansion on this.
\end{remark}

Building on the idea of \cref{rem:girth}, \cite{baderbensaidpetyt:quasiisometric:flexibility} gives a complete classification of when $A_{C_m}$ can be quasiisometrically embedded in $A_{C_n}$.


\begin{remark}
As discussed in \Cref{sec:embeddings_into_trees}, \cite[Thm~1.1]{rull:embedding} shows that for every $n$--colourable graph $\Gamma$, the group $A_\Gamma$ quasiisometrically embeds in $(F_2)^n=A_{K_{n\times2}}$. Rull's embeddings are essentially never stable, so \Cref{thm:squarefree_stable} is not true in general. Note, though, that $K_{n\times2}$ contains, as a subgraph, the suspension of the disjoint union of a vertex and an $(n-1)$--clique. 

The theorem is also false without the assumption on $\Gamma$. For example, \cite[Thm~5.3]{behrstockneumann:quasiisometric} shows that all RAAGs on trees of diameter at least 3 are quasiisometric. If there were a stable quasiisometry $A_{P_n}\to A_{P_m}$ for some $m>n>3$, where $P_k$ denotes the path graph on $k$ vertices, then \cref{prop:stable_implies_induced} would yield an isomorphism between $P_m^{\ext}$ and $P_n^{\ext}$. But $P_n^\ext$ contains $P_n$ subgraphs that cannot be extended to longer paths, which is not the case in $P_m^\ext$.
\end{remark}

\section{Geometric unconcealability} \label{sec:unconcealable}

In this section, we show that certain RAAGs cannot be concealed by a quasiisometric embedding: if such a RAAG quasiisometrically embeds into another RAAG $A_\Lambda$ of the same dimension, then this is already visible in the defining graph $\Lambda$. On the subgroup level, Kim--Koberda proved that cycle RAAGs are unconcealable \cite[Thm.~1.11]{kimkoberda:embedability}, recovering a theorem of Kambites for $F_2 \times F_2$. More precisely, they proved the following.

\begin{theorem}[{\cite[Cor.~3.9]{kambites:oncommuting}, \cite[Cor.~8.1]{kimkoberda:embedability}}] \label{thm:kk_irrepressible}
If $F_2\times F_2<A_\Lambda$ then $\Lambda$ contains a square. If $\Lambda$ is a finite, triangle-free graph such that $A_{C_n}<A_\Lambda$ for some $n\ge5$, then $\Lambda$ contains an induced $m$--cycle for some $m\in\{5,\dots,n\}$. 
\end{theorem}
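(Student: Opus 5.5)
This statement is a citation of results of Kambites and Kim--Koberda, so rather than reprove it from scratch I would explain how it follows from the extension-graph machinery. The plan has two parts, matching the two assertions.

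For the first assertion (Kambites), I use the Kim--Koberda theorem that if $A_\Gamma<A_\Lambda$ then $\Gamma$ embeds as an induced subgraph of $\Lambda^\ext$ \cite[Thm~1.3]{kimkoberda:embedability}. With $\Gamma=C_4$, this gives an induced square in $\Lambda^\ext$. The remaining point is to push this square back to $\Lambda$. I would argue that $\Lambda^\ext$ is built as an increasing union of trees of copies of $\Lambda$, glued along vertex stars (the $g\cdot\Gamma$ and $gs^n\cdot\Gamma$ gluing described after \cref{def:extension_graph}). Then I would show by induction on the number of copies that any induced cycle of length four either lies in a single copy, or passes through a star. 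In the latter case, replacing the star vertex by its type still yields a square in $\Lambda$.

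For the second assertion, the same theorem embeds $C_n$ as an induced subgraph of $\Lambda^\ext$. I would then invoke the "cycle projection" argument of Kim--Koberda. Let $C$ be a shortest induced cycle of length $\ge5$ in $\Lambda^\ext$. Choose a minimal-word-length configuration of labels, and consider the last copy $g\cdot\Lambda$ glued along a star $\mathrm{st}(g\cdot v)$. Since $\Lambda$ is triangle-free, a star is a tree of radius one. The portion of $C$ outside earlier copies therefore enters and exits through $\mathrm{lk}(g\cdot v)$, and can be shortcut through $g\cdot v$ or folded back into the previous copies. This yields an induced cycle of length in $\{5,\dots,\text{length of }C\}$ living in fewer copies. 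Iterating the argument lands in a single copy of $\Lambda$.

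The main obstacle is the folding step. Shortcutting must not produce a 4-cycle or a triangle, and it must never increase the length. This is exactly where triangle-freeness and the length bound $m\le n$ are used, and where I would follow \cite[\S3]{kimkoberda:embedability} closely, in particular \cite[Lem.~3.9]{kimkoberda:embedability}.
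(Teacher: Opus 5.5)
The paper gives no proof of this statement; it is quoted from Kambites and Kim--Koberda. Your reduction to extension graphs is the route Kim--Koberda use to recover both parts, but as written your first step misquotes \cite[Thm~1.3]{kimkoberda:embedability}. That theorem goes the \emph{other} way: an induced subgraph $\Gamma\subset\Lambda^\ext$ yields $A_\Gamma<A_\Lambda$, which is how the paper's introduction uses it. In the direction you need, Kim--Koberda only show that $A_\Gamma<A_\Lambda$ makes $\Gamma$ an induced subgraph of the \emph{clique graph} of $\Lambda^\ext$. The version with $\Lambda^\ext$ itself is false for general $\Lambda$: this is their extension graph conjecture, disproved by Casals-Ruiz--Duncan--Kazachkov. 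For triangle-free $\Lambda$ it does hold, but by a separate result of theirs, not Thm~1.3.

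So in the Kambites part, where $\Lambda$ is arbitrary, your first step is unjustified. It is easily repaired. An induced square of cliques $K_1,K_2,K_3,K_4$ contains non-adjacent $a\in K_1$, $c\in K_3$ and non-adjacent $b\in K_2$, $d\in K_4$, and $a,b,c,d$ then form an induced square in $\Lambda^\ext$. For the pull-down, do not ``replace by types''. Already for $\Lambda=C_4$, where $\Lambda^\ext$ is complete bipartite, an induced square with two vertices of type $p$ and two of an adjacent type $q$ is sent to the edge $pq$ traversed twice. Instead, in a doubling $\Gamma\cup_{\star(v)}\Gamma'$, a square not contained in one copy has two opposite vertices in $\lk(v)$ and one vertex in each of $\Gamma\ssm\star(v)$ and $\Gamma'\ssm\star(v)$. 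Replacing the latter vertex by $v$ gives an induced square in $\Gamma$.

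In the second part, the step you flag is a missing idea rather than a detail. Call an \emph{excursion} a subpath with endpoints in $\lk(v)$ and interior outside $\star(v)$. Shortcutting an excursion of length $k$ through $v$ gives a cycle of length $k+2$, so excursions of length two produce exactly the forbidden squares, and no amount of shortcutting avoids them. The fix is to \emph{fold} rather than shortcut in that case:
\begin{itemize}
\item Since $\Lambda$ is triangle-free, $\lk(v)$ is independent, so every excursion has length at least two.
\item If some excursion has length $k\ge3$, shortcutting it gives an induced cycle of length $k+2\in\{5,\dots,n\}$ in one copy.
\item If all excursions have length two, then $n=2r$ with $r\ge3$. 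Apply the isomorphism $\Gamma'\to\Gamma$ fixing $\star(v)$ to the excursions lying in $\Gamma'$. This turns the cycle into $x_1\bar y_1x_2\bar y_2\cdots x_r\bar y_r$ in $\Gamma$.
\item In this folded cycle, the $x_i\in\lk(v)$ are pairwise non-adjacent. Each $\bar y_i$ is adjacent to $x_k$ exactly when $k\in\{i,i+1\}$, so the $\bar y_i$ are distinct. Consecutive $\bar y_i$ are non-adjacent by triangle-freeness.
\item Hence either the folded cycle is an induced $C_n$, or some chord $\bar y_i\bar y_j$ closes an odd cycle of length at most $n-1$. In the latter case a shortest odd cycle of $\Gamma$ is induced, with length in $\{5,\dots,n-1\}$.
\end{itemize}
In all cases you descend one doubling step while keeping the length in $\{5,\dots,n\}$.
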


We prove \cref{mthm:unconcealable}, which is a geometric analogue of this result that works in every dimension. 

To prove the theorem, we will show that if $F_2^p\times A_{C_{n_1}}\times\dots\times A_{C_{n_q}}$ quasiisometrically embeds in $A_{\Lambda}$ of the same dimension, then we can find $K_{p\times 2}* C_{n_1}*\dots *C_{n_q}$ as a geometrically embedded subgraph of $\Lambda^{\ext}$ and so $K_{p\times 2}* C_{m_1}*\dots *C_{m_q}$ as an induced subgraph of $\Lambda^{\ext}$. The proof then concludes by studying the extension graph, in \cref{prop:pulling_join_from_ext}.
Recall that $A(\Gamma_1 * \Gamma_2) \cong A(\Gamma_1) \times A(\Gamma_2)$, and that $K_{n\times m}$ denotes the complete $n$--partite graph in which each part has size $m$.

\subsection{Extension graph data associated to singular geodesics}

Let $\Gamma$ be a simplicial graph. Note that every edge $e \subset X_\Gamma$ is contained in a unique standard geodesic, which we denote by $\gamma_e$. If $\alpha \subset X_\Gamma$ is a singular geodesic, define 
$$\operatorname{Ext}(\alpha) \subset V(\Gamma^\ext)$$ 
to be the set of vertices represented by the standard geodesics $\gamma_e$, where $e$ ranges over the edges of $\alpha$. Note that $|\operatorname{Ext}(\alpha)| =1$ if and only if $\alpha$ is a standard geodesic.

\begin{proposition}\label{prop:properties_of_EXT_of_singgeod}
Let $\Gamma$ be a simplicial graph, and let $\alpha \subset X_\Gamma$ be a singular geodesic.
\begin{enumerate}
    \item If $\alpha'$ is a singular geodesic parallel to $\alpha$, then $\operatorname{Ext}(\alpha)=\operatorname{Ext}(\alpha')$.
    \item If $\beta \subset X_\Gamma$ is a singular geodesic such that some parallel of $\alpha$ and some parallel of $\beta$ span a $2$--flat, then every vertex of $\operatorname{Ext}(\alpha)$ is adjacent to every vertex of $\operatorname{Ext}(\beta)$.
    \item The vertices of $\operatorname{Ext}(\alpha)$ are pairwise non-adjacent.
\end{enumerate}
\end{proposition}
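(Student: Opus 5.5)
For (1) I would reuse the parallel-set argument from the proof of \cref{lem:parallel_sing_geod_same_labels}. Since $\alpha$ is a singular geodesic, $P(\alpha)$ is a convex subcomplex splitting as $\alpha\times C$ with $C$ a connected CAT(0) cube complex. Under this splitting $\alpha'=\alpha\times\{v'\}$, so there is a combinatorial path $v=v_0,\dots,v_k=v'$ in $C$ joining them. Set $\alpha_i=\alpha\times\{v_i\}$. Consecutive $\alpha_i,\alpha_{i+1}$ bound a width-one strip $\alpha\times[v_i,v_{i+1}]$, so each edge $e$ of $\alpha_i$ is opposite, in a square, to an edge $e'$ of $\alpha_{i+1}$. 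The crossing edge of that square has label $t$ commuting with the label $s$ of $e$, and $e'$ is the $t$-translate of $e$. Hence $\gamma_e=g\langle s\rangle$ and $\gamma_{e'}=gt\langle s\rangle$ are parallel (this is the gluing $g\cdot v=gt\cdot v$ when $t\in\lk(s)$). So $e$ and $e'$ define the same vertex of $\Gamma^\ext$. Induction on $i$ gives $\operatorname{Ext}(\alpha)=\operatorname{Ext}(\alpha')$.

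For (2), part (1) lets me replace $\alpha,\beta$ by parallels $\alpha_0,\beta_0$ spanning a 2--flat $F$, without changing $\operatorname{Ext}$. The flat $F$ is a product of two singular geodesics, and I would first check that it is a subcomplex (a singular 2--flat). This holds because both factors lie in the 1--skeleton and the flat is the convex hull, so it is a union of squares in $X_\Gamma$. Now take edges $e\subset\alpha_0$ and $f\subset\beta_0$. Translating $f$ along $\alpha_0$ inside $F$, through squares, to a copy $f'$ at the vertex $\alpha_0\cap\beta_0$ does not change the vertex of $\Gamma^\ext$, by the argument of (1). Similarly, translate $e$ to $e'$ based at that same corner. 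Then $e'$ and $f'$ span a square of $X_\Gamma$, so $\gamma_{e'}$ and $\gamma_{f'}$ span a standard 2--flat and are adjacent in $\Gamma^\ext$ by \cref{rem:extension_graph_definition}.

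For (3) I would argue by contradiction using a local picture. Suppose edges $e_1,e_2$ of $\alpha$ give adjacent vertices $x_1,x_2$ of $\Gamma^\ext$. Adjacency means $\gamma_{e_1},\gamma_{e_2}$ have parallels spanning a standard 2--flat. In particular $x_1\neq x_2$ and their labels $s_1\ne s_2$ commute, with $g s_1g^{-1}$ and $hs_2h^{-1}$ commuting as conjugates. The plan is to use the hyperplane picture. The hyperplanes dual to edges of $\gamma_{e_j}$ all pass through the parallel class, and adjacency in $\Gamma^\ext$ should force the hyperplane dual to $e_1$ to cross the hyperplane dual to $e_2$. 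Equivalently, the standard geodesic $\gamma_{e_1}$ should cross the hyperplane of $e_2$.

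That contradicts $\alpha$ being a geodesic line in the 1--skeleton in the CAT(0) metric. A singular geodesic $\alpha$ cannot contain two edges dual to crossing hyperplanes: at the vertices of $\alpha$ the consecutive edges must be at angle $\pi$ in the link (no square between them), and more globally $\alpha$ is $\ell^2$-geodesic only if no pair of its edges lies in a common flat. Indeed, if two dual hyperplanes crossed, the subpath between them would be $d_2$-shortcuttable across squares.

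The main obstacle is this last step: converting adjacency in $\Gamma^\ext$ of $x_1,x_2$ into crossing of the specific hyperplanes dual to $e_1,e_2$. I would try to do this by projecting the standard flat onto the convex hull of $\alpha$ and using that hyperplanes crossing $\alpha$ each separate the two ends of $\alpha$.
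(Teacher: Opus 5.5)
Your parts (1) and (2) follow the paper's proof: a chain of squares in $P(\alpha)=\alpha\times C$ for (1), and moving two edges of the flat onto adjacent sides of a square for (2). There is one slip in (2): in general you cannot move both $e$ and $f$ to the vertex $\alpha_0\cap\beta_0$. Use the square $e\times f\subset F$ instead. It has a side parallel to $e$ on a geodesic of $F$ parallel to $\alpha_0$, and an adjacent side parallel to $f$ on a geodesic parallel to $\beta_0$. By (1), these sides give the same vertices of $\Gamma^\ext$ as $e$ and $f$.

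Part (3) has a genuine gap, at exactly the step you flag: you never show that adjacency of $x_1,x_2$ forces $H_{e_1}$ and $H_{e_2}$ to cross. Your reformulation, that $\gamma_{e_1}$ crosses $H_{e_2}$, can never hold. Hyperplanes of $X_\Gamma$ carry labels, a standard $s_1$--geodesic crosses only $s_1$--hyperplanes, and $s_1\neq s_2$. The projection idea is also unnecessary.

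The missing observation is that parallel singular geodesics cross exactly the same hyperplanes. Your own proof of (1) already gives this: $e\times\{v_i\}$ and $e\times\{v_{i+1}\}$ are opposite sides of a square, hence dual to the same hyperplane. Now let $\gamma'_{e_1},\gamma'_{e_2}$ be parallels of $\gamma_{e_1},\gamma_{e_2}$ spanning a standard 2--flat. Then each $H_{e_j}$ is dual to some edge $e'_j\subset\gamma'_{e_j}$. The square $e'_1\times e'_2$ in that flat has adjacent sides dual to $H_{e_1}$ and $H_{e_2}$ (again by the strip argument), so these hyperplanes cross. This is precisely the paper's argument.

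For the final step, your ``shortcut across squares'' is only a heuristic. The paper quotes this fact as standard, and it can be made rigorous as follows. Take a pair of edges of $\alpha$ with crossing dual hyperplanes that are closest along $\alpha$. If they are consecutive, they span a square at their common vertex, because CAT(0) cube complexes have no inter-osculating hyperplanes; the angle there is then $\pi/2$, contradicting geodesicity. Otherwise, by minimality, the hyperplane of the edge following the first crosses neither of them, so it separates them, which is again a contradiction.
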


\begin{proof}
    (1) Since $\alpha'$ is parallel to $\alpha$, it is contained in the parallel set of $\alpha$. The parallel set of $\alpha$ splits canonically as $P_\alpha=\alpha \times C$ for some CAT(0) cube complex $C$, see \cite[Lem.~3.4]{huang:quasiisometric:1}. By denoting $\alpha=\alpha\times\{c\}$ and $\alpha'=\alpha\times\{c'\}$, let $e\subset\alpha$ be an edge, and let $\sigma$ be a shortest combinatorial path in $C$ from $c$ to $c'$. Then $e\times \sigma \subset P_\alpha$ is a strip made of a chain of squares. So, moving $e$ successively along these squares produces an edge $e'\subset\alpha'$ corresponding to $e$. In particular, $e$ and $e'$ have the same label. Moreover, at each step, the standard geodesic through the current copy of $e$ is replaced by a parallel standard geodesic. Hence $\gamma_e$ and $\gamma_{e'}$ are parallel, so they determine the same vertex of $\Gamma^\ext$. Therefore $\operatorname{Ext}(\alpha)\subseteq \operatorname{Ext}(\alpha')$, and the reverse inclusion follows symmetrically.

    (2) Let $e \subset \alpha$ and $f \subset \beta$ be edges. Choose parallels $\alpha'$ and $\beta'$ of $\alpha$ and $\beta$, respectively, such that $\alpha'$ and $\beta'$ span a $2$--flat $F$, and let $e' \subset \alpha'$ and $f' \subset \beta'$ be the corresponding edges, as in (1). Let $e''$, resp.\ $f''$, be an edge of the square $e' \times f' \subset F$ parallel to $e'$, resp.\ $f'$. Since the singular geodesic in $F$ containing $e''$ is parallel to $\alpha'$, item (1) implies that $\gamma_{e''}$ and $\gamma_{e'}$ represent the same vertex of $\Gamma^\ext$. Similarly, $\gamma_{f''}$ and $\gamma_{f'}$ represent the same vertex of $\Gamma^\ext$. Since $e''$ and $f''$ are adjacent sides of a square, the standard geodesics $\gamma_{e''}$ and $\gamma_{f''}$ span a standard $2$--flat. Hence they are adjacent in $\Gamma^\ext$. Therefore, $\gamma_{e'}$ and $\gamma_{f'}$ are adjacent, and again by item (1) we get that $\gamma_e$ and $\gamma_f$ are adjacent as well.

    (3) Suppose that two vertices of $\operatorname{Ext}(\alpha)$ are adjacent. Then there exist edges $e,f\subset \alpha$ such that $\gamma_e$ and $\gamma_f$ admit parallel representatives spanning a standard $2$--flat. Let $H_e$ and $H_f$ be the hyperplanes dual to $e$ and $f$ respectively. Parallel singular geodesics cross the same hyperplanes. Moreover, if two singular geodesics span a singular $2$--flat, then every hyperplane dual to an edge of one of these geodesics crosses every hyperplane dual to an edge of the other. Hence $H_e$ crosses $H_f$. In particular, $\alpha$ crosses two crossing hyperplanes. This is impossible, since a CAT(0) geodesic contained in the $1$--skeleton of a CAT(0) cube complex cannot cross two crossing hyperplanes.
\end{proof}

\begin{remark}\label{rem:non_commuting_vertex_types_in_EXT}
If $\alpha \subset X_\Gamma$ is a non-standard geodesic, then there exist $u,v \in \operatorname{Ext}(\alpha)$ whose vertex types do not commute. Indeed, $\alpha$ contains two consecutive edges $e,e'$ with distinct labels $a,a'$. Since $\alpha$ is geodesic, the labels $a$ and $a'$ do not commute. The vertices of $\Gamma^\ext$ represented by the standard geodesics $\gamma_e$ and $\gamma_{e'}$ then lie in $\operatorname{Ext}(\alpha)$ and have vertex types $a$ and $a'$.
\end{remark}

\begin{proposition}\label{prop:EXT_of_images_of_SFB_geodesics}
    Let $\alpha$ and $\beta$ be geodesics spanning $\mathbb{R}^2$, let $\Lambda$ be a finite simplicial graph, and let $f : \mathbb{R}^2 \to X_\Lambda$ be a quasiisometric embedding. Assume that there exists $D \geq 0$ such that, for every geodesic $\gamma$ parallel to $\alpha$ or $\beta$, the image $f(\gamma)$ lies at Hausdorff distance at most $D$ from a singular geodesic in $X_\Lambda$. Then, for any singular geodesics $\alpha'$ and $\beta'$ lying at finite Hausdorff distance from $f(\alpha)$ and $f(\beta)$, respectively, the following hold:
\begin{itemize}
    \item $\operatorname{Ext}(\alpha') \cap \operatorname{Ext}(\beta') = \varnothing$;
    \item every vertex of $\operatorname{Ext}(\alpha')$ is adjacent to every vertex of $\operatorname{Ext}(\beta')$.
\end{itemize}
\end{proposition}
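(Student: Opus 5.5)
The plan is to show that the singular geodesics close to the images of the two coordinate directions can be chosen, up to parallelism, to span a $2$--flat in $X_\Lambda$. After that, \cref{prop:properties_of_EXT_of_singgeod} does most of the work. By \cref{prop:properties_of_EXT_of_singgeod}(1), $\operatorname{Ext}$ only depends on the parallelism class. Moreover, any two singular geodesics at finite Hausdorff distance from $f(\alpha)$ are parallel. So it suffices to exhibit one good pair of representatives.

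\textbf{Step 1: building a flat in $X_\Lambda$.} For each $t\in\mathbb{R}$, let $\alpha_t$ be the parallel of $\alpha$ through the point $\beta(t)$, and choose a singular geodesic $\alpha'_t$ with $d_{\mathrm{Haus}}(f(\alpha_t),\alpha'_t)\le D$. The lines $\alpha_t$ are pairwise at finite Hausdorff distance, so the $\alpha'_t$ are pairwise parallel and all lie in the parallel set $P(\alpha'_0)=\alpha'_0\times C$. Write $\alpha'_t=\alpha'_0\times\{c_t\}$. The map $t\mapsto c_t$ is a quasigeodesic in $C$, since distances between parallel lines are coarsely the distances between the $\alpha_t$ (via $f$ and the uniform $D$). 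The analogous construction with the roles of $\alpha$ and $\beta$ swapped gives singular geodesics $\beta'_s$ near $f(\beta_s)$. Each $\beta'_s$ lies coarsely inside $\alpha'_0\times\{c_t\}$, and its projection to $C$ tracks $t\mapsto c_t$ at bounded distance. Projecting $\beta'_0$ to $C$ therefore yields a line in $C$ parallel to a singular geodesic $\delta\subset C$. I expect to produce $\delta$ either by applying \cref{lem:parallel_geod_in_convex} in $C$, or by using the fact that projections of singular geodesics in a product cube complex to a factor are again singular geodesics, since hyperplanes of $P(\alpha'_0)$ split. The flat $\alpha'_0\times\delta$ is then a singular $2$--flat spanned by $\alpha'_0$ and a parallel of $\beta'_0$. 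Equivalently, I would show that $\beta'_0$ crosses no hyperplane crossed by $\alpha'_0$. This follows because every hyperplane crossed by $\beta'_0$ separates $\alpha'_t$ from $\alpha'_{t'}$ for suitable $t,t'$, so it is a hyperplane of the $C$ factor.

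\textbf{Step 2: deducing the two bullets.} Once the flat exists, \cref{prop:properties_of_EXT_of_singgeod}(2) gives that every vertex of $\operatorname{Ext}(\alpha')$ is adjacent to every vertex of $\operatorname{Ext}(\beta')$. For disjointness, suppose $v\in\operatorname{Ext}(\alpha')\cap\operatorname{Ext}(\beta')$. Then $v$ would be adjacent to itself, which is impossible in a simplicial graph. Alternatively, one can argue with hyperplanes: a common $\operatorname{Ext}$ vertex would force the two lines to share a label class of parallel standard geodesics, which crossing hyperplanes forbid.

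\textbf{Main obstacle.} The hard part is Step 1, namely justifying that the family $\{\alpha'_t\}$ sweeps out a product and that $\beta'_0$ is parallel into the $C$ factor. The key input is that the constant $D$ is uniform over all parallels; this is exactly where that hypothesis is needed. The first thing I would try is the hyperplane-separation argument: show that the hyperplanes crossed by $\alpha'_0$ and those crossed by $\beta'_0$ pairwise cross. In a CAT(0) cube complex, this lets one build the product of two singular geodesics as a $2$--flat, via the median/convex-hull structure.
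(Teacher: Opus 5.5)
Your route differs from the paper's. The paper does not construct the flat by hand. It cites \cite[Prop.~4.3]{baderbensaidpetyt:from}, which under exactly these hypotheses gives a singular $2$--flat $F=\ell_1\times\ell_2$ at finite Hausdorff distance from $f(\mathbb{R}^2)$. Then \cref{lem:parallel_geod_in_convex} puts parallels of $\alpha'$ and $\beta'$ inside $F$. These parallels must lie along different factors, since otherwise $f(\alpha)$ and $f(\beta)$ would be at finite Hausdorff distance. \cref{prop:properties_of_EXT_of_singgeod}(1),(2) then finish the proof, exactly as in your Step~2. Building the flat directly inside $P(\alpha'_0)=\alpha'_0\times C$ makes the argument self-contained, and it shows that only the uniform control on the $\alpha$--parallels through points of $\beta$ is needed. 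The paper's route yields the stronger fact that the whole image is near a flat, but depends on the companion paper.

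As written, though, your Step~1 is not yet justified. The hyperplane mechanism is circular. The claim that every hyperplane crossed by $\beta'_0$ separates some $\alpha'_t$ from some $\alpha'_{t'}$ fails precisely for hyperplanes crossed by the $\alpha'_t$: those cross every $\alpha'_t$ and separate none of them, and they are exactly what you need to exclude. The ``deep ends of $\beta'_0$'' argument only gives separation once you already know the hyperplane does not cross $\alpha'_0$.

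One repair is as follows. For $|t|$ large, $\beta'_0$ must cross many hyperplanes separating $\alpha'_0$ from $\alpha'_t$. Each of these meets $P(\alpha'_0)$ in some $\alpha'_0\times k$, so it crosses every hyperplane of $\alpha'_0$. A singular geodesic cannot cross two crossing hyperplanes, which rules out the bad case.

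Your projection mechanism is cleaner once stated correctly:
\begin{enumerate}
\item The uniform $D$ gives $f(\beta)\subseteq P(\alpha'_0)^{+D}$.
\item Apply \cref{lem:parallel_geod_in_convex} in $X_\Lambda$ (not in $C$) to obtain a parallel of $\beta'_0$ inside $P(\alpha'_0)$. This parallel can be taken singular. Indeed, it has the form $\beta'_0\times\{c\}$ in $P(\beta'_0)$, and since $P(\alpha'_0)$ is a subcomplex it also contains $\beta'_0\times\{c'\}$ for a vertex $c'$ of the cell carrying $c$.
\item A geodesic in $\alpha'_0\times C$ has affine first coordinate. Hence a singular one is either $\alpha'_0\times\{c\}$, or $\{x\}\times\delta$ with $\delta$ a singular geodesic of $C$.
\item The first case must be excluded because $\beta'_0$ is not parallel to $\alpha'_0$, as $f(\alpha)$ and $f(\beta)$ are not at finite Hausdorff distance. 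You never mention this point; the paper uses it explicitly.
\end{enumerate}
Then $\alpha'_0\times\delta$ is the required flat. The quasigeodesic $t\mapsto c_t$ and the lines $\beta'_s$ with $s\neq0$ are not needed.
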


\begin{proof}
    By \cite[Prop.~4.3]{baderbensaidpetyt:from}, there exists a singular $2$--flat $F \subset X_\Lambda$ such that $f(\mathbb R^2)$ lies at finite Hausdorff distance from $F$. Write $F=\ell_1\times \ell_2$, where $\ell_1,\ell_2$ are singular geodesics. Now let $\alpha' \subset X_\Lambda$ be a singular geodesic at finite Hausdorff distance from $f(\alpha)$. Since $f(\alpha)$ is contained in a bounded neighbourhood of $F$, \cref{lem:parallel_geod_in_convex} implies that $\alpha'$ admits a parallel singular geodesic contained in $F$. Therefore, $\alpha'$ is parallel to either $\ell_1$ or $\ell_2$. The same holds for $\beta'$. Moreover, $\alpha'$ and $\beta'$ cannot be parallel to the same factor. Indeed, otherwise $\alpha'$ and $\beta'$ would be parallel, hence at finite Hausdorff distance from each other. It would imply that $f(\alpha)$ and $f(\beta)$ are at finite Hausdorff distance from each other, contradicting the fact that $\alpha$ and $\beta$ span $\mathbb R^2$. We may therefore assume that $\alpha'$ is parallel to $\ell_1$ and $\beta'$ is parallel to $\ell_2$. By item (1) of \Cref{prop:properties_of_EXT_of_singgeod}, $\operatorname{Ext}(\alpha')=\operatorname{Ext}(\ell_1)$ and $\operatorname{Ext}(\beta')=\operatorname{Ext}(\ell_2)$. Since $\ell_1$ and $\ell_2$ span a $2$--flat, item (2) of \Cref{prop:properties_of_EXT_of_singgeod} implies that every vertex of $\operatorname{Ext}(\ell_1)$ is adjacent to every vertex of $\operatorname{Ext}(\ell_2)$. Therefore, every vertex of $\operatorname{Ext}(\alpha')$ is adjacent to every vertex of $\operatorname{Ext}(\beta')$. Finally, if $\operatorname{Ext}(\alpha') \cap \operatorname{Ext}(\beta')$ were non-empty, then any vertex in the intersection would be adjacent to itself, a contradiction.
\end{proof}

\subsection{Pulling cycles down from the extension graph}

In this subsection we prove \cref{thm:embedding_products_in_RAAGs_general}. The main step is to generalise \cite[Lem.~3.9(3)]{kimkoberda:embedability}, which says that if $C_n$ is an induced subgraph of $\Lambda^\ext$ for some $n \geq 4$, then there exists $4 \leq m \leq n$ such that $C_m$ is an induced subgraph of $\Lambda$. We extend this to joins of cycles. See also \cite[Corollary~8.1]{kimkoberda:embedability} for the subgroup-theoretic counterpart.

\begin{proposition}\label{prop:pulling_join_from_ext}
Let $q \geq 1$, let $n_1,\dots,n_q \geq 4$, and let $\Lambda$ be a finite simplicial graph. Let $\Sigma$ be a (possibly empty) finite graph such that $\Sigma * C_{n_1} * \cdots * C_{n_q}$ is an induced subgraph of $\Lambda^\ext$. Then there exist integers $m_1,\dots,m_q$ such that $4 \leq m_i \leq n_i$, and an induced subgraph
$$
C_{m_1} * \cdots * C_{m_q} \subset \Lambda
$$
such that every vertex of $C_{m_1} * \cdots * C_{m_q}$ is adjacent in $\Lambda$ to every vertex type appearing in $\Sigma$.

Moreover, if $\Sigma * C_{n_1} * \cdots * C_{n_q}$ has the same clique number as $\Lambda^\ext$ and $n_i$ is odd, then we can choose $m_i$ to be odd.
\end{proposition}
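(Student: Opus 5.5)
The plan is to reduce to the Kim--Koberda lemma \cite[Lem.~3.9]{kimkoberda:embedability} via the ``type'' map $\type:\Lambda^\ext\to\Lambda$, applying it cycle by cycle and using the join structure to control how the pulled-down cycles sit relative to each other and to $\Sigma$. Two basic facts about $\Lambda^\ext$ will be used throughout: adjacent vertices of $\Lambda^\ext$ have distinct, adjacent types (an edge comes from an edge of some $g\cdot\Lambda$), and if a vertex $u\in\Lambda^\ext$ is adjacent to vertices of types covering a set $T$, then $\type(u)$ is adjacent in $\Lambda$ to every element of $T$.

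\textbf{Step 1: pulling down a single cycle.} For each $i$, recall how \cite[Lem.~3.9(3)]{kimkoberda:embedability} proceeds: given an induced $C_{n_i}\subset\Lambda^\ext$, one conjugates and shortens labels (using the action of $A_\Lambda$ and the fact that $g\cdot\Lambda$ and $gs^k\cdot\Lambda$ are glued along $\str(g\cdot v)$) until the cycle is either contained in a single copy $g\cdot\Lambda$, or a shortcut produces a shorter induced cycle of length $\ge4$. The vertex types of the final cycle form an induced $C_{m_i}$ in $\Lambda$ with $4\le m_i\le n_i$. I would rerun this argument, keeping track of the extra data, rather than quoting it. The key point is that every move applied (left multiplication by an element of $A_\Lambda$, or replacing a vertex by another one of the same type via the star-gluing) preserves types of the remaining vertices. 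Moreover, the new cycle's vertices are types of vertices of the old cycle, so the type sets only shrink.

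\textbf{Step 2: join compatibility.} Let $V_i\subset\Lambda$ be the type set of the $i$th cycle after Step 1, and $T_\Sigma$ the type set of $\Sigma$. Since in the original induced join every vertex of $C_{n_i}$ is adjacent to every vertex of $C_{n_j}$ ($j\ne i$) and of $\Sigma$, adjacency of types gives: every element of $V_i$ is adjacent in $\Lambda$ to every element of $V_j$ and of $T_\Sigma$, and these sets are pairwise disjoint. Because $V_i$ induces $C_{m_i}$ in $\Lambda$, the union $V_1\cup\dots\cup V_q$ induces exactly $C_{m_1}*\dots*C_{m_q}$: the edges inside each $V_i$ are the cycle edges, and there are all edges between different $V_i$. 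This gives the main statement.

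\textbf{Step 3: odd lengths.} This is the step I expect to be the main obstacle. If $n_i$ is odd and the pulled-down $m_i$ were even, I would argue via clique number. Suppose $\Sigma*C_{n_1}*\dots*C_{n_q}$ has clique number $\omega(\Lambda^\ext)=\omega(\Lambda)$. Then the found join, together with a maximal clique of $T_\Sigma$ (which, being joined to everything, extends cliques), already realises clique number $\omega(\Lambda)$. The aim is then to show that in Step 1 an odd cycle can only be shortened to an odd cycle. A shortcut in the Kim--Koberda procedure splits $C_{n_i}$ into two shorter closed walks of total length $n_i+2$ (roughly), and one of them has odd length; I would choose that one. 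The risk is that the odd piece has length $3$, i.e.\ a triangle whose types span a triangle in $\Lambda$. Joined with the other cycles' edges and a maximal clique of $\Sigma$, such a triangle would give a clique in $\Lambda$ of size $\omega(\Lambda)+1$, a contradiction. So the odd piece has length $\ge5$, and iterating terminates at an odd induced cycle. The delicate part is verifying that the Kim--Koberda reduction genuinely decomposes into such parity-tracked shortcuts, including the case where two cycle vertices collapse to the same type. In that case the walk splits into two closed walks whose lengths sum to $n_i$, and the same parity-and-clique argument applies.
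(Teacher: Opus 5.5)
Step~2 rests on a false claim from Step~1: that the pulled-down cycle only uses types already occurring in the original cycle. The shortcut in the Kim--Koberda reduction passes through the vertex $v$ along whose star one doubles, and $\type(v)$ is typically new.

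Concretely, take $\Lambda=C_4$ with vertices $a,b,c,d$ in cyclic order. In $\Lambda^\ext$ the vertices $1\cdot a$, $1\cdot b$, $s_c\cdot a$, $s_d\cdot b$ form, in this order, an induced $4$--cycle. Its type set $\{a,b\}$ spans a single edge of $\Lambda$, while the only induced $4$--cycle of $\Lambda$ is $\Lambda$ itself, which uses $c$ and $d$. So any correct reduction must introduce types that do not occur in the original cycle.

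For such new types, the original join in $\Lambda^\ext$ gives you nothing. Nothing in your Steps~1--2 shows they are adjacent to, or even distinct from, the types in $V_j$ ($j\ne i$) or in $T_\Sigma$. Step~3 has the same dependence: ruling out the triangle by a clique count needs $\type(v)$ to be adjacent to the types of the other factors and of $\Sigma$. Processing the cycles one at a time makes this worse, since you lose track of where the other factors sit in the doubling sequence.

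The missing idea is to run the doubling induction $\Lambda=\Lambda_0\subset\Lambda_1\subset\cdots$ on the whole join simultaneously. At each stage, keep the (modified) join $\Sigma*C_{n_1}*\cdots*C_{n_q}$ as an honest induced subgraph of $\Lambda_N$, rather than tracking only its types. Consider a step $\Lambda_{N+1}=\Gamma\cup_S\Gamma'$ with $S=\str(v)$.
\begin{enumerate}
\item Suppose some cycle $C_1$ meets both $\Gamma\ssm S$ and $\Gamma'\ssm S$. There are no edges between these two sets, so every vertex of $\Sigma*C_2*\cdots*C_q$ lies in $S$. None of them is $v$, since otherwise $C_1\subset S$. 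Hence all of them are adjacent to $v$. This is exactly what lets you replace $C_1$ by a coned-off segment $(x_0,\dots,x_k,v)$ and still have an induced join inside $\Gamma$.
\item Suppose instead every cycle lies in a single copy. Then they all lie in the same copy, say $\Gamma$. A vertex of $\Sigma$ in $\Gamma'\ssm S$ forces all the cycles into $S$, so it can be swapped for its double in $\Gamma$, which has the same type and the same adjacencies to the cycles.
\end{enumerate}
At $N=0$ the whole join lies in $\Lambda$, and your Step~2 becomes trivial.

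For the parity statement, use the clique hypothesis together with the fact that $v$ is adjacent to all of $\Sigma*C_2*\cdots*C_q$. This shows that no edge of $C_1$ has both endpoints in $S$. So $S$ cuts $C_1$ into segments of length at least $2$, at least one on each side. Since $n_1$ is odd, some segment has odd length $k$ with $3\le k\le n_1-2$. Coning it off with $v$ gives an induced odd cycle of length between $5$ and $n_1$. This replaces your ``two closed walks'' bookkeeping, where the complementary piece need not be induced.
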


\begin{proof}
    By \cite[Lemma 3.1]{kimkoberda:embedability} (see also \cite{kimkoberdalee:finite}), the extension graph $\Lambda^\ext$ can be obtained as an increasing union of a sequence of induced subgraphs
    $$
    \Lambda=\Lambda_0 \subset \Lambda_1 \subset \Lambda_2 \subset \cdots,
    $$
    where each $\Lambda_{N+1}$ is obtained from $\Lambda_N$ by doubling along the star of a vertex. In particular, every finite subgraph of $\Lambda^\ext$ is contained in some $\Lambda_N$. We prove the proposition by induction on the smallest $N$ such that $ \Sigma *\bigast_{i=1}^q C_{n_i} \subseteq \Lambda_N$.

    If $N=0$, we may take $m_i:=n_i$ for every $i$ and the statement is trivial.
    

    Suppose now that the statement is true for $\Lambda_N$, and write 
    \[\Lambda_{N+1}=\Gamma \cup_{\str_{\Gamma}(v)=\phi(\str_{\Gamma}(v))} \Gamma',\]
    where $\Gamma=\Lambda_N$, $v\in \Gamma$, and $\phi:\Gamma\to \Gamma'$ is an isomorphism. Let $S=\star_\Gamma(v)=\star_{\Gamma'}(v)\subset\Lambda_{N+1}$. We denote $C_{n_i}$ by $C_i$ for every $i$. We distinguish two cases, either each $C_i$ is contained in $\Gamma$ or in $\Gamma'$, or not.

\textbf{Case 1. Each $C_i$ is contained in $\Gamma$ or $\Gamma'$.}
Assume first that each $C_i$ is contained in a single copy, i.e.\ either in $\Gamma$ or in $\Gamma'$. 
Note that there are no edges between $\Gamma-S$ and $\Gamma'-S$, hence either $\bigast_{i=1}^q C_i$ is fully contained in $\Gamma$ or in $\Gamma'$. Without loss of generality, we assume the former.

If some vertex $w \in \Sigma$ lies in $\Gamma' - S$, then, since $w$ is adjacent to every vertex of $\bigast_{i=1}^qC_i$, they all must lie in $S$. 
Let $\Sigma_0$ be the graph obtained by replacing every vertex in $w\in \Sigma\cap \Gamma'$ by its double $\phi^{-1}(w)\in \Gamma$. 
The graph $\Sigma_0$ has the same set of vertex types as $\Sigma$, and is adjacent to every vertex in $\bigast_{i=1}^qC_i$. 

Note that the clique number of $\Sigma_0*\bigast_{i=1}^qC_i$ is at least the clique number of $\Sigma*\bigast_{i=1}^qC_i$, as the map we defined preserves adjacencies.  Hence, if $\Sigma * \bigast_{i=1}^qC_i$ has the same clique number as $\Lambda^\ext$, then $\Sigma_0 * \bigast_{i=1}^qC_i$ has the same clique number as $\Lambda^{\ext}$. Thus, the conclusion follows from the induction hypothesis on $\Sigma_0 * \bigast_{i=1}^qC_i$.


\textbf{Case 2. There exists $i$ such that $C_i$ is not contained in $\Gamma$ or $\Gamma'$.}
Assume now that one factor, say $C_1$, is not contained in a single copy. First, note that $v \notin C_1$. Indeed, if $v \in C_1$, then the two neighbours of $v$ in $C_1$ must lie in $S$, by definition of $S$. Since $C_1$ is induced, these are the only two vertices of $C_1$ lying in $S$. It follows that the rest of the cycle is a path joining these two vertices with all internal vertices outside $S$, and therefore $C_1$ is contained in a single copy, a contradiction. 

As well as not containing $v$, we have that $C_1$ cannot lie inside $S$, because $S$ is contained in both $\Gamma$ and $\Gamma'$, and in particular in a single copy. Since there are no edges between $\Gamma - S$ and $\Gamma' - S$, and since every vertex of $\Sigma * C_2 * \cdots * C_q$ is adjacent to every vertex of $C_1$, it follows that $\Sigma * C_2 * \cdots * C_q$ is contained in $S$. Moreover it cannot contain $v$, by definition of $S$ and the fact that $C_1$ is not contained in $S$. In particular, every vertex of $\Sigma * C_2 * \cdots * C_q$ is adjacent to $v$. 

Choose a subpath $P=(x_0,x_1,\dots,x_k)$ of $C_1$ such that $x_0,x_k \in S$ and $x_1,\dots,x_{k-1} \notin S$. Since $C_1$ is not contained in a single copy, we can choose $P$ such that $2\leq k \leq n_1-2$. Moreover, $P$ is contained in a single copy, say $P \subset \Gamma$ (up to an isomorphism of $\Lambda_{N+1}$ interchanging $\Gamma$ and $\Gamma'$ via $\phi$). Since $C_1$ is induced and $k<n_1-1$, the endpoints $x_0$ and $x_k$ are not adjacent. 


We claim that the cycle 
    $$
    C_1' := (x_0,x_1,\dots,x_k,v)
    $$
    is an induced cycle contained in $\Gamma$. Indeed, the only edges between $x_0,\dots,x_k$ are the edges of the path $P$, since $C_1$ is induced. The vertex $v$ is adjacent to both $x_0$ and $x_k$, because they lie in $S$. Moreover, $v$ is not adjacent to any of the vertices $x_1,\dots,x_{k-1}$, since they lie outside $S$. 
    Finally, $x_0$ and $x_k$ are not adjacent, as discussed before. Hence $C_1'$ is an induced cycle of length $k+2$ and $4\leq k+2\leq n_1$. 
    
    We note that every vertex of $\Sigma * C_2 * \cdots * C_q$ is adjacent to every vertex of $P$ by hypothesis, and is adjacent to $v$ because $\Sigma * C_2 * \cdots * C_q \subset S$. Therefore, 
    $ 
    \Sigma*C_1' * C_2 * \cdots * C_q
    $
    is an induced subgraph of $\Gamma=\Lambda_N$. 

    For the moreover part, we will show that if $\Sigma*\bigast_{i=1}^qC_i$ has the same clique number as $\Lambda^{\ext}$ and $n_1$ is odd, then we can choose $P$ such that $k$ is odd and so $C_1'$ is of odd length. It is clear that $    \Sigma*C_1' * C_2 * \cdots * C_q$ has the same clique number as $ 
    \Sigma*C_1 * C_2 * \cdots * C_q
    $, so that will finish our proof, by the induction hypothesis.
    


    Assume that $\Sigma*\bigast_{i=1}^qC_i$ has the same clique number as $\Lambda^\ext$ and that $n_1$ is odd. Set $A:=\Sigma * C_2 * \cdots * C_q$. Since $A * C_1$ is induced and has the same clique number as $\Lambda$, and since $\omega_{C_1}=2$, we have $\omega_A=\omega_\Lambda-2$. We claim that $C_1$ has no edge with both endpoints in $S$. Indeed, suppose that $x,y$ are adjacent vertices of $C_1$ lying in $S$. Choose a clique $K \subset A$ of cardinality $\omega_\Lambda-2$. Since every vertex of $K$ is adjacent to $x$, $y$, and $v$, while $v$ is also adjacent to $x$ and $y$, it follows that $K \cup \{v,x,y\}$ is a clique of size $\omega_\Lambda+1$, a contradiction. Therefore $C_1$ has no edge with both endpoints in $S$. Hence the subpaths of $C_1$ with endpoints in $S$ and interior outside $S$ partition the edge set of $C_1$. Since $n_1$ is odd, the sum of their lengths is odd, so at least one of them has odd length. We choose $P=(x_0,x_1,\dots,x_k)$ to be such a path, and as explained $C_1'=(x_0,x_1,\dots,x_k,v)$ is of odd length and we finish by the induction hypothesis applied to $ 
    \Sigma*C_1' * C_2 * \cdots * C_q
    $. 
\end{proof}




We will use the following observation: 

\begin{lemma}\label{lem:singular_tripod_RAAG}
    Let $T$ be an infinite tripod, $\Lambda$ a simplicial graph, and $f : T \to X_\Lambda$ a quasiisometric embedding. Suppose that $f$ maps every geodesic at finite Hausdorff distance from a singular geodesic of $X_\Lambda$. Then there exists a geodesic $\gamma \subset T$ such that $f(\gamma)$ is at finite Hausdorff distance from a singular, non-standard geodesic.
\end{lemma}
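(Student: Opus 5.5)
We will argue by contradiction. Let $o$ be the centre of $T$ and let $r_1,r_2,r_3$ be its three legs, viewed as geodesic rays issuing from $o$. For $i<j$, let $\gamma_{ij}=r_i\cup r_j$ be the bi-infinite geodesic of $T$ through $o$. These three are the only geodesic lines of $T$ up to subrays. Suppose that for each $i<j$ the image $f(\gamma_{ij})$ lies at finite Hausdorff distance from a \emph{standard} geodesic $\sigma_{ij}=g_{ij}\sgen{s_{ij}}$. We will show this forces a contradiction with $f$ being a quasiisometric embedding.

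The first step is to show that $\sigma_{12}$ and $\sigma_{13}$ are parallel. Both contain subrays at finite Hausdorff distance from $f(r_1)$, so they have asymptotic subrays. By \cref{lem:parallel_sing_geod_same_labels}(1), these subrays have the same labels, so $s_{12}=s_{13}=:s$. The subrays can be parametrised so that $d(g_{12}s^k,\,g_{13}s^{k+c})$ stays bounded for all large $k$ and some fixed $c$. Hence the elements $x_k:=s^{-k}g_{12}^{-1}g_{13}s^{k+c}$ take only finitely many values, so $x_k=x_{k'}$ for some $k<k'$. This says that $x_k$ commutes with $s^{k'-k}$. In a RAAG the centraliser of a nontrivial power of a standard generator equals the centraliser of the generator, namely $\sgen{\star(s)}$. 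Therefore $g_{12}^{-1}g_{13}\in s^k\,\sgen{\star(s)}\,s^{-k-c}=\sgen{\star(s)}$. It follows that $g_{13}\sgen s=g_{12}y\sgen s$ for some $y$ centralising $s$, so $g_{13}\sgen s$ is the translate of $g_{12}\sgen s$ by an element commuting with $s$. In particular it lies at finite Hausdorff distance from it; concretely, $d(g_{12}s^m,\,g_{12}ys^m)=|y|$ for all $m$. The same argument applied to $r_2$ shows that $\sigma_{12}$ is parallel to $\sigma_{23}$.

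Consequently $f(\gamma_{23})$ and $f(\gamma_{12})$ are at finite Hausdorff distance, say at most $R$. In particular, for every $t\ge0$ the point $f(r_3(t))$ lies within $R$ of some $f(x_t)$ with $x_t\in r_1\cup r_2$. If $f$ is a $q$--quasiisometric embedding, this gives
$$
d_T(r_3(t),x_t)\le q\bigl(R+q\bigr).
$$
However, every point of $r_1\cup r_2$ lies at distance at least $t$ from $r_3(t)$ in the tripod, since any path between them passes through $o$. This is a contradiction for $t$ large. Hence at least one $\sigma_{ij}$ cannot be chosen standard: the singular geodesic at finite Hausdorff distance from that $f(\gamma_{ij})$, which exists by hypothesis, is non-standard. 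This holds for any choice, since two singular geodesics at finite Hausdorff distance are parallel, and by \cref{lem:parallel_sing_geod_same_labels}(2) parallel singular geodesics have the same label set, so standardness does not depend on the representative.

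The main technical point is the first step: passing from merely asymptotic standard rays to parallel standard lines. The centraliser computation above handles it, using the standard fact that centralisers of powers of generators in RAAGs are star subgroups. If that fact were unavailable, I would instead use the parallel-set splitting, as in the proof of \cref{lem:parallel_sing_geod_same_labels}.
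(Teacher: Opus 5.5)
Your proof is correct and follows essentially the paper's argument. Both assume two tripod lines sharing a leg map near standard geodesics, use \cref{lem:parallel_sing_geod_same_labels} to match labels, upgrade asymptotic standard rays to parallel standard lines by a finite-ball/centraliser argument, and contradict that the two lines are not at finite Hausdorff distance; your parametrisation shift and explicit quasiisometric-embedding estimate are only added detail, and your comparison of $\sigma_{12}$ with $\sigma_{13}$ is never used, since two lines sharing one leg suffice.
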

\begin{proof}
    Write $T := \gamma_1 \cup \gamma_2$, where $\gamma_1,\gamma_2$ are geodesics sharing a geodesic ray. We claim that $f(\gamma_1)$ and $f(\gamma_2)$ cannot both be at finite Hausdorff distance from standard geodesics. Indeed, suppose that $f(\gamma_1)$ and $f(\gamma_2)$ are respectively at finite Hausdorff distance from standard geodesics $\beta_1$ and $\beta_2$. Since $\gamma_1$ and $\gamma_2$ share a geodesic ray, the geodesics $\beta_1$ and $\beta_2$ have asymptotic subrays. However, standard geodesics with asymptotic rays are parallel. Indeed, by \Cref{lem:parallel_sing_geod_same_labels}, these asymptotic rays contain subrays with the same labels. Since $\beta_1$ and $\beta_2$ are standard, it follows that they are labelled by the same vertex $a$. Since they have asymptotic rays, there exist $D\geq 0$, and $g_1,g_2 \in A_\Lambda$ such that for all $n \geq 0$, $d(g_1 a^n, g_2 a^n) \leq D$, i.e.\ $a^{-n}g_1^{-1}g_2a^n$ lies in a ball of radius $D$. Since this ball is finite, two of these elements are equal, so $g_1^{-1}g_2$ commutes with $a^k$ for some $k \ne0$, hence with $a$. Therefore $\beta_1$ and $\beta_2$ are parallel. This implies that $f(\gamma_1)$ and $f(\gamma_2)$ are at finite Hausdorff distance, which contradicts the fact that $\gamma_1$ and $\gamma_2$ are not at finite Hausdorff distance. We conclude that  $f(\gamma_1)$ and $f(\gamma_2)$ cannot both be at finite Hausdorff distance from standard geodesics. 
\end{proof}

After introducing one more bit of notation, we will be ready to prove the main theorem of the section. 

\begin{definition}\label{def:floor_subgraph_ext_graph}
If $G$ is a finite subgraph of $\Lambda^\ext$, define $\lfloor G \rfloor$ to be the subgraph of $\Lambda$ induced by the set of vertex types appearing in $G$.
\end{definition}

Note that the RAAG associated to $K_{p\times 2}*C_{n_1}*\dots*C_{n_q}$ is $(p+2q)$--dimensional when every $n_i\geq 4$.

\begin{theorem}\label{thm:embedding_products_in_RAAGs_general}
Let $p,q \geq 0$, let $n_1,\dots,n_q\ge4$, and let $\Gamma = K_{p\times 2} * C_{n_1} *\cdots *C_{n_q} $. If $A_\Lambda$ is a $(p+2q)$--dimensional RAAG in which $A_\Gamma$ quasiisometrically embeds, then there are integers $m_1,\dots,m_q$ with $4 \leq m_i\leq n_i$ such that $K_{p\times 2} * C_{m_1} *\cdots *C_{m_q}$ is an induced subgraph of $\Lambda$. Moreover, if $n_i$ is odd, then we can take $m_i$ to be odd.
\end{theorem}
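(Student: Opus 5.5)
Here is the plan. Set $n=p+2q$. First observe that $\Gamma$ is directionally branch-complemented, hence directionally strongly branch-complemented by \cref{rem:FB_implies_strong_FB}. Indeed, $K_{2}$ with no edge and $C_{n_i}$ ($n_i\ge4$) are triangle-free without leaves, and joins preserve the property (\cref{ex:fully_branching_examples}). By \cref{thm:fully_branching_raag_rephrased}(3), a $q_0$--quasiisometric embedding $\varphi:X_\Gamma\to X_\Lambda$ therefore sends every standard geodesic and every standard $k$--flat within uniform Hausdorff distance $D$ of a singular geodesic, respectively a singular $k$--flat.

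Next I would fix the base copy $\Gamma=1\cdot\Gamma\subset\Gamma^\ext$. For each vertex $u\in\Gamma$, choose a singular geodesic $\alpha_u$ at finite Hausdorff distance from $\varphi(\sgen{s_u})$, and pick a vertex $\hat u\in\operatorname{Ext}(\alpha_u)$. If $u,w$ are adjacent, the standard 2--flat they span is mapped near a singular 2--flat, and all parallels of the two generating geodesics are uniformly controlled. So \cref{prop:EXT_of_images_of_SFB_geodesics} shows that $\operatorname{Ext}(\alpha_u)$ and $\operatorname{Ext}(\alpha_w)$ are disjoint and completely adjacent, and hence $\hat u\ne\hat w$ are adjacent. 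For non-adjacent $u,w$, the choices of $\hat u,\hat w$ must be made so that $\hat u\neq\hat w$ and they are non-adjacent; this is the main obstacle. There are two difficulties:
\begin{itemize}
\item the resulting subgraph $\{\hat u\}$ of $\Lambda^\ext$ might fail to be induced;
\item two non-adjacent vertices might be sent to the same Ext-vertex.
\end{itemize}
To handle this, I would not use $\Gamma^\ext$ directly. Instead I would work with the ``$\lfloor\cdot\rfloor$'' picture, tracking the vertex types of $\operatorname{Ext}$ sets via \cref{prop:properties_of_EXT_of_singgeod}(3) and \cref{rem:non_commuting_vertex_types_in_EXT}.

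A cleaner route is to use the clique-number constraint. The images of the $p$ free factors and the cycle factors occupy complementary, mutually adjacent blocks. Any induced path $u-w-u'$ in $\Gamma$ with $u,u'$ non-adjacent lies in a 2--flat-pair configuration, so $\hat u$ and $\hat u'$ are both adjacent to all of $\operatorname{Ext}(\alpha_w)$. If $\hat u$ were adjacent to $\hat u'$, then, together with a maximal clique in the complementary join factor, we would get an $(n+1)$--clique in $\Lambda^\ext$. That is impossible, since $\omega(\Lambda^\ext)=\omega(\Lambda)=n$. Equality $\hat u=\hat u'$ would force $\varphi(\sgen{s_u})$ and $\varphi(\sgen{s_{u'}})$ to share Ext-data. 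I would rule this out by choosing, via \cref{lem:singular_tripod_RAAG}-style freedom, distinct non-commuting types within $\operatorname{Ext}$ when the geodesic is non-standard. Alternatively, I would use that $u$ and $u'$ lie in different 2--flats through $\sgen{s_w}$ which diverge, together with the injectivity argument from \cref{prop:stable_implies_induced}.

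This produces an induced copy of $K_{p\times2}*C_{n_1}*\cdots*C_{n_q}$ in $\Lambda^\ext$ of clique number $n=\omega(\Lambda^\ext)$. I would then apply \cref{prop:pulling_join_from_ext} with $\Sigma=K_{p\times2}$ (if $q\ge1$) to obtain an induced $C_{m_1}*\cdots*C_{m_q}\subset\Lambda$, with $4\le m_i\le n_i$ and $m_i$ odd when $n_i$ is odd, adjacent to all vertex types of $\Sigma$. It remains to see that the types of $\Sigma$ span an induced $K_{p\times2}$ disjoint from the cycles. The two vertices of each part of $\Sigma$ must have distinct, non-adjacent types. Indeed, equal types that are completely adjacent to an $(n-1)$--clique would force, by the doubling structure, a square-free contradiction or an $(n+1)$--clique. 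Types in different parts commute, because adjacency in $\Lambda^\ext$ implies commuting types. When $q=0$, I would handle $K_{p\times2}$ directly by the same type argument, in the spirit of Kambites.
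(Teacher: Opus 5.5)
Your treatment of the cycle factors follows the paper's, apart from one point addressed in the second paragraph. The free factors are where the argument breaks.

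\textbf{Main gap: no control on types in the free factors.} Choosing $\hat a\in\operatorname{Ext}(\alpha_a)$ and $\hat b\in\operatorname{Ext}(\alpha_b)$ for the two standard generators $a,b$ of a free factor gives no control on their types. Your claim that the two vertices of each part of $\Sigma$ have distinct, non-adjacent types is false. Note also that $\Lambda$ is not assumed square-free, so there is no ``square-free contradiction'' to appeal to.

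Here is a concrete counterexample. Take $\Gamma=\Lambda=C_4=K_{2\times2}$ with parts $\{a,b\}$, $\{c,d\}$, and $\varphi=\psi\times\mathrm{id}$. Here $\psi$ is a quasiisometric embedding of the tree of $F(a,b)$ into itself that preserves the axis $\sgen a$ and sends the axis $\sgen b$ to within finite Hausdorff distance of $b\sgen a$. Such a $\psi$ exists: the cross $\sgen a\cup\sgen b$ is coarsely the ``H'' formed by $\sgen a$, $b\sgen a$ and the edge joining them, and the remaining branches can be mapped isometrically onto free branches.
\begin{itemize}
\item Every singular geodesic near $\varphi(\sgen a)$ or $\varphi(\sgen b)$ is standard, so $\hat a=v_a$ and $\hat b=b\cdot v_a$ are forced.
\item These are distinct, non-adjacent, and completely adjacent to $\{\hat c,\hat d\}$, and no $(n+1)$--clique arises.
\item Yet both have type $a$, so $\lfloor\Sigma\rfloor$ is the path $c-a-d$.
\end{itemize}
The injectivity argument of \cref{prop:stable_implies_induced} does not help either. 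It is about parallelism classes of standard geodesics, and non-parallel singular geodesics can have identical $\operatorname{Ext}$ sets.

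\textbf{The missing idea} is to stop using the standard generators of the free factors. We have $X_\Gamma=T^p\times\prod_jX_{C_{n_j}}$, and \emph{every} geodesic in a tree factor, standard or not, is branch-complemented in the sense of \cite{baderbensaidpetyt:from} (\cref{rem:branching_not_standard}). So all of them are sent uniformly close to singular geodesics by \cite[Thm~10.3]{baderbensaidpetyt:from}.
\begin{itemize}
\item \cref{lem:singular_tripod_RAAG} then yields a geodesic $\alpha_i$ in the $i$th tree factor whose image is close to a \emph{non-standard} singular geodesic $\alpha_i'$.
\item Take \emph{both} vertices of the $i$th part from $\operatorname{Ext}(\alpha_i')$, coming from two consecutive edges with distinct labels.
\item These labels do not commute (\cref{rem:non_commuting_vertex_types_in_EXT}), which is exactly what gives $\lfloor\Sigma\rfloor\cong K_{p\times2}$.
\end{itemize}
In the example above, the geodesic $\ldots,b^{-1},1,a,\ldots$ is sent close to $\ldots,ba^{-1},b,1,a,\ldots$, giving types $a$ and $b$.

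\textbf{Secondary issue: the cycle images need not be induced.} Your claim that $\Gamma$ itself appears as an \emph{induced} subgraph of $\Lambda^\ext$ fails for the cycle factors. Your clique count only excludes a chord between $u,u'\in C_{n_j}$ at distance two: their common neighbour in $C_{n_j}$, plus the image of an $(n-2)$--clique of the other factors, gives $n+1$ vertices. For distance $\ge3$ there is nothing to contradict, and chords do occur. For Rull's embedding $A_{C_6}\to A_{C_4}$ (\cref{rem:need_5}), singular geodesics of $X_{C_4}=T\times T$ lie in tree fibres, and $C_4^\ext$ is a join of two discrete sets. So images of consecutive vertices alternate sides, and $\hat1\sim\hat4$.

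The repair is the paper's:
\begin{itemize}
\item Use only that images of consecutive cycle vertices are distinct and adjacent.
\item Replace the image of $C_{n_j}$ by a shortest cycle in the subgraph it spans (a shortest odd cycle if $n_j$ is odd). This cycle is induced.
\item It has length at least $4$ (resp.\ $5$), because a triangle together with the image of an $(n-2)$--clique of the other factors would be an $(n+1)$--clique in $\Lambda^\ext$.
\item Then apply \cref{prop:pulling_join_from_ext} to $\Sigma*C_1'*\cdots*C_q'$, which still has clique number $n$.
\end{itemize}
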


\begin{proof}
    Let $\Gamma=K_{p\times 2}* C_{n_1} * \cdots * C_{n_q}$. First, note that $\Gamma$ is strongly directionally branch-complemented, see \Cref{ex:fully_branching_examples}. Since $\Lambda$ has the same clique number as $\Gamma$, it follows from \cite[Thm~10.3]{baderbensaidpetyt:from} that there exists a constant $D \geq 0$ such that every branch-complemented singular geodesic in $X_\Gamma$ is mapped by $f$ at Hausdorff distance at most $D$ from a singular geodesic of $Y$.

Up to a flat-respecting quasiisometry, we can assume that $X_\Gamma$ is $T^p \times X_{C_{n_1}} \times \cdots \times X_{C_{n_q}}$, where $T$ is the 4--regular tree. By \Cref{lem:singular_tripod_RAAG}, for each $i\in\{1,\dots,p\}$, there exists a singular geodesic $\alpha_i \subset X_{\Gamma_i}$, and a non-standard singular geodesic $\alpha_i' \subset X_{\Lambda}$, such that
    $$d_{\mathrm{Haus}} (f(\alpha_i), \alpha_i') \leq D.$$
    Since $\alpha_i'$ is non-standard, there exist $u_i, u_i' \in \operatorname{Ext}(\alpha_i')$ whose vertex types do not commute, see \Cref{rem:non_commuting_vertex_types_in_EXT}. Set $U_i := \{u_i,u_i'\}$. 

    For every $j \in \{1, \dots, q\}$, let $\beta_{1,j}, \dots, \beta_{n_j,j}$ be the standard geodesics based at the identity in $X_{C_{n_j}} \subseteq X_\Gamma$, and let $\beta'_{1,j}, \dots, \beta'_{n_j,j} \subset X_\Lambda$ be singular geodesics such that, for all $k \in \{1, \dots, n_j\}$, 
    $$d_{\mathrm{Haus}} (f(\beta_{k,j}),\beta'_{k,j})\leq D.$$
    For every $j \in \{1, \dots, q\}$ and every $k \in \{1, \dots, n_j\}$, take $v_{k,j} \in \operatorname{Ext}(\beta'_{k,j})$, and set $V_j := \{ v_{1,j}, \dots, v_{n_j,j}\}$. 

    It follows from \Cref{prop:EXT_of_images_of_SFB_geodesics} that the sets $U_1, \dots, U_p, V_1, \dots, V_q$ are pairwise disjoint, and that every vertex in one of these sets is adjacent to every vertex in the others. Therefore, they form a join 
    $$U_1 * \cdots *U_p *V_1 * \cdots * V_q\,\subset\,\Lambda^\ext.$$
    Moreover, by the second item of \Cref{prop:EXT_of_images_of_SFB_geodesics}, for each $j \in \{1, \dots, q\}$, the vertices of $V_j = \{ v_{1,j}, \dots, v_{n_j,j}\}$ commute with each other in a cyclic order. 
    Let $\bar V_j$ denote the subgraph of $\Lambda^\ext$ induced by the vertices of $V_j$. If $\bar V_j$ is not an induced cycle, replace it by a smaller induced one $C_j' \subset \bar V_j$. Such a cycle necessarily has length at least $4$, since otherwise the clique number of $U_1 * \cdots *U_p * \bar V_1 * \cdots * \bar V_q$ would exceed the clique number of $\Lambda^\ext$, which equals the clique number of $\Lambda$. So each $C_j'$ has length between 4 and $n_j$, inclusive.
    Moreover, it is easy to see that if $n_j$ is odd then $C_j'$ can be taken with odd length. Note that  
    $$U_1 * \cdots *U_p *C'_1 * \cdots * C'_q$$
    is also an induced subgraph of $\Lambda^\ext$, by construction. Set $\Sigma := U_1 * \cdots * U_p$. By the choice of $u_i,u_i' \in U_i$ for each $i$, the corresponding vertex types are not adjacent in $\lfloor \Sigma \rfloor$; see \Cref{def:floor_subgraph_ext_graph}. Moreover, since every vertex of each factor of $\Sigma$ is adjacent to every vertex of the other factors, the adjacency of the corresponding vertices in $\lfloor \Sigma \rfloor$ is preserved by the centralizer theorem for RAAGs \cite{servatius:automorphisms}. Therefore, $\Sigma$ is isomorphic to $\lfloor \Sigma \rfloor$. The conclusion follows now from \Cref{prop:pulling_join_from_ext}.
\end{proof}

If we restrict \Cref{thm:embedding_products_in_RAAGs_general} to products of copies $F_2$ with copies of $A_{C_5}$, we obtain the following.

\begin{corollary}
    Let $p,q \geq 0$, let $\Gamma = K_{p\times 2} * \bigast_{i=1}^q C_5$, and let $A_\Lambda$ be a $p+2q$--dimensional RAAG. If $A_\Gamma$ quasiisometrically embeds in $A_\Lambda$, then $\Gamma$ is an induced subgraph of $\Lambda$.
\end{corollary}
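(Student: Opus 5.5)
This corollary should follow directly from \cref{thm:embedding_products_in_RAAGs_general} applied with $n_1=\dots=n_q=5$. The theorem gives integers $m_1,\dots,m_q$ with $4\le m_i\le 5$ such that $K_{p\times 2}*C_{m_1}*\cdots*C_{m_q}$ is an induced subgraph of $\Lambda$. Since each $n_i=5$ is odd, the ``moreover'' clause of the theorem lets us take every $m_i$ odd. The only odd integer in $\{4,5\}$ is $5$, so $m_i=5$ for all $i$. The induced subgraph found in $\Lambda$ is therefore exactly $K_{p\times2}*\bigast_{i=1}^q C_5=\Gamma$.

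The hypotheses of the theorem need a quick check. $A_\Gamma$ quasiisometrically embeds in $A_\Lambda$ by assumption. The dimension of $A_\Lambda$ is $p+2q$, which matches the dimension of $A_\Gamma$, since the clique number of $K_{p\times2}*C_5^{*q}$ is $p+2q$. The cycle lengths satisfy $n_i=5\ge4$.

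The degenerate cases need a word. If $q=0$, the statement is the case of the theorem with no cycle factors, and the conclusion is that $K_{p\times 2}$ is induced in $\Lambda$. If $p=0$, the $K_{p\times2}$ factor is empty and the same argument applies.

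No step is a real obstacle; all the work sits in \cref{thm:embedding_products_in_RAAGs_general}. The one point to be careful about is that the parity refinement is applied to every cycle factor at once, rather than to one chosen factor.
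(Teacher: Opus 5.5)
Your proposal is correct and is essentially the paper's argument: the paper also obtains the corollary by specialising \cref{thm:embedding_products_in_RAAGs_general} to $n_1=\dots=n_q=5$ and using the parity clause to force every $m_i=5$. Your caution about applying the parity refinement to all cycle factors simultaneously is justified. The proof of that theorem provides this, since the induction in \cref{prop:pulling_join_from_ext} replaces one cycle at a time while preserving the oddness of each factor.
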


\begin{remark} \label{rem:need_5}
The control on the $m_i$ in \cref{thm:embedding_products_in_RAAGs_general} is in a sense optimal. Indeed, 
\cite[Thm~1.1]{rull:embedding} shows that $A_{C_n}$ quasiisometrically embeds in $A_{C_4}$ for all even $n$, we cannot expect the lower bound on $m_i$ to be 5 when $n_i$ is even. (But see \cref{antirull}, and in particular \cref{cor:no_stable_Rull}, which shows that with the additional assumption of stability we can get rid of that peculiarity). Moreover, \cite[Thm~1.12, Cor.~1.15]{kimkoberda:embedability} shows that every $A_{C_n}$ with $n\ge5$ is an undistorted subgroup of $A_{C_5}$, so we cannot improve on the number 5. This is related to the observation that a square complex can coarsely contain cyclic unions of six orthants whilst having maximum vertex degree five, as shown by the configuration in \cref{fig:no_6_slide}. 

Note also that for the statement in \cref{thm:embedding_products_in_RAAGs_general} that we can take $m_i$ to be odd when $n_i$ is odd, it is essential that $\Lambda$ has the same clique number as $\Gamma$. Indeed, \cite{rull:embedding} also shows that $A_{C_n}$ quasiisometrically embeds in $(F_2)^3=A_{K_{3,2}}$ for all $n$.
\end{remark}

\begin{figure}[ht]
\includegraphics[height=4cm]{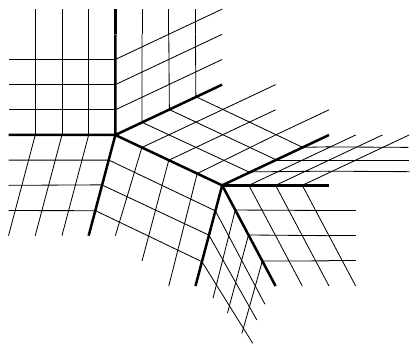}\centering
\caption{A CAT(0) square complex with maximal degree five whose Tits boundary is a cycle of length six.} \label{fig:no_6_slide}
\end{figure}

\cref{thm:embedding_products_in_RAAGs_general} in particular shows that the only RAAGs that quasiisometrically embed in RAAGs on forests are RAAGs on forests.
Droms showed that a 2-dimensional RAAG is coherent if and only if its defining graph is a forest \cite{droms:graph}, which gives us the following.

\begin{corollary} \label{cor:coherence}
Let $A_\Gamma$ be a 2--dimensional RAAG. If some incoherent RAAG quasiisometrically embeds in $A_\Gamma$, then $A_\Gamma$ is incoherent.
\end{corollary}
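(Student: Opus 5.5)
The plan is to combine the preceding theorem with Droms' characterisation. Let $A_\Lambda$ be an incoherent RAAG admitting a quasiisometric embedding into the 2--dimensional RAAG $A_\Gamma$. We argue by contradiction, assuming $A_\Gamma$ is coherent. By Droms \cite{droms:graph}, this means $\Gamma$ is a forest; in particular $\Gamma$ has no induced cycle of length at least four.

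The first step is to pin down the dimension of $A_\Lambda$. A quasiisometric embedding cannot raise the asymptotic rank, since quasiflats map to quasiflats. The asymptotic rank of $X_\Lambda$ equals its dimension (Item~\ref{sh:the_space_X_Gamma}), so $A_\Lambda$ has dimension at most two, i.e.\ $\Lambda$ is triangle-free. If $\Lambda$ had no edges, $A_\Lambda$ would be free and hence coherent, so $A_\Lambda$ is exactly 2--dimensional.

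Next, apply Droms again to $\Lambda$. Since $A_\Lambda$ is a 2--dimensional incoherent RAAG, $\Lambda$ is not a forest. Being triangle-free, it therefore contains an induced cycle $C_n$ with $n\ge4$. Restricting the embedding to the parabolic subgroup $A_{C_n}\le A_\Lambda$ keeps it a quasiisometric embedding, because parabolic subgroups are convex, hence undistorted. This gives a quasiisometric embedding $A_{C_n}\to A_\Gamma$ between 2--dimensional RAAGs.

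Finally, \cref{thm:embedding_products_in_RAAGs_general} with $p=0$ and $q=1$ yields an induced cycle $C_m\subset\Gamma$ with $4\le m\le n$. This contradicts $\Gamma$ being a forest. The one step needing justification beyond citation is the dimension bound. In the finitely generated case it follows from the quasiflat/rank argument above, for instance via the well-known fact that $\mathbb Z^3$ does not quasiisometrically embed in a 2--dimensional CAT(0) cube complex. Alternatively, one can bypass the reduction by first locating $C_n$ in $\Lambda$ and arguing only about $A_{C_n}$, though this still requires the dimension bound to exclude triangles in $\Lambda$.
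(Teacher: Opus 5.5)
Your proposal is correct and follows essentially the same route as the paper, which deduces the corollary from \cref{thm:embedding_products_in_RAAGs_general} (so that only RAAGs on forests quasiisometrically embed in 2--dimensional RAAGs on forests) combined with Droms' characterisation. You additionally make explicit two steps the paper leaves implicit: the dimension bound on the incoherent domain, and the restriction of the embedding to an undistorted cycle parabolic $A_{C_n}$ before applying the theorem with $p=0$, $q=1$.
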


\section{Stable embeddings into products} \label{antirull}

The goal of this section is to provide a counterpoint to Rull's theorem \cite{rull:embedding}. As noted at the start of Section~\ref{sec:qie_stable}, Rull's quasiisometric embedding of an $n$--colourable RAAG into $(F_2)^n$ is almost never stable. We shall prove the necessity of this by considering stable quasiisometric embeddings of RAAGs into products of RAAGs. In particular, we show in \cref{cor:stable_irreducible} that if $\Gamma$ is connected and has diameter at least four, then the only way to stably quasiisometrically embed $A_\Gamma$ in a finite product of RAAGs is to do so in a single factor.

\begin{definition}
We call an edge $e$ of a graph $\Gamma$ \emph{spinal} if every vertex of $\Gamma$ is adjacent to $e$. We refer to a 2--flat $F\subseteq A_\Gamma$ as being \emph{spinal} if it is a standard flat whose corresponding generators are the vertices of a spinal edge of $\Gamma$. 
\end{definition}

\begin{theorem} \label{thm:anti-rull2}
Let $\Gamma$ be a graph, and let $\Lambda$ be a finite graph that decomposes as a join $\Lambda=\bigast_{i=1}^r\Lambda_i$. Suppose there exists a stable quasiisometric embedding $\bar f:A_\Gamma\to A_\Lambda$. Each non-spinal standard 2--flat of $A_\Gamma$ is mapped within a uniformly finite Hausdorff distance of a 2--flat coming from a single $\Lambda_i$.
\end{theorem}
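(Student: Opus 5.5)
The plan is to fix a non-spinal standard $2$--flat $F=g\sgen{s,t}$ of $A_\Gamma$, where $\{s,t\}$ is an edge of $\Gamma$, and to use non-spinality to pick a vertex $u\in\Gamma$ adjacent to neither $s$ nor $t$.

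\textbf{Step 1: the image of $F$.} By stability, $\bar f(g\sgen s)$ and $\bar f(g\sgen t)$ lie uniformly Hausdorff-close to standard geodesics $\beta_s,\beta_t$ of $A_\Lambda$, with labels $a,b\in\Lambda$. Arguing as in \cref{prop:stable_implies_induced} (using \cite[Prop.~4.3]{baderbensaidpetyt:from} and \cref{lem:parallel_geod_in_convex}), $\bar f(F)$ is uniformly close to a standard $2$--flat $F'$ spanned by parallels of $\beta_s$ and $\beta_t$. Hence $a$ and $b$ are adjacent in $\Lambda$.

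\textbf{Step 2: reduction to a dichotomy.} If $a$ and $b$ lie in the same join factor $\Lambda_i$, then $F'$ lies in a coset of $A_{\Lambda_i}$, so it is a $2$--flat coming from $\Lambda_i$ and we are done. Uniformity follows because all constants come from the stability constant and the quasiisometry constants. It therefore suffices to rule out $a\in\Lambda_i$ and $b\in\Lambda_j$ with $i\neq j$. In that case $A_\Lambda$ splits as $A_{\Lambda_i}\times A_{\Lambda_j}\times A'$. Composing $\bar f$ with the projections $\pi_i,\pi_j$ onto the first two factors gives coarse Lipschitz maps. On $F$, $\pi_i\circ\bar f$ coarsely records only the $s$--coordinate, since $\beta_t$ has constant $\pi_i$--projection. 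Symmetrically, $\pi_j\circ\bar f$ coarsely records only the $t$--coordinate.

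\textbf{Step 3: contradiction via $u$ (main obstacle).} Since $u$ commutes with neither $s$ nor $t$, the flats $F$ and $gs^ku\sgen{s,t}$ are separated by the $u$--hyperplane. Consequently, for all $k,m,k',m'$,
$$d\bigl(gs^kt^m,\ gs^kut^{m'}s^{k'}\bigr)\approx |m|+|m'|+|k'|.$$
The point is that to get from the first point to the second, a geodesic must undo the $t$--part, cross $u$, and then redo everything. Apply Step 1 to the flat $gs^ku\sgen{s,t}$. Its image is close to a standard flat with labels $a',b'$.

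First suppose the labels are again mixed, with $a'$ in one of $\Lambda_i,\Lambda_j$ and $b'$ in the other. Fix $m=m'$ large and $k'=0$. Whichever factor carries the $t$--direction of the two flats, the two images then sit at the same coarse height in that factor. One should then be able to show that the total displacement between the two images is only about $|m|$ plus a bounded term, rather than $2|m|$. The aim is to establish this using the fact that the images of the geodesics $gs^k\sgen t$ and $gs^ku\sgen t$ are standard lines whose $\pi_j$--projections stay uniformly close to a single $b$--line once $k$ is fixed. This contradicts the lower bound from the quasiisometric embedding.

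Otherwise the flat $gs^ku\sgen{s,t}$ is not mixed. Then use the path $F$, $gs^k\sgen u$, $gs^ku\sgen{s,t}$ together with the same distance comparison to derive a contradiction, by letting $k\to\infty$ along the $s$--direction of $F$. I expect this case analysis, and in particular controlling the projection $\pi_i\circ\bar f$ along the $u$--line, to be the main difficulty. The first approach I will try is to compare the divergence of $\bar f(gs^k\sgen t)$ and $\bar f(gs^ku\sgen t)$ in each factor separately.
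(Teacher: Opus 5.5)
Your Steps 1 and 2 are fine and agree with the paper's setup. Step 3, which is where the whole content of the theorem lies, has a genuine gap.

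First, the mechanism cannot produce a contradiction even if your estimates held. Showing that $d\bigl(\bar f(gs^kt^m),\bar f(gs^kut^m)\bigr)$ is about $|m|$ rather than $2|m|$ is entirely compatible with $\bar f$ being a quasiisometric embedding, because the multiplicative constant absorbs factors of two. You need a bounded-versus-unbounded discrepancy.

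Second, the estimate is unjustified, and in the case you describe it is false. Since $u$ does not commute with $t$, the geodesics $gs^k\sgen t$ and $gs^ku\sgen t$ are not parallel. By injectivity of the induced map $\Gamma^\ext\to\Lambda^\ext$ (\cref{prop:stable_implies_induced}), their images are close to non-parallel standard lines. If both labels lie in $\Lambda_j$, their $\pi_j$--projections are non-parallel lines of $A_{\Lambda_j}$. (The other join factors commute with these labels, so parallelism in $A_\Lambda$ is detected in $A_{\Lambda_j}$.) Hence they do not stay near a single $b$--line. Comparing $F$ with the single flat $gs^ku\sgen{s,t}$ does not see the obstruction, and your non-mixed case is only gestured at.

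The missing idea is a counting argument over all of $F$, which is how the paper argues.
\begin{itemize}
\item For $x\in F$, the standard geodesics $x\sgen u$ are pairwise non-parallel, because $u$ commutes with neither $s$ nor $t$. So their images are uniformly close to pairwise non-parallel standard geodesics $\gamma_x$, each passing within a uniform distance $K$ of $F'=h\sgen{a,b}$ near $\bar f(x)$. Translate so that $h=1$.
\item For each $x$, choose $y_x\in F'$ near $\bar f(x)$, a word $w_x$ with $|w_x|\le K$ and $y_xw_x\in\gamma_x$, and let $c_x$ be the label of $\gamma_x$. Since $\Lambda$ is finite, there are only finitely many possible pairs $(w_x,c_x)$, while the points $y_x$ are $R$--dense in $F'$.
\item Pigeonholing twice over a coarse lattice of boxes in $F'$ gives four points $y_x$ forming a rectangle with the same data $(w,c)$.
\item The label $c$ misses $\Lambda_i$ or $\Lambda_j$, say $\Lambda_i$. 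Take two corners differing only by a power of $a$, and write $w=w_1\cdots w_r$ along the join. The $A_{\Lambda_i}$--part (the power of $a$ together with $w_i$) commutes with $b$, with the other $w_k$, and with $c$.
\item So the two lines $y_xw\sgen c$ are parallel, even though they come from distinct $x$. This contradicts injectivity on $\Gamma^\ext$.
\end{itemize}
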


\begin{proof}
Suppose that $\bar f:A_\Gamma\to A_\Lambda$ is a stable quasiisometric embedding, so that $\bar f$ induces a map $f:\Gamma^\ext\to\Lambda^\ext$. According to \cite[Prop.~4.3]{baderbensaidpetyt:from}, each standard 2--flat of $A_\Gamma$ is mapped by $\bar f$ within a uniformly bounded Hausdorff distance of some singular 2--flat of $A_\Lambda$. By \cref{lem:parallel_geod_in_convex,lem:parallel_sing_geod_same_labels}, that 2--flat can be taken to be a standard 2--flat $hF'$, where $h\in A_\Lambda$. 

Suppose that there is some non-spinal 2--flat $F\subseteq A_\Gamma$ whose corresponding image 2--flat $hF'$ does not arise from a single $\Lambda_i$. Our goal is to derive a contradiction by finding a pair of non-parallel geodesics in $A_\Gamma$ with parallel $\bar f$--images.

By pre- and post-composing $\bar f$ with left translations, we can assume that $h=1$ and $F$ is a standard flat corresponding to a non-spinal edge $e\subseteq\Gamma\subseteq\Gamma^\ext$. After relabelling the $\Lambda_i$, we can assume that the standard generators of $A_\Lambda$ that generate $F'$ are $t_1\in A_{\Lambda_1}$ and $t_2\in A_{\Lambda_2}$. We can therefore write $f(e)=e'$, where the vertices of $e'\subseteq\Lambda$ are $v_{t_1}\in\Lambda_1$ and $v_{t_2}\in\Lambda_2$. 

Let $v\in\Gamma$ be a vertex that is not a neighbour of $e$, with corresponding generator $s\in A_\Gamma$. If $g_1,g_2\in F$ are distinct, then the standard geodesics $g_1\sgen s$ and $g_2\sgen s$ are disjoint and diverge from each other and $F$ at constant speed; in particular, $f(g_1\cdot v)\neq f(g_2\cdot v)$. If $g\in F$, then since $g\sgen s$ meets $F$, we have that $\bar f(g\sgen s)$ comes $D$--close to $F'$. Since $\bar f$ is stable, we can fix a standard geodesic $\gamma_g\subseteq A_\Lambda$ that is within uniform Hausdorff distance of $\bar f(g\sgen s)$, comes within a uniform distance of $F'$, and represents the vertex $f(g\cdot v)$ of $\Lambda^\ext$. 

Say that a point $x\in F'$ is \emph{hit} if there is an element $g_x\in F$ such that $x$ is the $\ell^1$--closest point in $F'$ to $\gamma_{g_x}$. The element $g_x$ need not be unique, so we fix a choice of $g_x$ for each $x\in F'$ that is hit. Note, however, that each $g\in F$ is responsible for at most one $x\in F'$ being hit. If $x\in F'$ is hit, then let $w_x$ be the minimal element of $A_\Lambda$ such that $xw_x\in\gamma_{g_x}$. By construction, there is some $K$ such that $|w_x|\le K$ for every $x$ that is hit.

Since $\bar f$ is a quasiisometric embedding that maps $F$ within finite Hausdorff distance of $F'$, there is some $R$ such that every ball in $F'$ of radius $R$ contains a point that is hit. For $m,n\in\Z$, let $Q_{m,n}$ be the rectilinear square in $F'=\sgen{t_1,t_2}$ that has side-length $2R$ and is centred on $t_1^{10Rm}t_2^{10Rn}$. Note that the $Q_{m,n}$ are pairwise disjoint, and each contains at least one point that is hit.

Each $Q_{m,n}$ contains $4R^2$ points. Since $\Lambda$ is finite, there are finitely many words in $A_\Lambda$ of length at most $K$ and finitely many types of standard geodesics. Thus there is a number $M$ such that any collection of $M$ of the squares $Q_{m,n}$ must contain a pair with ``the same relative point being hit in the same way''. More precisely, if $(m_1,n_1),\dots,(m_M,n_M)$ are distinct, then there exists: an $x\in Q_{0,0}$; an element $w\in A_\Lambda$ with $|w|\le K$; some standard generator $t\in A_\Lambda$; and a pair $(m_i,n_i)$, $(m_j,n_j)$ with the following properties. Both $x_i=xt_1^{10Rm_i}t_2^{10Rn_i}\in Q_{m_i,n_i}$ and $x_j=xt_1^{10Rm_j}t_2^{10Rn_j}\in Q_{m_j,n_j}$ are hit; we have $w_{x_i}=w_{x_j}=w$; and we have $\type(\gamma_{g_{x_i}})=\type(\gamma_{g_{x_j}})=t$. We encode the data of this pair by the tuple $\tau=(x,w,t,i,j)$.

For fixed $n$, consider the set $\cQ_n$ of squares $\{Q_{1,n},\dots,Q_{M,n}\}$. By the previous paragraph, there is a pair among them as above, encoded by a tuple $\tau_n$. Since there are only finitely many possible tuples, there is some $N$ such that the set $\{\cQ_1,\dots,\cQ_N\}$ contains two elements $\cQ_i$ and $\cQ_j$ with the same associated tuple $\tau_i=\tau_j$.

From this, we have ``a rectangle of points being hit in the same way''. More precisely, we have shown that there exists: a point $x\in Q_{0,0}$; an element $w\in A_\Lambda$ with $|w|\le K$; a standard generator $t\in A_\Lambda$; and numbers $m_1,m_2\in\{1,\dots,M\}$ and $n_1,n_2\in\{1,\dots,N\}$ such that each $x_{ij}=xt_1^{10Rm_i}t_2^{10Rn_j}$ with $i,j\in\{1,2\}$ is hit, and also $w_{x_{ij}}=w$ and $\type(\gamma_{g_{x_{i,j}}})=t$. 

Since $\Lambda=\bigast_{k=1}^r\Lambda_k$, because we found a rectangle of points hit in the same way, we can relabel so that the standard generator $t$ arises from some $\Lambda_k$ with $k\ne 1$. This decomposition moreover allows us to write $w=w_1w_2\dots w_r$, where $w_i\in A_{\Lambda_i}$. In particular, each $w_k$ commutes with each $w_{k'}$ with $k'\ne k$, and each standard generator $t_k$ commutes with all $w_{k'}$ with $k'\ne k$. 

Since each $g\in F$ is responsible for at most one $x\in F'$ being hit, $g_{x_{11}}$ and $g_{x_{21}}$ are distinct. By construction, we have $\gamma_{g_{x_{ij}}}=x_{ij}w\sgen t$. From the above commutation relations, we see that each of $\gamma_{g_{x_{11}}}$ and $\gamma_{g_{x_{21}}}$ is parallel to $xt_2^{10Rn_1}w_2w_3\dots w_r\sgen t$. Thus $f(g_{x_{11}}\cdot v)=f(g_{x_{21}}\cdot v)$. But this is impossible, because we saw earlier that if $g_1,g_2\in F$ are distinct then $f(g_1\cdot v)\ne f(g_2\cdot v)$.
\end{proof}

\begin{remark} 
The proof of \cref{thm:anti-rull2} only uses a finite region of the 2--flat $F$ of size depending on the various parameters, so one could get away with assuming something less strong but more technical than stability. 
\end{remark}

\begin{corollary} \label{cor:stable_irreducible}
Let $\Gamma$ be a connected graph with $\diam\Gamma>3$, and let $\Lambda$ be a finite graph. If $\bar f:A_\Gamma\to A_\Lambda$ is a stable quasiisometric embedding, then $\bar f(A_\Gamma)$ lies in a uniform neighbourhood of a single irreducible factor of $A_\Lambda$.
\end{corollary}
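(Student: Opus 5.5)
Decompose $\Lambda=\Lambda_1*\cdots*\Lambda_r$ into its join-irreducible pieces, so that $A_\Lambda=A_{\Lambda_1}\times\cdots\times A_{\Lambda_r}$ and the $A_{\Lambda_i}$ are the irreducible factors. The plan is to apply \cref{thm:anti-rull2} and then propagate the resulting factor assignment across all of $A_\Gamma$.

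\textbf{Step 1: standard flats.} Since $\diam\Gamma>3$, $\Gamma$ has no spinal edge. Indeed, if $e=\{a,b\}$ were spinal, every vertex would be adjacent to $a$ or $b$, so any two vertices would be at distance at most $3$. Hence every standard 2--flat of $A_\Gamma$ is non-spinal. By \cref{thm:anti-rull2}, each such flat is mapped uniformly Hausdorff-close to a standard 2--flat $hF'$ coming from a single factor $\Lambda_{i}$. Both of its standard directions then lie in $\Lambda_i$.

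\textbf{Step 2: assign a factor to each vertex of $\Gamma^\ext$.} By stability, each standard geodesic $g\sgen s$ is mapped uniformly close to a standard geodesic of $A_\Lambda$, whose type is a generator in some $\Lambda_{i(g,s)}$. Every vertex of $\Gamma$ has a neighbour, since $\Gamma$ is connected with more than one vertex. So $g\sgen s$ lies in some standard 2--flat, and by \cref{lem:parallel_geod_in_convex} its image geodesic is parallel into the image flat. Thus the type of the image of $g\sgen s$ lies in the factor assigned to that flat. Two standard flats sharing the standard geodesic $g\sgen s$ must therefore receive the same factor. Connectedness of $\Gamma^\ext$ (which follows from connectedness of $\Gamma$), or more concretely chains of adjacent standard flats, then forces every standard geodesic of $A_\Gamma$ to be sent close to a standard geodesic of a single type-factor, say $\Lambda_1$. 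To carry this out, I would walk through a word $g=s_1^{k_1}\cdots s_m^{k_m}$: consecutive flats $g'\langle s,u\rangle$ and $g's^{k}\langle s,u'\rangle$ share the geodesic $g'\langle s\rangle$, so the factor is constant along the walk.

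\textbf{Step 3: conclude the image lies near one factor.} Projecting to $A_{\Lambda_j}$ for $j\ne1$, a standard geodesic of type in $\Lambda_1$ projects to a single point. Hence for each edge $(g,gs)$ of $A_\Gamma$, the projections $\pi_j\bar f(g)$ and $\pi_j\bar f(gs)$ are uniformly close: both points lie within $D$ of the same $\Lambda_1$-typed standard geodesic, whose $\pi_j$-image is a point. This alone only bounds the drift per edge, so the accumulation over a path is the main obstacle. To overcome it, I would use the stronger fact that the whole standard geodesic $g\sgen s$ maps within $D$ of a geodesic $h\sgen t$ with $t\in\Lambda_1$, so $\pi_j\bar f$ is $D$-close to the constant $\pi_j(h)$ along all of $g\sgen s$. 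Along a path $1=g_0,\dots,g_m=g$ built from syllables, consecutive standard geodesics intersect, and the constants attached to them must agree up to $2D$. The goal is a uniform bound on $d(\pi_j\bar f(g),\pi_j\bar f(1))$ independent of $m$, and I expect this step to need the most care. The first thing I would try is to show that the intermediate constants are in fact \emph{equal}, not merely $2D$-close. Since standard geodesics in $A_\Lambda$ of type $t\in\Lambda_1$ are parallel exactly when their $A_{\Lambda_j}$-coordinates agree, I would replace $\bar f$ by the map at bounded distance that sends each standard geodesic exactly onto its chosen $\Lambda_1$-typed geodesic. Intersecting geodesics then force equal $\pi_j$-coordinates up to a uniform finite ambiguity, which I would remove using that the flat-adjacency structure pins the parallel class. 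With this in place, $\pi_j\bar f$ is uniformly bounded, and $\bar f(A_\Gamma)$ lies in a uniform neighbourhood of a single coset of $A_{\Lambda_1}$.
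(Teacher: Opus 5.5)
Your Steps 1 and 2 are sound. Step 1 is exactly the paper's proof, which ends after noting that there are no spinal edges and invoking \cref{thm:anti-rull2}. Step 2 correctly supplies what that argument leaves implicit. Adjacent vertices of $\Gamma^\ext$ span a standard $2$--flat, so connectedness of $\Gamma^\ext$ forces one factor $\Lambda_1$ to serve for every standard flat and geodesic. Equivalently, the induced embedding $\Gamma^\ext\to\Lambda_1^\ext*\cdots*\Lambda_r^\ext$ lands in $\Lambda_1^\ext$.

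The gap is Step 3, and it cannot be closed. The fact you lean on is false. Because $t\in\Lambda_1$ commutes with all of $A_{\Lambda_j}$ for $j\ne1$, the geodesics $h\langle t\rangle$ and $hc\langle t\rangle$ with $c\in A_{\Lambda_j}$ stay at constant distance $|c|$, so they are always parallel. The parallelism class of a $\Lambda_1$--typed geodesic therefore carries no information about its $A_{\Lambda_j}$--coordinate, and neither flat adjacency nor $\Gamma^\ext$ can pin that coordinate down.

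Worse, the uniform bound on $\pi_j\bar f$ that you want fails in general. Take a finite $\Gamma$ as in the statement (e.g.\ the path $P_5$). Let $\Lambda=\Gamma*\Lambda_2$ with $\Lambda_2$ two non-adjacent vertices, so that $A_\Lambda=A_\Gamma\times F_2$. Let $c$ be a standard generator of $F_2$, and put $\bar f(g)=(g,c^{\syl{g}})$.
\begin{itemize}
\item Multiplying by a generator changes syllable length by at most one, so $d(g,h)\le d(\bar f(g),\bar f(h))\le 2d(g,h)$.
\item If $g_0$ is the shortest element of a coset $g\langle s\rangle$, then $\syl{g_0s^k}=\syl{g_0}+1$ for all $k\ne0$. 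Hence $\bar f(g_0\langle s\rangle)$ is within Hausdorff distance $1$ of the standard geodesic $(g_0,c^{\syl{g_0}+1})\langle s\rangle$.
\item So $\bar f$ is a stable quasiisometric embedding. Yet $\syl{g}$ is unbounded, so $\bar f(A_\Gamma)$ lies in no bounded neighbourhood of any coset of $A_\Gamma$ or of $F_2$.
\end{itemize}
This is precisely the accumulation you worried about: $\pi_j\bar f$ is coarsely constant on each standard geodesic, but not globally.

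So what your Steps 1--2, and the paper's one-line argument, actually prove is the single-factor statement for all standard flats and geodesics. The literal ``uniform neighbourhood of one coset'' conclusion fails in general, and no refinement of Step 3 will produce it.
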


\begin{proof}
If $\diam\Gamma>3$, then no edge of $\Gamma$ is spinal. The result follows from \cref{thm:anti-rull2}.
\end{proof}

The following corollary echoes \cref{cor:ie_prod_free}. Recall that $\omega_\Gamma$ denotes the clique number of a graph $\Gamma$. 

\begin{corollary} \label{cor:no_stable_Rull}
If $A_\Gamma$ admits a stable quasiisometric embedding in a finite product of finite-rank free groups, then $A_\Gamma$ is itself a product of finite-rank free groups.
\end{corollary}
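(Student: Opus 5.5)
The plan is to combine \cref{thm:anti-rull2} with \cref{lem:tree_flats_parabolic}, working by induction along the join decomposition of $\Gamma$. Let $\bar f:A_\Gamma\to A_\Lambda$ be stable, where $\Lambda=\bigast_{i=1}^r\Lambda_i$ and each $\Lambda_i$ is edgeless, so that $A_{\Lambda_i}$ is free. The first observation is that a single factor $\Lambda_i$ contains no edges, and hence no standard 2--flats of $A_\Lambda$ come from a single $\Lambda_i$. \cref{thm:anti-rull2} therefore says that $\Gamma$ has no non-spinal edges: every edge of $\Gamma$ is spinal. (If $\Gamma$ has no edges, then $A_\Gamma$ is free and we are done; note $\Gamma$ is finite because $A_\Gamma$ quasiisometrically embeds in a locally finite space, or else we work with finitely generated $A_\Gamma$ as in the introduction.)

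The second step is purely graph-theoretic. If every edge is spinal, then $\Gamma$ contains no induced copy of an edge plus an isolated vertex. Indeed, in such a configuration $\{a,b\}\sqcup\{c\}$ the edge $ab$ would not be adjacent to $c$. \cref{lem:tree_flats_parabolic} then gives that $\Gamma$ is a join of edgeless graphs, so $A_\Gamma$ is a product of free groups. Finite rank follows from finiteness of $\Gamma$.

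I expect the main subtlety to be the interpretation of ``spinal'' in the definition: whether ``$v$ adjacent to $e$'' includes the endpoints of $e$. This matters only for degenerate checks. One must also confirm that \cref{thm:anti-rull2} genuinely yields a contradiction for a non-spinal flat. It says such a flat maps near a 2--flat from a single $\Lambda_i$; when $\Lambda_i$ is edgeless, $X_{\Lambda_i}$ is a tree and contains no 2--flats at all, while a quasiisometrically embedded plane cannot lie at finite Hausdorff distance from a flat of lower dimension. If one worries that the statement of \cref{thm:anti-rull2} permits the target flat to be only ``coming from'' $\Lambda_i$ in a looser sense, then I would instead argue directly. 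In its proof the image flat $F'$ is standard, generated by $t_1,t_2$, and these commute, so they must lie in different factors; the proof then produces the contradiction regardless. Thus no non-spinal standard 2--flat exists, and the graph argument above finishes the proof.
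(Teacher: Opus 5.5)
Your proof is correct. Its geometric input is the same as the paper's: since each $\Lambda_i$ is edgeless, no standard 2--flat of $A_\Lambda$ comes from a single factor, so \cref{thm:anti-rull2} forces every edge of $\Gamma$ to be spinal.

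Where you diverge is the combinatorial finish.
\begin{itemize}
\item The paper fixes an $n$--clique $C=(v_1,\dots,v_n)$ with $n=\omega_\Gamma$. Spinality of the edges of $C$ shows that each vertex outside $C$ misses exactly one $v_i$, and it is coloured accordingly. Spinality of the edges $(v,v_j)$ then shows that the resulting $n$--partition is complete.
\item You instead note that ``every edge is spinal'' says exactly that $\Gamma$ has no induced edge-plus-isolated-vertex, and you quote \cref{lem:tree_flats_parabolic}.
\end{itemize}

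Your route is shorter and reuses a lemma already in the paper. Its only cost is that the lemma is stated for finitely generated $A_\Gamma$. You supply this correctly: a quasiisometric embedding into a locally finite space forces $\Gamma$ to be finite. The paper's own proof does not address this point explicitly, even though it is needed for the free factors to have finite rank. The paper's argument, in turn, is self-contained and identifies the number of parts with $\omega_\Gamma$ directly.

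One small correction: the ``induction along the join decomposition'' announced in your first sentence is never used, and can be dropped.
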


\begin{proof}
Let $n=\omega_\Gamma$. If $n\le1$ then we are done, so suppose that $n\ge2$. We shall prove that $\Gamma$ is complete $n$--partite. Let $\bar f:A_\Gamma\to A_\Lambda$ be a stable quasiisometric embedding, where $\Lambda$ is a finite join of finite edgeless graphs $\Lambda_1,\dots,\Lambda_r$, and let $f:\Gamma^\ext\to\Lambda^\ext$ be the induced map of extension graphs.

Let $C=(v_1,\dots,v_n)$ be an $n$--clique of $\Gamma$. If $\Gamma=C$ then we are done. Otherwise, colour the vertex $v_i$ with colour $c_i$. Since no 2--flat of $A_\Lambda$ comes from any one $\Lambda_i$, \cref{thm:anti-rull2} tells us that every edge of $C$ is spinal. 

Let $v$ be any vertex of $\Gamma\ssm C$. It cannot be adjacent to every vertex of $C$, for then we would have $\omega_\Gamma>n$. But every edge of $C$ is spinal, so $v$ is adjacent to all but one vertex of $C$. If that vertex is $v_i$, then give $v$ colour $c_i$.

If $v,v'\in\Gamma\ssm C$ have colour $c_i$, then each is adjacent to all of $v_1,\dots,v_{i-1},v_{i+1},\dots,v_n$, so $v$ and $v'$ cannot be adjacent to each other because then we would have $\omega_\Gamma>n$. This shows that $\Gamma$ is $n$--partite. 

On the other hand, if $v,v'\in\Gamma\ssm C$ have distinct respective colours $c_i$ and $c_j$, then consider the edge $(v,v_j)$ of $\Gamma$. As it is spinal, one of its endpoints is adjacent to $v'$. By construction, it must be $v$. Thus $\Gamma$ is complete $n$--partite.
\end{proof}

\cref{cor:no_stable_Rull} is in stark contrast to Rull's theorem \cite[Thm~1.1]{rull:embedding}. This shows that methods like the Alice's diary construction used by Rull, or variants such as those used by Nairne \cite{nairne:embedding}, are necessary for constructing such embeddings.

More generally, but less strongly, we have the following, which is most interesting in the case $n=2$ where it constrains quasiisometric embeddings of 3--dimensional RAAGs into products of 2--dimensional RAAGs.

\begin{corollary} \label{cor:product_of_2dim}
Let $\Gamma$ be a connected graph with $\omega (\Gamma)\ge2n-1$, and let $\Lambda_1,\dots,\Lambda_r$ be finite graphs with $\omega_{\Lambda_i}\le n$ for all $i$. If there is a stable quasiisometric embedding $A_\Gamma\to A_\Lambda=\prod_{i=1}^rA_{\Lambda_i}$, then $\diam(\Gamma)\le2$.
\end{corollary}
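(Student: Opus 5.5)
The plan is to apply \cref{thm:anti-rull2} with the join decomposition $\Lambda=\bigast_{i=1}^r\Lambda_i$, so that $A_\Lambda=\prod_i A_{\Lambda_i}$. We argue by contradiction: suppose $\diam(\Gamma)\ge3$. Since $\Gamma$ is connected, this gives vertices $a,b$ with $d(a,b)=3$.

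The key step is to locate an $m$--clique $K\subset\Gamma$ with $m=\omega_\Gamma\ge 2n-1$, all of whose edges are non-spinal, or at least to obtain enough non-spinal edges inside it. By \cref{thm:anti-rull2}, each non-spinal standard 2--flat of $K$ is then sent uniformly close to a 2--flat coming from a single $\Lambda_i$. The aim is to show that the whole standard $m$--flat of $K$ must map close to a flat in a single $A_{\Lambda_i}$. That is impossible, since $m\ge 2n-1>n\ge\omega_{\Lambda_i}$ exceeds the rank of $A_{\Lambda_i}$ (a quasiisometric embedding of $\mathbb R^m$ into an $n$--dimensional CAT(0) cube complex is impossible for $m>n$).

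To organise this combinatorially, use stability. Each vertex $u$ of $K$ corresponds to a standard geodesic whose image is near a standard geodesic of some type in some $\Lambda_{i(u)}$, and this gives a map $i:K\to\{1,\dots,r\}$. For a non-spinal edge $uw$, \cref{thm:anti-rull2} forces $i(u)=i(w)$: its flat lies in a single factor, and by \cref{lem:parallel_sing_geod_same_labels} both images of the generators lie in that factor. Conversely, vertices of $K$ in the same fibre of $i$ span a flat inside $A_{\Lambda_{i}}$. Since $i$ is injective on parallelism classes and preserves orthogonality (\cref{prop:stable_implies_induced}), each fibre has size at most $n$. With $|K|\ge 2n-1$, at least two fibres are nonempty, and as long as no fibre exceeds $n$, some fibre has size $\ge\lceil(2n-1)/r\rceil$. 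The useful consequence is that every edge of $K$ between different fibres must be spinal.

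Next I use the diameter. If $uw$ is spinal, every vertex of $\Gamma$ is adjacent to $u$ or $w$, so $\diam\Gamma\le3$. Getting to $\le2$ needs more: two spinal edges sharing nothing, or a spinal edge combined with the fibre structure. Take fibres $P,Q$ of $K$ with $|P|+|Q|\ge 2n-1$ and $|P|,|Q|\le n$, so both have size at least $n-1\ge1$ when $n\ge2$; the bipartite graph between them then consists entirely of spinal edges. Now take any $x,y\in\Gamma$ and suppose $x$ is nonadjacent to some $p\in P$. Spinality of the edges $pq$ forces $x$ to be adjacent to every $q\in Q$. Symmetrically, either $x$ is adjacent to all of $P$ or to all of $Q$, and the same holds for $y$. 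If $x$ and $y$ choose the same side, they have a common neighbour and $d(x,y)\le2$. If $x\sim$ all of $P$ and $y\sim$ all of $Q$, then $y$ is adjacent to a vertex of $P$ unless it misses all of $P$, and spinality of $pq$ lets me handle the case $|P|\ge2$ or $|Q|\ge2$. I expect this last pigeonhole step, together with the exact use of the bound $2n-1$ (it gives $|P|+|Q|\ge 2n-1$ with each at most $n$, so $\min\ge n-1$), to be the main obstacle. In particular the case $n=1$ and the possibility of a single fibre need separate handling. If $K$ lay in a single fibre it would have size $\le n<2n-1$, so that case does not arise.
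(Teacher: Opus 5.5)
Your setup is right: colour each vertex by the factor $\Lambda_i$ containing the type of its image, note that each colour class has size at most $n$, and use \cref{thm:anti-rull2} to make every edge between distinct colours spinal. The combinatorial endgame, however, has two genuine gaps.

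First, you cannot in general find two fibres $P,Q$ of the clique with $|P|+|Q|\ge 2n-1$. The $(2n-1)$--clique may meet three or more colours. For example, with $n=2$, the three vertices of a triangle may map into three different factors, so every fibre is a singleton. If you instead take $Q$ to be the union of the other fibres, the size bounds you rely on are lost.

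Second, and more seriously, spinality of the edges between $P$ and $Q$ can never by itself give diameter $2$. Take the complete graph on $P\sqcup Q$ and add a vertex $x$ joined to all of $P$ and a vertex $y$ joined to all of $Q$. Every $P$--$Q$ edge is spinal, yet $d(x,y)=3$, whatever the sizes of $P$ and $Q$. So your claim that ``spinality of $pq$ lets me handle the case $|P|\ge2$ or $|Q|\ge2$'' is false.

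The missing idea is to colour the vertices outside the clique as well, and so produce a spinal edge incident to one of them. The paper does this as follows.
\begin{enumerate}
\item Choose a vertex $v$ and an $n$--clique $C$ such that every edge from $v$ to $C$ is spinal. If the $(2n-1)$--clique $C_0$ has a monochromatic $n$--subclique, take it as $C$ and take $v\in C_0\ssm C$. Otherwise any $v\in C_0$ shares its colour with at most $n-2$ other vertices of $C_0$, which leaves an $n$--subclique $C$ avoiding $v$'s colour.
\item Any $w\not\sim v$ is then adjacent to all of $C$, so $C\cup\{w\}$ is an $(n+1)$--clique.
\item Hence $w$ has a spinal edge to some $u\in C$. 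If $C$ is monochromatic, $w$ must have a different colour from $C$. Otherwise some $u\in C$ differs in colour from $w$.
\item Any neighbour $v'$ of $v$ is therefore adjacent to $u$ or to $w$, giving $d(v',w)\le2$.
\end{enumerate}

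In your two-fibre situation the same idea would rescue the argument, since one of $P,Q$ then has size exactly $n$. An outside vertex adjacent to all of that part is forced to have a different colour, and so has spinal edges to it. You would still need to handle the first gap, though. The case $n=1$ follows from \cref{cor:no_stable_Rull}.
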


\begin{proof}
If $n\le1$ then this follows from Corollary~\ref{cor:no_stable_Rull}. Otherwise, let $f:\Gamma^\ext\to\Lambda^\ext$ be the map induced by the stable quasiisometric embedding, as given by \cref{prop:stable_implies_induced}. Colour the vertices of $\Gamma\subseteq\Gamma^\ext$ according to which $\Lambda_i$ their image corresponds to. There can be no monochromatic $(n+1)$--clique in $\Gamma$. Moreover, \cref{thm:anti-rull2} implies that every edge between vertices of distinct colours is spinal.

Let $C_0\subseteq\Gamma$ be a $(2n-1)$--clique. If there exists a monochromatic $n$--subclique of $C_0$, then fix one such subclique $C$, and let $v\in C_0\ssm C$. Every edge from $v$ to $C$ is spinal. If instead $C_0$ has no monochromatic $n$--subcliques, then let $v\in C_0$ be arbitrary. Since at most $n-2$ other vertices of $C_0$ share a colour with $v$, there is an $n$--subclique $C\subset C_0$ such that no vertex of $C$ has the same colour as $v$. Again, every edge from $v$ to $C$ is spinal. 

Now let $w$ be an arbitrary vertex of $\Gamma$ that is not adjacent to $v$. Since every edge from $v$ to $C$ is spinal, $w$ is adjacent to every element of $C$. In particular, $\dist(w,v)=2$ and if $w'$ is any other vertex of $\Gamma$ not adjacent to $v$, then $\dist(w,w')\le2$. 

Now suppose that $v'$ is a vertex of $\Gamma$ that is adjacent to $v$. Any two such vertices are at distance at most two, so it only remains to show that $\dist(v',w)\le 2$. If $C$ is monochrome, then $w$ cannot have the same colour as it, for there are no monochromatic $(n+1)$--cliques in $\Gamma$, and hence every edge from $w$ to $C$ is spinal. Otherwise there is some vertex $u\in C$ whose colour is different to that of $w$, so the edge $(u,w)$ is spinal. In either case, $v'$ must be adjacent either to $w$ or to an element of $C$, which is adjacent to $w$.
\end{proof}

\section{Quasiisometric embeddings of RAAGs in themselves} \label{sec:self_qie}

\cite{baderbensaidpetyt:quasiisometric:flexibility} is dedicated to constructing exotic quasiisometric embeddings between RAAGs defined on cycles of differing lengths. In this section we complement this by establishing rigidity results for self--quasiisometric-embeddings. More precisely, we will characterise all self--quasiisometric-embeddings for RAAGs defined on cycles of length at least five, and more generally for certain RAAGs with finite outer automorphism group.

Throughout this section, we will use notation as in Item~\ref{sh:vertex_notation}. In particular, whenever we consider a RAAG, its standard generating set will be denoted by $S$, and $v_s$ will be the vertex in the defining graph corresponding to a generator $s\in S$. We will always consider RAAGs with their word metric coming from the standard generating set.


For every group $G$ there is a map $L:G\to\QI(G)$, given by sending $g\in G$ to the left-multiplication operator $L_g$. The isometry $L_g$ is at a finite distance from the homomorphism that is conjugation by $g$. Indeed, for every $x\in G$,  $\dist(L_g(x),gxg^{-1})=|g|$.
If $G=A_\Gamma$ is a RAAG, then the kernel of this map is exactly the centre $Z(A_\Gamma)$.

A non-abelian RAAG has quasiisometries that are not close to homomorphisms. For example, if $s\in A(\Gamma)$ is a non-central generator we can consider the map $\varphi:G\to G$ defined by setting $\varphi(g)=sg$ if $g=ss_2^{k_2}\cdots s_n^{k_n}$ is reduced and syllable-reduced, $\varphi(g)=s^{-1}g$ if $g=s^2s_2^{k_2}\cdots s_n^{k_n}$ is reduced and syllable-reduced, and $\varphi(g)=g$ otherwise. This is an example of a \emph{change-of-power route map}, which will be discussed in this section (see \cref{def:change-of-power}). \cref{lem:power_map_is_qie} and  \cref{lem:power_map_is_surjective} show that $\varphi$ is a quasiisometry of $A_\Gamma$, but one can easily check this directly. Furthermore, $\varphi$ is not at finite distance from a homomorphism, as shown in \cref{prop:finite_dist_from_homo}.


Both this example and left multiplication (and in fact all undistorted group-embeddings, see \cref{lem:homos are not type-mixing}) are uniformly stable and take standard geodesics of a fixed type to standard geodesics of the same type. This leads us to the following definition. Recall from \cref{prop:stable_implies_induced} that stable quasiisometric embeddings between RAAGs induce combinatorial embeddings of the associated extension graphs.

\begin{definition}[Type-mixing]
We say a combinatorial embedding $f: \Gamma^{\ext}\to \Lambda^{\ext}$ is \emph{type-mixing} if there is a vertex $v\in\Gamma$ and a pair of elements $g_1,g_2\in A_\Gamma$ such that $\type(f(g_1\cdot v))\ne\type(f(g_2\cdot v))$.
    
We say a stable quasiisometric embedding $\psi:A_\Gamma\to A_\Lambda$ is \emph{type-mixing} if the induced combinatorial embedding between the extension graphs is type-mixing. 
%
\end{definition}


Type-mixing quasiisometric embeddings are fundamentally different from homomorphisms.

\begin{lemma} \label{lem:homos are not type-mixing}
If $\varphi:A_\Gamma\to A_\Lambda$ is a stable quasiisometric embedding and a homomorphism between right-angled Artin groups, then $\varphi$ is not type-mixing.
\end{lemma}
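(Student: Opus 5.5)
The key observation is that a homomorphism is equivariant: for every $g\in A_\Gamma$ and $x\in A_\Gamma$ we have $\varphi(gx)=\varphi(g)\varphi(x)$. So $\varphi$ sends the standard geodesic $g\sgen{s}$ exactly onto the left translate $\varphi(g)\cdot\varphi(\sgen s)$ of the image of $\sgen s$. The plan is therefore to show that the induced map of extension graphs satisfies $f(g\cdot v)=\varphi(g)\cdot f(v)$ for every vertex $v\in\Gamma$ and every $g\in A_\Gamma$. Here the right-hand side uses the natural left action of $A_\Lambda$ on $\Lambda^\ext$. Since that action preserves types, this gives $\type(f(g\cdot v))=\type(f(1\cdot v))$ for all $g$, which is exactly the failure of type-mixing.

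The steps are as follows.

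\textbf{Step 1.} Fix a vertex $v\in\Gamma$ with generator $s=s_v$. By stability there is a standard geodesic $\beta=h\sgen t$ in $A_\Lambda$, with $t\in S_\Lambda$, that lies within Hausdorff distance $D$ of $\varphi(\sgen s)$. By \cref{prop:stable_implies_induced}, $f(1\cdot v)$ is the parallelism class of $\beta$, which has type $t$.

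\textbf{Step 2.} For $g\in A_\Gamma$, the geodesic $g\sgen s$ has image $\varphi(g)\varphi(\sgen s)$, because $\varphi$ is a homomorphism. Left multiplication by $\varphi(g)$ is an isometry of $A_\Lambda$, so this image lies within Hausdorff distance $D$ of $\varphi(g)h\sgen t$. That set is again a standard geodesic labelled by $t$.

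\textbf{Step 3.} Parallelism classes of standard geodesics are well defined, and, as shown in the proof of \cref{lem:singular_tripod_RAAG}, two standard geodesics at finite Hausdorff distance are parallel and have the same label. Hence $f(g\cdot v)$ is the class of $\varphi(g)h\sgen t$, which has type $t$, independently of $g$. So $\varphi$ is not type-mixing.

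I expect no serious obstacle. The only point needing care is that "type" must be well defined on parallelism classes, so that the standard geodesic Hausdorff-close to $\varphi(g\sgen s)$ has the same label as $\varphi(g)h\sgen t$. This follows from \cref{lem:parallel_sing_geod_same_labels}(2), since parallel standard geodesics carry the same single label. Equivalently, it follows from the observation after \cref{def:extension_graph} that $g\cdot v=h\cdot w$ forces $v=w$.
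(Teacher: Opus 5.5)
Your proposal is correct and is the paper's proof. Both use $\varphi(gs^k)=\varphi(g)\varphi(s^k)$ to see that $\varphi(g\sgen s)$ lies at finite Hausdorff distance from the standard geodesic $\varphi(g)h\sgen t$, whose type $t$ is independent of $g$. Your remark that type is well defined on parallelism classes (via \cref{lem:parallel_sing_geod_same_labels}) only makes explicit a point the paper leaves implicit.
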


\begin{proof}
    Let $s$ be a standard generator of $A_\Gamma$ and let $g\in A_\Gamma$. Let $h\sgen t$ be a standard geodesic in $A_\Lambda$ that is at finite distance from $\varphi(\sgen s)$. 
    For every $g\in A_\Gamma$ we have that $\varphi(gs^k)=\varphi(g)\varphi(s^k)$, so $\varphi(g\sgen s)$ is at finite distance from $\varphi(g)h\sgen t$.
\end{proof}

In particular, type-mixing quasiisometric embeddings between right-angled Artin groups do not lie at finite distance from homomorphisms.

\subsection{Route maps}

Here we describe a way to reduce the problem of understanding all self--quasiisometric-embeddings of a RAAG $A_\Gamma$ to understanding a certain family of combinatorially defined embeddings. 

\begin{definition}[Route map]
Let $\Gamma$ be a graph. A \emph{route map} is a map $F:A_\Gamma/Z(A_\Gamma)\to A_\Gamma/Z(A_\Gamma)$ such that there exists a combinatorial embedding $f:\Gamma^\ext\to\Gamma^\ext$ with the property that $f(g\cdot\Gamma)=F(g)\cdot\Gamma$ for all $g\in A_\Gamma$. We also call $F$ the \emph{route map of $f$}. 
\end{definition}

We mod out by the centre, because if $z\in Z(A_\Gamma)$, then $gz\cdot \Gamma=g\cdot \Gamma$. We note that if $A_\Gamma$ has centre, then $A_\Gamma/Z(A_\Gamma)=A_{\Gamma'}$ where $\Gamma'$ is the induced subgraph of $\Gamma$ obtained by removing all vertices corresponding to central generators. By knowing the size of $\Gamma$, one can reconstruct $\Gamma$ and $\Gamma^{\ext}$ from $\Gamma'$. In $\Gamma^\ext$ there is only one vertex of a given central type and any combinatorial embedding $\Gamma^{\ext}\to \Gamma^\ext$ will map it to itself, up to an automorphism of $\Gamma$. Thus, in this section we will disregard the centre.

\bsh{Convention} \label{sh:convention_centreless}
All RAAGs considered in this section are centreless unless otherwise specified.
\esh

\begin{lemma} \label{lem:unique_route}
A combinatorial embedding $f:\Gamma^\ext\to\Gamma^\ext$ has at most one route map.
\end{lemma}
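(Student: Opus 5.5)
The plan is to show that the copy $g\cdot\Gamma\subset\Gamma^\ext$ determines the coset $g\,Z(A_\Gamma)$. Since we work with centreless RAAGs (Convention~\ref{sh:convention_centreless}), this means it determines $g$ itself. Granting this, suppose $F$ and $F'$ are both route maps of $f$. Then for every $g$ we have $F(g)\cdot\Gamma=f(g\cdot\Gamma)=F'(g)\cdot\Gamma$ as subgraphs of $\Gamma^\ext$, so $F(g)=F'(g)$. Uniqueness of the route map therefore reduces entirely to the following claim.

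\textbf{Claim:} if $g\cdot\Gamma=h\cdot\Gamma$ as subgraphs of $\Gamma^\ext$, then $g^{-1}h\in Z(A_\Gamma)$.

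First apply the left action of $g^{-1}$ on $\Gamma^\ext$ to reduce to the case $g=1$. So suppose $h\cdot\Gamma=\Gamma$ as subgraphs. Recall that vertices $g\cdot v$ and $h\cdot w$ are only ever identified when $v=w$, so each vertex of $\Gamma^\ext$ has a well-defined type, and $\Gamma\subset\Gamma^\ext$ contains exactly one vertex of each type. Hence the equality $h\cdot\Gamma=\Gamma$ forces $h\cdot v=1\cdot v$ for every $v\in\Gamma$. By the definition of the extension graph, $h\cdot v=1\cdot v$ means $hs_vh^{-1}=s_v$. So $h$ commutes with every standard generator, and therefore $h\in Z(A_\Gamma)$, which is trivial in the centreless case.

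The argument needs some care about what ``subgraph'' means. We should read $f(g\cdot\Gamma)=F(g)\cdot\Gamma$ as equality of vertex sets, or of induced subgraphs. Equality of vertex sets, together with the type observation above, already forces the vertexwise identification, so no further graph structure is needed. I do not expect a serious obstacle here; the only subtle point is that types are well defined, which the paper has already observed right after Definition~\ref{def:extension_graph}.
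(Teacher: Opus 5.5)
Your proposal is correct and is essentially the paper's argument: both show that $g\cdot\Gamma=h\cdot\Gamma$ forces $g^{-1}h$ to commute with every standard generator, so $F(g)$ is determined up to the centre. That ambiguity is exactly what the definition of a route map on $A_\Gamma/Z(A_\Gamma)$ already quotients out, and your type argument simply spells out the step the paper leaves implicit.
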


\begin{proof}
If $h$ and $h'$ satisfy that $h\cdot \Gamma=h'\cdot \Gamma$ then $h^{-1}h'$ commutes with all standard generators, hence it is in $Z(A_\Gamma)$. Thus, if $f(g\cdot \Gamma)=F(g)\cdot \Gamma$ for some $F(g)\in A_\Gamma$, then $F(g)$ is unique up to the centre.
\end{proof}

The next two lemmas are what allow us to reduce to studying route maps.

\begin{lemma} \label{lem:route_maps_exist}
If $n\ge5$, then every combinatorial embedding $C_n^\ext\to C_n^\ext$ has a route map. If $\Gamma$ is a graph with more than one vertex such that $\operatorname{Out}A_\Gamma$ is finite, then every automorphism of $\Gamma^\ext$ has a route map.
\end{lemma}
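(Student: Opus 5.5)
We have to show that for every combinatorial embedding $f$ (resp. automorphism) there is, for each $g\in A_\Gamma$, some $h\in A_\Gamma$ with $f(g\cdot\Gamma)=h\cdot\Gamma$. The plan is to characterise the subgraphs of the form $g\cdot\Gamma$ intrinsically in $\Gamma^\ext$, by a property that $f$ visibly preserves.

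\textbf{Step 1: characterise the translates.} For cycles $C_n$ with $n\ge5$, I would show that the induced $n$--cycles of $C_n^\ext$ are exactly the translates $g\cdot C_n$. One inclusion is immediate.

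For the converse, use the doubling description of $\Lambda^\ext$ from \cite[Lem.~3.1]{kimkoberda:embedability}, as in the proof of \cref{prop:pulling_join_from_ext}. Take an induced $n$--cycle and the smallest stage $\Lambda_N$ containing it. If it is not contained in a single copy, Case~2 of that argument produces an induced cycle $(x_0,\dots,x_k,v)$ in $\Lambda_N$ with $4\le k+2\le n$. It also exhibits a second such cycle using the complementary arc, with the two lengths summing to $n+4$ minus the overlap.

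Repeating down to $\Lambda_0=C_n$, which has no induced cycles except itself, forces both pieces to have length exactly $n$. This is impossible, since their lengths sum to $n+4$, so two lengths $\ge n$ would need $n\le 4$. Hence every induced $n$--cycle lies in a single copy, and by induction it is a translate of $C_n$.

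Since $f$ is a combinatorial embedding, $f(g\cdot C_n)$ is an $n$--cycle. It is induced because $C_n^\ext$ has girth $n$ by \cite[Lem.~3.9]{kimkoberda:embedability}: a chord would create a shorter cycle. So $f(g\cdot C_n)=h\cdot C_n$, which gives the route map $F(g)=h$.

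\textbf{Step 2: the finite-Out case.} Here $f$ is an automorphism, so I would characterise the translates as the maximal subgraphs isomorphic to $\Gamma$ that satisfy a rigidity property, namely that each is $g\cdot\Gamma$. Huang's work \cite{huang:quasiisometric:1} on finite-$\Out$ RAAGs gives the needed input: the stabiliser of a vertex star and the combinatorics of maximal standard flats are recovered from $\Gamma^\ext$.

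Concretely, I would argue that $g\cdot\Gamma$ is characterised by two facts. First, the star of each of its vertices in $\Gamma^\ext$, intersected with it, equals the star in $g\cdot\Gamma$. Second, for adjacent $u,w$ in $g\cdot\Gamma$ there is no separation phenomenon. The no-separating-star and no-domination conditions (Laurence--Servatius) are exactly what exclude other copies of $\Gamma$, for instance those obtained by partial conjugation or transvection.

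An automorphism of $\Gamma^\ext$ maps $\Gamma$ to a subgraph $\Gamma'\cong\Gamma$. Adjacent translates $g\cdot\Gamma$ and $gs\cdot\Gamma$ share $\star(g\cdot v_s)$, which is not separating. Propagating along words in $A_\Gamma$ would then show that every image is a translate once $f(\Gamma)$ is. Finally, by precomposing with the action of $A_\Gamma$, it suffices to handle $f(\Gamma)$ itself.

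\textbf{Main obstacle.} The hardest step is this finite-$\Out$ characterisation: showing that any copy of $\Gamma$ in $\Gamma^\ext$ (with the right adjacency to its neighbours) is a translate. I would first attempt it by the same doubling induction, using that a copy of $\Gamma$ straddling two copies along $\star(v)$ would make $\star(v)$ separating in $\Gamma$, or would produce a domination $\lk(w)\subseteq\star(v)$. Both are excluded by finiteness of $\Out(A_\Gamma)$.
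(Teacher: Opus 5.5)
Your Step~1 is essentially sound. Where the paper simply quotes \cite[Lem.~3.11]{kimkoberda:embedability}, you reprove it using the doubling argument from the proof of \cref{prop:pulling_join_from_ext} together with the girth bound. Two repairs are needed. First, the ``complementary arc'' of $P$ may re-enter $S=\star(v)$, so adjoining $v$ to it need not give an induced cycle. Instead, take one arc $P$ with interior in $\Gamma\ssm S$ and one arc $P'$ with interior in $\Gamma'\ssm S$. These are edge-disjoint, so their lengths satisfy $k+k'\le n$. The induced cycles $(P,v)$ and $(P',v)$ then each have length at most $n$, hence exactly $n$ by girth, and $2n\le n+4$ is impossible. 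Second, your final ``by induction it is a translate'' needs the fact that each doubling step adds a group translate of the previous stage, which is how \cite[Lem.~3.1]{kimkoberda:embedability} builds the sequence.

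The genuine gap is Step~2. It contains no argument for the one thing that has to be proved: that an automorphism $f$ of $\Gamma^\ext$ sends $\Gamma$ to some translate $h\cdot\Gamma$. Your reduction to $f(\Gamma)$ by precomposing with the $A_\Gamma$--action is correct, and it makes the ``propagation'' remark unnecessary. However, your characterisation of translates is either vacuous or unstated. The first condition only says the copy is induced. The others (``no separation phenomenon'', ``the right adjacency to its neighbours'') are never specified, and you never check that automorphisms preserve them.

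The suggested doubling mechanism also breaks down. Suppose $f(\Gamma)$ straddles $\Lambda_{N-1}\cup_{\star(v)}x\Lambda_{N-1}$. If $v\in f(\Gamma)$ you do get a separating star of $f(\Gamma)\cong\Gamma$. But the remaining case is $v\notin f(\Gamma)$, just as $v\notin C_1$ is forced for cycles. There the separating set is $f(\Gamma)\cap\lk(v)$, a set of vertices with a common neighbour outside $f(\Gamma)$. In general this is neither a vertex star of $f(\Gamma)$ nor the source of a domination pair, so the Laurence--Servatius conditions give no contradiction. Unlike the cycle case, there is no girth-type invariant to fall back on.

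Indeed, \cref{fig:counter_to_3.11} shows an atomic, hence finite-$\Out$, graph $\Gamma$ with a copy of $\Gamma$ inside $C_6^\ext\subset\Gamma^\ext$ that is not a translate. So the analogue of \cite[Lem.~3.11]{kimkoberda:embedability} fails. Everything then rests on your unspecified extra condition, which must use that $f$ is an automorphism of all of $\Gamma^\ext$, not merely that $f(\Gamma)\cong\Gamma$. This is exactly the input the paper takes from \cite[Lem.~4.10, 4.12]{huang:quasiisometric:1}.
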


\begin{proof}
\cite[Lem.~3.11]{kimkoberda:embedability} shows that, for $n\ge5$, every $n$--cycle in $C_n^\ext$ has the form $g\cdot C_n$ for some $g\in A_{C_n}$. This gives the first statement. The second statement follows from \cite[Lem.~4.10, 4.12]{huang:quasiisometric:1}.
\end{proof}

Recall from \cite{laurence:generating,servatius:automorphisms} that the RAAG $A_\Gamma$ has finite outer automorphism group if and only if $\Gamma$ has no separating vertex star and no vertices $v,w$ with $\lk(w)\subset\star(v)$. 

\begin{lemma} \label{lem:route_map_close_to_qie}
Let $\varphi:A_\Gamma\to A_\Gamma$ be a stable quasiisometric embedding. Let $f:\Gamma^\ext\to \Gamma^\ext$ be the combinatorial embedding induced by $\varphi$. If $f$ has a route map $F$, then $F$ and $\varphi$ are at finite distance from one another. 
\end{lemma}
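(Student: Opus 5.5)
We need to show $\sup_g d(\varphi(g),F(g))<\infty$, with $F(g)$ read modulo the (trivial, by \cref{sh:convention_centreless}) centre. The plan is to show that $\varphi(g)$ lies uniformly close to every standard geodesic representing a vertex of $f(g\cdot\Gamma)=F(g)\cdot\Gamma$, and that the set of points close to all of these is bounded around $F(g)$.

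\textbf{Step 1.} Fix $g\in A_\Gamma$. The vertices of $g\cdot\Gamma$ are represented by the standard geodesics $g\sgen{s}$, $s\in S$, and each passes through $g$. By stability, $\varphi(g\sgen s)$ lies within Hausdorff distance $D$ of a standard geodesic $\beta_{g,s}$, and $\beta_{g,s}$ represents the vertex $f(g\cdot v_s)$. Since $g\in g\sgen s$, we get $d(\varphi(g),\beta_{g,s})\le D$ for every $s$. Because $f(g\cdot\Gamma)=F(g)\cdot\Gamma$, the vertex $f(g\cdot v_s)$ equals $F(g)\cdot v_{\sigma(s)}$ for some generator $\sigma(s)$. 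So $\beta_{g,s}$ is parallel to $F(g)\sgen{\sigma(s)}$. Here $\sigma$ runs over all of $S$ because $f$ restricted to $g\cdot\Gamma$ is a bijection onto $F(g)\cdot\Gamma$ (both graphs are finite and isomorphic to $\Gamma$, and $f$ is an injective graph embedding).

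\textbf{Step 2.} Next, replace $\beta_{g,s}$ by $F(g)\sgen{\sigma(s)}$ itself, uniformly in $g$. Parallel standard geodesics $h\sgen t$ and $h'\sgen t$ differ by an element $h^{-1}h'$ of the centraliser of $t$, and their Hausdorff distance can be large. This is the \emph{main obstacle}. To handle it, use the triangle-free/finite-Out structure only through the following claim: the family $\{\beta_{g,s}\}_{s\in S}$ pairwise spans standard $2$--flats whenever $s$ and $s'$ are adjacent, and these flats are close to $\varphi(g\sgen{s,s'})$ by \cite[Prop.~4.3]{baderbensaidpetyt:from}. So the geodesics $\beta_{g,s}$ all pass within a uniform distance $D'$ of the single point $\varphi(g)$. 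Now take the point $p=\varphi(g)$ and write $p=F(g)c$. For each $t\in S$, the point $p$ is within $D$ of a geodesic $F(g)z_t\sgen t$ with $z_t\in C(t)$, by Step 1. I then need to show that $|c|$ is bounded. A reduced word for $c$ ending in a syllable that commutes with every generator is impossible, since the group is centreless. More concretely, I would argue that for each $t$ one can write $c=z_t t^{k_t}u_t$ with $|u_t|\le D$. Taking the normal form of $c$, every terminal syllable of $c$ beyond a bounded suffix then lies in $\bigcap_t \langle \mathrm{st}(t)\rangle$, which is trivial when the centre is trivial. Carrying this out cleanly, perhaps by a hyperplane-counting argument (a hyperplane separating $F(g)$ from $p$ must be crossed by, or cross, every $F(g)z_t\sgen t$), gives $|c|\le C(D,|S|)$.

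\textbf{Step 3.} Conclude that $d(\varphi(g),F(g))\le C$ for every $g$, with $C$ independent of $g$. Hence $F$ and $\varphi$ are at finite distance.
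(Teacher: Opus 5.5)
Your strategy is the paper's. Step 1 matches the paper's opening, and writing $\varphi(g)=F(g)c$ with $c$ within $D$ of some $z_t\langle t\rangle\subseteq P_t:=\langle\star(v_t)\rangle$ for every $t\in S$ is exactly its reduction to showing that $\bigcap_{t\in S}P_t^{+D}$ is bounded around $1$. You never need to replace $\beta_{g,s}$ by $F(g)\langle\sigma(s)\rangle$ itself; passing to the parallel set, as you eventually do with $z_t$, is the point. The 2--flat detour should be deleted, for three reasons. The specific geodesics $\beta_{g,s},\beta_{g,s'}$ need not span a flat. Its conclusion is already contained in Step 1. And no triangle-free or finite-Out hypothesis is used: only centrelessness and finiteness of $\Gamma$.

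The real gap is that the one step with content, bounding $|c|$, is only gestured at, and neither sketched justification works as written.
\begin{itemize}
\item In the ``terminal syllable'' sentence, the suffix $u_t$ depends on $t$, and $z_tt^{k_t}$ need not be a prefix of $c$. So nothing yet produces a single prefix of $c$ that lies in every $P_t$ and has bounded complementary suffix.
\item The hyperplane claim is false as stated. Take $c=r$ with $r$ not commuting with $t$. The hyperplane dual to the edge from $1$ to $r$ separates $1$ from $c$, yet its label is outside $\star(v_t)$. The correct statement is that all but at most $D$ of the hyperplanes separating $1$ from $c$ cross $P_t$. This is because each such hyperplane either crosses $P_t\ni 1$ or separates $c$ from $P_t$, and exactly $\dist(c,P_t)\le D$ hyperplanes do the latter.
\end{itemize}

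With that correction the count is immediate; it is what the paper does with reduced words. Fix a generator $r$. Since $A_\Gamma$ is centreless, there is $t\in S$ with $v_r\notin\star(v_t)$. Every hyperplane of $P_t$ has label in $\star(v_t)$, so every $r$--labelled hyperplane separating $1$ from $c$ is disjoint from $P_t$. It therefore separates $c$ from $P_t$, so there are at most $D$ of them. Summing over the finitely many generators gives $\dist(\varphi(g),F(g))=|c|\le|S|\,D$.
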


\begin{proof}
Let $S$ be the standard generating set of $A_\Gamma$.
Let $D$ be the stability  constant for $\varphi$, and let $g\in A_\Gamma$. For each standard generator $s\in S$, there is some $h_s\in A_\Gamma$ such that $\dist_{Haus}(\varphi(g\sgen s),h_s\sgen{t_s})\le D$, where $t_s=\type(f(g\cdot v_s))$. Note that since $\varphi$ has a route map, $\{t_s\mid s\in S\}=S$. Since $A_\Gamma$ is centreless, by \cref{lem:unique_route} the element $F(g)$ is the unique element in the intersection of the parallel sets of the $h_s\langle t_s\rangle$.
Hence $\varphi(g)$ is in the intersection of the $D$ neighbourhoods of the parallel sets of the $h_s\langle t_s\rangle$, which are the parallel sets of $F(g)\langle s\rangle$ for every $s\in S$.

We will show $\dist(\varphi(g),F(g))\leq |V(\Gamma)|\cdot D$. By left multiplication by $F(g)^{-1}$, we may assume that $F(g)=1$ and $\varphi(g)\in\bigcap_{v_s\in\Gamma}P_s^{+D}$, where $P_s$ is the parallel set of $\langle s\rangle$.

Choose a reduced word $s_1^{a_1}\cdots s_n^{a_n}$ representing $\varphi(g)$. The distance from $\varphi(g)$ to $P_s$ is at least $\sum_{s_i\notin C_{A_\Gamma}(s)} a_i$. Thus, as $A_\Gamma$ is centreless, for every $i$ there exists $s\in S$ that does not commute with $s_i$ and we get $\sum_{s_j=s_i} a_j\leq D$. In total, we must have $\dist(\varphi(g),1)=\sum_{i=1}^n a_i\leq |V(\Gamma)|\cdot D$.
\end{proof}

Even if a combinatorial embedding $f:\Gamma^\ext\to\Gamma^\ext$ is not induced by a quasiisometric embedding, it is often still the case that if it has a route map $F$, then $F$ is sufficiently well behaved that it induces the combinatorial embedding $f$. More precisely, we have the following.

\begin{lemma} \label{lem:route_maps_are_the_shit}
Suppose that no two vertices of $\Gamma$ have the same star. Let $f:\Gamma^{\ext}\to \Gamma^{\ext}$ be a combinatorial embedding with a route map $F$. For each $g\in A_\Gamma$ and each standard generator $s\in S$, there is some $m_k$ such that $F(gs^k)=F(g)\type(f(g\cdot v_s))^{m_k}$. The map $k\mapsto m_k$ is injective. In particular, $F$ induces $f$. 

If, furthermore, $f$ is not type-mixing and $F(1)=1$, then $F(g)$ has the same syllable structure as $g$, but perhaps with the powers changed, for every $g\in A_\Gamma$.
\end{lemma}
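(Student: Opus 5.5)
The plan is to read off how $F$ acts along a standard geodesic from how $f$ acts on the copies of $\Gamma$ glued around one vertex of $\Gamma^\ext$. Fix $g\in A_\Gamma$ and $s\in S$, and write $u=g\cdot v_s$ and $t=\type(f(u))$. For every $k$, the copies $g\cdot\Gamma$ and $gs^k\cdot\Gamma$ both contain $u$ and share $g\cdot\star(v_s)$. Applying $f$, the copies $F(g)\cdot\Gamma$ and $F(gs^k)\cdot\Gamma$ both contain $f(u)$. They also share the $f$--image of $g\cdot\star(v_s)$.

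\textbf{Step 1: locate $F(gs^k)$.} The copies of $\Gamma$ in $\Gamma^\ext$ containing a vertex $h\cdot v_t$ are exactly $hc\cdot\Gamma$ for $c$ in the centraliser of $t$, which is $\langle\star(t)\rangle$. Since $f(u)=F(g)\cdot w$ for some vertex $w$ of $\Gamma$, we get $F(gs^k)\in F(g)\langle\star(v_t)\rangle$ modulo the centre. Next use the other vertices of $\lk(v_s)$. For $x\in\lk(v_s)$, the vertex $g\cdot x$ lies in both copies, so its image lies in both $F(g)\cdot\Gamma$ and $F(gs^k)\cdot\Gamma$. Hence $F(g)^{-1}F(gs^k)$ centralises the generator $\type(f(g\cdot x))$ for every $x$ in $\lk(v_s)$.

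\textbf{Step 2: cut the centraliser down to $\langle t\rangle$.} Combining these constraints, $F(g)^{-1}F(gs^k)$ lies in $\langle t\rangle$ times the parabolic subgroup on $\lk(v_t)\cap\bigcap_x\star(\type f(g\cdot x))$. The key point is that $f$ restricted to $g\cdot\star(v_s)$ is a graph embedding into $F(g)\cdot\Gamma\cong\Gamma$. So $f$ sends $\star(v_s)$ isomorphically onto the star of $w$ in $F(g)\cdot\Gamma$, with $w$ of type $t$, and $\lk(v_s)$ maps bijectively onto $\lk(w)$. Here I need to check that a combinatorial embedding sends the copy $g\cdot\Gamma$ onto the whole copy $F(g)\cdot\Gamma$. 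This holds because $f(g\cdot\Gamma)=F(g)\cdot\Gamma$ and $f$ is injective on vertices, so it is a bijection of finite vertex sets.

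So the extra factor centralises every vertex of $\lk(v_t)$ and lies in $\lk(v_t)$. If it involved a vertex $y\in\lk(v_t)$, then $y$ would be adjacent to all of $\lk(v_t)$ as well as to $v_t$, so $\lk(v_t)\subseteq\star(y)$. I still need to exclude this using the hypothesis that no two vertices have the same star. I expect this is the delicate step, possibly requiring the full copy $gs^k\cdot\Gamma$, whose image is $F(gs^k)\cdot\Gamma$, so that $F(gs^k)$ is determined modulo the centre. Indeed, $F(gs^k)$ must move $F(g)\cdot\Gamma$ to a copy meeting it in exactly the image of $g\cdot\star(v_s)$. Multiplying by a nontrivial element of $\langle y\rangle$ fixes the bigger set $\star(y)\supsetneq\star(v_t)$ when the stars differ, and the gluing pattern distinguishes this from multiplying by a power of $t$. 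This would give $F(gs^k)=F(g)t^{m_k}$.

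\textbf{Step 3: injectivity.} Distinct $k$ give distinct copies $gs^k\cdot\Gamma$, since $s$ is not central. Because $f$ is injective, their images are distinct, so the $m_k$ are distinct. The statement that $F$ induces $f$ follows: the vertex $g\cdot v_s$ corresponds to the family of copies $gs^k\cdot\Gamma$, whose images $F(g)t^{m_k}\cdot\Gamma$ all contain $F(g)\cdot v_t=f(g\cdot v_s)$.

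For the final claim, non-type-mixing gives $t=\sigma(s)$ for a fixed permutation $\sigma$ of generators. Since $f$ preserves the adjacency of types, $\sigma$ is an automorphism of $\Gamma$. Induct on syllable length using a syllable-reduced word $g=s_1^{k_1}\cdots s_r^{k_r}$. We get $F(g)=\sigma(s_1)^{m_1}\cdots\sigma(s_r)^{m_r}$ with all exponents nonzero by injectivity and $m(0)=0$. Because $\sigma$ is a graph automorphism, this word is syllable-reduced with the same commutation pattern, so it has the same syllable structure.
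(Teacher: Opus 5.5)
The gap is where you suspected, at the end of Step~2, and it cannot be closed under the stated hypothesis. Your reduction up to that point is correct. The element $h=F(g)^{-1}F(gs^k)$ lies in $\sgen{t}\times\sgen{Y}$, where $Y$ is the set of vertices $y\in\lk(v_t)$ with $\lk(v_t)\subseteq\star(y)$, equivalently with $\star(v_t)\subseteq\star(y)$. But ``no two vertices have the same star'' only excludes $\star(v_t)=\star(y)$; it does not exclude $\star(v_t)\subsetneq\star(y)$. Your gluing argument only rules out $h$ being a pure power of $s_y$. Take instead a mixed element $h=t^p s_y^q$ with $p,q\neq0$. Then $1\cdot\Gamma\cap h\cdot\Gamma$ consists of the $1\cdot v$ with $h\in\sgen{\star(v)}$, that is, with $v\in\star(v_t)\cap\star(y)=\star(v_t)$. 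This is exactly the gluing pattern of $t^p\cdot\Gamma$, so neither the intersection data nor anything else in your argument distinguishes $h$ from a power of $t$.

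In fact the lemma is false as stated. Let $\Gamma$ be the path with edges $\{a,b\},\{b,c\},\{c,d\}$. It is centreless and star-rigid, so no two vertices share a star, but $\star(a)\subsetneq\star(b)$. The transvection $\tau\colon a\mapsto ab$, fixing $b,c,d$, is an automorphism of $A_\Gamma$ with $\tau(\sgen{\star(v)})=\sgen{\star(v)}$ for every vertex $v$. Hence $f(g\cdot v)=\tau(g)\cdot v$ is a well-defined automorphism of $\Gamma^\ext$ with route map $F=\tau$. It is not type-mixing, and $F(1)=1$. Yet $F(a^k)=a^kb^k$ is not of the form $a^{m_k}$, and $F(a)=ab$ does not have the syllable structure of $a$.

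The paper's proof makes the same inference in one line: it deduces that $F(g)$ and $F(gs^k)$ differ by a power of $t$ from uniqueness of the star of $v_t$. So this is not a weakness of your route relative to the paper's. What is needed is the stronger hypothesis that no vertex star is contained in another, i.e.\ no adjacent $v\neq w$ with $\lk(w)\subseteq\star(v)$. This holds for $C_n$ with $n\geq5$ and whenever $\Out(A_\Gamma)$ is finite, which are the cases used in the main theorems. Under it $Y=\varnothing$, and your Steps~1--3 give a complete proof along essentially the paper's lines. The same goes for your induction for the second part, where noting that $\sigma$ is a graph automorphism is a clean way to see that syllable-reducedness is preserved.
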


\begin{proof}
By definition, $f(g\cdot\Gamma)$ is the copy $F(g)\cdot\Gamma$ of $\Gamma$ inside $\Gamma^\ext$. Thus $f(g\cdot\star_\Gamma(v_s))=F(g)\cdot\star v_t$, where $f(g\cdot v_s)=F(g)\cdot v_t$. For every $k\ne0$, the intersection of $g\cdot\Gamma$ with $gs^k\cdot\Gamma$ is $g\cdot\star_\Gamma(v_s)$, and hence the intersection of $f(g\cdot\Gamma)$ with $f(gs^k\cdot\Gamma)$ is precisely $F(g)\cdot\star_\Gamma(v_t)$. Since $v_t$ is the unique vertex of $\Gamma$ whose star is $\star_\Gamma(w)$, it follows that $F(g)$ and $F(gs^k)$ differ by a suffix that is a power of $t$. The $m_k$ in the statement are pairwise distinct because no two $F(gs^k)\cdot\Gamma$ are equal, since $A_\Gamma$ is centreless.

We have shown that the image of the standard geodesic $g\sgen s$ under $F$ is the standard geodesic $F(g)\sgen{\type(f(g\cdot v_s))}$, and so $F$ induces $f$.

We prove the second statement by induction on $d$, the syllable-length of $g$. Let $g=s_1^{a_1}\dots s_d^{a_d}$ be a syllable-reduced word representing $g$. If $d=0$ then there is nothing to prove because $F(1)=1$, so suppose $d>0$. Let $h=s_1^{a_1}\dots s_{d-1}^{a_{d-1}}$. By induction, we know that $F(h)=t_1^{b_1}\dots t_{d-1}^{b_{d-1}}$ for some nonzero integers $b_1,\dots,b_{d-1}$. Since we started with a syllable-reduced expression for $g$, no reduced expression for $h$ can have $s_d^{\pm1}$ as a final letter. Since $f$ is not type-mixing, it follows that no reduced expression for $F(h)$ can have $\type(f(h\cdot v_{s_d}))^{\pm1}$ as a final letter. By the first part, we have $F(g)=F(h)t_d^{b_d}$ for some integer $b_d$, and $b_d$ is nonzero because of injectivity.
\end{proof}

Using \cref{lem:route_maps_are_the_shit}, we can see that in some situations, the existence of a route map prevents a combinatorial embedding $\Gamma^\ext\to\Gamma^\ext$ from being type-mixing.
The following definition was introduced by Huang in his work on quasiisometric rigidity \cite{huang:commensurability}. 

\begin{definition}[Star-rigid]
A graph $\Gamma$ is \emph{star-rigid} if for every $v\in\Gamma$, the only automorphism of $\Gamma$ that restricts to the identity on $\star(v)$ is the identity automorphism.
\end{definition}

Cyclic graphs are star-rigid. If $\Gamma$ is star-rigid, then no two vertices of $\Gamma$ have the same star, for transposing such vertices would be an automorphism.

\begin{proposition} \label{prop:no_exotic_map}
Suppose that $\Gamma$ is star-rigid. If a combinatorial embedding $f:\Gamma^\ext\to\Gamma^\ext$ has a route map $F$, then $f$ is not type-mixing.
\end{proposition}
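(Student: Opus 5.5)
Assume star-rigidity and that $f$ has a route map $F$; we show the type of $f(g\cdot v)$ does not depend on $g$. The key observation is that each translate $g\cdot\Gamma$ is an isomorphic copy of $\Gamma$, and $f$ maps it onto $F(g)\cdot\Gamma$. This gives a graph isomorphism
\[
\sigma_g:\Gamma\to\Gamma,\qquad v\mapsto \type(f(g\cdot v)),
\]
since $v\mapsto g\cdot v$ and $F(g)\cdot w\mapsto w$ are isomorphisms and $f$ is injective and adjacency-preserving. (If $f$ is only a combinatorial embedding, the induced map $\Gamma\to\Gamma$ is injective on the finite graph, hence bijective. For an automorphism of the finite graph $\Gamma$ we also need non-edges to go to non-edges. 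I would take this from $f$ being an embedding of $\Gamma^\ext$ as an induced subgraph, or else from the finiteness of $\Gamma$: an injective edge-preserving self-map of a finite graph is an automorphism.) So it suffices to show that $\sigma_g$ is independent of $g$.

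Because $A_\Gamma$ is generated by $S$, it is enough to prove $\sigma_{gs^k}=\sigma_g$ for every $g$, every $s\in S$ and every $k\neq 0$. We have $g\cdot\Gamma\cap gs^k\cdot\Gamma\supseteq g\cdot\star(v_s)$. Indeed, for $w\in\star(v_s)$ the generator $s_w$ commutes with $s$, so $g\cdot w=gs^k\cdot w$. For such $w$, the definitions give $\sigma_g(w)=\type(f(g\cdot w))=\type(f(gs^k\cdot w))=\sigma_{gs^k}(w)$, because types are well defined on vertices of $\Gamma^\ext$ (vertices $g\cdot v$ and $h\cdot w$ are only identified when $v=w$). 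Hence $\sigma_g^{-1}\circ\sigma_{gs^k}$ is an automorphism of $\Gamma$ that restricts to the identity on $\star(v_s)$.

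Star-rigidity then forces $\sigma_g^{-1}\sigma_{gs^k}=\mathrm{id}$. Inducting along a word for $g$ gives $\sigma_g=\sigma_1$ for all $g$, so $\type(f(g\cdot v))=\sigma_1(v)$ is independent of $g$, and $f$ is not type-mixing.

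The main obstacle is checking that $\sigma_g$ really is an automorphism of $\Gamma$, i.e.\ well defined and non-edge-preserving; the remaining steps are a direct consequence of star-rigidity. Note that \cref{lem:route_maps_are_the_shit} is not needed here. The only input from the route map is that $f(g\cdot\Gamma)=F(g)\cdot\Gamma$.
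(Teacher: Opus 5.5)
Your argument is correct and is essentially the paper's: induct along a word for $g$, note that consecutive copies $g'\cdot\Gamma$ and $g\cdot\Gamma$ (with $g=g't^{\pm1}$) share $g'\cdot\star(v_t)$, so the induced automorphisms of $\Gamma$ differ by one fixing a vertex star pointwise, and conclude by star-rigidity. The only difference is that the paper composes as $\sigma_g\circ\sigma_{g'}^{-1}$ and cites \cref{lem:route_maps_are_the_shit}. Your ordering $\sigma_{g'}^{-1}\circ\sigma_g$ fixes $\star(v_t)$ on the nose and uses only $f(g\cdot\Gamma)=F(g)\cdot\Gamma$, which also gives surjectivity of $\sigma_g$ directly, without appealing to finiteness.
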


\begin{proof}
We will show by induction on $d=|g|$ that $\type(f(g\cdot v_s))=\type(f(v_s))$ for every standard generator $s$. For $d=0$ there is nothing to prove. Assuming $d>0$, let $g=g't^{\pm 1}$ be a reduced expression for $g$. By \cref{lem:route_maps_are_the_shit}, there exists $m$ such that $F(g)=F(g')\type(f(g'\cdot v_t\cdot))^m$. Thus, the map $\Gamma\to\Gamma$ that sends the vertex of type $\type(f(g'\cdot v))$ to the vertex of type $\type(f(g\cdot v))$ for each $v$ is an automorphism of $\Gamma$ that restricts to the identity on $\star(v_t)$. Since $\Gamma$ is star-rigid, this implies that $\type(f(g\cdot v_s))=\type(f(g'\cdot v_s))$ for all $s\in S$. By induction, this is equal to $\type(f(v_s))$.
\end{proof}

\begin{corollary} \label{cor:power_change}
Let $\Gamma$ be a graph such that $A_\Gamma$ is centreless. If $\Gamma$ is star-rigid, then any route map $F:A_\Gamma\to A_\Gamma$ with $F(1)=1$ has the property that $F(g)$ has the same syllable structure as $g$, but perhaps with the powers changed.
\end{corollary}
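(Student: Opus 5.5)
The corollary follows by combining \cref{prop:no_exotic_map} with the second half of \cref{lem:route_maps_are_the_shit}. Let $F$ be a route map with $F(1)=1$. By definition, $F$ is the route map of some combinatorial embedding $f:\Gamma^\ext\to\Gamma^\ext$, and \cref{lem:unique_route} guarantees that $F$ is well defined, since $A_\Gamma$ is centreless and so $A_\Gamma/Z(A_\Gamma)=A_\Gamma$. I will fix such an $f$.

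First, I will check the hypothesis of \cref{lem:route_maps_are_the_shit} that no two vertices of $\Gamma$ have the same star. As noted just before \cref{prop:no_exotic_map}, this is immediate from star-rigidity. If $\star(u)=\star(w)$ with $u\ne w$, then the transposition of $u$ and $w$ is a graph automorphism. It fixes $\star(u)\ssm\{u,w\}$ pointwise, but it is not the identity on $\star(u)$ itself. So I should be a little careful. The cleaner route is that this transposition restricts to the identity on $\star(v)$ for any vertex $v$ adjacent to neither $u$ nor $w$, or more simply on $\star(x)$ for a suitable third vertex $x$. If no such vertex exists, the graph is small and can be handled directly. Since the paper already asserts this implication, I will simply cite that remark.

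Next, \cref{prop:no_exotic_map} applies, because $\Gamma$ is star-rigid and $f$ has the route map $F$. It gives that $f$ is not type-mixing. Now all hypotheses of the second statement of \cref{lem:route_maps_are_the_shit} hold: distinct stars, $f$ not type-mixing, and $F(1)=1$. That statement yields exactly the conclusion that $F(g)$ has the same syllable structure as $g$, with possibly different exponents.

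The only potentially delicate point is the distinct-stars hypothesis. Everything else is a direct chaining of the previous two results, so the proof should be only a few lines long.
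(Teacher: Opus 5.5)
Your proof is correct and is the same as the paper's: \cref{prop:no_exotic_map} shows $f$ is not type-mixing, and the second part of \cref{lem:route_maps_are_the_shit} then gives the conclusion once you know that no two vertices have the same star. On the point you flagged, the justification comes from centrelessness, not from the graph being small: if $\star(u)=\star(w)$ with $u\ne w$, then the transposition of $u$ and $w$ fixes $\star(v)$ pointwise for any $v$ not adjacent to $u$, and such a $v$ exists because otherwise $s_u$ would be central (without centrelessness the implication genuinely fails, e.g.\ for a single edge).
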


\begin{proof}
Let $F$ be a route map of the combinatorial embedding $f:\Gamma^\ext\to \Gamma^\ext$. By \cref{prop:no_exotic_map}, if $\Gamma$ is star-rigid, then $f$ is not type-mixing. Star-rigidity implies that no two vertices have the same star, hence the conclusion holds by \cref{lem:route_maps_are_the_shit}. 
\end{proof}

\subsection{The structure of route maps}

Let $\Gamma$ be a star-rigid graph. Here we describe all the route maps of $A_\Gamma$. According to \cref{cor:power_change}, any map $A_\Gamma\to A_\Gamma$ that preserves the identity and is the route map of a combinatorial embedding $\Gamma^\ext\to\Gamma^\ext$ that is not type-mixing simply modifies the powers appearing in syllable-reduced expressions for elements of $A_\Gamma$. This motivates the following definition.

\begin{definition}[Change-of-power] \label{def:change-of-power}
We say that a map $F:A_\Gamma\to A_\Gamma$ is a \emph{change-of-power route map} if it satisfies the conclusion of \cref{cor:power_change}: it is the route map of some combinatorial embedding $\Gamma^\ext\to\Gamma^\ext$ and, for every $g\in A_\Gamma$, the element $F(g)$ has the same syllable structure as $g$, but perhaps with the powers changed. 
\end{definition}

Note that every change-of-power route map $F$ satisfies $F(1)=1$. The hypothesis that $F$ is a route map ensures that $F$ changes powers ``in an injective way''.

Given $\sigma\in\Aut\Gamma$, we denote by $\hat\sigma$ the automorphism of $A_\Gamma$ that, given an element $g=s_{v_1}^{a_1}\dots s_{v_k}^{a_k}\in A_\Gamma$, maps it to $\hat\sigma(g)=s_{\sigma(v_1)}^{a_1}\dots s_{\sigma(v_k)}^{a_k}$.

\begin{lemma} \label{lem:rooting_route_map}
Let $\Gamma$ be star-rigid, and let $F:A_\Gamma\to A_\Gamma$ be a route map. There exists $\sigma\in \operatorname{Aut}(\Gamma)$ such that $F'= L_{F(1)^{-1}} \circ F\circ\hat\sigma$ is a change-of-power route map. 
\end{lemma}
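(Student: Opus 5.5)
The plan is to pin down $\sigma$ from the behaviour of $f$ on the base copy of $\Gamma$, and then check that the modified map is again a route map of a suitably modified combinatorial embedding, to which \cref{cor:power_change} applies. Let $f:\Gamma^\ext\to\Gamma^\ext$ be the combinatorial embedding whose route map is $F$, so $f(\Gamma)=F(1)\cdot\Gamma$. Then $F(1)^{-1}f$ maps $1\cdot\Gamma$ into $1\cdot\Gamma$. Since $\Gamma$ is a finite graph, this restriction is an injective graph map $\Gamma\to\Gamma$, hence a bijection on vertices. It is also adjacency-preserving, so it is a bijective edge-preserving map of a finite graph, and such a map is an automorphism. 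Call this automorphism $\tau$, so that $F(1)^{-1}f(1\cdot v)=1\cdot\tau(v)$. Set $\sigma=\tau^{-1}$.

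Next, $\hat\sigma$ acts on $\Gamma^\ext$ by $\hat\sigma_*(g\cdot v)=\hat\sigma(g)\cdot\sigma(v)$. This is well defined because $\hat\sigma$ is a group automorphism that carries $s_v$ to $s_{\sigma(v)}$, so it respects the identifications in \cref{def:extension_graph}; it is a graph automorphism, and $\hat\sigma_*(g\cdot\Gamma)=\hat\sigma(g)\cdot\Gamma$. Left multiplication by $F(1)^{-1}$ is likewise an automorphism of $\Gamma^\ext$, sending $h\cdot\Gamma$ to $F(1)^{-1}h\cdot\Gamma$. Consider
\[
f'=F(1)^{-1}\circ f\circ\hat\sigma_* .
\]
This is a combinatorial embedding, and it satisfies
\[
f'(g\cdot\Gamma)=F(1)^{-1}f\bigl(\hat\sigma(g)\cdot\Gamma\bigr)=F(1)^{-1}F(\hat\sigma(g))\cdot\Gamma=F'(g)\cdot\Gamma .
\]
So $F'$ is the route map of $f'$, modulo the centre, and $A_\Gamma$ is centreless by Convention~\ref{sh:convention_centreless}. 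Moreover $F'(1)=F(1)^{-1}F(1)=1$, since $\hat\sigma(1)=1$.

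Since $\Gamma$ is star-rigid, \cref{cor:power_change} applies to $F'$. It shows that $F'(g)$ has the same syllable structure as $g$ with possibly changed powers, so $F'$ is a change-of-power route map as required. With this approach the choice of $\sigma$ is not actually needed for the conclusion: any $\sigma$ would do, because the corollary already absorbs the type permutation. Even so, I would record $\sigma=\tau^{-1}$, so that $f'$ fixes the types of $1\cdot\Gamma$. I also suspect that the intended reading of \enquote{same syllable structure} includes having the same generators, and \cref{lem:route_maps_are_the_shit} only gives this when types are preserved at the base. That is exactly what the choice $\sigma=\tau^{-1}$ secures, since then $f'(1\cdot v)=1\cdot v$, and non-type-mixing (\cref{prop:no_exotic_map}) propagates this to all $g\cdot v$.

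The main obstacle is the well-definedness of $\hat\sigma_*$ on $\Gamma^\ext$ and its compatibility with the route-map identity. This amounts to checking that $gs_vg^{-1}=hs_wh^{-1}$ implies $\hat\sigma(g)s_{\sigma v}\hat\sigma(g)^{-1}=\hat\sigma(h)s_{\sigma w}\hat\sigma(h)^{-1}$, which is immediate by applying the homomorphism $\hat\sigma$.
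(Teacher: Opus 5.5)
Your proof is correct and follows essentially the same approach as the paper: the same $\sigma$ (undoing the type permutation that $F(1)^{-1}f$ induces on the base copy of $\Gamma$), the same $f'$, and the same appeal to \cref{cor:power_change} together with \cref{prop:no_exotic_map}. Your version is somewhat more careful in two respects: defining $f'=F(1)^{-1}\circ f\circ\hat\sigma_*$ as a composition makes it immediate that $f'$ is a combinatorial embedding, which the paper simply asserts for its formula $f'(g\cdot v_s)=F'(g)\cdot v_s$; and you make explicit that this choice of $\sigma$ is what makes $F'$ keep the same generators, rather than generators permuted by the types, which the paper's bare appeal to \cref{cor:power_change} leaves implicit.
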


\begin{proof}
Clearly $F'(1)=1$. Let $f:\Gamma^\ext\to \Gamma^\ext$ be a combinatorial embedding for which $F$ is the route map. 
Since $F$ is a route map, there exists an automorphism $\sigma\in \Aut(\Gamma)$ such that $f(\sigma(v))=F(1)\cdot v$ for all $v\in\Gamma$. Since $\Gamma$ is star-rigid,  \cref{prop:no_exotic_map} tells us that $f$ is not type-mixing, so we have $f(g\cdot \sigma(v))=F(g)\cdot v$ for all $g\in A_{\Gamma}$ and all $v\in\Gamma$. 

Let $f':\Gamma^\ext\to \Gamma^\ext$ be the combinatorial embedding defined by $f'(g\cdot v_s)= (L_{F(1)^{-1}}\circ F\circ\hat\sigma(g))\cdot v_s$. Note that $F'=L_{F(1)^{-1}}\circ F\circ\hat\sigma$ is the route map of $f'$. By \cref{cor:power_change}, $F'$ is a change-of-power route map.
%
\end{proof}

Thus, in order to understand route maps of $A_{\Gamma}$ with $\Gamma$ star-rigid, we need only understand change-of-power route maps. 
Let $F:A_{\Gamma}\to A_{\Gamma}$ be a change-of-power route map.
By \cref{cor:power_change}, the map $F$ respects syllable-prefixes, in the sense that $F(g)$ is a prefix of $F(gg')$ whenever $g$ and $g'$ can be represented by syllable-reduced words whose concatenation is syllable-reduced. More precisely, for every $g\in A_{\Gamma}$ and $s\in S$ such that $\syl{gs}=\syl g +1$, we have an injective map $F_{g,s}:\mathbb{Z}\to \mathbb{Z}$, which is defined by setting $F_{g,s}(0)=0$ and $F_{g,s}(k)$ to be the integer for which $F(gs^k)=F(g)s^{F_{g,s}(k)}$ when $k\ne0$. We can thus recover $F$ from $\{F_{g,s}\mid s\in S,\, g\in A_{\Gamma}\}$.

\begin{definition}[Blocking set]
Let $\Gamma$ be a graph. For a standard generator $s$ of $A_\Gamma$, the \emph{blocking set} of $s$ is the set $B_s$ consisting of all elements of $A_\Gamma$ that cannot be represented by a reduced word whose final letter commutes with $s$. 
\end{definition}

Note that $B_s$ is exactly the set of minimal labels of vertices of $\Gamma^\ext$ of type $s$.

In the remainder of the subsection, we use blocking sets to characterise change-of-power route maps. We will show that, in many situations, a change-of-power route map $F$ is determined by $\{F_{g,s}\mid s\in S,\,g\in B_s\}$; see \cref{lem:power_maps_determined_on_B}. Conversely, we will show that, for any $\Gamma$, if we make an arbitrary choice of injective map $\psi_{g,s}:\mathbb{Z}\to \mathbb{Z}$ for each $s\in S$ and each $g\in B_s$ such that $\psi_{g,s}(0)=0$, then this collection can arise as $\{F_{g,s}\}$ for some route map $F$; see \cref{lem:extending_power_maps}. 

\begin{proposition}
\label{lem:power_maps_determined_on_B}
Let $\Gamma$ be such that no two distinct vertices $w,v\in \Gamma$ have $\lk(w)\subseteq \str(v)$. 
Let $F$ be a change-of-power route map of $A_{\Gamma}$. The maps $\{F_{g,s}\mid s\in S,\,g\in B_s\}$ determine $F$. 
\end{proposition}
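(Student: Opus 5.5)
The plan is to show that for every $s\in S$ and every $g\in A_\Gamma$ with $\syl{gs}=\syl g+1$, the map $F_{g,s}$ equals $F_{h,s}$ for a specific $h\in B_s$ determined by $g$. Since $F$ is recovered from all the $F_{g,s}$ by successively extending along syllable-reduced words, as noted before the definition of blocking sets, this proves the statement.

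\emph{Step 1 (reduction to a blocking element).} Take $g\notin B_s$ with $\syl{gs}=\syl g+1$. I would peel off a maximal suffix of $g$ consisting of letters that commute with $s$. Using \cref{prop:shuffling}, this writes $g=hc$ with $h\in B_s$ and $c\in\sgen{\lk(v_s)}$. The suffix $c$ contains no power of $s$, because $s$ cannot be a final letter of $g$ when $\syl{gs}=\syl g+1$. I then argue by induction on $\syl c$. Write $g=g''x^m$ with $x\in\lk(v_s)$, $m\neq0$ and $\syl{g''}=\syl g-1$. Here $g''=hc'$ with $\syl{c'}<\syl c$, so the inductive hypothesis gives $F_{g'',s}=F_{h,s}$, and it suffices to show $F_{g,s}=F_{g'',s}$.

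\emph{Step 2 (commuting square).} Fix $k\neq0$. Because $x$ and $s$ commute, $gs^k=g''s^kx^m$. Hence the copies $gs^k\cdot\Gamma$ and $g''s^k\cdot\Gamma$ of $\Gamma$ in $\Gamma^\ext$ intersect in $g''s^k\cdot\star(v_x)$. The hypothesis implies that no two vertices of $\Gamma$ have the same star, so \cref{lem:route_maps_are_the_shit} applies. Since $F$ is a change-of-power route map, $f$ is not type-mixing and preserves types, so the lemma applied to the $x$-geodesic through $g''s^k$ gives $F(gs^k)=F(g''s^k)x^{j}$ for some $j$. On the other hand, the definitions of the power maps give
\[
F(gs^k)=F(g)s^{F_{g,s}(k)}=F(g'')x^{F_{g'',x}(m)}s^{F_{g,s}(k)},\qquad F(g''s^k)=F(g'')s^{F_{g'',s}(k)}.
\]
Cancelling $F(g'')$, we obtain an equality in $\sgen{x,s}\cong\Z^2$:
\[
x^{F_{g'',x}(m)}s^{F_{g,s}(k)}=s^{F_{g'',s}(k)}x^{j}.
\]
Comparing exponents yields $F_{g,s}(k)=F_{g'',s}(k)$, which is what Step 1 needs.

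\emph{Main obstacle.} The delicate point is justifying that the copies $f(gs^k\cdot\Gamma)=F(gs^k)\cdot\Gamma$ and $F(g''s^k)\cdot\Gamma$ intersect exactly along a translate of a single star, and that this star is $\star(v_x)$, so that the two labels differ by a power of $x$ rather than by some other element. This is where I expect the hypothesis that no $\lk(w)\subseteq\star(v)$ for $w\neq v$ to be used. It should rule out the intersection $g\cdot\Gamma\cap g'\cdot\Gamma$ being a star for a pair $g,g'$ that does not differ by a power of the corresponding generator. The hypothesis should also rule out $f$ mapping the glued star onto a strictly larger common subgraph. My first attempt would be to analyse which elements $u$ satisfy $\Gamma\cap u\cdot\Gamma\supseteq\star(v_x)$: each such $u$ must centralise every generator in $\star(v_x)$, and the link condition should force $u\in\sgen x$.
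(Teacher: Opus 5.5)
Your proposal is correct and is essentially the paper's proof: you peel off a final syllable $x^m$ commuting with $s$, show that $F(gs^k)$ differs from $F(g''s^k)$ by a power of $x$ and from $F(g)$ by a power of $s$, and compare exponents in $\sgen{x,s}\cong\Z^2$. The paper settles your ``main obstacle'' exactly as you suggest, and only containment (not exactness of the intersection) is needed: $gs^k\cdot w=g''s^k\cdot w$ for every $w\in\lk(v_x)$, so $F(g''s^k)^{-1}F(gs^k)$ centralises all generators of $\lk(v_x)$, and the hypothesis that there is no $y\neq v_x$ with $\lk(v_x)\subseteq\star(y)$ forces it into $\sgen x$.
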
 

\begin{proof}
We will recover $F_{g,s}$ for every $g\in A_{\Gamma}$ and every standard generator $s\in S$ by using induction on $d=\syl g$. For $d=0$ the maps are given for every $s$. 
    
Now assume $\syl g=d >0$ and let $s_2\in S$. If $g$ cannot end with a letter that commutes with $s_2$ then $g\in B_{s_2}$, so $F_{g,s_2}$ is given. Otherwise $g$ has a syllable-reduced expression $g=g's_1^{a_1}$, where $s_1$ commutes with $s_2$. We will show that $F_{g,s_2}=F_{g',s_2}$, which, by the induction hypothesis, is determined by the given data. 
    We will denote $b_1={F_{g',s_1}(a_1)}$, and for a nonzero integer $a_2$ we write $b_2={F_{g',s_2}(a_2)}$.

 See \cref{fig:determining comb embedding} for an illustration of the proof.

    Let $f:\Gamma^\ext\to\Gamma^\ext$ be the combinatorial embedding corresponding to $F$. Every generator $s\ne s_1$ that commutes with $s_1$ satisfies that 
    \[F(gs_2^{a_2})\cdot v_{s} \,=\, f(gs_2^{a_2}\cdot v_{s}) 
    \,=\, f(g's_2^{a_2}\cdot v_{s}) \,=\, F(g')s_2^{b_2}\cdot v_{s},\]
    and hence $F(gs_2^{a_2})=F(g')s_2^{b_2}h_0$, where $h_0$ commutes with all generators corresponding to $\lk(v_{s_1})$. This implies that there exists $m\neq 0$ such that $h_0=s_1^m$, because otherwise there exists a different generator, $s$ such that $\lk(v_{s_1})\subseteq \str(v_s)$, which cannot be by assumption on $\Gamma$.

    Similarly, every generator $s$ that commutes with $s_2$ satisfies that 
    \[F(gs_2^{a_2})\cdot v_{s} \,=\, f(gs_2^{a_2}\cdot v_{s})
    \,=\, f(g's_1^{a_1}\cdot v_{s}) \,=\, F(g')s_1^{b_1}\cdot v_{s},\]
    hence by similar reasoning there exists $n\neq 0$ such that $F(gs_2^{a_2})=F(g')s_1^{b_1}s_2^n$.

    We then conclude that $F(gs_2^{a_2})=F(g')s_1^{b_1}s_2^{b_2}$, which is equal to $F(g)s_2^{b_2}$. We have shown that $F_{g,s_2}(a_2)=F_{g',s_2}(a_2)$ for every $a_2\in\Z\ssm\{0\}$, as desired.
\end{proof}

\begin{figure}[ht]
\includegraphics{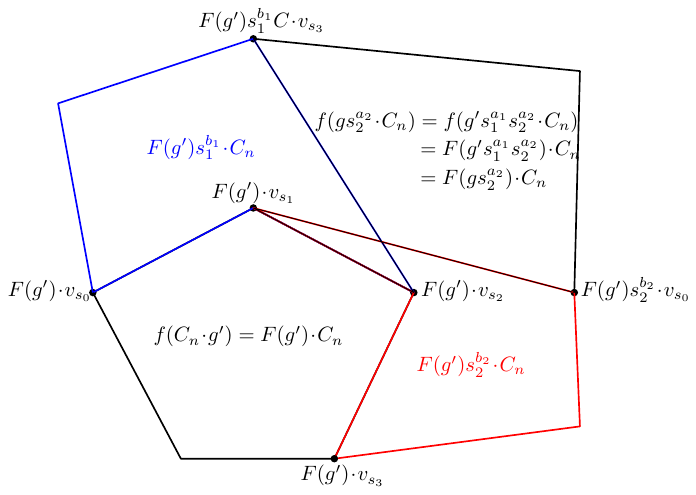}\centering 
\caption{The proof of \cref{lem:power_maps_determined_on_B} illustrated for $\Gamma=C_n$: the element $F(g)$ is determined by $F(g's_1^{a_1})$ and $F(g's_{2}^{a_2})$} 
\label{fig:determining comb embedding}
\end{figure}


Conversely, let $\Gamma$ be an arbitrary graph. For each $s\in S$ and each $g$ in the blocking set $B_s$, fix an injective map $\psi_{g,s}:\Z\to\Z$ with $\psi_{g,s}(0)=0$. We define a map $\hat\psi:A_\Gamma\to A_\Gamma$ inductively on syllable-length as follows. First set $\hat\psi(1)=1$. Now suppose that we have defined $\hat\psi$ on all elements of syllable-length less than $d>0$. Let $g\in A_\Gamma$ have $\syl g=d-1$, and suppose that $\syl {gs^a}=d$. If $g\in B_s$, then we define $\hat\psi(gs^a)=\hat\psi(g)s^{\psi_{g,s}(k)}$. Otherwise there is a syllable-reduced expression $g=g's_1^{a_1}$ where $s_1$ commutes with $s$. In this case, although we do not know that $g'\in B_s$, by induction there is some nonzero $b$ such that $\hat\psi(g's^a)=\hat\psi(g')s^b$, and we declare $\hat\psi(gs^a)=\hat\psi(g)s^b$.

\begin{proposition}\label{lem:extending_power_maps}
The map $\hat\psi$ is a well-defined change-of-power route map.
\end{proposition}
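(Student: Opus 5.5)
We have to prove three things about $\hat\psi$: it is well defined (independent of the choices of syllable-reduced expressions), it is the route map of a combinatorial embedding $f:\Gamma^\ext\to\Gamma^\ext$, and it preserves syllable structure. The plan is to argue by induction on syllable-length, strengthening the inductive statement to the following: for every $g$ and every $s$ with $\syl{gs}=\syl g+1$ there is an injective map $\hat\psi_{g,s}:\Z\to\Z$ with $\hat\psi_{g,s}(0)=0$ such that $\hat\psi(gs^a)=\hat\psi(g)s^{\hat\psi_{g,s}(a)}$. Moreover, $\hat\psi_{g,s}=\hat\psi_{g',s}$ whenever $g=g's_1^{a_1}$ is syllable-reduced with $s_1$ commuting with $s$. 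Unwinding this, $\hat\psi_{g,s}=\psi_{g_0,s}$, where $g_0\in B_s$ is the element obtained from $g$ by deleting every syllable that can be shuffled to the end and commutes with $s$. In other words, $g_0$ is the minimal label of the vertex $g\cdot v_s$, which explains why $B_s$ is the correct index set.

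\textbf{Well-definedness.} The issue is that an element $h$ of syllable-length $d$ can be written as $gs^a$ in several ways, one for each final syllable. By \cref{prop:shuffling}, two syllable-reduced expressions for $h$ differ by shuffling commuting syllables. So it suffices to treat $h=g''s_1^{a_1}s_2^{a_2}$ with $s_1,s_2$ commuting and both syllables terminal. We must check that appending $s_2$ to $g''s_1^{a_1}$ and appending $s_1$ to $g''s_2^{a_2}$ give the same result. By the inductive rule, $g''s_1^{a_1}\notin B_{s_2}$, so the second exponent is $\hat\psi_{g'',s_2}(a_2)$, and symmetrically. Both routes therefore yield $\hat\psi(g'')s_1^{b_1}s_2^{b_2}$, and these agree because $s_1$ and $s_2$ commute. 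The definition also refers to a chosen $g'$ with $g=g's_1^{a_1}$. Independence of that choice follows from the characterisation $\hat\psi_{g,s}=\psi_{g_0,s}$ with $g_0$ canonical, proved simultaneously in the induction.

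\textbf{Syllable structure.} Next we check that $\hat\psi(g)$ has the same syllable structure as $g$. Since every exponent is nonzero by injectivity, it suffices to show that the new syllable does not merge or cancel with earlier ones. A final syllable $t^{\pm}$ of $\hat\psi(g)$ can be shuffled to the end only if the corresponding syllable of $g$ can. So $\syl{\hat\psi(g)s^{b}}=\syl{\hat\psi(g)}+1$ holds exactly when $\syl{gs}=\syl g+1$. This is again an induction using the commutation pattern, which is preserved because types are preserved.

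\textbf{The embedding.} Define $f(g\cdot v_s)=\hat\psi(g)\cdot v_s$. This is well defined: if $g\cdot v_s=h\cdot v_s$, then $h=gc$ with $c\in C(s)=\langle\star(v_s)\rangle$, and appending syllables commuting with $s$ leaves the coset $\hat\psi(\cdot)\langle\star\rangle$ unchanged in the appropriate sense. Concretely, $\hat\psi(gc)\in\hat\psi(g)\langle \star(v_s)\rangle$ by the inductive formula. Adjacency is preserved because each $g\cdot\Gamma$ maps onto $\hat\psi(g)\cdot\Gamma$, so $\hat\psi$ is the route map of $f$. The main obstacle is injectivity of $f$. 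Comparing minimal labels, injectivity of each $\psi_{g_0,s}$ together with the fact that the syllable structure is preserved shows that distinct minimal labels in $B_s$ have distinct images modulo $C(s)$. This can be proved by induction, peeling off the first differing syllable.
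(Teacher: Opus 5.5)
Your proposal is correct and follows the paper's argument: reduce well-definedness to interchanging two commuting terminal syllables, observe that only exponents change, set $f(g\cdot v)=\hat\psi(g)\cdot v$ with adjacency coming from $g\cdot\Gamma\mapsto\hat\psi(g)\cdot\Gamma$, and deduce injectivity by passing to minimal labels and using injectivity of the $\psi_{g,s}$. Your strengthened inductive hypothesis $\hat\psi_{g,s}=\psi_{g_0,s}$, with $g_0$ the minimal label of $g\cdot v_s$, settles the independence from the choice of $g'$ in the definition, which the paper leaves implicit; invoke it also for well-definedness of $f$ by comparing $g$ and $gc$ through their common minimal label $g_0$ rather than directly, since $gc$ need not be a syllable-reduced extension of $g$.
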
 

\begin{proof}
    We first show that if $\gamma$ and $\gamma'$ are syllable-reduced words representing the same element in $A_\Gamma$, then $\hat\psi(\gamma)=\hat\psi(\gamma')$.
    Let $\gamma$ be syllable-reduced. By \cref{prop:shuffling}, any other syllable-reduced word representing it can be obtained by interchanging commuting adjacent syllables. Hence it is enough to show that if $\gamma=s_1^{a_1}\dots s_d^{a_d}$ and $\gamma'$ is obtained from $\gamma$ by interchanging commuting syllables $s_c^{a_c}$ and $s_{c+1}^{a_{c+1}}$, then $\hat\psi(\gamma)=\hat\psi(\gamma')$.
    
    We prove this by induction on $d$. For $d\le1$ there is nothing to prove. By induction we need only consider the case where $c=d-1$. Let $g'=s_1^{a_1}\dots s_{d-2}^{a_{d-2}}$
    and let $b_1,b_2$ be such that 
    \[
    \hat\psi(g's_{d-1}^{a_{d-1}}) \,=\, \hat\psi(g')s_{d-1}^{b_1} \quad \text{and} \quad 
    \hat\psi(g's_{d}^{a_{d}}) \,=\, \hat\psi(g')s_{d}^{b_2}.
    \]
    As $g's_{d-1}^{a_{d-1}}\notin B_{s_d}$ and $g's_{d}^{a_d}\notin B_{s_{d-1}}$, we by definition have that $\hat\psi(\gamma)=\hat\psi(g's_{d-1}^{a_{d-1}})s_d^{b_2}$ and $\hat\psi(\gamma')=\hat\psi(g's_{d}^{a_{d}})s_{d-1}^{b_1}$. By definition of $b_1$ and $b_2$, these are equal as elements of $A_{\Gamma}$. 
    
    

    Note that the map $\hat\psi$ simply changes powers in syllable-reduced expressions. To complete the proof, we show that it is the route map of some combinatorial embedding $\Gamma^\ext\to\Gamma^\ext$. By definition, $\hat\psi(g.\bar s)$ is a subset of $\hat\psi(g)\sgen s$ for every $g\in A_\Gamma$ and every $s\in S$, so $\hat\psi$ induces a map of vertex sets $f:\Gamma^{\ext}\to \Gamma^{\ext}$ by setting $f(g\cdot v)=\hat\psi(g)\cdot v$. This ensures that if $f$ is a combinatorial embedding, then $\hat\psi$ is the route map of $f$. 

Since $\hat\psi$ merely changes powers, if $v_1$ and $v_2$ are adjacent vertices of $\Gamma$, then $f(g\cdot v_1)$ and $f(g\cdot v_2)$ are adjacent or equal. Thus, to finish the proof we need only show that $f$ is injective. 


    Assume $f(g_1\cdot v_1)=f(g_2\cdot v_2)$ and assume that $g_1,g_2$ are minimal representatives, which in particular means that $g_isg_i^{-1}$ is reduced and syllable-reduced for each $i\in\{1,2\}$, where $s=\type(f(g_1\cdot v_1))$. From the definition of $f$ we see that $\hat\psi(g_1)\cdot v_1=\hat\psi(g_2)\cdot v_2$, so $v_1=v_2$ and this vertex has type $s$. 
    It follows that $\hat\psi(g_1)s\hat\psi(g_1)^{-1}= \hat\psi(g_2)s\hat\psi(g_2)^{-1}$. We note that these expressions are reduced and syllable reduced. By \cref{prop:shuffling}, we must have $\hat\psi(g_1)=\hat\psi(g_2)$. As the $\psi_{g,s}$ are injective, we see by induction that $g_1=g_2$, hence $g_1\cdot v_1=g_2\cdot v_2$.
\end{proof}
    
    




The following summarises what we know about route maps in the star-rigid case.

\begin{corollary} \label{lem:route_rooting_summary}
Let $\Gamma$ be a star-rigid graph such that no vertex link is a subset of the star of another vertex. Every route map is determined by an automorphism of $\Gamma$, a left multiplication map, and a collection of injections of $\Z$ defined on blocking sets of standard generators of $A_\Gamma$; and vice versa.
\end{corollary}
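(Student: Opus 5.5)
The corollary is obtained by chaining the results of this subsection, in two directions.

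\emph{Forward direction.} Let $F$ be a route map of a combinatorial embedding $f:\Gamma^\ext\to\Gamma^\ext$. By \cref{lem:rooting_route_map}, which uses star-rigidity, there is $\sigma\in\Aut(\Gamma)$ such that $F'=L_{F(1)^{-1}}\circ F\circ\hat\sigma$ is a change-of-power route map. Since $\hat\sigma$ is invertible, with inverse $\widehat{\sigma^{-1}}$, and left multiplication is invertible, we recover
$$F=L_{F(1)}\circ F'\circ\widehat{\sigma^{-1}}.$$
The hypothesis that no vertex link lies in the star of another vertex is exactly what \cref{lem:power_maps_determined_on_B} needs. That proposition shows $F'$ is determined by the restricted power maps $\{F'_{g,s}\mid s\in S,\,g\in B_s\}$. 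Each $F'_{g,s}$ is an injection of $\Z$ fixing $0$, by \cref{lem:route_maps_are_the_shit} and the definition of $F_{g,s}$. Hence $F$ is determined by the triple $(\sigma,F(1),\{F'_{g,s}\})$.

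\emph{Converse direction.} Start from an arbitrary $\sigma\in\Aut(\Gamma)$, an element $h\in A_\Gamma$, and injections $\psi_{g,s}$ with $\psi_{g,s}(0)=0$ indexed by $s\in S$ and $g\in B_s$. \cref{lem:extending_power_maps} gives a change-of-power route map $\hat\psi$ whose restricted power maps are the $\psi_{g,s}$. It remains to check that $L_h\circ\hat\psi\circ\hat\sigma$ is again a route map. If $\hat\psi$ is the route map of $f$, then $L_h\circ\hat\psi\circ\hat\sigma$ should be the route map of the composite of three combinatorial embeddings:
\begin{itemize}
\item the left action of $h$ on $\Gamma^\ext$;
\item $f$;
\item the automorphism of $\Gamma^\ext$ induced by $\sigma$, namely $g\cdot v\mapsto\hat\sigma(g)\cdot\sigma(v)$.
\end{itemize}
We need to check that this last formula is well defined, which holds because $\hat\sigma$ is an automorphism of $A_\Gamma$. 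We also need to check how $\hat\sigma$ and $\sigma$ interact, so that the composite sends $g\cdot\Gamma$ to $h\hat\psi(\hat\sigma(g))\cdot\Gamma$. This is immediate, since $\sigma$ permutes $\Gamma$ and $g\cdot\Gamma$ is the same subgraph whether or not the vertex labels are permuted. Injectivity and adjacency preservation are inherited under composition.

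\emph{Main obstacle.} The step needing the most care is the bookkeeping that the correspondence is really bijective ``up to'' the natural ambiguities. In the forward direction, $\sigma$ is uniquely determined: it is defined by $f(\sigma(v))=F(1)\cdot v$. The element $F(1)$ is unique by \cref{lem:unique_route}, using the standing convention that the groups are centreless. In the converse direction, applying the forward construction to $L_h\circ\hat\psi\circ\hat\sigma$ returns the original data. This is because the change-of-power maps determine and are determined by their blocking-set data, via \cref{lem:power_maps_determined_on_B} together with \cref{lem:extending_power_maps}.
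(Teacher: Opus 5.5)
Your argument is correct and is the paper's proof: \cref{lem:rooting_route_map} followed by \cref{lem:power_maps_determined_on_B} for the forward direction, and \cref{lem:extending_power_maps} for the converse. You additionally check that $L_h\circ\hat\psi\circ\hat\sigma$ is again a route map and do the uniqueness bookkeeping, both of which the paper leaves implicit. The only wrinkle is cosmetic and does not affect the correspondence: with your forward normalisation $F=L_{F(1)}\circ F'\circ\widehat{\sigma^{-1}}$, applying the forward construction to $L_h\circ\hat\psi\circ\hat\sigma$ returns $\sigma^{-1}$ rather than $\sigma$.
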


\begin{proof}
By \cref{lem:rooting_route_map}, every route map is determined by an automorphism of $\Gamma$, a left multiplication, and a change-of-power route map. By \cref{lem:power_maps_determined_on_B}, the latter is determined by a collection of injections of $\Z$, one for each pair $(g,s)$ with $g$ in the blocking set of $s$. The converse is given by \cref{lem:extending_power_maps}.
\end{proof}

\begin{proposition}\label{prop:finite_dist_from_homo}
    A change-of-power route map is at finite distance from a homomorphism if and only if it is a homomorphism defined by $s_i\mapsto s_i^{n_i}$ for $s_i\in S$ and $n_i\neq 0$.
\end{proposition}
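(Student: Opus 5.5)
The reverse direction is a check, and the forward direction is the substance. Throughout, $A_\Gamma$ is centreless as in Convention~\ref{sh:convention_centreless}, and $F$ is a change-of-power route map.

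For the reverse direction, suppose $F$ is the homomorphism $\phi$ with $s_i\mapsto s_i^{n_i}$ and $n_i\ne 0$. The image of a syllable-reduced word $s_1^{a_1}\cdots s_k^{a_k}$ is $s_1^{n_1a_1}\cdots s_k^{n_ka_k}$. This word is still syllable-reduced, because by \cref{prop:shuffling} reducedness depends only on which letters commute and on the signs and nonvanishing of the exponents. Hence $\phi$ changes only the powers. It is the route map of the embedding $g\cdot v\mapsto \phi(g)\cdot v$, since $\phi(g)s\phi(g)^{-1}$ determines $g s g^{-1}$. Being a homomorphism, it is at distance zero from one.

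For the forward direction, suppose $\dist(F(g),\phi(g))\le C$ for all $g$, where $\phi$ is a homomorphism. First I determine $\phi$ on generators. Since $F(s^k)=s^{F_{1,s}(k)}$, the set $\phi(\sgen s)$ lies within $C$ of the standard geodesic $\sgen s$. Write $\phi(s)=w$. The elements $w^k$ stay $C$-close to $\sgen s$ for all $k$, and the subset $\{F_{1,s}(k)\}$ is unbounded by injectivity. Comparing translation lengths, $w$ must be a conjugate of a power of $s$ whose axis is parallel to $\sgen s$. Therefore $w=h s^{n_s}h^{-1}$, where $n_s\neq0$ and $h$ commutes with $s$, which gives $w=s^{n_s}$. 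Note that $n_s=0$ is impossible because $F_{1,s}$ is injective and hence unbounded.

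Next I determine $F$ itself. Since $\phi(s)=s^{n_s}$, the map $\phi$ is the power homomorphism. It remains to show $F=\phi$. I would prove by induction on syllable length that $F_{g,s}(a)=n_s a$. Fix $g$ with $\syl{gs^a}=\syl g+1$. Then $F(gs^a)=F(g)s^{F_{g,s}(a)}$, while $\phi(gs^a)=\phi(g)s^{n_s a}$. By induction $F(g)=\phi(g)$, so we get $|s^{F_{g,s}(a)-n_s a}|\le C$. This bound by itself does not force equality. So I would instead use finite distance along a long chain: bounded distance is not preserved additively, but bounded distance from a homomorphism gives equivariance up to a constant.

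The key trick is as follows. Take $x\in B_s$ realizing a distinct injection $F_{x,s}$, and consider the elements $g_m=(x s^{a}) t^{m}$ for a generator $t$ not commuting with $s$, with $m$ large. Then $F(g_m)=F(xs^a)t^{F_{xs^a,t}(m)}$, whereas $\phi(g_m)=\phi(x)s^{n_sa}t^{n_tm}$. Compare the two: the words are reduced and the syllable $s$ is trapped between non-commuting letters, so if the $s$-exponents differ by $e\neq 0$, the distance grows with $m$. The reason is that a discrepancy in an inner syllable followed by a long non-commuting tail costs at least the length of that tail. For this I would invoke the standard normal-form estimate in RAAGs, namely that $|u^{-1}u'|$ for $u=ps^{b}t^{c}$ and $u'=ps^{b'}t^{c'}$ with $b\ne b'$ is at least $|c|+|c'|$, which follows from \cref{prop:shuffling}. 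This forces $F_{g,s}(a)=n_sa$, and every $g$ can be treated this way. Existence of a non-commuting $t$ follows from centrelessness.

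I expect the main obstacle to be this last estimate, together with handling elements where no such $t$ can be appended syllable-reducingly. I would handle those through the blocking-set reduction of \cref{lem:power_maps_determined_on_B}, which shows that only $g\in B_s$ matter. For such $g$, appending $t\notin\lk(s)$ is always possible.
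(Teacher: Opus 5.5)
Your proof is correct. The first half, forcing $\phi(s)=s^{n_s}$ with $n_s\neq0$, matches the paper's first step. There, though, the real content is a normal-form argument rather than a comparison of translation lengths. The paper cyclically reduces and observes that any letter other than $s^{\pm1}$ in the cyclic core, or any conjugator not commuting with $s$, makes $d(\phi(s)^k,\sgen s)$ grow linearly in $k$. You should supply that argument in place of ``comparing translation lengths''.

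The genuine difference is the last step.
\begin{itemize}
\item The paper concludes in one line. The power homomorphism is itself a route map, and since $F$ and $\phi$ are at finite distance they induce the same combinatorial embedding of $\Gamma^\ext$. Hence $F=\phi$ by uniqueness of route maps (\cref{lem:unique_route}).
\item You instead induct on syllable length and detect a wrong exponent directly. Suppose $F(g)=\phi(g)$ but $e=n_sa-F_{g,s}(a)\neq0$, and take $t$ not commuting with $s$. Then the element $F(gs^at^m)^{-1}\phi(gs^at^m)=t^{-c}s^{e}t^{n_tm}$, with $c=F_{gs^a,t}(m)$, is represented by a reduced word. So it has length at least $|n_tm|$, which is unbounded in $m$, contradicting bounded distance.
\end{itemize}
Your route is more elementary and self-contained. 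It avoids extension graphs, uses only the syllable-by-syllable form of $F$ and centrelessness, and gives an explicit linear divergence. The paper's route is shorter given the machinery already in place.

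One clean-up: the restriction to $x\in B_s$ and the obstacle you anticipate are both unnecessary. Take any $g$ with $\syl{gs^a}=\syl g+1$ and any $t$ not commuting with $s$. Appending $t^m$ to a syllable-reduced word for $gs^a$ keeps it syllable-reduced, because merging $t^m$ with an earlier $t$-syllable would require $t$ to commute past $s^a$. So your inductive step works for every $g$ directly. You should drop the fallback to \cref{lem:power_maps_determined_on_B}, since that proposition assumes no $\lk(w)\subseteq\str(v)$, a hypothesis the statement you are proving does not have.
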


\begin{proof}
It is clear that a homomorphism of the form $s_i\mapsto s_i^{n_i}$ is a change-of-power map.

For the other direction, let $F$ be a change-of-power map. Assume that $F$ is at distance at most $C$ from a homomorphism $f$.
Let $s$ be a standard generator. By the definition of $C$, for every $n$ there exists $g_{n}\in A_\Gamma$ satisfying $|g_{n}|\leq C$ and such that $f(s^n)=F(s^n)g_{n}=s^{F_{1,s}(n)}g_{n}$, with the latter a reduced expression.

As $f$ is a homomorphism, we have $f(s^n)=f(s)^n$. In other words 
\[s^{F_{1,s}(n)}g_n=(s^{F_{1,s}(1)}g_1)^n.\]
In particular, the distance of $f(s)^n$ from $\sgen s$ is bounded by $|g_n|\leq C$. 

We will first show that $g_1=s^l$ for some $l\in \mathbb{Z}$.

Write $g_1=hwh^{-1}$ where $hwh^{-1}$ is reduced and $w\neq 1$ is cyclically reduced. If $w$ contains a letter that is not $s^{\pm 1}$ then $d((s^{F_{1,s}(1)}g_1)^n, \sgen s)\geq n$.

Otherwise, $w=s^l$ for some $l\neq 0$. If $h\neq 1$, then $h$ does not commute with $s$. In particular, $h$ contains a letter that does not commute with $s$ and so does not get cancelled in the expression $(s^{F_{1,s}(1)}g_1)^n$. We then get again, 
$d((s^{F_{1,s}(1)}g_1)^n, \sgen s)\geq n$.

In conclusion, $g_1=s^{l_s}$. Consequently, the homomorphism $f$ satisfies $f(s)=s^{n_s}$ for some $n_s\in\Z$. We note that $n_s\neq 0$, since $F$ is assumed to be injective and $\dist(F(s^n),f(s^n))\le C$ for all $n$. Indeed, there are only finitely many elements of word length at most $C$. Applying the same argument for each standard generator, we deduce that $f$ has the form $t\mapsto t^{n_t}$ with $n_t\ne0$ for all standard generators $t$.

At this point one could argue that $l_s=0$, but that is not necessary. Indeed, we now know that $f$ and $F$ induce the same combinatorial embedding of extension graphs, and hence $F$ is the route map of $f$. This shows that $F=f$.
\end{proof}
\subsection{When is a route map a quasiisometric embedding?}

Here we characterise when a change-of-power route map is a quasiisometric embedding and when it is coarsely onto, in terms of conditions on the maps $F_{g,s}$ given by \cref{lem:power_maps_determined_on_B}.

\begin{definition}
For $C\ge\eps>0$, a map $\zeta:\Z\to\Z$ is said to be $(\eps,C)$--bilipschitz if for every $m,n\in\Z$ we have 
\[
\eps|n-m| \,\le\, |\zeta(n)-\zeta(m)| \,\le\, C|n-m|.
\]
A collection of maps $\zeta_i:\Z\to\Z$ is said to be \emph{uniformly bilipschitz} if there exist $C\ge\eps>0$ such that every $\zeta_i$ is $(\eps,C)$--bilipschitz. 

Let $F:A_\Gamma\to A_\Gamma$ be a change-of-power route map. We say that $F$ is \emph{power-bilipschitz} if its collection $\{F_{g,s}\,:\,g\in A_\Gamma,\,s\in S\}$ of associated maps is uniformly bilipschitz.
\end{definition}

Note that we are not requiring $(\eps,C)$--bilipschitz maps to be bijections of $\Z$, merely injections. It was shown in \cite{benjaminishamov:bilipschitz} that every bilipschitz bijection of $\Z$ is at finite $\ell^\infty$-distance from either the identity or a reflection.




\begin{proposition} \label{lem:power_map_is_qie}
A change-of-power route map of a RAAG $A_\Gamma$ is a quasiisometric embedding if and only if it is power-bilipschitz.
\end{proposition}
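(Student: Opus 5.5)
The forward direction is the easy one. Fix a change-of-power route map $F$ that is a $(q,q)$--quasiisometric embedding. For $g\in A_\Gamma$ and $s\in S$ with $\syl{gs}=\syl g+1$, we have $F(gs^k)=F(g)s^{F_{g,s}(k)}$, and $d(gs^m,gs^n)=|m-n|$. The images are $F(g)s^{F_{g,s}(m)}$ and $F(g)s^{F_{g,s}(n)}$, at distance $|F_{g,s}(m)-F_{g,s}(n)|$. The quasiisometry inequalities therefore give $q^{-1}|m-n|-q\le|F_{g,s}(m)-F_{g,s}(n)|\le q|m-n|+q$. Since these maps are injective on $\Z$, the additive constants can be absorbed when $|m-n|$ is small: distinct integers map to integers at distance at least $1$, and for $|m-n|\ge 1$ the upper bound gives at most $2q|m-n|$. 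So the family is uniformly $(\eps,C)$--bilipschitz with $\eps=\min(1,(2q^2)^{-1})$ (using the lower bound for $|m-n|\ge 2q^2$ and injectivity otherwise) and $C=2q$. If some $F_{g,s}$ is defined for pairs $(g,s)$ not of this form, we reduce to this case via the identity $F_{g,s}=F_{g',s}$ established in the proof of \cref{lem:power_maps_determined_on_B}.

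For the converse, assume $F$ is power-bilipschitz with constants $(\eps,C)$. We show that $\eps\,d(x,y)\le d(F(x),F(y))\le C\,d(x,y)$, so $F$ is in fact bilipschitz. By left-invariance of the word metric it is tempting to reduce to $x=1$, but $F$ is not equivariant. Instead, write $y=x\,w$ with $x$ and $w$ chosen via the common ``prefix'': let $p$ be the largest common prefix (in the median/$\ell^1$ sense) of $x$ and $y$, so that $x=pu$ and $y=pv$ with $|x^{-1}y|=|u|+|v|$ and $u^{-1}v$ reduced as written. The key claim is that $F$ respects this: $F(x)=F(p)u'$ and $F(y)=F(p)v'$, where $u',v'$ have the same syllable structure as $u,v$, with exponents replaced by $F_{h,s}$-differences. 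In syllable form, $u=s_1^{a_1}\cdots s_k^{a_k}$ and $u'=s_1^{b_1}\cdots s_k^{b_k}$, where $b_i=F_{h_i,s_i}(a_i+c_i)-F_{h_i,s_i}(c_i)$, with $c_i$ accounting for a possible shared partial syllable at the junction with $p$. The bilipschitz bounds then give $\eps|a_i|\le|b_i|\le C|a_i|$. Provided $u'^{-1}v'$ is still reduced, summing yields the desired inequality.

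The main obstacle is this reduction step: handling the first syllables of $u$ and $v$, which may share the generator of the last syllable of $p$, and justifying that $F(x)$ and $F(y)$ differ by $u'^{-1}v'$ with no further cancellation. We plan to use \cref{prop:shuffling} and the syllable-prefix property from \cref{cor:power_change}. Choose $p$ as the syllable-wise common prefix, namely the maximal element such that both $x$ and $y$ admit syllable-reduced expressions beginning with a syllable-reduced expression of $p$. Then $x=pu$ and $y=pv$, and at most one generator $s$ can have its syllable split between $u$ and $v$; there may be several if they commute, and then each is handled separately. For such a generator we use a single map $F_{p,s}$, so the contribution is $|F_{p,s}(a)-F_{p,s}(a')|$, which lies between $\eps|a-a'|$ and $C|a-a'|$. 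The remaining syllables of $u$ and $v$ cannot cancel against each other by maximality of $p$. Since $F$ preserves syllable structure and types, no new cancellation appears in $F(u)$-side versus $F(v)$-side, and the same maximality persists after applying $F$. This yields $d(F(x),F(y))=\sum|b_i|$ up to the described bounds, completing the proof.
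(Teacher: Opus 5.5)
Your proposal is correct and follows essentially the same route as the paper. In the QIE $\Rightarrow$ power-bilipschitz direction you read off the quasiisometry inequalities along the standard geodesics $gs^k$ and absorb the additive constants by injectivity; you do this directly for every pair with $\syl{gs}=\syl g+1$, which avoids the paper's detour through the blocking sets $B_s$. In the converse you decompose along a maximal common prefix, control the pairwise-commuting split syllables by a single map $F_{p,s}$, and use that changing powers creates no new cancellation. The single-map step is exactly where the identity $F_{h_{k_1},s_{k_2}}=F_{h_c,s_{k_2}}$ from the proof of \cref{lem:power_maps_determined_on_B} is needed, as in the paper. The only cosmetic difference is that you compute $F(x)^{-1}F(y)$ exactly and read off both bounds at once, whereas the paper gets the Lipschitz bound from the triangle inequality and the lower bound via the intermediate elements $\bar g_i$.
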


\begin{proof}
Let $F:A_\Gamma\to A_\Gamma$ be a change-of-power route map. First suppose that $F$ is power-bilipschitz, with constants $C\ge\eps>0$. We will show that $F$ is $C$--Lipschitz and $\eps$--colipschitz.

Let $gs$ be syllable-reduced expression, with $s\in S$ a standard generator. For every $a,b\in\Z$ we have
\[
\dist(F(gs^a),F(gs^b)) \,\le\, |F_{g,s}(a)-F_{g,s}(b)| \,\le\, C|a-b|,
\]
and it follows from the triangle inequality that $F$ is $C$--Lipschitz.

For the other inequality, let $g_1,g_2\in A_\Gamma$, and let $h$ be their maximal common prefix. Since $F$ is a change-of-power map, the maximal common prefix of $F(g_1)$ and $F(g_2)$ has the same syllable structure as $h$, but may not be equal to $F(h)$.

Let $\gamma=s_1^{a_1}\dots s_d^{a_d}$ be a syllable-reduced expression for $h$. Let $\gamma_i$ be a syllable-reduced expression for $g_i$ such that the first $d$ syllables of $\gamma_i$ use the same letters as $\gamma$ in the same order, though possibly with different exponents. Let $b_{ij}$ be the exponent on the $j^\mathrm{th}$ syllable of $\gamma_i$.

Let $c\le d$ be maximal such that the first $c$ syllables of both $\gamma_i$ are precisely $s_1^{a_1}\dots s_c^{a_c}$. Any syllable for which this is not the case must commute with every syllable that appears after it in $\gamma$. After changing the representative of $h$ by shuffling commuting syllables, and changing the $\gamma_i$ correspondingly, we may assume that if $k>c$, then there is at most one $i\in\{1,2\}$ such that $b_{ik}=a_k$. Since $h$ is the maximal common prefix of $g_1$ and $g_2$, there must be exactly one such $i$, and $s_k^{a_k}$ is a prefix of the $k^\mathrm{th}$ syllable of the other $\gamma_i$. 

For each $k\le d$, let $h_k$ be the prefix of $h$ represented by the word $s_1^{a_1}\dots s_k^{a_k}$. Since the $s_k$ with $k>c$ commute pairwise, by the proof of \cref{lem:power_maps_determined_on_B} we have that if $k_1,k_2>c$ then $F_{h_{k_1},s_{k_2}}=F_{h_c,s_{k_2}}$. From the fact that $F$ is power-bilipschitz, we have that if $k\in\{c+1,\dots,d\}$, then $|F_{h_c,s_k}(b_{1k})-F_{h_c,s_k}(b_{2k})|\ge\eps|b_{1k}-b_{2k}|$. 


Now let $\bar g_i\in A_\Gamma$ be the element represented by $s_1^{b_{i1}}\dots s_d^{b_{id}}$. Note that $h$ is also the maximal common prefix of $\bar g_1$ and $\bar g_2$. Additionally, $F(\bar g_i)$ is a prefix of $F(g_i)$, and $F(\bar g_1)$ and $F(\bar g_2)$ have the same maximal common prefix as $F(g_1)$ and $F(g_2)$. These latter two imply that
\[
\dist(F(g_1),F(g_2)) \,=\, \dist(F(g_1),F(\bar g_1))+\dist(F(\bar g_1),F(\bar g_2))+\dist(F(\bar g_2),F(g_2)).
\]
For $k>d$, the fact that $F$ is power-bilipschitz implies that the absolute value of the exponent of the $k^\mathrm{th}$ syllable of $F(g_i)$ is at least $\eps|b_{ik}|$. This shows that $\dist(F(g_i),F(\bar g_i))\ge\eps\dist(g_i,\bar g_i)$. 

We now combine these various estimates to show that $F$ is $\eps$--colipschitz. Indeed, 
\begin{align*}
\dist(F(g_1),F(g_2)) 
    \,&=\, \dist(F(g_1),F(\bar g_1)) + \dist(F(\bar g_1),F(\bar g_2)) + \dist(F(\bar g_2),F(g_2)) \\
    &\ge\, \eps\dist(g_1,\bar g_1) + \eps\dist(\bar g_2,g_2) 
        + \sum_{k=c+1}^d|F_{h_c,s_k}(b_{1k})-F_{h_c,s_k}(b_{2k})| \\
    &\ge\, \eps\dist(g_1,\bar g_1) + \eps\dist(\bar g_2,g_2) + \eps\sum_{k=c+1}^d|b_{1k}-b_{2k}| \\
    &=\, \eps\dist(g_1,g_2),
\end{align*}
as desired.

\medskip

For the other direction, suppose that the change-of-power route map $F$ is an $(A,K)$--quasiisometric embedding, with $A\ge1$. Let $C=A+K$ and let $\eps=\frac1{A(1+K)}$. For every $s\in S$, $g\in B_{s}$, and $a,b\in\mathbb{Z}$, we have 
\[
\frac1A|a-b|-K \,\le\, \dist(F(gs^a),F(gs^{b})) \,\le\, A|a-b|+K.
\] 
By definition, $F(gs^n)=F(g)s^{F_{g,s}(n)}$, so the right-hand side shows that every $F_{g,s}$ with $s\in S$ and $g\in B_s$ is $C$--Lipschitz. Moreover, route maps are injective, so the left-hand side implies that every such $F_{g,s}$ is $\eps$--colipschitz. We conclude from the proof of \cref{lem:power_maps_determined_on_B} that $F$ is power-bilipschitz, with constants $(\eps,C)$.
\end{proof}

Note that for any graph $\Gamma$, if $s\in S$ is a central element of $A_\Gamma$, then $B_s=\{1\}$. If $\Gamma$ is infinite then there may be infinitely many such $s$.

\begin{lemma} \label{lem:power_map_is_surjective}
Let $\Gamma$ be a graph, and let $F:A_\Gamma\to A_\Gamma$ be a change-of-power route map, with associated collection of maps $\{F_{g,s}\,:\,s\in S,\,g\in B_s\}$. The map $F$ is coarsely onto if and only if the following holds: if $s$ is non-central then $F_{g,s}$ is onto for every $g\in B_s$, and the maps $F_{1,s}$ with $s$ central are uniformly coarsely onto. If all $F_{g,s}$ are onto, then $F$ is onto.
\end{lemma}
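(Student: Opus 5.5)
The plan is to handle the three claims in turn. Two tools from earlier in the section do most of the work. First, by the proof of \cref{lem:power_maps_determined_on_B}, every $F_{g,s}$ equals some $F_{g_0,s}$ with $g_0\in B_s$. Concretely, $g_0$ is obtained from $g$ by stripping off final syllables that commute with $s$, and for central $s$ it is $g_0=1$. Second, the image of $F$ consists of elements $F(g)$ whose syllable structure equals that of $g$.

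\emph{Sufficiency.} We show every $h\in A_\Gamma$ lies within bounded distance of the image, by induction on $\syl h$. Write $h=h's^{c}$ syllable-reduced, with $s$ non-central, or else split off the central part. Since $A_\Gamma=A_{\Gamma'}\times Z$, where $Z$ is free abelian on the central generators, and $F$ acts on each central coordinate through the single map $F_{1,s}$, the central coordinates contribute a bounded error exactly when the $F_{1,s}$ are uniformly coarsely onto. This reduces us to the centreless part.

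Now suppose inductively that $h'=F(g')$ exactly. We want $g=g's^{a}$ with $F(g)=F(g')s^{F_{g',s}(a)}$ and $F_{g',s}(a)=c$. For this we need $\syl{g's}=\syl{g'}+1$ whenever $\syl{h's}=\syl{h'}+1$. This holds because $F$ preserves syllable structure and the types involved: a reduced expression for $h'$ ends in a letter commuting with $s$ exactly when one for $g'$ does. Since $F_{g',s}=F_{g_0,s}$ with $g_0\in B_s$, and the latter is onto, a suitable $a$ exists.

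This argument gives exact surjectivity on the non-central part. It therefore also proves the final claim: if all $F_{g,s}$ are onto, including the central ones, the induction produces exact preimages and $F$ is onto.

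\emph{Necessity.} Suppose first that some non-central $s$ and some $g\in B_s$ have $F_{g,s}$ missing a value $c\neq0$. Consider $h_n=F(g)s^{c}t^{n}$, where $t$ is a generator not commuting with $s$; such a $t$ exists because $s$ is non-central. Any preimage point near $h_n$ has a syllable-reduced form whose syllables up to the $s$-syllable are forced by the structure of $h_n$, since the long $t^n$ tail blocks shuffling. Its image would therefore have the form $F(g)s^{F_{g,s}(a)}\cdots$ with $F_{g,s}(a)\ne c$, or else its $s$-syllable would be absent. Either way the distance to $h_n$ is at least roughly $\min(n,1)$. More carefully, to move the tail $t^n$ past a wrong $s$-exponent costs about $n$. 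So the distance from $h_n$ to the image tends to infinity, and $F$ is not coarsely onto. For central $s$, the product decomposition shows that coarse surjectivity forces $F_{1,s}$ to be coarsely onto with a uniform constant: projecting to the $\langle s\rangle$ factor is $1$-Lipschitz.

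\emph{Main obstacle.} The delicate step is the necessity direction for non-central $s$. It requires a normal-form argument showing that an element whose $s$-syllable after the prefix $F(g)$ carries a missed exponent, and which is followed by a long non-commuting tail, is far from every element of the image. I would handle this using \cref{prop:shuffling} together with the fact that $g\in B_s$, which makes the prefix $F(g)s^{\ast}$ rigid. I would then compare maximal common prefixes in the same way as in the proof of \cref{lem:power_map_is_qie}.
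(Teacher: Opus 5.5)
Your proposal is correct and follows essentially the same route as the paper: necessity uses the same witness $F(g)s^{c}t^{k}$ with $t$ not commuting with $s$, together with the $1$--Lipschitz projection onto the central factors, and sufficiency inverts the exponent maps one syllable at a time. The only difference is packaging. The paper builds an inverse change-of-power route map $\hat\psi$ from the inverses of the exponent maps via \cref{lem:extending_power_maps} and notes that $F\circ\hat\psi=\mathrm{id}$, whereas your induction on $\syl h$ constructs each preimage directly; this makes explicit that the inverse used at each stage is $F_{g',s}^{-1}$, attached to the preimage $g'$ already constructed.
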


\begin{proof}
First suppose that $F_{g,s}$ is onto for every non-central $s\in S$ and every $g\in B_s$. For such $s$ and $g$ there exists an inverse function $F_{g,s}^{-1}$. Let $\Lambda$ denote the subgraph of $\Gamma$ induced by vertices corresponding to such generators. Note that $A_\Gamma$ is the direct product of $A_\Lambda$ with a free-abelian group. By \cref{lem:extending_power_maps}, these inverse maps define a change-of-power route map $\hat\psi:A_\Lambda\to A_\Lambda$. By construction, $F\hat\psi(h)=h$ for all $h\in A_\Lambda$. Since $F$ is a change-of-power map, it follows that if the $F_{1,s}$ with $s\in S$ central are uniformly coarsely onto, then $F$ is coarsely onto, and if every such $F_{1,s}$ is onto, then $F$ is a bijection.
        
We now consider the converse. If the $F_{1,s}$ with $s$ central are not uniformly coarsely onto, then clearly $F$ is not coarsely onto. Suppose instead that there is a non-central $s\in S$ and some $g\in B_s$ for which $F_{g,s}$ is not onto. Let $n\in \mathbb{Z}\ssm\set{0}$ lie outside the image of $F_{g,s}$. Since $s$ is not central, there is some standard generator $t$ that does not commute with it. The map $F$ just changes powers, so it follows that if $h$ is any syllable-reduced word for which $t$ is the unique first letter, then $F(g)s^nh\notin F(A_\Gamma)$. In particular $F(g)s^nt^k$ is at distance at least $k$ from $F(A_\Gamma)$ for each $k\in\N$. Thus, $F$ is not coarsely onto.
\end{proof}

\subsection{Rigidity results}

Here we combine the results of the previous subsections to prove the main results of the section. 

For a metric space $X$, let $\QIE(X)$ denote the \emph{quasiisometric embedding monoid} of $X$. That is, elements of $\QIE(X)$ are equivalence classes of quasiisometric embeddings $f:X\to X$, where $f_1$ and $f_2$ are equivalent if and only if they are at finite distance, in the sense that $\dist(f_1(x),f_2(x))$ is uniformly bounded for all $x\in X$. 

\begin{theorem} \label{thm:qie_of_cycle_to_itself}
If $n>4$, then every element of $\QIE(A_{C_n})$ is represented by a map of the form $F=L_g\circ\hat\psi\circ\hat\sigma$, for some $g\in A_{C_n}$, some $\sigma\in\Aut(C_n)$, and some power-bilipschitz change-of-power route map $\hat\psi$. In turn, $\hat\psi$ is determined by a set $\{F_{g,s}\,:\,s\in S,\,g\in B_s\}$ of uniformly bilipschitz maps of $\Z$ that preserve 0. The correspondence is one-to-one.

A quasiisometric embedding $F\in\QIE(A_{C_n})$ is a quasiisometry if and only if the maps $F_{g,s}$ are all onto.
%
\end{theorem}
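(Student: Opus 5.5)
Let $\varphi:A_{C_n}\to A_{C_n}$ be a quasiisometric embedding with $n>4$. The plan is to chain together the results of the preceding sections.

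\textbf{Step 1: stability and an extension-graph map.} The cycle $C_n$ is triangle-free and has no leaves, so it is directionally branch-complemented. Since $n\ge5$, $C_n$ contains no square, and hence no copy of $S_2$ (a square). Therefore \cref{thm:squarefree_stable} shows that $\varphi$ is stable. By \cref{prop:stable_implies_induced}, $\varphi$ then induces a combinatorial embedding $f:C_n^\ext\to C_n^\ext$.

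\textbf{Step 2: passing to a route map.} By \cref{lem:route_maps_exist}, $f$ has a route map $F$, using $n\ge5$ and \cite[Lem.~3.11]{kimkoberda:embedability}. Note that $A_{C_n}$ is centreless. \cref{lem:route_map_close_to_qie} gives that $F$ lies at finite distance from $\varphi$. So $F$ represents the same element of $\QIE(A_{C_n})$, and in particular $F$ is itself a quasiisometric embedding.

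\textbf{Step 3: normal form.} Cycles are star-rigid, so \cref{lem:rooting_route_map} provides $\sigma\in\Aut(C_n)$ such that $\hat\psi:=L_{F(1)^{-1}}\circ F\circ\hat\sigma$ is a change-of-power route map. Inverting, we get $F=L_{F(1)}\circ\hat\psi\circ\hat\sigma^{-1}$, which is of the required form after renaming $\sigma$. Since $L_g$ and $\hat\sigma$ are isometries, $\hat\psi$ is a quasiisometric embedding. \cref{lem:power_map_is_qie} then shows $\hat\psi$ is power-bilipschitz.

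In $C_n$ with $n\ge5$, no link is contained in another vertex's star. Hence \cref{lem:power_maps_determined_on_B} shows that $\hat\psi$ is determined by $\{\hat\psi_{g,s}: g\in B_s\}$, and these maps are uniformly bilipschitz and fix $0$. Conversely, \cref{lem:extending_power_maps} builds a change-of-power route map from any such family. By the proof of \cref{lem:power_maps_determined_on_B}, every $\hat\psi_{g,s}$ equals some $\hat\psi_{g',s}$ with $g'\in B_s$. So uniform bilipschitz bounds on the blocking-set maps propagate to all maps, and \cref{lem:power_map_is_qie} gives a quasiisometric embedding. This handles existence in both directions.

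\textbf{Step 4: uniqueness of the representation (main obstacle).} Suppose $L_g\circ\hat\psi\circ\hat\sigma$ and $L_{g'}\circ\hat\psi'\circ\hat\sigma'$ are at finite distance. Both are stable, so they induce the same map on $C_n^\ext$, because finite-distance maps send standard geodesics to parallel ones. By \cref{lem:route_maps_are_the_shit}, each map induces its own extension-graph embedding, and by \cref{lem:unique_route} a combinatorial embedding has at most one route map. So the two maps are equal as functions.

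Evaluating at $1$ gives $g=g'$. The map $\sigma$ is then read off from $f(\sigma(v))=g\cdot v$, and this forces $\hat\psi=\hat\psi'$. Finally the data $\{F_{g,s}\}$ are recovered by the definition $\hat\psi(gs^k)=\hat\psi(g)s^{F_{g,s}(k)}$. The care needed here is that "finite distance implies same induced $f$" holds; I would argue it directly from Hausdorff-closeness of the images of standard geodesics.

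\textbf{Step 5: quasiisometry criterion.} $F$ is a quasiisometry if and only if $\hat\psi$ is coarsely onto. Since $C_n$ has no central generators, \cref{lem:power_map_is_surjective} says this happens exactly when every $\hat\psi_{g,s}$ with $g\in B_s$ is onto. By the propagation in Step 3, this is equivalent to all $F_{g,s}$ being onto.
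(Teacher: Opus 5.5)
Your proposal is correct and follows the same route as the paper. The chain is: stability via \cref{thm:squarefree_stable}, the induced embedding and its route map, finite distance via \cref{lem:route_map_close_to_qie}, the normal form and power-bilipschitz property via \cref{lem:rooting_route_map}, \cref{lem:power_maps_determined_on_B}, \cref{lem:extending_power_maps} and \cref{lem:power_map_is_qie} (which is what \cref{lem:route_rooting_summary} packages), and the quasiisometry criterion via \cref{lem:power_map_is_surjective}. Your Step~4 just spells out the paper's one-line uniqueness argument (finite distance forces the same induced embedding of $C_n^\ext$, hence the same route map by \cref{lem:unique_route}); the one slip is cosmetic: with your convention $F=L_g\circ\hat\psi\circ\hat\sigma$ one gets $f(v)=g\cdot\sigma(v)$ rather than $f(\sigma(v))=g\cdot v$, and either way $\sigma$ is determined.
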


\begin{proof}
Let $\varphi:A_{C_n}\to A_{C_n}$ be a quasiisometric embedding. By \cref{thm:squarefree_stable}, $\varphi$ is stable, and so by \cref{prop:stable_implies_induced} it induces a combinatorial embedding $f:C_n^\ext\to C_n^\ext$. 

By \cref{lem:route_maps_exist}, $f$ has a route map $F$, which is unique by \cref{lem:unique_route}, and $F$ is at finite distance from $\varphi$ by \cref{lem:route_map_close_to_qie}. As $F$ is a route map of $A_{C_n}$, the desired decomposition is given by \cref{lem:route_rooting_summary}, and the power-bilipschitz property of $F$ is given by \cref{lem:power_map_is_qie}. The correspondence is one-to-one because distinct maps with the form of $F$ are not at finite distance, by \cref{lem:route_map_close_to_qie}. The statement about when $F$ is a quasiisometry is given by \cref{lem:power_map_is_surjective}. 
%
%
        %
\end{proof}

Recall from \cite{laurence:generating,servatius:automorphisms}, that a RAAG $A_\Gamma$ has finite outer automorphism group if and only if $\Gamma$ has no separating vertex stars and there are no $v,w\in\Gamma$ with $\lk(w)\subset\star(v)$. In particular, $A_\Gamma$ is centreless.

\begin{theorem} \label{thm:qi_of_finite_out_to_itself}
Let $\Gamma$ be a finite, star-rigid graph. If $\Out A_\Gamma$ is finite and $A_\Gamma\ne\Z$, then every element of $\QI(A_\Gamma)$ is represented by a map of the form $F=L_g\circ\hat\psi\circ\hat\sigma$, for some $g\in A_\Gamma$, some $\sigma\in\Aut\Gamma$, and some power-bilipschitz change-of-power route map $\hat\psi$ whose associated maps of $\Z$ are all bijections preserving 0. The correspondence is one-to-one.
\end{theorem}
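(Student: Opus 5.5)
The plan is to mirror the proof of \cref{thm:qie_of_cycle_to_itself}, replacing the stability input. The difference is that $\Gamma$ need not be directionally branch-complemented, and $\Gamma$ may contain $S_n$ or induced squares, so \cref{thm:squarefree_stable} is not available. Instead I will use Huang's quasiisometric rigidity theory for RAAGs with finite outer automorphism group \cite{huang:quasiisometric:1}. It shows that every quasiisometry $\varphi:A_\Gamma\to A_\Gamma$ maps standard flats, and in particular standard geodesics, within uniformly bounded Hausdorff distance of standard flats. I will take this as given. The maximal standard flats are rigid by Huang's results, and standard geodesics are recovered as coarse intersections of maximal standard flats. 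This holds because, when $\Out A_\Gamma$ is finite, each vertex is an intersection of maximal cliques, which rules out domination.

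Granting stability, the quasiisometry $\varphi$ induces a combinatorial embedding $f:\Gamma^\ext\to\Gamma^\ext$ by \cref{prop:stable_implies_induced}. A quasiinverse $\bar\varphi$ is also stable and induces $\bar f$. The compositions $\varphi\circ\bar\varphi$ and $\bar\varphi\circ\varphi$ lie at finite distance from the identity, so they send each standard geodesic within finite Hausdorff distance of itself. Hence $f\bar f$ and $\bar f f$ are the identity, and $f$ is an automorphism of $\Gamma^\ext$.

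By the second part of \cref{lem:route_maps_exist}, which uses $|\Gamma|>1$ and $\Out A_\Gamma$ finite, $f$ has a route map $F$. This map is unique by \cref{lem:unique_route}, since $A_\Gamma$ is centreless. By \cref{lem:route_map_close_to_qie}, $F$ lies at finite distance from $\varphi$, so $F$ is a quasiisometry.

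The finite-Out condition excludes $\lk(w)\subseteq\star(v)$, and $\Gamma$ is star-rigid. So \cref{lem:route_rooting_summary} applies and writes $F=L_g\circ\hat\psi\circ\hat\sigma$, with $\hat\psi$ a change-of-power route map determined by the maps $\{F_{g,s}: g\in B_s\}$. The factors $L_g$ and $\hat\sigma$ are isometries, so $\hat\psi$ is itself a quasiisometry. Then \cref{lem:power_map_is_qie} gives that $\hat\psi$ is power-bilipschitz. Since $A_\Gamma$ is centreless, \cref{lem:power_map_is_surjective} applied to the coarsely onto map $\hat\psi$ shows that every $F_{g,s}$ is onto, and they are injective by definition, so they are bijections fixing $0$.

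For the converse, any such composition is a quasiisometry by \cref{lem:power_map_is_qie} and \cref{lem:power_map_is_surjective}. Uniqueness of the representation follows as before: two distinct maps of this form are distinct route maps, hence induce distinct extension-graph maps. They therefore cannot be at finite distance, by \cref{lem:route_map_close_to_qie} together with the uniqueness in \cref{lem:route_rooting_summary}.

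The main obstacle is the first step: establishing uniform stability of an arbitrary quasiisometry without any branching hypothesis, and in particular when $\Gamma$ contains induced squares. I would first try to cite Huang's result that quasiisometries of such RAAGs preserve standard flats up to uniformly bounded Hausdorff distance and induce automorphisms of $\Gamma^\ext$ \cite[\S4]{huang:quasiisometric:1}. That would even bypass the stability step entirely, giving the automorphism $f$ directly together with closeness to standard geodesics.
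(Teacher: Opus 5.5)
Your proposal is correct and is essentially the paper's proof: stability of an arbitrary self-quasiisometry is imported from Huang (the paper cites \cite[Thm~3.20]{huang:quasiisometric:1}, which needs only the absence of $v\ne w$ with $\lk(w)\subseteq\star(v)$), the automorphism of $\Gamma^\ext$ comes from \cref{prop:stable_implies_induced} plus a quasiinverse (or \cite[Lem.~4.5]{huang:quasiisometric:1}), and the remainder is exactly the second paragraph of the proof of \cref{thm:qie_of_cycle_to_itself}. One small point about the direction of implication in your stability heuristic: the absence of domination is what forces each vertex to be an intersection of maximal cliques, and the converse fails (in $C_4$ every vertex is such an intersection, yet $\lk(a)\subseteq\star(c)$ for opposite vertices $a,c$).
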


\begin{proof}
By \cite[Thm~3.20]{huang:quasiisometric:1}, if $\varphi$ is a self-quasiisometry of a RAAG whose defining graph has no $v,w$ with $\lk(w)\subset\star(v)$, then $\varphi$ is stable. Moreover, $\varphi$ induces an isomorphism of extension graphs by \cite[Lem.~4.5]{huang:quasiisometric:1} (this also follows from \cref{prop:stable_implies_induced} and the fact that $\varphi$ has a quasiinverse).

The proof now follows as in the second paragraph of \cref{thm:qie_of_cycle_to_itself}: note that $\Out A_\Gamma$ being finite implies that it satisfies the conditions of \cref{lem:route_rooting_summary} and \cref{lem:route_map_close_to_qie}.
\end{proof}

Based on the logic of this section and the proof of \cref{thm:qie_of_cycle_to_itself}, it is natural to ask the following.

\begin{question}
For which graphs $\Gamma$ does every combinatorial embedding $\Gamma^\ext\to\Gamma^\ext$ have a route map? For which RAAGs $A_\Gamma$ does every self--quasiisometric-embedding induce a combinatorial embedding $\Gamma^\ext\to\Gamma^\ext$ with a route map?
\end{question}

A natural candidate in the 2--dimensional case would be the \emph{atomic} graphs of \cite{bestvinakleinersageev:asymptotic}, which satisfy the necessary assumptions in \cref{thm:squarefree_stable} to induce a combinatorial embedding, and also meet the conditions of \cref{lem:route_maps_are_the_shit} and \cref{lem:power_maps_determined_on_B}. Note, though, that the analogue of \cite[Lem.~3.11]{kimkoberda:embedability} can fail for such graphs, as shown by the example in \cref{fig:counter_to_3.11}. It should also be observed that if $\Gamma$ is not star-rigid, there exists an isomorphism $\Gamma^{\ext}\to \Gamma^{\ext}$ that is type-mixing. 



\begin{figure}[ht]
\includegraphics{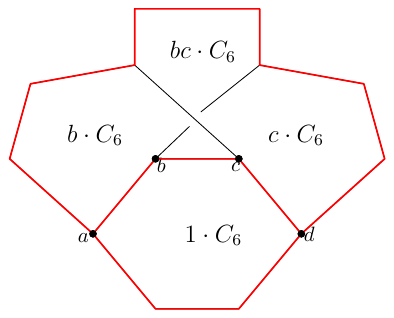}\centering
\caption{The graph $\Gamma$ obtained by gluing a 6-cycle to a 12-cycle along a $P_4$, embedded in $C_6^\ext\hookrightarrow \Gamma^\ext$. Thus $\Gamma$ fails \cite[Lem.~3.11]{kimkoberda:embedability}.} \label{fig:counter_to_3.11}
\end{figure}

\section{Beyond directionally branch-complemented}\label{sec:beyond_FB}

This section is a first step towards extending \Cref{thm:fully_branching_raag_rephrased}, which is about quasiisometric embeddings of directionally branch-complemented and directionally strongly branch-complemented cliques, to cliques that are not directionally branch-complemented. 

More precisely, we will give a weaker branching condition for top dimensional standard flats, or equivalently, cliques in the defining graph. The condition is meant to capture leaf-like cliques, and indeed it includes all edges containing leaves in connected, triangle-free graphs that are not stars. We show in \cref{thm:weakly_FB_cliques} that top-dimensional flats satisfying the condition are mapped Hausdorff-close to flats.
In the triangle-free connected case, when $\Gamma$ is not a star, we upgrade the result to all 2--flats in \cref{thm:qi_embedding_every_2flats_2D_RAAGs}.

\begin{definition}[Weak branching]\label{def:weakly_fully_branching_clique}
Let $\Gamma$ be a graph with clique number $n \geq 2$, and let $K \subset \Gamma$ be an $n$--clique. We say that $K$ is \emph{weakly directionally branch-complemented} if either 
    \begin{itemize}
        \item there exists a directionally branch-complemented $n$--clique $K' \subset \Gamma$ such that $K \cap K'$ is an $(n-1)$--clique, or
        \item $K$ contains a directionally strongly branch-complemented subclique of size $n-1$.
    \end{itemize}
\end{definition}

Weakly directionally branch-complemented cliques may to some extent be viewed as the ``leaf cliques'', see the blue cliques in \Cref{fig:leaf_flats_3D}.

\begin{figure}[htbp]
    \centering
    \begin{tikzpicture}[baseline=-.1cm, scale=0.9]

\draw[red!30, line width=3pt]
(0,0) -- (1,0) -- (2,0) -- (3,0)
(0,-1) -- (0,0);

\draw[blue!30, line width=3pt]
(-1,0) -- (0,0)
(3,0) -- (4,0)
(0,-1) -- (0,-2)
(2,0) -- (2,1);

\draw[thick]
(-1,0) -- (4,0)
(0,-2) -- (0,0)
(2,0) -- (2,1);

\fill (-1,0) circle(.08);
\fill[red] (0,0) circle(.08);
\fill[red] (1,0) circle(.08);
\fill[red] (2,0) circle(.08);
\fill[red] (3,0) circle(.08);
\fill (4,0) circle(.08);
\fill[red] (0,-1) circle(.08);
\fill (0,-2) circle(.08);
\fill (2,1) circle(.08);
\end{tikzpicture}
\qquad
\begin{tikzpicture}[baseline=-.1cm, scale=1.15]

\fill[fill=red!30]
(-.4,.35) -- (0,-.35) -- (.8,-.35) -- (.4,.35) -- cycle;

\fill[fill=blue!30]
(-.8,-.35) -- (-.4,.35) -- (0,-.35) -- cycle;
\fill[fill=blue!30]
(0,-.35) -- (.4,.35) -- (.8,-.35) -- cycle;
\fill[fill=blue!30]
(-.4,.35) -- (0,1.05) -- (.4,.35) -- cycle;

\draw[thick]
(-.8,-.35) -- (.8,-.35)
(-.4,.35) -- (.4,.35);

\draw[thick]
(-.8,-.35) to (-.4,.35) to (0,-.35) to (.4,.35) to (.8,-.35)
(-.4,.35) to (0,1.05) to (.4,.35);

\fill (-.8,-.35) circle(.08);
\fill[red] (-.4,.35) circle(.08);
\fill[red] (0,-.35) circle(.08);
\fill[red] (.4,.35) circle(.08);
\fill (.8,-.35) circle(.08);
\fill (0,1.05) circle(.08);

\end{tikzpicture}
\qquad
\begin{tikzpicture}[baseline=-.1cm, scale=1.15]
\fill[fill=red!30]
(-.4,-.35) -- (0,.35) -- (.4,-.35) -- cycle;

\fill[fill=blue!30]
(-.8,.35) -- (-.4,-.35) -- (0,.35) -- cycle;
\fill[fill=blue!30]
(0,.35) -- (.4,-.35) -- (.8,.35) -- cycle;

\draw[thick]
(-.8,.35) -- (.8,.35)
(-1.2,-.35) -- (1.2,-.35);

\draw[thick]
(-1.2,-.35) to (-.8,.35) to (-.4,-.35) to (0,.35) to (.4,-.35) to
(.8,.35) to (1.2,-.35);

\fill (-1.2,-.35) circle(.08);
\fill (-.8,.35) circle(.08);
\fill[red] (-.4,-.35) circle(.08);
\fill[red] (0,.35) circle(.08);
\fill[red] (.4,-.35) circle(.08);
\fill (.8,.35) circle(.08);
\fill (1.2,-.35) circle(.08);
\end{tikzpicture}
\qquad
\begin{tikzpicture}[baseline=-.1cm, scale=1.15]
\fill[fill=red!30]
(-.4,-.35) -- (0,.35) -- (.4,-.35) -- cycle;

\fill[fill=blue!30]
(-.8,.35) -- (-.4,-.35) -- (0,.35) -- cycle;
\fill[fill=blue!30]
(0,.35) -- (.4,-.35) -- (.8,.35) -- cycle;

\draw[thick]
(-.8,.35) -- (.8,.35)
(-1.2,-.35) -- (1.2,-.35);

\draw[thick]
(-1.2,-.35) to (-.8,.35) to (-.4,-.35) to (0,.35) to (.4,-.35) to (.8,.35)
(1.2,-.35) to (.8,-1.05) to (.4,-.35);

\fill (-1.2,-.35) circle(.08);
\fill (-.8,.35) circle(.08);
\fill[red] (-.4,-.35) circle(.08);
\fill[red] (0,.35) circle(.08);
\fill[red] (.4,-.35) circle(.08);
\fill (.8,.35) circle(.08);
\fill (1.2,-.35) circle(.08);
\fill (.8,-1.05) circle(.08);
\end{tikzpicture}
\qquad
\begin{tikzpicture}[baseline=-.1cm, scale=1.6]

\coordinate (t1) at (-.85,1);
\coordinate (b1) at (-.85,-1);

\coordinate (v1) at (-1.35,.08);
\coordinate (v2) at (-1.10,-.28);
\coordinate (v3) at (-.60,-.28);
\coordinate (v4) at (-.35,.08);
\coordinate (v5) at (-.85,.35);

\coordinate (a1) at (.55,-.30);
\coordinate (a2) at (1.35,-.30);
\coordinate (a3) at (1.75,.30);
\coordinate (a4) at (.95,.30);
\coordinate (t2) at (1.15,1.00);
\coordinate (b2) at (1.15,-1.00);

\coordinate (u) at (.43,.67);
\coordinate (w) at (2.06,1.17);

\draw[thick]
(v1) -- (v2) -- (v3) -- (v4);
\draw[thick,dotted]
(v4) -- (v5) -- (v1);

\draw[thick]
(t1) -- (v1) (t1) -- (v2) (t1) -- (v3) (t1) -- (v4)
(b1) -- (v1) (b1) -- (v2) (b1) -- (v3) (b1) -- (v4);
\draw[thick,dotted]
(t1) -- (v5)
(b1) -- (v5);

\draw[thick]
(a1) -- (a2) -- (a3);
\draw[thick,dotted]
(a1) -- (a4) -- (a3);
\draw[thick]
(a1) -- (t2)
(a2) -- (t2)
(a3) -- (t2)
(a1) -- (b2)
(a2) -- (b2)
(a3) -- (b2);
\draw[thick,dotted]
(a4) -- (t2)
(a4) -- (b2);

\fill[blue!30] (v4) -- (a1) -- (u) -- cycle;
\draw[thick] (v4) -- (a1) -- (u) -- cycle;

\fill[blue!30] (a3) -- (t2) -- (w) -- cycle;
\draw[thick] (a3) -- (t2) -- (w) -- cycle;

\fill[red] (v1) circle(.06);
\fill[red] (v2) circle(.06);
\fill[red] (v3) circle(.06);
\fill[red] (v4) circle(.06);
\fill[red] (v5) circle(.06);
\fill[red] (t1) circle(.06);
\fill[red] (b1) circle(.06);

\fill[red] (a1) circle(.06);
\fill[red] (a2) circle(.06);
\fill[red] (a3) circle(.06);
\fill[red] (a4) circle(.06);
\fill[red] (t2) circle(.06);
\fill[red] (b2) circle(.06);

\fill (u) circle(.06);
\fill (w) circle(.06);

\end{tikzpicture}
    \caption{Weakly directionally branch-complemented cliques (in blue) when $n=2,3$.}
    \label{fig:leaf_flats_3D}
\end{figure}

Note that the two conditions are not equivalent for $n>2$. Indeed, in the first row of \cref{fig:leaf_flats_3D}, all 3-dimensional blue cliques satisfy the first condition and not the second, as no vertex is strongly branch-complemented. In the graph on the second row, the blue clique on the left satisfies the second condition, but not the first, as all red vertices are strongly branch-complemented. The blue clique on the right satisfies both conditions.

Recall that a graph is a \emph{star} if it is connected and it has a vertex that lies in every edge.

\begin{observation}\label{rem:weakly_FB_cliques_in_2D}
Let $\Gamma$ be a connected, triangle-free graph that is not a star.
\begin{itemize}
\item   Every non-leaf vertex $v\in\Gamma$ is directionally branch-complemented. Indeed, otherwise all neighbours of $v$ would be leaves, so $\Gamma$ would be a star.
\item Every edge of $\Gamma$ is either directionally branch-complemented or weakly directionally branch-complemented. Indeed, let $e$ be an edge of $\Gamma$. If $e$ is not directionally branch-complemented, then its two vertices cannot both have degree $1$, since $\Gamma$ is connected and not a star. Hence one vertex $v$ of $e$ has degree at least $2$. Again, the same argument implies that one neighbour of $v$ must also have degree at least $2$. Therefore, $e$ intersects a directionally branch-complemented edge at $v$, so $e$ is weakly directionally branch-complemented.
\end{itemize}
\end{observation}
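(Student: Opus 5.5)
The plan is to unwind the definitions in the triangle-free case, where $n=2$. There a vertex is branching exactly when it is not a leaf, and it is branch-complemented exactly when it is branching and has a branching neighbour, as noted after \cref{def:fully_branching_in_graphs}. An edge is a $2$--clique, and by \cref{rem:fully_branching_not_branching} it is directionally branch-complemented exactly when both of its endpoints are branch-complemented. Both bullets then reduce to short arguments about leaves, using only that $\Gamma$ is connected and not a star.

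For the first bullet, let $v$ be a vertex that is not a leaf. Since $\Gamma$ is connected and not a single vertex (a single vertex would be a star), $v$ has degree at least two, so $v$ is branching. Suppose, for contradiction, that every neighbour of $v$ is a leaf. Then each neighbour of $v$ is adjacent only to $v$. By connectedness, $\Gamma$ is then exactly $v$ together with its neighbours, and every edge contains $v$. So $\Gamma$ is a star, a contradiction. Hence $v$ has a branching neighbour, and $v$ is branch-complemented.

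For the second bullet, let $e=\{u,v\}$ be an edge that is not directionally branch-complemented. First, $u$ and $v$ cannot both be leaves: otherwise connectedness would force $\Gamma=e$, which is a star. So we may assume $v$ is not a leaf. By the first bullet, $v$ is branch-complemented, so it has a non-leaf neighbour $w$, and $w$ is branch-complemented by the first bullet again. Therefore $e'=\{v,w\}$ is a directionally branch-complemented $2$--clique.

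It remains to check that $e'\neq e$, that is, $w\neq u$. If $u$ were not a leaf, the first bullet would make both $u$ and $v$ branch-complemented, so $e$ would be directionally branch-complemented, contrary to assumption. Hence $u$ is a leaf, while $w$ is not, so $w\neq u$. Thus $e\cap e'=\{v\}$ is a $1$--clique, and the first condition of \cref{def:weakly_fully_branching_clique} holds for $e$.

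There is no real obstacle here. The only care needed is the degenerate case where $\Gamma$ is a single edge or a single vertex, both of which are stars and are therefore excluded by hypothesis.
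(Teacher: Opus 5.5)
Your proof is correct and follows the same route as the paper. You reduce to the triangle-free description (branching means non-leaf, branch-complemented means non-leaf with a non-leaf neighbour), and you use connectedness plus the non-star hypothesis to rule out a non-leaf vertex whose neighbours are all leaves; your explicit check that the edge $\{v,w\}$ differs from $e$ (because $u$ must be a leaf) is a step the paper leaves implicit, and it is exactly what makes $e\cap e'$ a $1$--clique.
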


Before stating the main theorem of this section, we recall the following result from \cite[Thm~7.8]{baderbensaidpetyt:from}, which is the main ingredient in the proof. We refer to \cite[\S10]{drutukapovich:geometric} for background on asymptotic cones. The \emph{asymptotic rank} of a CAT(0) cube complex is the supremal $n$ such that it contains isometrically embedded boxes $[0,m]^n$ for all $m$; see \cite[Prop.~2.19]{baderbensaidpetyt:from}. 

\begin{theorem}\label{thm:top_quasiflats_union_parallel_codim1_flats_paper1}
Let $Y$ be a proper, finite-dimensional CAT(0) cube complex of asymptotic rank $n\ge2$, and set $E=\R^n$. Suppose that $f : E \to Y$ is a quasiisometric embedding. Let $E_\omega$ be an asymptotic cone of $E$ with respect to some fixed basepoint, and let $f_\omega : E_\omega \to Y_\omega$ be the induced biLipschitz embedding. Let $H\subset E$ be a singular $(n-1)$--flat.

If $f_\omega(H_\omega')$ is a singular $(n-1)$--flat in $Y_\omega$ for every $H_\omega' \subseteq E_\omega$ parallel to $H_\omega$, then $f(E)$ lies at finite Hausdorff distance from some $n$--flat in $Y$.
\end{theorem}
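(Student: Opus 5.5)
The plan is to work first in the asymptotic cone and show that $f_\omega(E_\omega)$ is an $n$--flat there. Then I would transfer this back to $Y$ using the structure theory of top-dimensional quasiflats in CAT(0) cube complexes.

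\textbf{Step 1: splitting in the cone.} Recall that $Y_\omega$ is a complete median/CAT(0)-type space of rank $n$. The parallel set of the singular $(n-1)$--flat $f_\omega(H_\omega)$ splits as $f_\omega(H_\omega)\times T$. Since the asymptotic rank is $n$, the transverse factor $T$ has rank one, so it is an $\R$--tree. I would aim to show that $f_\omega(E_\omega)$ lies in this parallel set. By hypothesis, for each $H'_\omega$ parallel to $H_\omega$, the image $f_\omega(H'_\omega)$ is a singular $(n-1)$--flat. Because $f_\omega$ is biLipschitz, these flats are pairwise at finite Hausdorff distance, hence parallel. Thus $f_\omega(E_\omega)$ is a union of parallel singular $(n-1)$--flats, and it has the form $f_\omega(H_\omega)\times c$, where $c\subset T$ is the image of the transverse line in $E_\omega$. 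The curve $c$ is a biLipschitz embedded line in an $\R$--tree, and such a line is a geodesic. Hence $f_\omega(E_\omega)$ is an $n$--flat. Since every parallel copy of $H_\omega$ in every cone satisfies the hypothesis, this conclusion holds for all choices of scaling sequence and basepoint sequence.

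\textbf{Step 2: finite union of orthants.} Next I would invoke the structure theorem for top-rank quasiflats in proper finite-dimensional CAT(0) cube complexes (Huang; Kleiner--Lang; Bowditch). It says that $f(E)$ lies at finite Hausdorff distance from a finite union $\mathcal O=O_1\cup\dots\cup O_k$ of $n$--orthants, which may be taken to be cubical orthant subcomplexes. The asymptotic cone of $\mathcal O$ based at a fixed point is the cone on its Tits boundary, a finite union of spherical simplices forming an $(n-1)$--cycle $\partial_T\mathcal O$. By Step 1 this cone is a flat, so $\partial_T\mathcal O$ is an isometrically embedded round $(n-1)$--sphere in $\partial_TY$.

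\textbf{Step 3: upgrading to a flat in $Y$ (main obstacle).} The hard part is passing from the cone to $Y$ itself. Flatness at a single basepoint only controls $\mathcal O$ up to sublinear error, which is why I would use cones with moving basepoints. A round top-dimensional sphere in $\partial_TY$ of a proper CAT(0) space of rank $n$ bounds a flat $F$, by the Leeb/Kleiner--Lang result that top-dimensional Tits spheres are boundaries of flats. I would then show $d_{\mathrm{Haus}}(\mathcal O,F)<\infty$ as follows. Each orthant $O_i$ has Tits boundary a simplex contained in $\partial_TF$. Choose a geodesic ray in $O_i$ going into the interior of that simplex; by Lemma~\ref{lem:parallel_geod_in_convex}-type arguments applied to rays in $O_i$ asymptotic to $F$, the orthant is parallel to a sector of $F$.

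The risk is a sublinear drift between $O_i$ and $F$. I would rule it out using the cone at moving basepoints along $O_i$. A linearly growing distance would produce a cone in which $f_\omega(E_\omega)$ contains a branch point, contradicting Step 1. Because there are only finitely many orthants, and the distances are attained on cubical subcomplexes, the drift is bounded. This yields a uniform Hausdorff bound and hence the theorem.
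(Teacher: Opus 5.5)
This theorem is only quoted in the paper, from \cite{baderbensaidpetyt:from}, so there is no proof here to compare with; I judge your proposal on its own terms.

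Steps 1 and 2 are sound. In Step 1, however, the reason $T$ is an $\R$--tree should be that asymptotic cones of $Y$ have geometric dimension $n$, so $T$ is a one-dimensional CAT(0) space; having no 2--flats does not by itself make a CAT(0) space a tree.

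The genuine problem is in Step 3. The hypothesis only concerns the asymptotic cone with a \emph{fixed} basepoint, so your claim at the end of Step 1, that the conclusion holds for every basepoint sequence, is unfounded. Suppose the basepoints $p_i\in E$ satisfy $|p_i|/\lambda_i\to\infty$. Then the hyperplanes whose ultralimits are the parallels of $H_\omega$ in that cone all leave every bounded region of the fixed-basepoint cone, and nothing is assumed about their images. Your drift argument therefore has no input. Its logic is also off. It only addresses a linearly growing distance between $O_i$ and $F$, which is already excluded once $\partial_TO_i\subset\partial_TF$. It says nothing about the sublinear drift you were worried about.

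In fact there is no drift to control. Each $O_i$ is a convex Euclidean cone on its apex $o_i$, and every ray $\rho$ from $o_i$ inside $O_i$ ends at a point of $\partial_TO_i\subset\partial_TF$. So $t\mapsto d(\rho(t),F)$ is convex, and it is bounded by the distance to an asymptotic ray in $F$. A bounded convex function on $[0,\infty)$ is nonincreasing, so $O_i\subset F^{+d(o_i,F)}$. Since there are finitely many orthants, $f(E)\subset F^{+R}$ for some $R$. Conversely, composing $f$ with nearest-point projection to $F$ gives a quasiisometric embedding $\R^n\to F\cong\R^n$, which is coarsely onto; hence $F$ also lies in a bounded neighbourhood of $f(E)$.

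With this in place of your last paragraph the argument closes. Note that it never uses that the flats $f_\omega(H'_\omega)$ are singular, so all of its content sits in the two results you quote. You should check that both are available in the generality you need:
\begin{itemize}
\item the orthant decomposition, since Huang's theorem is stated for $n$--dimensional complexes, whereas here only the asymptotic rank of $Y$ is $n$;
\item above all, the statement that a round $(n-1)$--sphere in $\partial_TY$ bounds an $n$--flat, which you need for proper CAT(0) spaces of \emph{asymptotic} rank $n$, not just geometric dimension $n$.
\end{itemize}
If you prove the latter rather than quote it, that is where a moving-basepoint rescaling argument belongs. Its input there is the rank bound, which holds in every cone, rather than the hypothesis of the theorem.
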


We use this to control the images of weakly directionally branch-complemented flats. For the purposes of the proof, we refer to \cite[Def.~9.5]{baderbensaidpetyt:from} for the more precise notion of \emph{directionally 1--branch-complemented}.

\begin{theorem}\label{thm:weakly_FB_cliques}
Let $\Gamma$ be a graph with clique number $n \geq 2$ and let $Y$ be a proper, finite-dimensional CAT(0) cube complex of asymptotic rank $n$. Let $f : X_\Gamma \to Y$ be a quasiisometric embedding. Suppose that $K \subset \Gamma$ is a weakly directionally branch-complemented $n$--clique. 

For every standard $n$--flat $F \subset X_\Gamma$ associated with $K$, the image $f(F)$ lies at finite Hausdorff distance from some $n$--flat in $Y$.
\end{theorem}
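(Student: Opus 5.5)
The plan is to deduce the statement from \cref{thm:top_quasiflats_union_parallel_codim1_flats_paper1}, applied to $E=F\cong\R^n$ and the restriction $f|_F$. To do this, we need a singular $(n-1)$--flat $H\subset F$ such that, in every asymptotic cone, $f_\omega$ sends every $H'_\omega$ parallel to $H_\omega$ onto a singular $(n-1)$--flat of $Y_\omega$. In both cases of \cref{def:weakly_fully_branching_clique}, we take $H$ to be a standard $(n-1)$--flat in $F$ associated with an $(n-1)$--subclique $L\subset K$. The parallels of $H$ inside $F$ are again standard flats associated with $L$, namely $g\cdot H$ for $g\in F$. The task is therefore to show that all standard $(n-1)$--flats associated with $L$ have images uniformly Hausdorff-close to singular $(n-1)$--flats in $Y$. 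Uniform closeness then passes to the asymptotic cone and yields the hypothesis of \cref{thm:top_quasiflats_union_parallel_codim1_flats_paper1}. We also need that ultralimits of singular flats in $Y$ are singular flats in $Y_\omega$ (in the sense of \cite{baderbensaidpetyt:from}), and that ultralimits of the $g\cdot H$ exhaust all parallels of $H_\omega$ in $F_\omega$, which holds since $F_\omega=\R^n$ and parallels of $H_\omega$ are ultralimits of parallels of $H$ within bounded distance.

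In the second case, $K$ contains a directionally strongly branch-complemented $(n-1)$--clique $L$. We take $H$ associated with $L$. The argument of \cref{thm:fully_branching_raag_rephrased}(3) (that is, \cite[Thm~10.3]{baderbensaidpetyt:from}) gives a uniform $D$ such that every standard flat associated with $L$ is sent within $D$ of a singular $(n-1)$--flat. Strictly, that theorem is stated for $n$--dimensional $Y$, whereas here $Y$ is only proper and finite-dimensional of asymptotic rank $n$. We therefore invoke the general form from \cite[\S9--10]{baderbensaidpetyt:from}, which is stated for asymptotic rank $n$ using directional $1$--branch-complementation \cite[Def.~9.5]{baderbensaidpetyt:from}.

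In the first case there is a directionally branch-complemented $n$--clique $K'$ with $L=K\cap K'$ an $(n-1)$--clique. For each standard $H$ associated with $L$, there is a standard flat $F'$ associated with $K'$ containing $H$ (namely the flat $gA_{K'}$ through a point $g\in H$). By \cref{thm:fully_branching_raag_rephrased}(1), in its asymptotic rank $n$ form, $f(F')$ is within uniform $D$ of an $n$--flat $F''\subset Y$. Since $H$ is a codimension-one standard subflat of $F'$ and $F'$ is directionally branch-complemented, we expect the same cone argument to give that $f_\omega(H_\omega)$ is a singular hyperplane of $F''_\omega$. More concretely, $H$ corresponds to the branching face $K'\ssm\{v\}$ of $K'$, where $v\in K'\ssm K$. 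The required notion is that, in the $1$--branch-complemented sense of \cite[Def.~9.5]{baderbensaidpetyt:from}, the complementary direction $v$ is branching, and this forces the images of hyperplanes transverse to $v$ to be singular in the cone. This case is where I expect the main obstacle. Directional branch-complementation of $K'$ directly guarantees the branching of $\{v\}$ and of $K'\ssm\{v\}$ for a suitable choice, but we need the conclusion for the specific face $L$. The fix I would try first is to use that each vertex of $K'$ is branch-complemented, and to extract from the proofs of \cite[Thms~10.1, 10.2]{baderbensaidpetyt:from} that all coordinate hyperplanes of a directionally $1$--branch-complemented flat have singular images in the cone. Alternatively, we can use only that $f_\omega(F'_\omega)$ is a flat (a uniform quasiflat close to $F''$) together with the branching of the line direction $v$ inside $F'$. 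Every parallel of $H_\omega$ in $F_\omega$ is then the intersection of $F_\omega$ with a parallel copy of $F'_\omega$, and its image is a codimension-one subflat of $F''_\omega$ that is singular because the orthogonal standard direction $v$ is mapped to a singular line.

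Once this is established, \cref{thm:top_quasiflats_union_parallel_codim1_flats_paper1} applies to $f|_F$, and $f(F)$ lies at finite Hausdorff distance from an $n$--flat of $Y$, as claimed.
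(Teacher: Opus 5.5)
Your overall architecture matches the paper's, and so does your second case: you apply \cref{thm:top_quasiflats_union_parallel_codim1_flats_paper1} to $F$ with $H$ a standard flat associated with an $(n-1)$--subclique $L\subset K$, and in the second case you get uniform closeness from \cref{thm:fully_branching_raag_rephrased}(3) and pass it to the cone. The gap is in the first case, which you leave open.

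The obstacle you identify there is not real, and the missing observation is short. Nothing special has to be proved about the face $L=K\cap K'$. It is branching because it is an intersection of two $n$--cliques. The complementary vertex $v\in K'\ssm K$ is branching because it is branch-complemented. Most importantly, every vertex of $L$ is a vertex of $K'$, hence branch-complemented. This is a property of vertices, not of the clique they are viewed in; see \cref{rem:fully_branching_not_branching}. So $L$ is itself a directionally branch-complemented $(n-1)$--clique, and each standard flat $gH$ with $g\in F$ is directionally $1$--branch-complemented in the sense of \cite[Def.~9.5]{baderbensaidpetyt:from}. Then \cite[Cor.~9.21]{baderbensaidpetyt:from} applies directly: $f_\omega$ sends any ultralimit of such flats to a singular $(n-1)$--flat. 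Every parallel of $H_\omega$ in $F_\omega$ is such an ultralimit, so the hypothesis of \cref{thm:top_quasiflats_union_parallel_codim1_flats_paper1} holds. This is exactly the paper's argument. You need neither $f(F')$ nor anything extracted from the proofs of \cite[Thms~10.1, 10.2]{baderbensaidpetyt:from}. Note also that this is only a cone-level statement. Since $L$ need not be directionally \emph{strongly} branch-complemented, the uniform closeness you set as ``the task'' in your first paragraph is not available in this case, which is why the two cases are handled differently.

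Your alternative route for the first case does not work. Knowing that $f_\omega(F'_\omega)$ is a flat and that lines in the direction $v$ go to singular lines gives no control over the images of the orthogonal hyperplanes. In particular, it does not even show they are subflats. For example, the shear $(x,y)\mapsto(x,y+|x|)$ is a biLipschitz bijection of $\Z^2$ whose cone map (the same formula on $\R^2$) sends vertical lines to vertical lines but horizontal lines to non-singular broken lines. What must be used is branch-complementation of the directions of $H$ itself, which is what the citation above provides.
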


\begin{proof}
If the first item of \Cref{def:weakly_fully_branching_clique} holds, then there exists a directionally branch-complemented $n$--clique $K'$ such that $K \cap K'$ is a branching subclique. Let $X_\omega$ and $Y_\omega$ be asymptotic cones of $X_\Gamma$ and $Y$ with respect to fixed basepoints, and let $f_\omega : X_\omega \to Y_\omega$ be the induced biLipschitz map. If $H \subset F$ is a standard $(n-1)$--flat associated to $K \cap K'$, then $H$ is directionally $1$--branch-complemented. Since every $(n-1)$--flat $H'_\omega \subset F_\omega$ parallel to $H_\omega$ is the ultralimit of directionally $1$--branch-complemented $(n-1)$--flats (standard $(n-1)$--flats in $F$ associated to $K \cap K'$), its image $f_\omega(H_\omega')$ is a singular $(n-1)$--flat, by \cite[Cor.~9.21]{baderbensaidpetyt:from}. By \Cref{thm:top_quasiflats_union_parallel_codim1_flats_paper1}, $f(F)$ lies at finite Hausdorff distance from some $n$--flat in $Y$.

If the second item of \cref{def:weakly_fully_branching_clique} holds, then it follows from the third item of \Cref{thm:fully_branching_raag_rephrased} that there exists a constant $D \geq 0$ such that for every standard $(n-1)$--flat $H \subset F$ associated with $K \cap K'$, the image $f(H)$ is at Hausdorff distance at most $D$ from some singular $(n-1)$--flat in $Y$. Therefore, the condition of \Cref{thm:top_quasiflats_union_parallel_codim1_flats_paper1} is satisfied, and the conclusion follows.
\end{proof}

It follows from \Cref{rem:weakly_FB_cliques_in_2D} and \Cref{thm:weakly_FB_cliques} that if $\Gamma$ is a connected, triangle-free, non-star graph, if $Y$ is a $2$--dimensional CAT(0) cube complex, and if $f : X_\Gamma \to Y$ is a quasiisometric embedding, then for every standard $2$--flat $F \subset X_\Gamma$, the image $f(F)$ lies at finite Hausdorff distance from some $2$--flat in $Y$. In this particular setting, we can actually show that the conclusion holds for \emph{every} $2$--flat in $X_\Gamma$. This generalises \cite[Thm~1.1]{bestvinakleinersageev:asymptotic}.

\begin{theorem}\label{thm:qi_embedding_every_2flats_2D_RAAGs}
Let $\Gamma$ be a triangle-free simplicial graph such that no connected component is a star. Let $Y$ be a proper, finite-dimensional CAT(0) cube complex of asymptotic rank $2$. If $f : X_\Gamma \to Y$ is a quasiisometric embedding, then for every $2$--flat $F \subset X_\Gamma$, the image $f(F)$ lies at finite Hausdorff distance from some $2$--flat in $Y$.
\end{theorem}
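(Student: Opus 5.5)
We plan to reduce to \cref{thm:top_quasiflats_union_parallel_codim1_flats_paper1} with $n=2$. For this we need a singular geodesic $H\subset F$ such that, in the asymptotic cone, every geodesic of $F_\omega$ parallel to $H_\omega$ is mapped by $f_\omega$ to a singular geodesic of $Y_\omega$.

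\textbf{Step 1: structure of $F$.} Since $X_\Gamma$ is a 2--dimensional CAT(0) square complex, a 2--flat $F$ is a union of squares and half-flats of various slopes. By the structure theory of flats in 2--dimensional RAAGs (as in Bestvina--Kleiner--Sageev), we may first replace $F$ by a nearby flat made of a $\mathbb{Z}^2$--periodic pattern of standard pieces, up to finite Hausdorff distance. Concretely, we expect $F$ to contain a singular geodesic $H$ such that each parallel copy $H'\subset F$ is a singular geodesic whose edges are labelled by vertices of $\Gamma$ lying in one side of a 4--cycle or an edge. This is the \emph{product region} structure: $F$ lies in (a neighbourhood of) the parallel set of a product $A_{\Lambda_1}\times A_{\Lambda_2}$ with $\Lambda_1*\Lambda_2\subset\Gamma$, because in dimension two every 2--flat lies in a maximal product subcomplex coming from a join subgraph of $\Gamma$. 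If $F$ is not close to such a product, we instead use the cone: ultralimits of flats in 2--dimensional complexes are unions of sectors, and each sector boundary ray is a limit of pieces of standard geodesics.

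\textbf{Step 2: the parallel lines in the cone are limits of branching geodesics.} Fix the join $\Lambda_1*\Lambda_2\subset\Gamma$ carrying $F$. Each line $H'_\omega\subset F_\omega$ parallel to $H_\omega$ is an ultralimit of singular geodesics in $F$ whose edges carry labels from $\Lambda_1$. We need each such label vertex $v$ to be branch-complemented (in the sense of \cite[Def.~9.5]{baderbensaidpetyt:from}), or at least 1--branch-complemented. By \cref{rem:weakly_FB_cliques_in_2D}, every non-leaf vertex of a non-star component is directionally branch-complemented. If $\Lambda_1$ contains a non-leaf vertex, then $\Lambda_1$ has at least two vertices, and the geodesics in $F$ labelled by $\Lambda_1$ are singular geodesics in the tree factor $X_{\Lambda_1}$, which branches since $\Lambda_2\ne\varnothing$. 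We would then argue that singular geodesics in $X_{\Lambda_1}\times\{pt\}$ are intersections of distinct half-flats, and so are $1$--branch-complemented. Then \cite[Cor.~9.21]{baderbensaidpetyt:from} gives that $f_\omega(H'_\omega)$ is singular.

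\textbf{Step 3: degenerate cases, the main obstacle.} The difficulty is when one side, say $\Lambda_2=\{w\}$, is a single vertex and $\Lambda_1$ consists of leaves attached to $w$. In that case $F$ lies in $X_{\Lambda_1}\times\mathbb{R}$ with every $\Lambda_1$--vertex a leaf, which is exactly the star situation. Since the component is not a star, some neighbour of a $\Lambda_1$--vertex, or $w$ itself, has another neighbour $u$ outside $\Lambda_1$. The lines parallel to the $w$--direction are then standard geodesics of a branching vertex, and hence branch-complemented. So we choose $H$ in the $w$--direction rather than the $\Lambda_1$--direction. The real work is to show that at least one of the two directions of any flat always admits this uniform branching, using that no component is a star. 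We also expect that handling non-standard slopes in Step 1 requires a careful asymptotic-cone argument.

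Once the hypothesis of \cref{thm:top_quasiflats_union_parallel_codim1_flats_paper1} holds, it gives that $f(F)$ is at finite Hausdorff distance from a 2--flat.
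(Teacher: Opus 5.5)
Your reduction is the right one; it is the paper's second case. You find a direction of $F$ all of whose parallels in $F$ are $1$--branch-complemented, then apply \cite[Cor.~9.21]{baderbensaidpetyt:from} and \cref{thm:top_quasiflats_union_parallel_codim1_flats_paper1}. But the step you defer as ``the real work'' is the whole content of the proof: showing that some direction of every 2--flat has this property. The partial criterion you give for it in Step 2 is wrong, because whether the $\Lambda_1$--direction branches is governed by the \emph{other} side. If $b_1\neq b_2$ both commute with every label of $\alpha$ (take $\alpha$ through $1$), then $\bigcup_k b_1^k\alpha$ and $\bigcup_k b_2^k\alpha$ are 2--flats meeting exactly in $\alpha$. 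If instead $L_\beta=\{w\}$ and $\alpha$ is non-standard, $\alpha$ may lie in a unique flat. Concretely, let $\Gamma=P_5$ with vertices $x,a_1,w,a_2,y$ in order, and let $F=\alpha\times\beta$ with $L_\alpha=\{a_1,a_2\}$ and $L_\beta=\{w\}$. Here $\Lambda_1$ contains the non-leaf $a_1$, so your Step 2 selects the $\alpha$--direction. However, $w$ is the only vertex commuting with both $a_1$ and $a_2$, so no parallel of $\alpha$ in $F$ is branching, and Cor.~9.21 is unavailable. Nor is this the situation of your Step 3, since $\Lambda_1$ does not consist of leaves. Two further slips in Step 2: the inference ``$\Lambda_1$ contains a non-leaf, so $|\Lambda_1|\ge 2$'' is a non sequitur, and branching of a line alone is not $1$--branch-complementedness, since the complementary direction must branch too.

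What is missing is the dichotomy of \cref{prop:2flats_in_2D_RAAGs}. It starts from a fact you hedge around in Step 1: in a 2--dimensional CAT(0) cube complex every 2--flat is a subcomplex. So $F$ is a product of orthogonal singular geodesics $\alpha,\beta$ whose label sets are disjoint, pairwise commuting, and constant along parallels by \cref{lem:parallel_sing_geod_same_labels}. There are no other ``slopes'', no need to pass to periodic flats, and your fallback for flats ``not close to such a product'' is vacuous. The case split is then by label-set sizes:
\begin{itemize}
\item If $|L_\alpha|,|L_\beta|\ge 2$, both directions branch by the construction above. So $F$ is directionally branch-complemented and \cite[Thm~10.1]{baderbensaidpetyt:from} applies.
\item If $\alpha$ is standard with label $a$ and $|L_\beta|\ge 2$, then $a$ is a non-leaf, hence branch-complemented by \cref{rem:weakly_FB_cliques_in_2D}. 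This is where the no-star hypothesis enters, and you take $H$ parallel to $\alpha$.
\item If both directions are standard, $F$ is the standard flat of an edge. One endpoint of that edge is a non-leaf, since a one-edge component is a star, and so is branch-complemented.
\end{itemize}
In the $P_5$ example this correctly selects the $w$--direction.
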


The proof will follow from the following proposition.

\begin{proposition}\label{prop:2flats_in_2D_RAAGs}
If $\Gamma$ is a triangle-free graph such that no connected component is a star, then every $2$--flat $F \subset X_\Gamma$ satisfies one of the following:
\begin{itemize}
    \item $F$ is directionally branch-complemented, or
    \item $F$ contains a branch-complemented standard geodesic.
\end{itemize}
\end{proposition}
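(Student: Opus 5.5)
The plan is to use the structure of 2--flats in a 2--dimensional CAT(0) cube complex. Any 2--flat $F\subset X_\Gamma$ is a union of squares, since $X_\Gamma$ is 2--dimensional and $F$ is locally a plane. Hence $F$ is a square complex homeomorphic to $\mathbb R^2$ and, by Gauss--Bonnet, it is flat at every vertex: each vertex of $F$ meets exactly four squares. So $F$ is tiled as a standard square grid. Its hyperplanes (intersected with $F$) split into two transverse families, and each square of $F$ corresponds to an edge of $\Gamma$ through its two labels.

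We split into cases according to the labels occurring along the two hyperplane families. Let $L_1$ and $L_2$ be the sets of labels of edges dual to the first and second family. Every label in $L_1$ commutes with every label in $L_2$, since each pair of crossing hyperplanes of $F$ spans a square, and in fact each such pair spans a square of $F$.

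If both $L_1$ and $L_2$ contain at least two labels, then for each $i$ two labels of $L_i$ must be non-adjacent in $\Gamma$, because $\Gamma$ is triangle-free. Then $L_1\cup L_2$ spans a complete bipartite subgraph of $\Gamma$ containing $C_4$. I expect that in this case $F$ is directionally branch-complemented in the sense of \cite[Def.~9.5]{baderbensaidpetyt:from}: every singular geodesic of $F$ is branching with branching complement. Branching should come from the existence of at least two choices of transverse label at each row.

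If instead some $L_i$ is a single label $s$, then the rows of that family are standard geodesics $g\sgen s$ contained in $F$. It then suffices to show that $v_s$ is branch-complemented, which by the first bullet of \cref{rem:weakly_FB_cliques_in_2D} holds whenever $v_s$ is not a leaf. The case where $v_s$ is a leaf must still be handled: then $L_{3-i}$ is its unique neighbour $u$, and $F$ is a standard flat on the leaf edge. Here I would instead use a standard geodesic labelled $u$. Since the component is not a star, $u$ has a neighbour other than $v_s$ that is not a leaf, so $v_u$ is branch-complemented by \cref{rem:weakly_FB_cliques_in_2D}, and the $u$--geodesics in $F$ are standard.

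The main obstacle is the first case. It requires translating the abstract directional branch-complemented condition of \cite{baderbensaidpetyt:from} for a non-standard flat into the combinatorics of $L_1, L_2$. I would first try to verify that each singular line of $F$ lies in at least two distinct 2--flats, obtained by flipping a label choice along a half-plane. I would then check that the complementary direction is branching in the same way, and that this holds uniformly at all scales.
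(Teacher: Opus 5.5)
Your treatment of the case where one label set is a single label $s$ is correct and matches the paper, including the leaf subcase where you switch to the standard geodesics labelled by the unique neighbour $u$. The genuine gap is the case $|L_1|,|L_2|\ge2$. There you only state an expectation, and this case is where the content of the proposition lies.

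Moreover, the mechanism you suggest would not give what is needed as stated. Branching of a singular geodesic $\alpha\subset F$ requires $\alpha$ to be an \emph{intersection} of 2--flats, not merely to lie in two distinct 2--flats. A flat obtained from $F$ by flipping label choices on one side of $\alpha$ coincides with $F$ on the other half-plane. So its intersection with $F$ is a half-plane rather than $\alpha$.

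The missing step is a short explicit construction. Take orthogonal singular geodesics $\alpha,\beta\subset F$, translate so that they meet at $1$, and choose distinct labels $b_1,b_2$ of $\beta$. These are not labels of $\alpha$, and each commutes with every label of $\alpha$. Hence the translates $b_j^k\alpha$, $k\in\Z$, are pairwise parallel, and their union $F_j=\bigcup_{k}b_j^k\alpha$ is a 2--flat (the product of $\alpha$ with $\sgen{b_j}$).

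Every vertex of $F_j$ has the form $xb_j^k$ with $x$ a vertex of $\alpha$. Suppose $xb_1^k=yb_2^l$. Then $y^{-1}x=b_2^lb_1^{-k}$ lies in two parabolic subgroups generated by disjoint sets of generators, which forces $k=l=0$ and $x=y$. Thus $F_1\cap F_2=\alpha$, so $\alpha$ is branching. Symmetrically, using two distinct labels of $\alpha$, the geodesic $\beta$ is branching.

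Since the choice of $\alpha,\beta$ was arbitrary, this applies to every parallel in both directions. As in the paper, $F$ is then directionally branch-complemented. The complete bipartite structure of $L_1\cup L_2$ plays no role in this argument.
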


\begin{proof}
Let $F$ be 2--flat. 
Since it is top-dimensional, it is a subcomplex. Let $\alpha$ and $\beta$ be two orthogonal geodesics contained in the $1$--skeleton of $F$. Let $L_\alpha$ and $L_\beta$ denote the sets of labels of edges of $\alpha$ and $\beta$, respectively, that is, the corresponding sets of generators of $A_\Gamma$. First, note that if $\alpha' \subset F$ is a geodesic parallel to $\alpha$ and contained in the $1$--skeleton of $X_\Gamma$, then $\alpha$ and $\alpha'$ have the same set of labels of edges by \Cref{lem:parallel_sing_geod_same_labels}. The same holds for the parallels of $\beta$. Moreover, since $\alpha$ and $\beta$ span the $2$--flat $F$, every label in $L_\alpha$ commutes with every label in $L_\beta$ and the sets are disjoint.

If $\alpha$ is standard, then either $\beta$ is standard or $|L_\beta|\geq 2$. If $\beta$ is standard, then by the assumption on $\Gamma$, as observed in \cref{rem:weakly_FB_cliques_in_2D}, $F$ satisfies the conclusion of the proposition. If $ |L_\beta|\geq 2$, then $\alpha$ corresponds to a non-leaf vertex, hence it is branching. Since $\Gamma$ has no star components, $\alpha$ is branch-complemented, as observed in  \cref{rem:weakly_FB_cliques_in_2D}, and the proposition follows. 

Thus, by symmetry, we are left with the case that both $\alpha$ and $\beta$ are not standard.
That means that there exists distinct elements $a_1,a_2\in L_\alpha$ and distinct elements $b_1,b_2\in L_\beta$. All four elements are pairwise distinct as $L_\alpha$ and $L_\beta$ are disjoint.

By left multiplication by an element of $A_\Gamma$, we can assume that $\alpha$ and $\beta$ intersect at $1_{A_\Gamma}$. 
As $b_1$ commutes with every element of $L_\alpha$, the geodesics $\{b_1^k\alpha\}_{k\in\mathbb{Z}}$ are parallel, and hence, form a 2--flat $F_1$. Similarly  the geodesics $\{b_2^k\alpha\}_{k\in\mathbb{Z}}$ form a 2--flat, $F_2$. We have that $F_1\cap F_2=\alpha$, hence $\alpha$ is branching. 
Similarly, using $\bigcup_{k\in\Z}a_1^k\beta$ and $\bigcup_{k\in\Z}a_2^k\beta$ we get that $\beta$ is branching. 

We showed that both $\alpha$ and $\beta$ are branching. By definition, they are both branch-complemented and hence $F$ is directionally branch-complemented.
\end{proof}

\begin{proof}[Proof of \Cref{thm:qi_embedding_every_2flats_2D_RAAGs}]
By \Cref{prop:2flats_in_2D_RAAGs}, $F$ is either directionally branch-complemented or contains a branch-complemented standard geodesic. If $F$ is directionally branch-complemented, then the conclusion follows from \cite[Thm~10.1]{baderbensaidpetyt:from}. 

Suppose instead that $F$ contains a branch-complemented standard geodesic $\gamma$. Then every parallel of $\gamma$ inside $F$ is 1--branch-complemented. We argue as in the first case of the proof of \cref{thm:weakly_FB_cliques}. Let $X_\omega$ and $Y_\omega$ be asymptotic cones of $X_\Gamma$ and $Y$ with respect to fixed basepoints, respectively, and let $\gamma_\omega$ be the ultralimit of $\gamma$. Each parallel $\gamma'_\omega$ of $\gamma_\omega$ inside the ultralimit $F_\omega$ of $F$ is the ultralimit of 1--branch-complemented 1--flats, so its image $f_\omega(\gamma'_\omega)$ is a singular geodesic, by \cite[Cor.~9.21]{baderbensaidpetyt:from}. Hence \Cref{thm:top_quasiflats_union_parallel_codim1_flats_paper1} applies, and the conclusion follows. 
\end{proof}

\begin{remark}
The condition that the connected components are not stars is necessary: if $\Gamma=P_3$ is the path on three vertices, then $X_\Gamma \cong T \times \mathbb R$, and there exists a quasiisometric embedding $T \times \mathbb R \to T \times T$ for which some flat $L \times \mathbb R$ is not sent at finite Hausdorff distance from any $2$--flat in $T \times T$, see \cite{bowditch:quasiisometric}.    
\end{remark}

\bibliographystyle{alpha}
\footnotesize{\bibliography{biblio}}
\Addresses
\end{document}

%% file: preamble.tex
\usepackage[leqno]{amsmath}
\usepackage{amssymb,xfrac}
\usepackage{amsthm}
\usepackage{ifthen} 
\usepackage[pagebackref,breaklinks,unicode]{hyperref} 
    \renewcommand*{\backrefalt}[4]{\ifcase #1 (Not cited).\or (Cited p.~#2).\else (Cited pp.~#2).\fi} 
\usepackage[capitalise,noabbrev]{cleveref}
\usepackage{enumitem}
\usepackage{graphicx}
\usepackage{tikz-cd, tikz}
\usepackage{float}
\usepackage[justification=centering]{caption}
\usepackage{subcaption}
\usepackage{wasysym}

\numberwithin{equation}{section}

\newtheorem{theorem}{Theorem}[section]
\newtheorem{proposition}[theorem]{Proposition}
\newtheorem{lemma}[theorem]{Lemma}
\newtheorem{corollary}[theorem]{Corollary}

\newtheorem*{theorem*}{Theorem}
\newtheorem*{proposition*}{Proposition}
\newtheorem*{lemma*}{Lemma}
\newtheorem*{corollary*}{Corollary}

\newtheorem{mthm}{Theorem} 
\newtheorem{mcor}[mthm]{Corollary}

\theoremstyle{definition}
\newtheorem{definition}[theorem]{Definition}
\newtheorem{observation}[theorem]{Observation}
\newtheorem{remark}[theorem]{Remark}
\newtheorem{example}[theorem]{Example}
\newtheorem{question}[theorem]{Question}

\newtheorem*{definition*}{Definition}
\newtheorem*{observation*}{Observation}
\newtheorem*{remark*}{Remark}
\newtheorem*{example*}{Example}
\newtheorem*{question*}{Question}
\newtheorem*{exercise*}{Exercise}
\newtheorem*{fact*}{Fact}
\newtheorem*{notation*}{Notation}

\newcommand{\Z}{\mathbb{Z}}

\DeclareMathOperator{\QI}{QI}
\DeclareMathOperator{\QIE}{QIE}

\DeclareMathOperator{\Aut}{Aut}

\DeclareMathOperator{\Out}{Out}

\DeclareMathOperator{\diam}{diam}

\DeclareMathOperator{\lk}{Link}

\DeclareMathOperator{\str}{Star}

\DeclareMathOperator{\starr}{Star}
\renewcommand*{\star}{\starr}

\newcommand{\set}[1]{\left\{ #1 \right\}}

\newcounter{claimref}
\newcounter{claimcount}

\newenvironment{claim*}{\par\medskip\noindent\textbf{Claim:}\hspace{0.5mm}}{}

\newenvironment{claim*proof}{\medskip\noindent\emph{Proof of Claim.}\hspace{0.5mm}}
    {\leavevmode\unskip\penalty9999\hbox{}\nobreak\hfill\quad\hbox{$\diamondsuit$}\medskip}

\newcounter{subclaimcount}

\newenvironment{subclaim*}{\par\medskip\textbf{Subclaim:}\hspace{0.5mm}}{}
\newenvironment{subclaimproof*}{\medskip\noindent\emph{Proof of Subclaim.}\hspace{0.5mm}}
    {\leavevmode\unskip\penalty9999\hbox{}\nobreak\hfill\quad\hbox{$\fullmoon$}\medskip}

\crefname{cond}{condition}{conditions}
\creflabelformat{cond}{#2#1\@#3}
\crefname{obs}{observation}{observations}
\creflabelformat{obs}{#2#1\@#3}

\newcommand*{\eps}{\varepsilon}

\newcommand*{\R}{\mathbb{R}}

\newcommand*{\cQ}{\mathcal{Q}}
\newcommand*{\dist}{d}

\DeclareMathOperator{\type}{Typ}
\newcommand*{\sgen}[1]{\langle#1\rangle}
\newcommand*{\syl}[1]{|#1|_{\mathrm{syl}}}
\newcommand*{\bigast}{\mathop{\scalebox{1.5}{\raisebox{-0.2ex}{$*$}}}}
\renewcommand*{\subset}{\subseteq}

\newcounter{shcount}
\newcounter{thmcount}

\swapnumbers
\newcommand*{\bsh}[1]{\theoremstyle{definition}\newtheorem{subhead\theshcount}[theorem]{#1}
    \begin{subhead\theshcount}} 
\newcommand*{\esh}{\end{subhead\theshcount}\stepcounter{shcount}} 
 
\newcommand*{\numberedtheorem}[3]{\theoremstyle{plain}\newtheorem*{makethm\thethmcount}{#1}
    \ifthenelse{\equal{#2}{}}{\begin{makethm\thethmcount}#3\end{makethm\thethmcount}\stepcounter{thmcount}}
    {\begin{makethm\thethmcount}[#2]#3\end{makethm\thethmcount}\stepcounter{thmcount}}} 

\makeatletter\newcommand{\linkdest}[1]{\Hy@raisedlink{\hypertarget{#1}{}}}\makeatother 

\usepackage{lmodern,anyfontsize} 
\usepackage{setspace} 
\usepackage{tocloft} 
\usepackage[nottoc]{tocbibind}

%% file: main.bbl
\begin{thebibliography}{BKMM12}

\bibitem[Bal95]{ballmann:lectures}
Werner Ballmann.
\newblock {\em Lectures on spaces of nonpositive curvature}, volume~25 of {\em DMV Seminar}.
\newblock Birkh\"{a}user Verlag, Basel, 1995.
\newblock With an appendix by Misha Brin.

\bibitem[Bas72]{bass:degree}
H.~Bass.
\newblock The degree of polynomial growth of finitely generated nilpotent groups.
\newblock {\em Proc. London Math. Soc. (3)}, 25:603--614, 1972.

\bibitem[BB97]{bestvinabrady:morse}
Mladen Bestvina and Noel Brady.
\newblock Morse theory and finiteness properties of groups.
\newblock {\em Invent. Math.}, 129(3):445--470, 1997.

\bibitem[BBP26a]{baderbensaidpetyt:from}
Shaked Bader, Oussama Bensaid, and Harry Petyt.
\newblock From branching quasiflats to flats in cat(0) cube complexes.
\newblock {\em arXiv:2605.10248}, 2026.

\bibitem[BBP26b]{baderbensaidpetyt:quasiisometric:flexibility}
Shaked Bader, Oussama Bensaid, and Harry Petyt.
\newblock Quasiisometric embeddings between right-angled artin groups: flexibility.
\newblock {\em Preprint}, 2026.

\bibitem[BDS07]{buyalodranishnikovschroeder:embedding}
Sergei Buyalo, Alexander Dranishnikov, and Viktor Schroeder.
\newblock Embedding of hyperbolic groups into products of binary trees.
\newblock {\em Invent. Math.}, 169(1):153--192, 2007.

\bibitem[BH99]{bridsonhaefliger:metric}
Martin~R. Bridson and Andr\'{e} Haefliger.
\newblock {\em Metric spaces of non-positive curvature}, volume 319 of {\em Grundlehren der Mathematischen Wissenschaften}.
\newblock Springer-Verlag, Berlin, 1999.

\bibitem[BJN10]{behrstockjanuszkiewiczneumann:quasiisometric}
Jason~A. Behrstock, Tadeusz Januszkiewicz, and Walter~D. Neumann.
\newblock Quasi-isometric classification of some high dimensional right-angled {A}rtin groups.
\newblock {\em Groups Geom. Dyn.}, 4(4):681--692, 2010.

\bibitem[BKMM12]{behrstockkleinerminskymosher:geometry}
Jason Behrstock, Bruce Kleiner, Yair~N. Minsky, and Lee Mosher.
\newblock Geometry and rigidity of mapping class groups.
\newblock {\em Geom. Topol.}, 16(2):781--888, 2012.

\bibitem[BKS08]{bestvinakleinersageev:asymptotic}
Mladen Bestvina, Bruce Kleiner, and Michah Sageev.
\newblock The asymptotic geometry of right-angled {A}rtin groups. {I}.
\newblock {\em Geom. Topol.}, 12(3):1653--1699, 2008.

\bibitem[BN12]{behrstockneumann:quasiisometric}
Jason~A. Behrstock and Walter~D. Neumann.
\newblock Quasi-isometric classification of non-geometric 3-manifold groups.
\newblock {\em J. Reine Angew. Math.}, 669:101--120, 2012.

\bibitem[Bow16]{bowditch:quasiisometric}
Brian~H. Bowditch.
\newblock Quasi-isometric maps between direct products of hyperbolic spaces.
\newblock {\em Internat. J. Algebra Comput.}, 26(4):619--633, 2016.

\bibitem[Bow18]{bowditch:large:mapping}
Brian~H. Bowditch.
\newblock Large-scale rigidity properties of the mapping class groups.
\newblock {\em Pacific J. Math.}, 293(1):1--73, 2018.

\bibitem[Bow22]{bowditch:median:book}
Brian~H. Bowditch.
\newblock Median algebras.
\newblock {\em Preprint available at \mbox{bhbowditch.com/papers/median-algebras.pdf}}, 2022.

\bibitem[BS15]{benjaminishamov:bilipschitz}
Itai Benjamini and Alexander Shamov.
\newblock Bi-{L}ipschitz bijections of {$\mathbb Z$}.
\newblock {\em Anal. Geom. Metr. Spaces}, 3(1):313--316, 2015.

\bibitem[Bur65]{burling:oncolouring}
James~P. Burling.
\newblock {\em On colouring problems of families of polytopes}.
\newblock PhD thesis, University of Colorado, Boulder, 1965.

\bibitem[Cha07]{charney:introduction}
Ruth Charney.
\newblock An introduction to right-angled {A}rtin groups.
\newblock {\em Geom. Dedicata}, 125:141--158, 2007.

\bibitem[Cho96]{chow:groups}
Richard Chow.
\newblock Groups quasi-isometric to complex hyperbolic space.
\newblock {\em Trans. Amer. Math. Soc.}, 348(5):1757--1769, 1996.

\bibitem[DJ00]{davisjanuszkiewicz:right}
Michael~W. Davis and Tadeusz Januszkiewicz.
\newblock Right-angled {A}rtin groups are commensurable with right-angled {C}oxeter groups.
\newblock {\em J. Pure Appl. Algebra}, 153(3):229--235, 2000.

\bibitem[DK18]{drutukapovich:geometric}
Cornelia Dru\c{t}u and Michael Kapovich.
\newblock {\em Geometric group theory}, volume~63 of {\em American Mathematical Society Colloquium Publications}.
\newblock American Mathematical Society, Providence, RI, 2018.
\newblock With an appendix by Bogdan Nica.

\bibitem[Dro87a]{droms:graph}
Carl Droms.
\newblock Graph groups, coherence, and three-manifolds.
\newblock {\em J. Algebra}, 106(2):484--489, 1987.

\bibitem[Dro87b]{droms:isomorphisms}
Carl Droms.
\newblock Isomorphisms of graph groups.
\newblock {\em Proc. Amer. Math. Soc.}, 100(3):407--408, 1987.

\bibitem[Dun85]{dunwoody:accessibility}
M.~J. Dunwoody.
\newblock The accessibility of finitely presented groups.
\newblock {\em Invent. Math.}, 81(3):449--457, 1985.

\bibitem[Esk98]{eskin:quasiisometric}
Alex Eskin.
\newblock Quasi-isometric rigidity of nonuniform lattices in higher rank symmetric spaces.
\newblock {\em J. Amer. Math. Soc.}, 11(2):321--361, 1998.

\bibitem[FM99]{farbmosher:quasiisometric}
Benson Farb and Lee Mosher.
\newblock Quasi-isometric rigidity for the solvable {B}aumslag-{S}olitar groups. {II}.
\newblock {\em Invent. Math.}, 137(3):613--649, 1999.

\bibitem[FN20]{fishernguyen:quasiisometric}
David Fisher and Thang Nguyen.
\newblock Quasi-isometric embeddings of non-uniform lattices.
\newblock {\em Comment. Math. Helv.}, 95(1):37--78, 2020.
\newblock With an appendix by S. Garibaldi, D. B. McReynolds, N. Miller and D. Witte Morris.

\bibitem[FW18]{fisherwhyte:quasiisometric}
David Fisher and Kevin Whyte.
\newblock Quasi-isometric embeddings of symmetric spaces.
\newblock {\em Geom. Topol.}, 22(5):3049--3082, 2018.

\bibitem[Gen23]{genevois:algebraic}
Anthony Genevois.
\newblock Algebraic properties of groups acting on median graphs.
\newblock {\em Preprint available at \mbox{sites.google.com/view/agenevois/books}}, 2023.

\bibitem[Gen26]{Genevois:PolynHyp}
Anthony Genevois.
\newblock Polynomial hyperbolicity and products of free groups, 2026.
\newblock Preprint.

\bibitem[Gro81]{gromov:groups}
Mikhael Gromov.
\newblock Groups of polynomial growth and expanding maps.
\newblock {\em Inst. Hautes \'{E}tudes Sci. Publ. Math.}, 53:53--73, 1981.

\bibitem[Gro93]{gromov:asymptotic}
M.~Gromov.
\newblock Asymptotic invariants of infinite groups.
\newblock In {\em Geometric group theory, {V}ol. 2 ({S}ussex, 1991)}, volume 182 of {\em London Math. Soc. Lecture Note Ser.}, pages 1--295. Cambridge Univ. Press, Cambridge, 1993.

\bibitem[Gui73]{guivarch:croissance}
Yves Guivarc'h.
\newblock Croissance polynomiale et p\'eriodes des fonctions harmoniques.
\newblock {\em Bull. Soc. Math. France}, 101:333--379, 1973.

\bibitem[HK18]{huangkleiner:groups}
Jingyin Huang and Bruce Kleiner.
\newblock Groups quasi-isometric to right-angled {A}rtin groups.
\newblock {\em Duke Math. J.}, 167(3):537--602, 2018.

\bibitem[HM95]{hermillermeier:algorithms}
Susan Hermiller and John Meier.
\newblock Algorithms and geometry for graph products of groups.
\newblock {\em J. Algebra}, 171(1):230--257, 1995.

\bibitem[HO17]{huangosajda:quasieuclidean}
Jingyin Huang and Damian Osajda.
\newblock Quasi-{E}uclidean tilings over 2-dimensional {A}rtin groups and their applications.
\newblock {\em arXiv:1711.00122}, 2017.

\bibitem[HOV24]{huangosajdavaskou:rigidity}
Jingyin Huang, Damian Osajda, and Nicolas Vaskou.
\newblock Rigidity and classification results for large-type {A}rtin groups.
\newblock {\em arXiv:2407.19940}, 2024.

\bibitem[Hua17a]{huang:quasiisometric:1}
Jingyin Huang.
\newblock Quasi-isometric classification of right-angled {A}rtin groups {I}: the finite {O}ut case.
\newblock {\em Geom. Topol.}, 21(6):3467--3537, 2017.

\bibitem[Hua17b]{huang:top}
Jingyin Huang.
\newblock Top-dimensional quasiflats in {CAT}(0) cube complexes.
\newblock {\em Geom. Topol.}, 21(4):2281--2352, 2017.

\bibitem[Hua18]{huang:commensurability}
Jingyin Huang.
\newblock Commensurability of groups quasi-isometric to {RAAG}s.
\newblock {\em Invent. Math.}, 213(3):1179--1247, 2018.

\bibitem[Hua25]{huang:quasiisometric:2}
Jingyin Huang.
\newblock Quasi-isometric classification of right-angled {A}rtin groups {II}: {S}everal infinite {O}ut cases.
\newblock {\em Groups Geom. Dyn.}, 19(4):1165--1261, 2025.

\bibitem[HW99]{hsuwise:onlinear}
Tim Hsu and Daniel~T. Wise.
\newblock On linear and residual properties of graph products.
\newblock {\em Michigan Math. J.}, 46(2):251--259, 1999.

\bibitem[HW08]{haglundwise:special}
Fr\'{e}d\'{e}ric Haglund and Daniel~T. Wise.
\newblock Special cube complexes.
\newblock {\em Geom. Funct. Anal.}, 17(5):1551--1620, 2008.

\bibitem[Kam09]{kambites:oncommuting}
Mark Kambites.
\newblock On commuting elements and embeddings of graph groups and monoids.
\newblock {\em Proc. Edinb. Math. Soc. (2)}, 52(1):155--170, 2009.

\bibitem[KK13]{kimkoberda:embedability}
Sang-Hyun Kim and Thomas Koberda.
\newblock Embedability between right-angled {A}rtin groups.
\newblock {\em Geom. Topol.}, 17(1):493--530, 2013.

\bibitem[KK14]{kimkoberda:geometry}
Sang-Hyun Kim and Thomas Koberda.
\newblock The geometry of the curve graph of a right-angled {A}rtin group.
\newblock {\em Internat. J. Algebra Comput.}, 24(2):121--169, 2014.

\bibitem[KKL17]{kimkoberdalee:finite}
Sang-hyun Kim, Thomas Koberda, and Juyoung Lee.
\newblock Finite subgraphs of an extension graph.
\newblock {\em arXiv:1708.02088}, 2017.

\bibitem[KL97]{kleinerleeb:rigidity}
Bruce Kleiner and Bernhard Leeb.
\newblock Rigidity of quasi-isometries for symmetric spaces and {E}uclidean buildings.
\newblock {\em Inst. Hautes \'{E}tudes Sci. Publ. Math.}, 86:115--197, 1997.

\bibitem[Lau95]{laurence:generating}
Michael~R. Laurence.
\newblock A generating set for the automorphism group of a graph group.
\newblock {\em J. London Math. Soc. (2)}, 52(2):318--334, 1995.

\bibitem[Mar74]{marden:geometry}
Albert Marden.
\newblock The geometry of finitely generated kleinian groups.
\newblock {\em Ann. of Math. (2)}, 99:383--462, 1974.

\bibitem[Mos80]{mostow:onremarkable}
G.~D. Mostow.
\newblock On a remarkable class of polyhedra in complex hyperbolic space.
\newblock {\em Pacific J. Math.}, 86(1):171--276, 1980.

\bibitem[Nai23]{nairne:embeddings}
Patrick~S. Nairne.
\newblock Embeddings of trees, {C}antor sets and solvable {B}aumslag-{S}olitar groups.
\newblock {\em Geom. Dedicata}, 217(2.13):1--26, 2023.

\bibitem[Nai24]{nairne:embedding}
Patrick~S. Nairne.
\newblock Embedding relatively hyperbolic groups into products of binary trees.
\newblock {\em arXiv:2412.16029}, 2024.

\bibitem[Pan89]{pansu:metriques}
Pierre Pansu.
\newblock M\'{e}triques de {C}arnot-{C}arath\'{e}odory et quasiisom\'{e}tries des espaces sym\'{e}triques de rang un.
\newblock {\em Ann. of Math. (2)}, 129(1):1--60, 1989.

\bibitem[Pra73]{prasad:strong}
Gopal Prasad.
\newblock Strong rigidity of {${\bf Q}$}-rank {$1$} lattices.
\newblock {\em Invent. Math.}, 21:255--286, 1973.

\bibitem[Rul08]{rull:embedding}
Alina Rull.
\newblock {\em Embedding of right-angled {A}rtin and {C}oxeter groups into products of trees}.
\newblock PhD thesis, University of Zurich, 2008.

\bibitem[Sch95]{schwartz:quasiisometry}
Richard~Evan Schwartz.
\newblock The quasi-isometry classification of rank one lattices.
\newblock {\em Inst. Hautes \'{E}tudes Sci. Publ. Math.}, 82:133--168 (1996), 1995.

\bibitem[Ser89]{servatius:automorphisms}
Herman Servatius.
\newblock Automorphisms of graph groups.
\newblock {\em J. Algebra}, 126(1):34--60, 1989.

\bibitem[Sta68]{stallings:ontorsionfree}
John~R. Stallings.
\newblock On torsion-free groups with infinitely many ends.
\newblock {\em Ann. of Math. (2)}, 88:312--334, 1968.

\bibitem[Tuk86]{tukia:onquasiconformal}
Pekka Tukia.
\newblock On quasiconformal groups.
\newblock {\em J. Analyse Math.}, 46:318--346, 1986.

\bibitem[Tuk88]{tukia:homeomorphic}
Pekka Tukia.
\newblock Homeomorphic conjugates of {F}uchsian groups.
\newblock {\em J. Reine Angew. Math.}, 391:1--54, 1988.

\bibitem[Wis21]{wise:structure}
Daniel~T. Wise.
\newblock {\em The structure of groups with a quasiconvex hierarchy}, volume 209 of {\em Annals of Mathematics Studies}.
\newblock Princeton University Press, Princeton, NJ, 2021.

\end{thebibliography}
